\documentclass[12pt,reqno]{amsart}

\usepackage{etex}
\usepackage[utf8]{inputenc}
\usepackage{amsfonts}
\usepackage{amsmath}
\usepackage{amssymb}
\usepackage{amsthm}
\usepackage{mathrsfs}
\usepackage{stmaryrd}
\usepackage{color}
\usepackage[english]{babel}
\usepackage{fontenc}
\usepackage{url}
\usepackage{graphicx}
\usepackage{hyperref}
\usepackage{caption}
\usepackage{epstopdf}
\usepackage{hyphenat}
\usepackage{float}
\usepackage{indentfirst}
\usepackage[export]{adjustbox}
\usepackage{tikz}
\usepackage{color}
\usepackage[all]{xy}
\usetikzlibrary{matrix,arrows,decorations.pathmorphing}
\usepackage{bookmark}
\usepackage{array}
\newcolumntype{P}[1]{>{\centering\arraybackslash}p{#1}}
\usepackage{enumitem}
\usepackage{float}
\floatstyle{plaintop}
\restylefloat{table}
\usepackage{booktabs}

\allowdisplaybreaks

\newtheorem{theorem}{Theorem}[section]
\newtheorem{lemma}[theorem]{Lemma}
\newtheorem{proposition}[theorem]{Proposition}
\newtheorem{corollary}[theorem]{Corollary}
\newtheorem{lemdef}[theorem]{Lemma--Definition}

\theoremstyle{definition}
\newtheorem{remark}[theorem]{Remark}
\newtheorem{example}[theorem]{Example}
\newtheorem{definition}[theorem]{Definition}

\tikzset{
    partial ellipse/.style args={#1:#2:#3}{
        insert path={+ (#1:#3) arc (#1:#2:#3)}
    }
}

\DeclareMathOperator{\Sym}{Sym}
\DeclareMathOperator{\Bl}{Bl}
\DeclareMathOperator{\Spec}{Spec}
\DeclareMathOperator{\GL}{GL}
\DeclareMathOperator{\PGL}{PGL}
\DeclareMathOperator{\Aut}{Aut}
\DeclareMathOperator{\Laf}{Laf}
\DeclareMathOperator{\GP}{GP}
\DeclareMathOperator{\diag}{diag}
\DeclareMathOperator{\Proj}{Proj}
\DeclareMathOperator{\ConvHull}{ConvHull}
\DeclareMathOperator{\Supp}{Supp}
\DeclareMathOperator{\Hilb}{Hilb}
\DeclareMathOperator{\id}{id}
\DeclareMathOperator{\val}{val}

\def\bP{\mathbb{P}}
\def\XGP{\overline{\mathbf{X}}_{\mathrm{GP}}}

\oddsidemargin -0pt \evensidemargin -0pt \topmargin -20pt
\textheight 600pt \textwidth 470pt

\setcounter{tocdepth}{1}
\makeatletter
\def\l@subsection{\@tocline{2}{0pt}{2.5pc}{5pc}{}}
\makeatother

\author{Luca Schaffler}
\address{Department of Mathematics, KTH Royal Institute of Technology, SE-100 44 Stockholm, Sweden}
\email{lucsch@math.kth.se}

\author{Jenia Tevelev}
\address{Department of Mathematics and Statistics, University of Massachusetts, Amherst, MA 01003. 
Laboratory of Algebraic Geometry and its Applications, HSE, Moscow, Russia.}
\email{tevelev@math.umass.edu}

\subjclass[2020]{14J10, 14D06, 52C35, 52B40, 51E24, 14T90}
\keywords{Moduli space, line arrangement, compactification, matroid, Mustafin join.}
\title[Compactifications of moduli of points and lines in the projective plane]{Compactifications of moduli of points and lines\\in the projective plane}

\begin{document}

\begin{abstract}
Projective duality identifies the moduli spaces $\mathbf{B}_n$ and $\mathbf{X}(3,n)$
parametrizing linearly general configurations of $n$  points in $\mathbb{P}^2$
and $n$ lines  in the dual $\mathbb{P}^2$, respectively.
The~space $\mathbf{X}(3,n)$ admits Kapranov's Chow quotient compactification $\overline{\mathbf{X}}(3,n)$, 
studied also by Lafforgue, Hacking, Keel, Tevelev, and Alexeev, which gives an
example of a KSBA moduli space of stable surfaces: it carries a family of 
certain reducible degenerations of~$\mathbb{P}^2$ with $n$ ``broken lines''. 
Gerritzen and Piwek proposed a dual perspective, a compact moduli space
parametrizing certain reducible degenerations of $\mathbb{P}^2$ with $n$ smooth points. We investigate the relation between these approaches,
answering a question of Kapranov from 2003.
\end{abstract}

\maketitle

%--------------------------------------------------------------------------------------

\section{Introduction}

Projective duality associates to a configuration of $n$ points in $\mathbb{P}^2$  a configuration of $n$ lines in the dual $\mathbb{P}^2$, and conversely. Our motivating question is: how does projective duality between points and lines in the plane behave in families and under degenerations? More precisely, denote by $\mathbf{B}_n$ and $\mathbf{X}(3,n)$ the moduli spaces of $n$ general points in $\mathbb{P}^2$ and $n$ general lines in the dual $(\mathbb{P}^2)^\vee$ respectively. Projective duality gives an identification 
$$\mathbf{B}_n=\mathbf{X}(3,n),$$
but these moduli spaces carry distinct $\mathbb{P}^2$-bundles, one with $n$ points (sections) and another with $n$ lines (divisors). The problem is to find their \emph{modular compactifications}, i.e.~to extend the $\mathbb{P}^2$-bundles to universal families over the compactifications. For $\mathbf{X}(3,n)$, a modular compactification is given by Kapranov's Chow quotient  $\overline{\mathbf{X}}(3,n)$ \cite{Kap93a}, which carries a family of reducible degenerations of $\mathbb{P}^2$ with $n$ ``broken lines''. This is an example of the moduli space of stable pairs (varieties with divisors) of Koll\'ar, Shepherd-Barron, and Alexeev \cite{HKT06,Ale15}. There are no comparable moduli spaces for varieties with marked points.

We start by studying the Gerritzen--Piwek compactification $\overline{\mathbf{B}}_n$.
Although defined differently in \cite{GP91}, $\overline{\mathbf{B}}_n$ 
is isomorphic to the Zariski closure of the image of the map 
$$\mathbf{B}_n\hookrightarrow \prod_{\mathcal{Q}_n}\mathbb{P}^1,$$
where $\mathcal{Q}_n$ is the set of ordered quintuples of distinct labels
$v_1,\ldots,v_5\in \{1,\ldots,n\}$. Every map $\mathbf{B}_n\to\mathbb{P}^1$ sends $(p_1,\ldots,p_n)$ to the cross-ratio of four points obtained by projecting $p_{v_1},\ldots,p_{v_4}$ from $p_{v_5}\in\mathbb{P}^2$.
The analogous construction for points in $\mathbb{P}^1$ instead of $\mathbb{P}^2$ yields $\overline{\mathrm{M}}_{0,n}\cong\overline{\mathbf{X}}(2,n)$ \cite[Theorem~9.18]{HKT09}. %Therefore, it is natural to ask whether $\overline{\mathbf{B}}_n$ is also  isomorphic to $\overline{\mathbf{X}}(3,n)$. 
It follows from Luxton's \cite[Proposition~3.2.8]{Lux08} (see the proof of Theorem~\ref{mainresultforapplications})
that a similar result holds for lines in $\mathbb{P}^2$ up to normalization:

\begin{theorem}\label{isomorphismbetweenkapranovcompandgpcomp}
There exists a finite birational morphism $\overline{\mathbf{X}}(3,n)\rightarrow\overline{\mathbf{B}}_n$. In particular, 
we have
$\overline{\mathbf{X}}(3,n)^\nu\cong\overline{\mathbf{B}}_n^\nu$.
 (For a reduced scheme $X$, we denote by $X^\nu$ its normalization.)
\end{theorem}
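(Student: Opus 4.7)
The plan is to construct a proper birational morphism
$f \colon \overline{\mathbf{X}}(3,n) \to \overline{\mathbf{B}}_n$ by extending the
cross-ratio embedding from the interior, and then deduce finiteness from Luxton's
\cite[Proposition~3.2.8]{Lux08}. Since $\overline{\mathbf{B}}_n$ is by construction
the Zariski closure of the image of $\phi \colon \mathbf{B}_n \hookrightarrow
\prod_{\mathcal{Q}_n}\mathbb{P}^1$, it suffices to extend $\phi$ to a morphism
$\overline{\phi}$ defined on all of $\overline{\mathbf{X}}(3,n)$: properness of the
source forces the image of $\overline{\phi}$ to lie in $\overline{\mathbf{B}}_n$, and
agreement with $\phi$ on the dense open $\mathbf{X}(3,n) = \mathbf{B}_n$ forces the
induced morphism $f$ to be birational.

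The main step is the coordinate-wise extension. Fix $q=(v_1,\ldots,v_5)\in\mathcal{Q}_n$.
Dually to the definition via points, on $\mathbf{X}(3,n)$ the cross-ratio $\phi_q$
records the positions of the four intersection points $l_{v_i}\cap l_{v_5}$
($i=1,\ldots,4$) on the line $l_{v_5}\cong\mathbb{P}^1$, so $\phi_q$ factors through the
restriction morphism $\pi_{v_5} \colon \mathbf{X}(3,n)\to \mathbf{X}(2,n-1)=\mathrm{M}_{0,n-1}$.
I would exploit the modular description of $\overline{\mathbf{X}}(3,n)$ as a KSBA moduli
space of reducible degenerations of $\mathbb{P}^2$ with $n$ broken lines
\cite{HKT06,Ale15}: restricting the universal family to the $v_5$-th broken line and
stabilizing yields a family of Deligne--Mumford stable $(n-1)$-pointed genus-zero
curves, hence a morphism
$\overline{\pi}_{v_5} \colon \overline{\mathbf{X}}(3,n)\to \overline{\mathrm{M}}_{0,n-1}$
extending $\pi_{v_5}$. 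Composing with the classical cross-ratio map
$\overline{\mathrm{M}}_{0,n-1}\to\mathbb{P}^1$ associated to the four labels
$v_1,\ldots,v_4$ produces the desired extension of $\phi_q$. Assembling these extensions
over all $q\in\mathcal{Q}_n$ defines $\overline{\phi}$, and hence the proper birational
morphism $f$.

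For finiteness, the plan is to invoke Luxton's \cite[Proposition~3.2.8]{Lux08}, which,
as the statement itself signals, implies that the cross-ratio coordinates separate
points of $\overline{\mathbf{X}}(3,n)$ up to finite ambiguity: equivalently, $f$ is
quasi-finite. A proper quasi-finite morphism is finite, and a finite birational
morphism of reduced schemes induces an isomorphism on normalizations, giving the
second assertion
$\overline{\mathbf{X}}(3,n)^\nu\cong\overline{\mathbf{B}}_n^\nu$.

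The principal obstacle I anticipate is the rigorous construction of the restriction
morphisms $\overline{\pi}_{v_5}$: one must check that the stabilization along the
$v_5$-th broken line is compatible with the Chow quotient structure on
$\overline{\mathbf{X}}(3,n)$, i.e.\ that the set-theoretic restriction map is genuinely
regular, and not merely defined on the smooth locus. The extraction of quasi-finiteness
from Luxton's tropical result is the other delicate point, but it should reduce to
checking that the image of the fan of $\overline{\mathbf{X}}(3,n)$ under the
cross-ratio coordinates determines the source fan up to finitely many preimages.
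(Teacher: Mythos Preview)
Your overall strategy is sound and would work, but the paper takes a slightly different and cleaner route that sidesteps exactly the obstacle you flag.

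Rather than factoring each cross-ratio through a restriction morphism $\overline{\pi}_{v_5}\colon\overline{\mathbf{X}}(3,n)\to\overline{\mathrm{M}}_{0,n-1}$ built from the KSBA family, the paper factors through the \emph{forgetful} maps
\[
\overline{\mathbf{X}}(3,n)\longrightarrow\prod_{I\in\binom{[n]}{5}}\overline{\mathbf{X}}(3,I)\cong\prod_{I\in\binom{[n]}{5}}\overline{\mathrm{M}}_{0,I},
\]
and then embeds each $\overline{\mathrm{M}}_{0,5}$ into $\prod_{i\in I}\mathbb{P}^2$ via the product of Kapranov maps (Lemma~\ref{productofkapranovmapsn=5}). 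Both your $\overline{\pi}_{v_5}$ and the paper's forgetful maps are instances of Lafforgue's \emph{face maps} $\overline{\mathbf{\Omega}}^Q\to\overline{\mathbf{\Omega}}^F$ (contraction versus deletion facets of the hypersimplex), which are regular by construction on the toric variety $\mathcal{A}^Q$ and hence on $\overline{\mathbf{\Omega}}^Q$. So your worry about regularity of $\overline{\pi}_{v_5}$ evaporates once you recognize it as a face map; there is no need to stabilize broken lines in the KSBA family. The paper's choice of factoring through $\overline{\mathbf{X}}(3,5)$ rather than $\overline{\mathrm{M}}_{0,n-1}$ has the further advantage that the target is already the natural ambient space for $\overline{\mathbf{B}}_n$ (via Lemma~\ref{equivalentdefinitionofBnbarinsmallerproductofP2s}), so the commutative diagram closes immediately.

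For finiteness, your plan to invoke Luxton's \cite[Proposition~3.2.8]{Lux08} is what the paper's introduction points to, but in the body the paper proves the stronger Theorem~\ref{mainresultforapplications} (via the matroid-theoretic Corollary~\ref{extensiontomatroidpolytopesubdivisions}), from which Corollary~\ref{productofforgetfulmapsisfinite} gives that the product of forgetful maps is already finite and birational onto its image. Your sketch ``the image of the fan determines the source fan up to finitely many preimages'' is not quite the mechanism: the argument (Lemma~\ref{lemmaforlafforgue'storicvarieties}) shows that the product of face maps contracts no complete curve in $\mathcal{A}^Q$, hence its restriction to the projective $\overline{\mathbf{\Omega}}^Q$ is finite.
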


For $n=6$, Luxton proves that in fact $\overline{\mathbf{X}}(3,6)\cong\overline{\mathbf{B}}_6$, \cite[Theorem~4.1.15]{Lux08}.
We prove a very general result in combinatorial matroid theory (see \S\ref{aresultinmatroidtheory}) and combine it with the theory of Lafforgue's 
 varieties~$\overline{\mathbf{\Omega}}^Q$ (\cite{Laf03}, see~\S\ref{kapranovandlafforgue}), which generalize Kapranov's construction of 
$\overline{\mathbf{X}}(3,n)$ to arbitrary matroids, to prove perhaps the most general result in this direction.

\begin{theorem}
\label{mainresultforapplications}
Let $\overline{\mathbf{\Omega}}^Q$ be the Lafforgue variety corresponding to a matroid polytope $Q$ of rank $r$ and dimension $n-1$ such that $r\geq3$, or $r=2$ and $n\geq5$. The product of face maps
\begin{equation}
\label{sgsGSRH}
\overline{\mathbf{\Omega}}^Q\rightarrow\prod_{F\in\mathcal{F}}\overline{\mathbf{\Omega}}^F=\prod_{F\in\mathcal{F}}\mathbb{P}^1
\end{equation}
is a finite morphism and its restriction to the main stratum of $\overline{\mathbf{\Omega}}^Q$ is birational onto its image if the matroid is framed. Here $\mathcal{F}$ is the collection of faces of $Q$ in the boundary of the hypersimplex $\Delta(r,n)$ that are equivalent to $\Delta(2,4)$.
\end{theorem}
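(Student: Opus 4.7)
The plan is to separate the assertion into two parts: the unconditional finiteness of the morphism in (\ref{sgsGSRH}), and the birationality on the main stratum under the framed hypothesis. Both rest on the combinatorial matroid-theoretic result of \S\ref{aresultinmatroidtheory}, but the stratification of $\overline{\mathbf{\Omega}}^Q$ coming from Lafforgue's construction plays a larger role in the first part.

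First I would recall the face maps. Lafforgue's theory produces, for every face $F\subseteq Q$ of the matroid polytope, a canonical morphism $\overline{\mathbf{\Omega}}^Q\to\overline{\mathbf{\Omega}}^F$, compatible with the stratification by matroidal subdivisions. When $F$ is equivalent to $\Delta(2,4)$, the Lafforgue variety $\overline{\mathbf{\Omega}}^F$ coincides with $\overline{\mathrm{M}}_{0,4}\cong\mathbb{P}^1$, and on the main stratum the resulting coordinate is the cross-ratio of the corresponding four collinear (or concurrent) elements. Let $f$ denote the product morphism (\ref{sgsGSRH}).

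For finiteness, since $\overline{\mathbf{\Omega}}^Q$ is proper and the target $\prod_{F\in\mathcal{F}}\mathbb{P}^1$ is separated, $f$ is automatically proper, so finiteness reduces to quasi-finiteness. I would argue stratum by stratum: each closed boundary stratum of $\overline{\mathbf{\Omega}}^Q$ is indexed by a matroidal subdivision of $Q$ and fibers as a product of Lafforgue varieties $\overline{\mathbf{\Omega}}^{Q_i}$ attached to the pieces $Q_i$. Every $\Delta(2,4)$-face of a $Q_i$ lying on $\partial\Delta(r,n)$ contributes a factor of $f$, so by induction on dimension (or on the complexity of the polytope) it suffices to show that, on every piece, the collection of such visible $\Delta(2,4)$-faces is rich enough to force quasi-finiteness of the map to their product. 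This is precisely the content that the combinatorial result of \S\ref{aresultinmatroidtheory} is designed to deliver.

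For the birationality assertion, on the main stratum $\overline{\mathbf{\Omega}}^Q$ parametrizes realizations of the matroid modulo the torus action. When the matroid is framed, the collection of $\Delta(2,4)$-faces highlighted in \S\ref{aresultinmatroidtheory} includes enough cross-ratios to reconstruct a generic realization uniquely up to the torus, exactly as the quintuples $\mathcal{Q}_n$ do in the proof of Theorem~\ref{isomorphismbetweenkapranovcompandgpcomp}. This gives injectivity of $f$ on a dense open subset of the main stratum and hence birationality onto its image. The rank hypothesis ($r\geq 3$, or $r=2$ with $n\geq 5$) ensures that such a framing family of quadruples is nonempty and faithful; below that range, $\Delta(2,4)$ is essentially the whole polytope and the statement degenerates.

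The main obstacle I anticipate is the finiteness on the deeper boundary strata. The matroid-theoretic result of \S\ref{aresultinmatroidtheory} must be robust enough that, after passing to any matroidal subdivision of $Q$, each piece still retains enough $\Delta(2,4)$-faces lying in the boundary of $\Delta(r,n)$ to separate points on the corresponding factor. Tracking how the index set $\mathcal{F}$ restricts along a subdivision, and checking that no exceptional configuration collapses under all of the retained cross-ratios simultaneously, is the delicate combinatorial step; this is presumably why a purely matroid-theoretic lemma is proved first and then fed into the geometric argument.
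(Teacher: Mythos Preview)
Your reduction of finiteness to quasi-finiteness via properness is correct, but the stratum-by-stratum induction you propose does not match what \S\ref{aresultinmatroidtheory} actually proves, and this is where the gap lies. Theorem~\ref{mainresultinmatroidtheory} and Corollary~\ref{extensiontomatroidpolytopesubdivisions} do not assert that each piece $Q_i$ of a matroidal subdivision retains enough $\Delta(2,4)$-faces on $\partial\Delta(r,n)$; they compare \emph{two} subdivisions $\mathscr{P}\prec\mathscr{P}'$ of the same $Q$ and produce a single facet of $\Delta(r,n)$ on which the restrictions still differ. The paper uses this not on $\overline{\mathbf{\Omega}}^Q$ but on the ambient Lafforgue \emph{toric} variety $\mathcal{A}^Q$ (Lemma~\ref{lemmaforlafforgue'storicvarieties}): a complete curve in $\mathcal{A}^Q$ must meet two distinct toric strata $S\subsetneq S'$, these correspond to subdivisions $\mathscr{P}'$ strictly coarser than $\mathscr{P}$, and Corollary~\ref{extensiontomatroidpolytopesubdivisions} then supplies a facet $E$ for which the face map $\mathcal{A}^Q\to\mathcal{A}^E$ already separates the two points. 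The recursion is by \emph{facets of the hypersimplex} (decreasing $r$ or $n$), not by pieces of a subdivision; your induction ``on dimension'' is ill-posed because maximal pieces $Q_i$ have the same dimension $n-1$ as $Q$. After iterating down to rank-$2$ faces on four elements, the connected ones are $U_{2,4}$ or $\widetilde{U}_{2,4}$; the latter contribute $\mathcal{A}^F\cong\mathbb{G}_m$, which is affine and cannot receive a complete curve, so one may project them away. Restricting the resulting morphism to the projective subscheme $\overline{\mathbf{\Omega}}^Q\subseteq\mathcal{A}^Q$ gives finiteness.

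Your birationality sketch is essentially correct in spirit. The paper makes it precise by checking injectivity on $R$-points for every local $\Bbbk$-algebra $R$: the frame normalizes $r+1$ hyperplanes to standard position, and then explicit determinantal cross-ratio formulas recover the ratios $b_t/b_s$ of the coefficients of each remaining hyperplane, so $\mathbf{\Omega}^Q\to\prod_{F\in\mathcal{F}}\mathbb{P}^1$ is in fact a locally closed embedding, not merely generically injective.
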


Next we investigate the claim from \cite{GP91} that $\overline{\mathbf{B}}_n$ is a modular compactification which parametrizes certain degenerations of $\mathbb{P}^2$ with configurations of $n$ points. As common in moduli theory, to construct the family over a compactification, the first step is to identify potential central fibers in one-parameter degenerations. Formally, if $\Bbbk$ is an algebraically closed field, $R=\Bbbk[[t]]$, and $K=\Bbbk((t))$ is the field of fractions of $R$, then given $\Spec(K)\rightarrow\mathbf{B}_n$, we need to construct a family over $\Spec(R)$. We recall the definition of \emph{Mustafin join} \cite{Mus78}.

\begin{definition}
For~a free $R$-submodule $L\subseteq K^3$ of rank $3$, let $\mathbb{P}(L)=\mathrm{Proj}(\mathrm{Sym}(L^\vee))$.
The~submodule $L$~is called a~\emph{lattice}.
Given a finite set of lattices 
$\Sigma=\{L_1,\ldots,L_m\}$, the~corresponding 
Mustafin join $\mathbb{P}(\Sigma)$
is the Zariski closure of the diagonal embedding $$\mathbb{P}_K^2\hookrightarrow \mathbb{P}(L_1)\times_R\ldots\times_R\mathbb{P}(L_m).$$
We denote by $\mathbb{P}(\Sigma)_\Bbbk$ the central fiber of $\mathbb{P}(\Sigma)$,
a degeneration of $\mathbb{P}_K^2$. The \emph{Bruhat--Tits building} $\mathfrak{B}_3^0$ is the set of equivalence classes of lattices modulo rescaling by 
$\lambda\in K^*$. The~Mustafin join $\mathbb{P}(\Sigma)$ only depends on the equivalence classes of lattices $[L_1],\ldots,[L_m]\in \mathfrak{B}_3^0$.
\end{definition}

Mustafin joins were used to determine special fibers of various families of compactified moduli spaces. One of the first examples is \cite{GP91}, where Gerritzen and Piwek  constructed a Mustafin join associated to a one-parameter degeneration of $n$ points in $\mathbb{P}^2$ (see below). Kapranov showed in \cite{Kap93b} that an analogous construction for $n$ points in $\mathbb{P}^1$ gives all fibers of the universal family over~$\overline{\mathrm{M}}_{0,n}$. The interpretation of the family of visible contours over $\overline{\mathbf{X}}(3,n)$ in terms of Mustafin joins was obtained in \cite{KT06}. This work was based on \cite{Fal01,Laf03}, where Mustafin joins were also studied. In \cite{CS10,CHSW11}, it was shown that the central fibers of arbitrary Mustafin joins are reduced and Cohen--Macaulay.

\begin{definition}[\cite{GP91}]
Let $a_1,\ldots,a_n\in\mathbb{P}^2(K)$
be points in general linear position.
We~call $\mathbf{a}=(a_1,\ldots,a_n)$ an~{\em arc}.
Given a lattice $L$, %class $[L]\in\mathfrak{B}_3^0$, 
the valuative criterion of properness gives unique extensions $\overline{a}^L_1,\ldots,\overline{a}^L_n\in\mathbb{P}(L)(R)$,
which are sections of the $\bP^2$-bundle  $\mathbb{P}(L)\rightarrow\Spec(R)$.
We~say that $L$ is a {\em stable lattice with respect to an arc $\mathbf{a}$} if at 
least four of the limits $\overline{a}^L_1(0),\ldots,\overline{a}^L_n(0)$ in the central fiber $\mathbb{P}(L)_\Bbbk\subseteq\mathbb{P}(L)$ are in general linear position. 
Let 
$$\Sigma_\mathbf{a}\subseteq\mathfrak{B}_3^0$$ 
be the set of stable lattice classes with respect to $\mathbf{a}$. This is a finite set \cite[Lemma~5.19]{KT06}.
We call $\mathbb{P}(\Sigma_\mathbf{a})$ {\em the Mustafin join for an arc} to distinguish it from arbitrary Mustafin joins. 
The valuative criterion of properness gives $n$ sections 
$\overline{a}_1,\ldots,\overline{a}_n$ 
of $\mathbb{P}(\Sigma_\mathbf{a})\rightarrow\Spec(R)$.
\end{definition}

By Theorem~\ref{sectionssmoothcentralfiber},
the Mustafin join $\mathbb{P}(\Sigma_\mathbf{a})\rightarrow\Spec(R)$ for an arc
is smooth
along $n$ disjoint sections $\overline{a}_1,\ldots,\overline{a}_n$.
The central fiber $\mathbb{P}(\Sigma_\mathbf{a})_\Bbbk$ is reduced and Cohen--Macaulay by \cite{CS10}.

\begin{remark}
In \cite{GP91}, the authors claimed 
that $\overline{\mathbf{B}}_n$
is the moduli space for Mustafin joins for arcs, i.e.~there 
exists a family $\overline{\mathbf{F}}_n\rightarrow\overline{\mathbf{B}}_n$ %with $n$ disjoint  sections 
with the following universal property. 
Let~$\mathbf{a}\colon\mathrm{Spec}(K)\rightarrow\mathbf{B}_n$ be an arc 
and let 
$\overline{\mathbf{a}}\colon\Spec(R)\to\overline{\mathbf{B}}_n$ 
be its unique extension.
Then~the pullback of $\overline{\mathbf{F}}_n$ via $\overline{\mathbf{a}}$ 
is isomorphic to $\mathbb{P}(\Sigma_\mathbf{a})$. However, we found a mistake in the argument and the statement is also wrong, as there exist arcs $\mathbf{a},\mathbf{b}\colon\mathrm{Spec}(K)\rightarrow\mathbf{B}_n$ with $\overline{\mathbf{a}}(0)=\overline{\mathbf{b}}(0)\in\overline{\mathbf{B}}_n$ such that the central fibers of $\mathbb{P}(\Sigma_\mathbf{a})$ and $\mathbb{P}(\Sigma_\mathbf{b})$ are not isomorphic (see Example~\ref{familyoverBn}). 
\end{remark}

In \S\ref{UniversalMustafin}, 
we construct the correct modular compactification of $\mathbf{B}_n$
and the universal Mustafin join of point configurations over it 
using multigraded Hilbert schemes \cite{HS04}.

\begin{theorem}
There exists a compactification $\mathbf{B}_n\subseteq\overline{\mathbf{X}}_{\mathrm{GP}}(3,n)$ with a proper flat family $\overline{\mathcal{M}}\to\overline{\mathbf{X}}_{\mathrm{GP}}(3,n)$
smooth along $n$ disjoint sections 
satisfying the following universal property.
If $\mathbf{a}\colon\mathrm{Spec}(K)\rightarrow\mathbf{B}_n$ and $\overline{\mathbf{a}}\colon\Spec(R)\to\overline{\mathbf{X}}_{\mathrm{GP}}(3,n)$ is the unique extension, then $\overline{\mathbf{a}}^*\overline{\mathcal{M}}$ is isomorphic to the Mustafin join $\mathbb{P}(\Sigma_\mathbf{a})$ with $n$ sections.
\end{theorem}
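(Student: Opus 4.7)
The plan is to realize $\overline{\mathbf{X}}_{\mathrm{GP}}(3,n)$ as the scheme-theoretic closure of a natural embedding of $\mathbf{B}_n$ into a multigraded Hilbert scheme of Haiman--Sturmfels \cite{HS04}, with $\overline{\mathcal{M}}$ the pullback of the universal family. The universal property will follow from the valuative criterion applied to this closure construction.

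First, I would identify the ambient parameter space. For each arc $\mathbf{a}$, the Mustafin join $\mathbb{P}(\Sigma_\mathbf{a})$ is by definition the scheme-theoretic closure of the diagonal $\mathbb{P}^2_K$ inside the product $\prod_{[L]\in\Sigma_\mathbf{a}}\mathbb{P}(L)$, whose Cox ring is $\mathbb{Z}^{|\Sigma_\mathbf{a}|}$-graded. The multigraded Hilbert scheme with the Hilbert function of the diagonal $\mathbb{P}^2$ is projective by \cite{HS04} and carries a tautological universal family, so it is the natural moduli space for such Mustafin joins. The technical complication is that the labelling set $\Sigma_\mathbf{a}$ itself varies with $\mathbf{a}$; I would deal with this either by fixing a sufficiently large collection of potential lattice classes and stratifying by combinatorial type, or by taking a suitable colimit as the collection grows. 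The finiteness of $\Sigma_\mathbf{a}$ \cite[Lemma~5.19]{KT06} is essential to keeping this manageable.

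With the ambient multigraded Hilbert scheme $H$ fixed, I would define $\overline{\mathbf{X}}_{\mathrm{GP}}(3,n)$ as the scheme-theoretic closure in $H$ of the tautological morphism $\mathbf{B}_n\to H$ that sends each configuration to its trivial Mustafin structure (a single diagonal $\mathbb{P}^2$). The family $\overline{\mathcal{M}}\to\overline{\mathbf{X}}_{\mathrm{GP}}(3,n)$ is then the pullback of the tautological family on $H$; it is flat by definition of the Hilbert scheme and proper since $H$ is projective. The $n$ sections extend from $\mathbf{B}_n$ to $\overline{\mathbf{X}}_{\mathrm{GP}}(3,n)$ by the valuative criterion applied to $\overline{\mathcal{M}}\to\overline{\mathbf{X}}_{\mathrm{GP}}(3,n)$, and smoothness along these sections is exactly Theorem~\ref{sectionssmoothcentralfiber}.

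The universal property is then a valuative criterion argument. For an arc $\mathbf{a}\colon\Spec(K)\to\mathbf{B}_n$ with unique extension $\overline{\mathbf{a}}$, the pullback $\overline{\mathbf{a}}^*\overline{\mathcal{M}}$ is a flat $R$-family with generic fiber $\mathbb{P}^2_K$, and by construction it is the scheme-theoretic closure of $\mathbb{P}^2_K$ inside the relevant product of $\mathbb{P}^2$-bundles over $R$. The main obstacle is to verify that this product is indexed precisely by $\Sigma_\mathbf{a}$: no unstable lattice classes contribute spurious factors, and every stable lattice class is represented. Proving this amounts to matching the intrinsic ``four general points in the central fiber'' stability condition to the extrinsic multigrading data of the Hilbert scheme; this should be possible via a careful combinatorial analysis of the Bruhat--Tits building $\mathfrak{B}_3^0$ combined with the finiteness bound, but is where the bulk of the technical work lies. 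Once this matching is established, $\overline{\mathbf{a}}^*\overline{\mathcal{M}}\cong\mathbb{P}(\Sigma_\mathbf{a})$ follows, together with the claim about sections.
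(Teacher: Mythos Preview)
Your general framework is correct: the paper does define $\overline{\mathbf{X}}_{\mathrm{GP}}(3,n)$ as a closure inside (a product involving) a multigraded Hilbert scheme and pulls back the universal family. But you have not found the idea that makes the construction work, and your two proposed workarounds for the varying index set $\Sigma_\mathbf{a}$ (a ``sufficiently large collection of potential lattice classes'' or a colimit) are neither precise nor obviously implementable, since the building $\mathfrak{B}_3^0$ is infinite and there is no a priori bound on which lattice classes can occur.

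The paper's solution is to index not by lattice classes but by \emph{quadruples}: the ambient space is the multigraded Hilbert scheme of $(\mathbb{P}^2)^{\binom{n}{4}}$, one factor for each $4$-element subset $I\subseteq[n]$. Over $\mathbf{B}_n$, the map $\Psi$ sends $((p_1,\ldots,p_n),p)$ to the tuple $(\psi_I(p))_I$, where $\psi_I\in\PGL_3$ is the unique transformation taking $(p_{i_1},\ldots,p_{i_4})$ to the standard frame; see diagram~\eqref{sdgegefg} and Definition~\ref{definitionofthecorrectcompactification}. This index set is fixed, finite, and purely combinatorial, independent of $\mathbf{a}$. For an arc, each quadruple $I$ determines a unique stable lattice $L_I\in\Sigma_\mathbf{a}$ (Lemma--Definition~\ref{xfvxfbfb}), giving a surjection $\binom{[n]}{4}\twoheadrightarrow\Sigma_\mathbf{a}$ and hence a closed embedding $\mathbb{P}(\Sigma_\mathbf{a})\hookrightarrow\Spec(R)\times(\mathbb{P}^2)^{\binom{n}{4}}$ with the correct multigraded Hilbert function. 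The universal property is then a few lines (Theorem~\ref{indeedcorrectfamily}): flatness of $\overline{\mathcal{M}}$ guarantees that $\overline{\mathbf{a}}^*\overline{\mathcal{M}}$ is the closure of the generic $\mathbb{P}^2_K$, and that closure is $\mathbb{P}(\Sigma_\mathbf{a})$ by construction. What you identified as ``the bulk of the technical work'' simply disappears once the indexing is chosen correctly; there is no matching problem between stability and multigrading to solve.
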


In Proposition~\ref{sRGsrgwrH}, we construct forgetful morphisms
$\overline{\mathbf{X}}_{\GP}(3,n)\rightarrow\overline{\mathbf{X}}_{\GP}(3,n-1)$, 
the analogues of forgetful morphisms $\overline{\mathrm{M}}_{0,n}\to\overline{\mathrm{M}}_{0,n-1}$.
We further show that there exists a birational morphism $\overline{\mathbf{X}}_{\mathrm{GP}}(3,n)\to\overline{\mathbf{B}}_n$. In Theorem~\ref{sgsrgRWGwrhr}, we show that 
%We then describe the new compactification $\overline{\mathbf{X}}_{\GP}(3,n)$ for small values of $n$. The first interesting case is  $n=5$, when 
$$\overline{\mathbf{X}}_{\mathrm{GP}}(3,5)\cong\overline{\mathbf{X}}(3,5)\cong\overline{\mathbf{B}}_5\cong\overline{\mathrm{M}}_{0,5}.$$

However,  $\overline{\mathbf{X}}_{\GP}(3,6)$ is different from Kapranov's Chow quotient compactification $\overline{\mathbf{X}}(3,6)$. In \cite{Lux08}, Luxton showed that $\overline{\mathbf{X}}(3,6)$ is a tropical compactification of $\mathbf{X}(3,6)$ inside the toric variety whose fan $\Sigma(3,6)$ is the tropical Grassmannian  of Speyer and Sturmfels \cite{SS04}. The $15$ singular points of $\overline{\mathbf{X}}(3,6)$ correspond to non simplicial \emph{bipyramid cones} in $\Sigma(3,6)$. 

\begin{theorem}
\label{zh}
$\overline{\mathbf{X}}_{\GP}(3,6)^\nu$ is the blow up of $\overline{\mathbf{X}}(3,6)$ given by the minimal refinement of 
$\Sigma(3,6)$ induced by splitting each bipyramid cone into $12$ subcones as illustrated in Table~\ref{subdivisionofthebipyramid}. The $6$-pointed degenerations of $\mathbb{P}^2$ parametrized by $\overline{\mathbf{X}}_{\GP}(3,6)^\nu$ are listed in Tables~\ref{tbl:nonplanarGPdegenerations} -- \ref{tbl:furtherGPdegenerations}.
\end{theorem}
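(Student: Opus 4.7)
The plan is to compare $\overline{\mathbf{X}}_{\GP}(3,6)^\nu$ with $\overline{\mathbf{X}}(3,6)$ using the birational morphisms established earlier. Luxton's theorem gives $\overline{\mathbf{X}}(3,6)\cong\overline{\mathbf{B}}_6$, and combined with the birational morphism $\overline{\mathbf{X}}_{\GP}(3,6)\to\overline{\mathbf{B}}_6$ announced in the introduction I obtain a birational morphism
$$\pi\colon\overline{\mathbf{X}}_{\GP}(3,6)^\nu\longrightarrow\overline{\mathbf{X}}(3,6).$$
The task is to identify $\pi$ with the toric modification described by the indicated refinement of $\Sigma(3,6)$ and to list the fibers of the universal family.

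The first step is to show that $\pi$ is an isomorphism over the smooth locus $U\subseteq\overline{\mathbf{X}}(3,6)$, i.e.\ the complement of the $15$ bipyramid points. Over $U$ the combinatorial type of $\Sigma_\mathbf{a}$ depends only on the specialization $\overline{\mathbf{a}}(0)$, which makes $\pi$ injective on the closed points of $\pi^{-1}(U)$; since $U$ is smooth, Zariski's main theorem gives that $\pi$ is an isomorphism over $U$. By the $\mathfrak{S}_6$-symmetry acting on both sides, it then suffices to analyze a single bipyramid point $p_\sigma$ associated with a cone $\sigma\subseteq\Sigma(3,6)$. I would parametrize arcs $\mathbf{a}\colon\Spec(K)\to\mathbf{B}_6$ whose extension $\overline{\mathbf{a}}$ specializes to $p_\sigma$, classify the possible sets of stable lattice classes $\Sigma_\mathbf{a}\subseteq\mathfrak{B}_3^0$, and compute the resulting Mustafin joins $\mathbb{P}(\Sigma_\mathbf{a})_\Bbbk$. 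The goal is to show that up to isomorphism there are exactly $12$ central fibers, matching the degenerations in Tables~\ref{tbl:nonplanarGPdegenerations}--\ref{tbl:furtherGPdegenerations}, and that the tropical direction of each arc lies in exactly one of the $12$ subcones of $\sigma$ from Table~\ref{subdivisionofthebipyramid}, with the bijection preserved. The universal property of $\overline{\mathbf{X}}_{\GP}(3,6)^\nu$ then identifies $\pi$ locally at $p_\sigma$ with the proposed toric blow-up; minimality of the refinement follows from pairwise non-isomorphism of the $12$ central fibers.

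The main obstacle is the explicit enumeration at $p_\sigma$. One must describe a neighborhood of the bipyramid configuration of five lattice classes in the Bruhat--Tits building $\mathfrak{B}_3^0$, determine for which arcs $\mathbf{a}$ the set $\Sigma_\mathbf{a}$ acquires additional lattice classes beyond the bipyramid five, and verify that the refined combinatorial data biject with both the proposed $12$-cone subdivision and with the $12$ isomorphism classes of central fibers. A secondary check is that the refined fan defines a bona fide tropical compactification of $\mathbf{X}(3,6)$, which follows from functoriality of tropical compactifications under toric blow-ups along torus-invariant centers once the combinatorial refinement has been fixed. The lattice-classification step itself is a finite but intricate calculation that I expect to be the most labor-intensive part of the proof.
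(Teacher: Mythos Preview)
There is a genuine gap in your first step. You claim that $\pi$ is an isomorphism over the smooth locus $U\subseteq\overline{\mathbf{X}}(3,6)$, i.e.\ away from the $15$ bipyramid points, on the grounds that the combinatorial type of $\Sigma_\mathbf{a}$ depends only on $\overline{\mathbf{a}}(0)$ there. This is false. The phrase ``minimal refinement of $\Sigma(3,6)$ induced by splitting each bipyramid cone'' means that once you insert the new walls inside each bipyramid, you must also subdivide the neighboring \emph{simplicial} maximal cones (types \#21, \#23, \#24) so that the result is a fan; equivalently (see the Remark following Definition~\ref{explicitdescriptionoftherefinementinducedbysplittingbypiramid}), $\widehat{\mathbf{X}}(3,6)$ is obtained from the small resolution $\overline{\mathbf{X}}'(3,6)$ by blowing up the codimension-$2$ strata of type \#12, which lie entirely in the smooth locus. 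So $\pi$ is \emph{not} injective over $U$, and your appeal to Zariski's Main Theorem cannot get off the ground. Relatedly, your picture at a bipyramid point is off: the exceptional fiber there is a surface (a blow-up of $\mathbb{P}^1\times\mathbb{P}^2$), not $12$ isolated points, and the $12$ maximal subcones index the deepest strata of that surface; the degenerations over it are the family \#20--\#20.7 of Tables~\ref{tbl:GPdegenerationspart2} and~\ref{tbl:furtherGPdegenerations}, not $12$ distinct isomorphism classes.

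The paper's argument is organized quite differently. It does not separate ``smooth locus vs.\ bipyramid points'' but rather ``planar locus vs.\ non-planar locus.'' The planar locus $\mathbf{U}(3,6)$ (Definition~\ref{definitionofU123GPVandVtildetoricpatch}) is covered by toric charts, and crucially it \emph{contains} the bipyramid points. Over each chart, Theorem~\ref{reductionmorphismisoonU123} identifies $\mathbf{U}^{ijk}\hookrightarrow Y_{\mathcal{Q}_3^\vee}$ and $\mathbf{U}_{\GP}^{ijk}\hookrightarrow Y_{\widetilde{\mathcal{Q}}_3}$, so the comparison of $\overline{\mathbf{X}}_{\GP}(3,6)^\nu$ with $\widehat{\mathbf{X}}(3,6)$ reduces to the purely toric statement that the refinement $\widetilde{\mathcal{Q}}_3\preccurlyeq\mathcal{Q}_3^\vee$ matches $\widehat{\Sigma}(3,6)\preccurlyeq\Sigma(3,6)$ on those charts (Table~\ref{conesinrefinedquotientfancorrespondingtosplittingbipyramid}); this is what the machinery of \S\S\ref{sGsgsGsehsRH}--\ref{openpatchesfromtoricvarieties} was built for. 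Only the small non-planar locus (strata \#7 and the closed stratum \#2, including \#10 and \#23) is then handled by ad hoc arc computations (Lemmas~\ref{non-planardegenerationgp7} and~\ref{type2stratainGPareblowupp1xp1at9pts}). So the labor-intensive ``lattice classification step'' you anticipate at the bipyramid is bypassed entirely by toric geometry, while the genuine arc analysis happens elsewhere, at strata you did not flag.
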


The main difficulty in dealing with $\overline{\mathbf{X}}(3,n)$
is that its singularities are hard to control.
In~Theorem~\ref{reductionmorphismisoonU123}, we show that
$\overline{\mathbf{X}}(3,n)^\nu$ has a natural open locus 
$\mathbf{U}(3,n)$ with toroidal singularities.
Moreover, its preimage $\mathbf{U}_{\GP}(3,n)$
in $\overline{\mathbf{X}}_{\GP}(3,n)^\nu$ is also toroidal.
We call $\mathbf{U}(3,n)$ and $\mathbf{U}_{\GP}(3,n)$
{\em planar loci} because they correspond to limits of arcs
with all stable lattices in one apartment of the Bruhat--Tits building. Planar loci are covered by charts
which are isomorphic to (non-toric) open subsets in toric varieties
with fans that we denote by ${\mathcal{Q}}_m$ and 
$\widetilde{\mathcal{Q}}_m$ (here $n=m+3$).
While ${\mathcal{Q}}_m$ is nothing but the 
\emph{quotient fan} of Kapranov--Sturmfels--Zelevinsky 
\cite{KSZ91}, see \S\ref{sGsgsGsehsRH},
the fan $\widetilde{\mathcal{Q}}_m$ is new, see
\S\ref{asrgasrhadrhd}.

For $n=6$, $\mathbf{U}(3,6)$ nearly describes all of $\overline{\mathbf{X}}(3,6)$ (including its singular locus),
which allows us to immediately describe $\mathbf{U}_{\GP}(3,6)$ by toric geometry.
This leaves a few closed subsets,
which we deal with an ad hoc analysis.

%--------------------------------------------------

\subsection*{Acknowledgements}
We would like to thank Valery Alexeev, Dustin Cartwright,  
Patricio Gallardo, and Sean Keel for helpful conversations related to this work. In particular, we would like to thank Valery Alexeev for allowing us to reproduce in the current paper his pictures \cite[Figures~5.11, 5.12, and 5.13]{Ale15}. Finally, we thank the anonymous referee for the valuable comments and feedback. At KTH, the first author was supported by the KTH grant Verg Foundation. The second author was supported by the NSF grant DMS-1701704, Simons Fellowship, and by the HSE University Basic Research Program and Russian Academic Excellence Project 5-100.

\tableofcontents

%----------------------------------------------------------------------------------------------------

\section{A result in matroid theory}
\label{aresultinmatroidtheory}

This section is elementary and we only
use standard terminology and facts in matroid theory that can be found in \cite{Oxl11}. 
We use the notation $[n]=\{1,\ldots,n\}$ throughout the paper.
Recall that an example of a matroid on $[n]$ is a configuration of $n$ hyperplanes in $\bP^{r-1}$.
Matroids of this form are called \emph{realizable}.

\begin{definition}
Let $M$ be a matroid on $[n]$ and let $i_0\in[n]$.
\begin{enumerate}
\item The \emph{contraction} $M/i_0$ is the matroid on $[n]\setminus\{i_0\}$ such that $I\subseteq[n]\setminus\{i_0\}$ is independent provided $I\cup\{i_0\}$ is independent for $M$;
\item The \emph{deletion} $M\setminus i_0$ is the matroid on $[n]\setminus\{i_0\}$ such that $I\subseteq[n]\setminus\{i_0\}$ is independent provided $I$ is independent for $M$.
\item $M$ is called \emph{disconnected} if there exists a partition $[n]=E_1\amalg E_2$ and matroids $M_1, M_2$ on $E_1,E_2$ respectively such that $I$ is an independent set for $M$ if and only if $I=I_1\amalg I_2$, where $I_i$ is an independent set for $M_i$, $i=1,2$. A matroid is \emph{connected} provided it is not disconnected.
\end{enumerate}
For realizable matroids, deletion corresponds to removing a hyperplane from the arrangement and contraction corresponds to intersecting the arrangement with one of its hyperplanes.
\end{definition}

\begin{definition}[\cite{Tev07}]
Let $M$ and $M'$ be two matroids on $[n]$. Then we say that $M$ is \emph{more constrained than} $M'$ provided every independent set for $M$ is independent for $M'$, but there exists an independent set for $M'$ that is dependent for $M$.
\end{definition}

 The goal of this section is to prove the following:

\begin{theorem}
\label{mainresultinmatroidtheory}
Let $M$ and $M'$ be two matroids on $[n]$ of the same rank $r$.
Suppose that $r\geq3$ or $r=2$ and $n\geq5$. Then the following statement (CD) holds: \begin{itemize}
    \item[(CD)] If $M$ is connected and more constrained than $M'$ then there exists $i_0\in[n]$ such that either the contraction $M/i_0$ is connected and more constrained than $M'/i_0$, or the deletion $M\setminus i_0$ is connected and more constrained than $M'\setminus i_0$.
\end{itemize}
\end{theorem}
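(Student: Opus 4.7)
The plan is to reformulate ``$M$ more constrained than $M'$'' through an explicit witness circuit, combine this with the classical result (see \cite{Oxl11}) that for any element $i_0$ of a connected matroid $M$ with $|E(M)|\geq 2$ at least one of $M\setminus i_0$ or $M/i_0$ is connected, and then handle the remaining pathological configurations via the numerical hypotheses on $(r,n)$.

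First, I would record the witness. Since $M$ is strictly more constrained than $M'$, some $M'$-independent set is $M$-dependent, and any minimal such set is a circuit $C$ of $M$ which is independent in $M'$. Two transfer facts then follow. If $i_0\notin C$ then $C$ is still a circuit of $M\setminus i_0$ and still independent in $M'\setminus i_0$, so $M\setminus i_0$ is strictly more constrained than $M'\setminus i_0$. If $i_0\in C$, set $J:=C\setminus\{i_0\}$; then $J\cup\{i_0\}=C$ is $M$-dependent, so $J$ is dependent in $M/i_0$, while $C$ independent in $M'$ forces $J$ independent in $M'/i_0$, so $M/i_0$ is strictly more constrained than $M'/i_0$. (Since $M$ is connected with $n\geq 2$ it has no loops or coloops, and by the inclusion of independent sets neither does $M'$, so the ranks on both sides of each transfer automatically coincide.) The problem therefore reduces to locating $i_0$ such that the relevant one of $M\setminus i_0$ or $M/i_0$ is also connected.

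This suggests the natural case split: if some $i_0\notin C$ has $M\setminus i_0$ connected, use deletion; if some $i_0\in C$ has $M/i_0$ connected, use contraction. Otherwise, by the cited classical result every $i_0\notin C$ has $M/i_0$ connected and every $i_0\in C$ has $M\setminus i_0$ connected, yet neither option yields a valid witness for this particular $C$. I expect this pathological regime to be the main obstacle: it forces $M$ to admit non-trivial $2$-separations at every element of $[n]$, a rigidity which should clash with $M$ being connected unless $(r,n)$ is very small.

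To eliminate this regime I would exploit the hypothesis on $(r,n)$. For $r=2$, a connected matroid on $[n]$ has no loops and at least three parallel classes, so $C$ is a parallel pair of $M$ that refines into two distinct classes of $M'$; when $n\geq 5$ there is enough parallel-class mass outside $C$ to pick $i_0\notin C$ whose deletion preserves at least three parallel classes (hence connectedness) and preserves the refinement (hence the witness), contradicting the pathology. The excluded case $r=2, n=4$ genuinely fails, for instance with $M'$ the uniform matroid on four elements and $M$ obtained by declaring one pair parallel. For $r\geq 3$ I would pick $C$ of minimal cardinality among $M'$-independent circuits of $M$ and argue that simultaneous $2$-separations at every element of $[n]$ would induce a direct-sum decomposition of $M$, contradicting its connectedness. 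A cleaner variant is to replace the single witness $C$ by the collection of all $M'$-independent circuits of $M$ and to coordinate the element choice with some member of this collection; this bookkeeping is where I expect the bulk of the technical work to lie, and where the edge case $C=[n]$ (possible only when $r\geq 3$ and $n\leq r+1$) must also be absorbed.
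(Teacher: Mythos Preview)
Your setup is correct and matches the paper's Lemma~\ref{moreconstrainednesswithcontractionanddeletion}: a witness $C$ (an $M$-circuit independent in $M'$) transfers to $M\setminus i_0$ for $i_0\notin C$ and to $M/i_0$ for $i_0\in C$, and Tutte's theorem guarantees that one of these minors is connected. Your $r=2$ argument is essentially fine.

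The gap is in the $r\geq 3$ case. Your assertion that the pathological regime ``would induce a direct-sum decomposition of $M$'' is not justified, and is in fact false as stated: connected matroids can have $2$-separations at every element without being direct sums (think of series-parallel networks). You have correctly identified that in the bad case $M/i_0$ is connected for every $i_0\notin C$ and $M\setminus i_0$ is connected for every $i_0\in C$, but this structural information alone does not yield a contradiction, and your suggestion of choosing $C$ of minimal size does not obviously help.

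The paper escapes this trap by two devices you do not use. First, it observes that if there are \emph{two} distinct witnesses $I_1',I_2'$, then any $e\in I_1'\setminus I_2'$ works: by Tutte one of $M/e$, $M\setminus e$ is connected, and since $e\in I_1'$ and $e\notin I_2'$ the transfer goes through whichever way. Your closing sentence about ``replacing the single witness $C$ by the collection of all $M'$-independent circuits'' is groping toward this, but the point is not bookkeeping --- it is that two witnesses let you win regardless of which minor is connected. Second, once reduced to a unique witness $I'$ (necessarily a basis of $M'$, so $|I'|=r$), the paper passes to the dual: ``more constrained'' is self-dual, and applying a cosimplicity argument to $M^*$ (via an exercise in \cite{Oxl11} on simple connected matroids) either finishes or forces $M$ to contain a parallel pair $C'$. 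Since $r\geq 3$ gives $|C'|=2<r=|I'|$, the pair $C'$ is distinct from $I'$ and hence (by uniqueness) dependent in $M'$ too; deleting one member of $C'$ outside $I'$ then preserves both connectedness and the witness.
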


Before we plunge into the proof,
we recast the theorem in 
the language of matroid polytopes.
Let $\{e_1,\ldots,e_n\}$ be the basis of~$\mathbb{R}^n$. The \emph{hypersimplex} $\Delta(r,n)$ is the lattice polytope
\begin{equation*}
\Delta(r,n)=\ConvHull\left(\sum_{i\in I}e_i\mid I\subseteq[n]~\textrm{and}~|I|=r\right)\subseteq\mathbb{R}^n.
\end{equation*}

A \emph{matroid polytope} $P$ in $\mathbb{R}^n$ is a lattice polytope whose vertices are $\sum_{i\in B}e_i$, where $B$ varies among the bases of a matroid on $[n]$. 
If the matroid has rank $r$, then the vertices of $P$ are also vertices of the hypersimplex $\Delta(r,n)$. The hypersimplex is the matroid polytope of the uniform matroid. Recall that $M$ is connected if and only if $P$ has  dimension $n-1$, see \cite[Theorem 1.11]{Laf03} or \cite[Proposition 2.4]{FS05}.
If a matroid $M$ is more constrained than a matroid $M'$, then
we have $P\subsetneq P'$ for the corresponding matroid polytopes.
The converse holds if we assume $M$ and $M'$ have the same rank.
So we have
%The following theorem is a direct adaptation of Theorem~\ref{mainresultinmatroidtheory}  to matroid polytopes.

\begin{corollary}
\label{applicationtheoremmatroidstomatroidpolytopes}
Let $P,P'\subseteq\Delta(r,n)$ be matroid polytopes of rank $r$ and dimension $n-1$ such that $r\geq3$, or $r=2$ and $n\geq5$. Assume $P\subsetneq P'$. Then there exists a facet $F\subseteq\Delta(r,n)$ such that
$P|_F$ and $P'|_F$ are facets of $P$ and $P'$ respectively, and
$P|_F\subsetneq P'|_F$.
\end{corollary}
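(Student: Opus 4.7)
The plan is to translate the statement into the language of matroids and invoke Theorem~\ref{mainresultinmatroidtheory}. Let $M$ and $M'$ denote the rank-$r$ matroids on $[n]$ with matroid polytopes $P$ and $P'$. Since $\dim P = \dim P' = n-1$, both matroids are connected by the characterization cited just before the corollary. The hypothesis $P \subsetneq P'$, combined with the equality of ranks asserted in the statement, is exactly the translation into polytope language of the assertion that $M$ is more constrained than $M'$.

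Next I would recall the standard dictionary between the facets of $\Delta(r,n)$ and the matroid operations of deletion and contraction. The facets of the hypersimplex are of two types: $F_i^0 = \Delta(r,n) \cap \{x_i = 0\} \cong \Delta(r, n-1)$ and $F_i^1 = \Delta(r,n) \cap \{x_i = 1\} \cong \Delta(r-1, n-1)$, for $i \in [n]$. Under the natural identification of the ambient lattices, the matroid polytope of $M \setminus i$ coincides with $P \cap F_i^0$ and the matroid polytope of $M/i$ coincides with $P \cap F_i^1$. Applying the same characterization of connectedness as in the first paragraph, connectedness of $M \setminus i$ (resp.\ $M/i$) is equivalent to the face $P \cap F_i^0$ (resp.\ $P \cap F_i^1$) having dimension $n-2$, i.e.\ being a facet of $P$.

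Finally I apply Theorem~\ref{mainresultinmatroidtheory} to the pair $(M, M')$. It produces an index $i_0 \in [n]$ such that either $M/i_0$ is connected and more constrained than $M'/i_0$, or $M \setminus i_0$ is connected and more constrained than $M' \setminus i_0$. I take $F = F_{i_0}^1$ in the first case and $F = F_{i_0}^0$ in the second. By the dictionary, $P|_F$ is the matroid polytope of a connected matroid, so $\dim P|_F = n-2$, i.e.\ $P|_F$ is a facet of $P$. The strict inclusion $P|_F \subsetneq P'|_F$ follows from the \emph{more constrained} relation between the contracted (resp.\ deleted) matroids, again using that they have the same rank. Since $P'|_F$ contains the $(n-2)$-dimensional polytope $P|_F$ and is itself contained in a facet of $\Delta(r,n)$, its dimension is exactly $n-2$, and so $P'|_F$ is a facet of $P'$ as well. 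The real work is Theorem~\ref{mainresultinmatroidtheory}; the corollary itself is just this translation, and I do not anticipate any obstacle beyond carefully invoking the polytope/matroid dictionary.
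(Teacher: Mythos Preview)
Your proof is correct and follows essentially the same approach as the paper: translate to matroids, apply Theorem~\ref{mainresultinmatroidtheory}, and read off the facet from the contraction/deletion dictionary. If anything, you are slightly more careful than the paper in explicitly verifying that $P'|_F$ is also a facet (since (CD) only asserts connectedness of $M/i_0$ or $M\setminus i_0$, not of the primed versions), which you deduce from the inclusion $P|_F \subsetneq P'|_F$ and the dimension bound.
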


\begin{proof}
Let $M,M'$ be the matroids corresponding to $P,P'$. By Theorem~\ref{mainresultinmatroidtheory}, there exists $i_0\in[n]$ such that either $M/i_0$ is connected and more constrained than $M'/i_0$, or $M\setminus i_0$ is connected and more constrained than $M'\setminus i_0$.
In the former case, let $F$ be the facet of $\Delta(r,n)$ given by  the hyperplane $x_{i_0}=1$. Then $P|_F$ and $P'|_F$ are the matroid polytopes corresponding to $M/i_0$ and $M'/i_0$ respectively. Therefore, $P|_F\subsetneq P'|_F$. In the latter case we consider the facet of $\Delta(r,n)$ given by  the hyperplane $x_{i_0}=0$.
\end{proof}

Next we consider matroid polytope subdivisions of a matroid polytope.

\begin{corollary}
\label{extensiontomatroidpolytopesubdivisions}
Let $Q\subseteq\Delta(r,n)$ be a matroid polytope of rank $r$ and dimension $n-1$ such that $r\geq3$, or $r=2$ and $n\geq5$. Let $\mathscr{P},\mathscr{P}'$ be two subdivisions of $Q$ into matroid polytopes such that $\mathscr{P}'$ is coarser than $\mathscr{P}$. Then there exists a facet $F\subseteq\Delta(r,n)$ such that $E=Q|_F$ is a facet of $Q$ and $\mathscr{P}'|_E$ is coarser than $\mathscr{P}|_E$.
\end{corollary}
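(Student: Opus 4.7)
The plan is to reduce the statement to Corollary~\ref{applicationtheoremmatroidstomatroidpolytopes} by isolating a single pair of cells whose strict inclusion forces all the required conclusions. If $\mathscr{P}=\mathscr{P}'$ the statement is trivial (any facet of $Q$ induced by a facet of $\Delta(r,n)$ works), so I would assume that $\mathscr{P}$ strictly refines $\mathscr{P}'$. Then there exists some $P'\in\mathscr{P}'$ that is properly subdivided by $\mathscr{P}$, and I can pick a cell $P\in\mathscr{P}$ with $P\subsetneq P'$. Since $\mathscr{P}$ and $\mathscr{P}'$ are matroid polytope subdivisions of the $(n-1)$-dimensional matroid polytope $Q\subseteq\Delta(r,n)$, both $P$ and $P'$ are matroid polytopes of rank $r$ and dimension $n-1$.

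Next I would apply Corollary~\ref{applicationtheoremmatroidstomatroidpolytopes} to the strict inclusion $P\subsetneq P'$: this produces a facet $F\subseteq\Delta(r,n)$ such that $P|_F$ and $P'|_F$ are facets of $P$ and $P'$ respectively, with $P|_F\subsetneq P'|_F$. To see that $E=Q|_F$ is then a facet of $Q$, I would observe that $F$ is a supporting hyperplane of $\Delta(r,n)\supseteq Q$, so $Q\cap F$ is a face of $Q$; since $P'|_F\subseteq Q\cap F$ already has dimension $n-2$, this face has dimension exactly $n-2$ and is therefore a facet of $Q$.

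Finally, since $\mathscr{P}$ refines $\mathscr{P}'$, restricting both subdivisions to $E$ preserves the refinement relation, so $\mathscr{P}|_E$ refines $\mathscr{P}'|_E$; the refinement is strict because $P|_F\in\mathscr{P}|_E$ is a proper subcell of $P'|_F\in\mathscr{P}'|_E$. The only technical subtlety I anticipate is the dimension bookkeeping needed to guarantee that $Q|_F$ is a facet of $Q$ rather than a lower-dimensional face, but this is handled automatically by the fact that $P'|_F$ is already a codimension-one face of $P'\subseteq Q$. Beyond this, the argument is a direct transfer of the strict inclusion $P|_F\subsetneq P'|_F$ into the conclusion about restricted subdivisions.
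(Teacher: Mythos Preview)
Your argument is correct and follows the same route as the paper: pick maximal-dimensional cells $P\in\mathscr{P}$ and $P'\in\mathscr{P}'$ with $P\subsetneq P'$, apply Corollary~\ref{applicationtheoremmatroidstomatroidpolytopes}, and read off the conclusion on the facet $E=Q|_F$. You actually supply more detail than the paper does---in particular the verification that $Q|_F$ is a genuine facet of $Q$ (via the dimension of $P'|_F$), which the paper leaves implicit.
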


\begin{proof}
Since $\mathscr{P}'$ is coarser than $\mathscr{P}$, we can find maximal dimensional polytopes $P\in\mathscr{P}$ and $P'\in\mathscr{P}'$ such that $P\subsetneq P'$. By Corollary~\ref{applicationtheoremmatroidstomatroidpolytopes} there exists a facet $F\subseteq\Delta(r,n)$ such that $P|_F$ and $P'|_F$ are facets of $P$ and $P'$ respectively, and $P|_F\subsetneq P'|_F$. If we set $E=Q|_F$, it follows that the subdivision $\mathscr{P}'|_E$ is coarser than $\mathscr{P}|_E$.
\end{proof}

The rest of this section is dedicated to the proof
of Theorem~\ref{mainresultinmatroidtheory}.

\begin{lemma}
\label{moreconstrainednesswithcontractionanddeletion}
Let $M$ and $M'$ be two matroids on $[n]$. Assume that $M$ is more constrained than $M'$ and let $I'$ be an independent set for $M'$ which is dependent for $M$. Then 
\begin{itemize}
\item[(a)] If $i_0\in I'$, then $M/i_0$ is more constrained than $M'/i_0$;
\item[(b)] If $j_0\in[n]\setminus I'$, then $M\setminus j_0$ is more constrained than $M'\setminus j_0$.
\end{itemize}
\end{lemma}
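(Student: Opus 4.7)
The plan is to verify, directly from the definitions, the two clauses of ``more constrained''---the containment of independent sets, and the existence of a strict witness---after contracting or deleting $i_0$ or $j_0$. The hypothesis on $I'$ is tailor-made to supply the witness in each case: in (a), removing $i_0$ from $I'$ will yield an independent set of $M'/i_0$ that remains dependent in $M/i_0$; in (b), $I'$ itself already lies in $[n]\setminus\{j_0\}$ and can be transported directly.

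For part (a), I would first take an independent set $I \subseteq [n]\setminus\{i_0\}$ of $M/i_0$; by definition of contraction, $I \cup \{i_0\}$ is independent in $M$, hence in $M'$ by the hypothesis that $M$ is more constrained than $M'$, hence $I$ is independent in $M'/i_0$. For the strict witness I would propose $J := I' \setminus \{i_0\}$: because $i_0 \in I'$, the union $J \cup \{i_0\} = I'$ is independent in $M'$, which gives $J$ independent in $M'/i_0$; and $I'$ is dependent in $M$ by assumption, which gives $J$ dependent in $M/i_0$.

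Part (b) is the deletion analogue and unwinds similarly: if $I \subseteq [n]\setminus\{j_0\}$ is independent in $M \setminus j_0$, then $I$ is independent in $M$, hence in $M'$, hence in $M'\setminus j_0$ since $j_0 \notin I$. The strict witness here is simply $I'$ itself: the condition $j_0 \notin I'$ places $I'$ inside the ground set $[n]\setminus\{j_0\}$, and $I'$ retains its independence in $M'\setminus j_0$ and its dependence in $M\setminus j_0$ directly from the corresponding properties in $M'$ and $M$.

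I do not anticipate any real obstacle; the lemma is essentially a bookkeeping statement. The one conceptual point worth flagging is that the two cases ``$i_0\in I'$'' and ``$j_0\notin I'$'' correspond exactly to the two situations in which the chosen witness survives transport to the new ground set while keeping both its independence in $M'$ and its dependence in $M$. This is the role the hypothesis on $I'$ plays, and it is why the statement splits naturally into the contraction and deletion halves rather than appearing as a single uniform clause.
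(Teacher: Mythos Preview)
Your proposal is correct and matches the paper's proof essentially step for step: the same verification of the containment of independent sets, and the same choice of witness ($I'\setminus\{i_0\}$ for contraction, $I'$ itself for deletion).
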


\begin{proof}
If $I$ is independent for $M/i_0$, then $I\cup\{i_0\}$ is independent for $M$ by definition. Hence, $I\cup\{i_0\}$ is independent for $M'$, implying that $I$ is independent for $M'/i_0$. Moreover, $I'\setminus\{i_0\}$ is independent for $M'/i_0$, but dependent for $M/i_0$.
If $I$ is independent for $M\setminus j_0$, then $I$ is independent for $M$.
Hence, $I$ is independent for $M'$, implying that $I$ is independent for $M'\setminus j_0$ because $j_0\notin I$. Moreover, $I'$ is independent for $M'\setminus j_0$, but dependent for $M\setminus j_0$.
\end{proof}

\begin{proposition}
\label{casewithtwodistinctindependentsets}
Let $M$ and $M'$ be two matroids on $[n]$ of the same rank. 
Assume that  at least two distinct independent sets $I_1',I_2'$ for $M'$ are dependent for $M$.
%Assume that $M$ is connected and more constrained than $M'$. Moreover,  
Then (CD) holds.
%there exists $i_0\in[n]$ such that either $M/i_0$ is connected and more constrained than $M'/i_0$, or $M\setminus i_0$ is connected and more constrained than $M'\setminus i_0$.
\end{proposition}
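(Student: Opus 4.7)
The plan is to combine Lemma~\ref{moreconstrainednesswithcontractionanddeletion}, which controls the ``more constrained'' clause, with Tutte's classical theorem that in a connected matroid on at least two elements, for every $i_0$ at least one of $M/i_0$ and $M\setminus i_0$ is again connected (see \cite{Oxl11}). Throughout I assume the hypothesis of (CD), namely that $M$ is connected and more constrained than $M'$.

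The key observation is that, since $I_1' \neq I_2'$, their symmetric difference $I_1' \triangle I_2'$ is nonempty. I pick $i_0 \in I_1' \triangle I_2'$ and, after relabeling, assume $i_0 \in I_1'$ and $i_0 \notin I_2'$. Lemma~\ref{moreconstrainednesswithcontractionanddeletion}(a) applied to $I' = I_1'$ then yields that $M/i_0$ is more constrained than $M'/i_0$, while Lemma~\ref{moreconstrainednesswithcontractionanddeletion}(b) applied to $I' = I_2'$ and $j_0 = i_0$ yields that $M \setminus i_0$ is more constrained than $M' \setminus i_0$. Tutte's theorem applied to the connected matroid $M$ at $i_0$ (admissible because $n \geq 2$, forced by the existence of two distinct nonempty independent sets $I_1', I_2'$ in $M'$) now guarantees that at least one of $M/i_0$, $M \setminus i_0$ is connected. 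That minor is also more constrained by the previous sentence, and so witnesses the conclusion of (CD) for $i_0$.

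I do not expect any real obstacle. The only external input beyond Lemma~\ref{moreconstrainednesswithcontractionanddeletion} is Tutte's theorem on matroid connectivity, and the reason the two ingredients glue cleanly is precisely that the symmetric difference $I_1' \triangle I_2'$ makes clauses (a) and (b) of the Lemma simultaneously available at any of its elements; whichever side of ``contract or delete'' Tutte hands us is thus automatically covered, and no case analysis or further combinatorial input is needed. (By contrast, the case where only one such independent set $I'$ exists—and hence one cannot freely switch between (a) and (b)—will require a genuinely different argument, presumably where the rank and dimension hypotheses of Theorem~\ref{mainresultinmatroidtheory} enter.)
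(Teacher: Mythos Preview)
Your proof is correct and essentially identical to the paper's: both pick an element $e$ in $I_1'\setminus I_2'$ (up to relabeling), invoke Lemma~\ref{moreconstrainednesswithcontractionanddeletion}(a) and (b) to secure ``more constrained'' for both $M/e$ and $M\setminus e$, and then appeal to Tutte's theorem \cite[Theorem~4.3.1]{Oxl11} to guarantee that at least one of these minors is connected.
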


\begin{proof}
Up to switching $I_1'$ with $I_2'$, let $e\in I_1'\setminus I_2'$. If $M/e$ is connected, then we are done by Lemma~\ref{moreconstrainednesswithcontractionanddeletion}~(a). Otherwise, by \cite[Theorem 4.3.1]{Oxl11} we have that $M\setminus e$ is connected, and by Lemma~\ref{moreconstrainednesswithcontractionanddeletion}~(b) we have that $M\setminus e$ is more constrained than $M'\setminus e$.
\end{proof}

\begin{proposition}
\label{casewithnosizetwococircuit}
Let $M$ and $M'$ be two matroids on $[n]$ of the same rank. 
Assume that $M$ has no size two cocircuit. 
%Assume that $M$ is connected and more constrained than $M'$. 
Then (CD) holds. %there exists $i_0\in[n]$ such that either $M/i_0$ is connected and more constrained than $M'/i_0$, or $M\setminus i_0$ is connected and more constrained than $M'\setminus i_0$.
\end{proposition}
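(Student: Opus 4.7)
I would prove Proposition~\ref{casewithnosizetwococircuit} by contradiction. Suppose (CD) fails and fix any independent set $I' \subseteq [n]$ of $M'$ that is dependent in $M$. Combining Lemma~\ref{moreconstrainednesswithcontractionanddeletion} with the failure of (CD), one obtains that $M/e$ is disconnected for every $e \in I'$ and that $M \setminus e$ is disconnected for every $e \in [n] \setminus I'$. By Tutte's Lemma \cite[Theorem~4.3.1]{Oxl11}, at least one of $M/e$, $M \setminus e$ is always connected, so $M \setminus e$ is connected for every $e \in I'$ and $M/e$ is connected for every $e \in [n] \setminus I'$. Writing $S = \{e : M \setminus e \text{ is disconnected}\}$ and $T = \{e : M/e \text{ is disconnected}\}$, one reaches the precise identification $S = [n] \setminus I'$ and $T = I'$, so in particular $|S| \geq n - r$.

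For each $e \in S$, the disconnectedness of $M \setminus e$ yields a separation $E \setminus \{e\} = A_e \sqcup B_e$ with $r_M(A_e) + r_M(B_e) = r$ and both parts nonempty. A short direct computation using the connectedness of $M$ shows $A_e$ and $B_e$ are flats of $M$, that $e \notin \overline{A_e} \cup \overline{B_e}$, and that $M|_{A_e \cup B_e}$ is the matroid direct sum $M|_{A_e} \oplus M|_{B_e}$. The ``no size~$2$ cocircuit'' hypothesis then forces neither restriction to contain a coloop: if $f \in A_e$ were a coloop of $M|_{A_e}$, the direct-sum structure would make $(A_e \setminus \{f\}) \cup B_e$ a hyperplane of $M$ whose complement $\{e, f\}$ would be a size~$2$ cocircuit. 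Hence $|A_e| \geq r_M(A_e) + 1$ and $|B_e| \geq r_M(B_e) + 1$, which forces $n - 1 \geq r + 2$, i.e.~$n \geq r + 3$. If $n \leq r + 2$, then already $S = \emptyset$, contradicting $|S| \geq n - r \geq 1$.

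The remaining case $n \geq r + 3$ splits according to the two rank regimes in the statement. For $r = 2$ and $n \geq 5$, a connected rank~$2$ matroid is a union of $k \geq 3$ parallel classes, and the hypothesis of no size~$2$ cocircuit forces every class to have size at most $n - 3$; an elementary count then gives $|S| \leq 1$, contradicting $|S| \geq n - 2 \geq 3$. For $r \geq 3$, I would argue that the rigid structure imposed by the coloop-free direct-sum decompositions $(A_e, B_e)$ attached to every $e$ in the large set $[n] \setminus I'$ is incompatible with the absence of size~$2$ cocircuits, by comparing the $2$-separations obtained from distinct elements of $S$ and showing that their interaction must eventually produce a coloop in some $M|_{A_e}$ or $M|_{B_e}$, hence a size~$2$ cocircuit. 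This last step is the main obstacle, and likely requires a careful case analysis of how the restrictions $M|_{A_e}$ and $M|_{B_e}$ overlap as $e$ varies over $S$, exploiting the hypothesis $r \geq 3$ to ensure enough room for the overlap argument to close.
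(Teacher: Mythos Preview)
Your setup is sound: the identification $S=[n]\setminus I'$ and $T=I'$ via Lemma~\ref{moreconstrainednesswithcontractionanddeletion} together with Tutte's lemma is correct, and your argument that a coloop $f$ in $M|_{A_e}$ or $M|_{B_e}$ would force $\{e,f\}$ to be a size-$2$ cocircuit of $M$ is valid. The deduction $n\geq r+3$ whenever $S\neq\emptyset$ and the rank-$2$ parallel-class count both close correctly.

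However, the case $r\geq 3$, $n\geq r+3$ is genuinely incomplete. You yourself flag the ``interaction of overlapping $2$-separations'' step as the main obstacle and offer only a strategy, not a proof. Since this is the generic case, the gap is essential; it is not at all clear that comparing the decompositions $(A_e,B_e)$ for varying $e\in S$ can be made to produce a size-$2$ cocircuit without substantial additional input.

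The paper's proof bypasses the $2$-separation analysis entirely by passing to the dual. Since $M$ is connected (hence has no coloops) and has no size-$2$ cocircuits, $M$ is cosimple, so $M^*$ is simple and connected. A known counting fact \cite[\S4.3, Exercise~10(e)]{Oxl11} then guarantees at least $r(M)+1$ elements $e$ for which $M^*/e$ is connected. After invoking Proposition~\ref{casewithtwodistinctindependentsets} to reduce to a unique $I'$ (necessarily a basis, so $|I'|=r$), pigeonhole yields such an $e$ outside $I'$; for that $e$, $M\setminus e=(M^*/e)^*$ is connected and, by Lemma~\ref{moreconstrainednesswithcontractionanddeletion}(b), more constrained than $M'\setminus e$. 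This duality-plus-counting argument handles all ranks uniformly in a few lines and is the missing idea in your approach.
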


\begin{proof}
By Proposition~\ref{casewithtwodistinctindependentsets} we can assume there is a unique independent set $I'$ for $M'$ that is dependent for $M$. Observe that $I'$ must be a basis of $M'$, hence $|I'|=r$, where $r$ is the common rank of $M$ and $M'$. From the connectedness of $M$ we have that $M$ has no coloops: $M$ connected implies that the dual matroid $M^*$ is also connected by \cite[Corollary 4.2.5]{Oxl11}, and a connected matroid has no loops, so $M^*$ has no loops, implying that $M$ has no coloops. Moreover, by hypothesis $M$ has no size two cocircuits, hence $M$ is a cosimple matroid (by definition, a matroid is cosimple provided it has no coloops and no size two cocircuits). This implies that the dual matroid $M^*$ is simple and connected. Hence there are at least $r+1$ elements $e$ such that $M^*/e$ is connected by \cite[\S4.3, Exercise 10 (e)]{Oxl11}. In particular, we can find one of such $e$ in the complement of $I'$. Since $M^*/e$ is connected, also $(M^*/e)^*=M\setminus e$ is connected (here we used \cite[\S3.1, Exercise 1 (b)]{Oxl11} and $(M^*)^*=M$), and by Lemma~\ref{moreconstrainednesswithcontractionanddeletion}~(b) we have that $M\setminus e$ is more constrained than $M'\setminus e$.
\end{proof}

\begin{lemma}
\label{moreconstrainednessanddualmatroids}
Let $M$ and $M'$ be two matroids on $[n]$ of the same rank. Assume that $M$ is more constrained than $M'$. Then the dual matroid $M^*$ is more constrained than $(M')^*$.
\end{lemma}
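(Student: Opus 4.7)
The plan is to reformulate the notion of ``more constrained'' in terms of bases rather than arbitrary independent sets, and then transport the resulting inclusion across duality via the standard complementation bijection between bases of $M$ and bases of $M^*$.

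First I would verify the following equivalence: for matroids $M$ and $M'$ on $[n]$ of the same rank $r$, $M$ is more constrained than $M'$ if and only if $\mathcal{B}(M)\subsetneq \mathcal{B}(M')$, where $\mathcal{B}$ denotes the collection of bases. For the forward direction, any basis $B$ of $M$ is independent for $M$, hence independent for $M'$ by hypothesis; since $|B|=r$ equals the rank of $M'$, the set $B$ is a basis of $M'$. Strictness follows by taking the given independent set $I'$ for $M'$ that is dependent for $M$ and completing it to a basis $B'$ of $M'$: this $B'$ cannot be a basis of $M$, since its subset $I'$ is dependent for $M$. The reverse direction is immediate, since every independent set of $M$ extends to a basis of $M$, which lies in $\mathcal{B}(M')$, so the original set is independent for $M'$; strictness of the base inclusion gives an independent set for $M'$ not belonging to $M$.

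Having reformulated the condition in this way, I would apply the standard fact $\mathcal{B}(M^*)=\{[n]\setminus B : B\in \mathcal{B}(M)\}$ (see for instance \cite{Oxl11}). The map $B\mapsto [n]\setminus B$ is a bijection between subsets of $[n]$ and is obviously inclusion-preserving and strictness-preserving, so $\mathcal{B}(M)\subsetneq \mathcal{B}(M')$ immediately yields $\mathcal{B}(M^*)\subsetneq \mathcal{B}((M')^*)$. Since $M^*$ and $(M')^*$ both have rank $n-r$, I then apply the same-rank equivalence from the first step in the reverse direction to conclude that $M^*$ is more constrained than $(M')^*$.

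The only real content is the equivalence between ``more constrained'' and strict containment of bases under the same-rank assumption; once that is in hand, duality reduces to the trivial observation that complementation in $[n]$ preserves strict inclusions. No obstacle of substance is expected.
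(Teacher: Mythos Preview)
Your proposal is correct and essentially follows the same approach as the paper. The paper argues directly from the characterization of independence in the dual (an independent set of $M^*$ is one disjoint from some basis of $M$), whereas you first isolate the equivalence ``more constrained $\Longleftrightarrow$ $\mathcal{B}(M)\subsetneq\mathcal{B}(M')$'' and then apply complementation; the underlying ingredients---that bases of $M$ are bases of $M'$ under the same-rank hypothesis, and that extending the bad $I'$ to a basis $B'$ of $M'$ yields a set whose complement witnesses strictness for the duals---are identical.
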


\begin{proof}
Let $I$ be an independent set for $M^*$. Then there exists a basis $B$ for $M$ disjoint from~$I$. But $B$ is also a basis for $M'$ because $M$ is more constrained than $M'$ and they have the same rank. So $I$ is also independent for $M'$. Let $I'$ be an independent set for $M'$ that is dependent for $M$. Let $B'$ be a basis for $M'$ containing $I'$. Then $B'$ is also dependent for $M$. It follows that $(B')^c$ is a basis for $(M')^*$ and $(B')^c$ is dependent for $M^*$, proving that $M^*$ is more constrained than $(M')^*$.
\end{proof}

\begin{proof}[Proof of Theorem~\ref{mainresultinmatroidtheory}]
Suppose first that $r\ge 3$.
By Lemma~\ref{moreconstrainednessanddualmatroids}, we have that the dual matroid $M^*$ is connected and more constrained than $(M')^*$. If there exists $i_0\in[n]$ such that $M^*/i_0$ is connected and more constrained than $(M')^*/i_0$, or $M^*\setminus i_0$ is connected and more constrained than $(M')^*\setminus i_0$, then $M\setminus i_0$ is connected and more constrained than $M'\setminus i_0$, or $M/i_0$ is connected and more constrained than $M'/i_0$ respectively. Therefore, by Proposition~\ref{casewithnosizetwococircuit} applied to $M^*$ and $(M')^*$, we can assume that $M^*$ has a size two cocircuit $C$, which is a size two circuit of $M$.

By Proposition~\ref{casewithtwodistinctindependentsets} we can assume that there exists a unique independent set for $M'$ that is dependent for $M$. Recall that $I'$ must be a basis of $M'$, hence $|I'|=r$ and in particular $I'\neq C$ (here is where we used the assumption that $r\geq3$). Therefore, $C$ must be dependent for $M'$, otherwise we would contradict the uniqueness of $I'$. In particular, $C\not\subseteq I'$, hence we can find an element $c\in C\setminus I'$. By Lemma~\ref{moreconstrainednesswithcontractionanddeletion}~(b), we only need to show that $M\setminus c$ is connected. But this follows from Lemma~\ref{deleteelementparallelandconnectedness} below.

Finally, suppose $r=2$ and $n\geq5$.
First of all observe that $M$ (and similarly $M'$) is realized by a point arrangement in $\mathbb{P}^1$ (over the complex numbers). Let $\mathcal{H}=\{p_1,\ldots,p_n\}$, and declare $p_i=p_j$ for $i\neq i$ provided $\{i,j\}$ is a circuit of $M$. Then the matroid associated to $\mathcal{H}$ is isomorphic to $M$. Similarly, define $\mathcal{H}'=\{p_1',\ldots,p_n'\}$.

For point configurations on $\mathbb{P}^1$, saying that $\mathcal{H}$ is more constrained than $\mathcal{H}'$ means that if some points coincide in $\mathcal{H}'$, then the corresponding points in $\mathcal{H}$ coincide as well, but not vice versa. In what follows we analyze an exhaustive list of possibilities where in each one we determine an appropriate index $i_0$.

If there exist $i,j\in[n]$ with $i\neq j$ such that $p_i'=p_j'$, then one can take $i_0=i$ and the claim follows. So let us assume all points in $\mathcal{H}'$ are distinct. Under this assumption, what we need to show is that we can always find $i_0\in[n]$ such that $\mathcal{H}\setminus i_0$ is automorphism-free and it contains points appearing multiple times.

If $p_i\in\mathcal{H}$ appears at least three times, then we can set $i_0=i$. Otherwise, assume each point in $\mathcal{H}$ appears at most two times. If $p_i\in\mathcal{H}$ is the only point appearing twice, then let $i_0\in[n]\setminus\{i\}$ (in this case the hypothesis $n>4$ guarantees that $\mathcal{H}\setminus i_0$ is automorphism-free). If there exist two distinct points $p_i,p_j\in\mathcal{H}$ appearing twice, then we can set $i_0=i$. Since we considered all the possibilities, this concludes the proof.
\end{proof}

\begin{lemma}
\label{deleteelementparallelandconnectedness}
Let $M$ be a connected matroid and let $C$ be a size two circuit of $M$. Then for all $c\in C$ we have that $M\setminus c$ is connected.
\end{lemma}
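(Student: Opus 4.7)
The plan is to use Whitney's characterization of connectedness (\cite[Corollary 4.3.2]{Oxl11}): for $|E(M)|\geq 2$, the matroid $M$ is connected if and only if every two distinct elements lie on a common circuit, or equivalently if and only if the relation ``$e,f$ belong to a common circuit'' has a single equivalence class under transitive closure. Write $C=\{c,c'\}$. The case $|E(M)|=2$ is trivial since $M\setminus c$ is then a one-element matroid, so I assume $|E(M)|\geq 3$.

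The first step is the following auxiliary claim: for every circuit $D$ of $M$ with $c\in D$ and $D\neq C$, one has $c'\notin D$, and for every $a\in D\setminus\{c\}$ there exists a circuit $D_a$ of $M\setminus c$ containing both $a$ and $c'$. The exclusion $c'\notin D$ is immediate from minimality, since otherwise $C\subsetneq D$ would contradict the fact that $C$ is a circuit. To produce $D_a$, I would apply strong circuit elimination to $D$ and $C$ at their common element $c$, taking $a\in D\setminus C$ as the preserved element; this yields a circuit $D_a\subseteq (D\cup C)\setminus\{c\}=(D\setminus\{c\})\cup\{c'\}$ containing $a$. The membership $c'\in D_a$ is then forced: otherwise $D_a\subseteq D\setminus\{c\}\subsetneq D$ would be a circuit strictly contained in $D$, violating minimality of $D$.

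With the auxiliary claim in hand, I would conclude connectedness of $M\setminus c$ as follows. Pick any two distinct $a,b\in E(M)\setminus\{c\}$. Connectedness of $M$ furnishes a circuit $D$ containing both. If $c\notin D$, then $D$ is already a circuit of $M\setminus c$ and directly links $a$ and $b$. Otherwise $c\in D$, and since $a,b\in D\setminus\{c\}$ are distinct we have $|D|\geq 3$, so $D\neq C$; the auxiliary claim then produces circuits $D_a, D_b$ of $M\setminus c$ containing $\{a,c'\}$ and $\{b,c'\}$ respectively, chaining $a$ to $b$ through $c'$ in the connectivity relation of $M\setminus c$. Hence all elements of $E(M)\setminus\{c\}$ lie in a single connected component of $M\setminus c$.

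The only genuinely delicate point, and the one place where minimality must be invoked twice, is justifying $c'\in D_a$ in the auxiliary claim; everything else is a formal combination of Whitney's theorem and strong circuit elimination, both available from \cite{Oxl11}.
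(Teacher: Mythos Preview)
Your proof is correct. Both arguments ultimately establish that every element of $M\setminus c$ lies in a common circuit with $c'$, which forces connectedness. The paper, however, reaches this more directly: rather than starting from a circuit through an arbitrary pair $a,b$ and then trading $c$ for $c'$ via strong circuit elimination, it fixes $c'$ as the hub from the outset. For any $x\neq c,c'$, connectedness of $M$ already provides a circuit $D$ containing $x$ and $c'$, and then $c\notin D$ is automatic (else $C\subsetneq D$), so $D$ is a circuit of $M\setminus c$ with no further work. Your auxiliary claim manufactures exactly these circuits $D_a$, but through an extra elimination step; the paper's choice of endpoints makes that step unnecessary.
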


\begin{proof}
Let $C=\{c_1,c_2\}$. Let us show $M\setminus c_1$ is connected (connectedness of $M\setminus c_2$ is proved analogously). We show that for each $x\neq c_1,c_2$ in the ground set, there exists a circuit of $M\setminus c_1$ containing both $x$ and $c_2$, so that $x$ and $c_2$ belong to the same connected component. Since $M$ is connected, there exists a circuit $D$ of $M$ containing $x$ and $c_2$. Observe that $c_1\notin D$ because $C\not\subseteq D$ by the minimality of $D$. Hence, $D$ is also a circuit of $M\setminus c_1$. %, showing what we needed.
\end{proof}

\begin{remark}
\label{counterexampleinrank2}
The assumption $n\ge5$ if $r=2$ can not be removed.
Let $M'=U_{2,4}$ be the uniform matroid of rank $2$ with ground set $[4]$. %(this is the matroid such that every size $2$ subset of $[4]$ is independent). 
Let $M=\widetilde{U}_{2,4}$ be the matroid with the same ground set and with bases
\[
\{1,2\},~\{1,3\},~\{1,4\},~\{2,3\},~\{2,4\}.
\]
It is easy to check that (CD) does not hold.
%Observe that $M$ is more constrained than $M'$. The following hold:
%\begin{itemize}
%\item $M$ and $M'$ are both connected;
%\item $M/1=M'/1$ and $M/2=M'/2$;
%\item $M/3$ and $M/4$ are disconnected.
%\end{itemize}
%Therefore, there is no $i_0\in[4]$ such that $M/i_0$ is connected and more constrained than $M'/i_0$. $M'$ is self-dual and $M$ is equal to $M^*$ up to permuting the indices in the ground set. Therefore, there is also no $i_0\in[4]$ such that $M\setminus i_0$ is connected and more constrained than $M'\setminus i_0$.
\end{remark}

%--------------------------------------------------------------------------------------------------------------------------------------------------

\section{Cross-ratios on Lafforgue's varieties}
\label{kapranovandlafforgue}

%--------------------------------------------------

We recall several definitions and facts from \cite{Kap93a,Laf03,KT06}.  
Let $G(r,n)$ denote the Grassmannian of $r$-dimensional linear subspaces in $\Bbbk^n$
embedded in 
$$\mathbb{P}:=\mathbb{P}(\bigwedge^r\Bbbk^n)$$ 
via the Pl\"ucker embedding.
Let $G^0(r,n)\subseteq G(r,n)$ be the subset of points with nonzero Pl\"ucker coordinates. 
The  torus $H=\mathbb{G}_m^n/\diag(\mathbb{G}_m)$  acts on $G(r,n)$ via the action of $\mathbb{G}_m^n$ on~$\Bbbk^n$ and 
$G^0(r,n)$  is an $H$-invariant open subset. Denote the quotient by $\mathbf{X}(r,n)=G^0(r,n)/H$ ($H$~acts freely).
By the Gelfand--MacPherson correspondence \cite{GM82}, ${\mathbf{X}}(r,n)$ is also the moduli space 
of $n$ hyperplanes in general linear position in $\mathbb{P}^{r-1}$.
Namely, a point of $G(r,n)$ can be represented as the row space of an $r\times n$ matrix, and columns of this matrix give $n$ hyperplanes in $\mathbb{P}^{r-1}$.
Kapranov's compactification $\overline{\mathbf{X}}(r,n)$ of $\mathbf{X}(r,n)$ is the Chow quotient 
\begin{equation*}
\overline{\mathbf{X}}(r,n)=G(r,n)/\!/ H.
\end{equation*}
By 
\cite{Kap93a}, $\overline{\mathbf{X}}(r,n)$ is also
isomorphic to the Chow quotient of $(\mathbb{P}^{r-1})^n$
by $\PGL_r$. For example, $\overline{\mathbf{X}}(2,n)$ is isomorphic to $\overline{\mathrm{M}}_{0,n}$, the moduli space 
of stable genus zero $n$-pointed curves. In particular,
$\overline{\mathbf{X}}(2,4)=\bP^1$
via the cross-ratio of four points.

For $x\in\mathbb{P}$, let $\Supp(x)$ be the convex hull of the vertices of $\Delta(r,n)$ corresponding to nonzero coordinates of $x$. Given a matroid polytope $P\subseteq\Delta(r,n)$, we define the algebraic tori
$$
\mathbb{P}^{P,0}=\{x\in\mathbb{P}\mid\Supp(x)=P\}\quad\hbox{\rm and}\quad 
\mathcal{A}^P_0:=\mathbb{P}^{P,0}/H.$$
For instance, $\mathbb{P}^{\Delta(r,n),0}\subseteq\mathbb{P}$ is the maximal torus. We refer to \cite{GKZ08}
for the theory of \emph{regular subdivisions} and \emph{secondary polytopes}.
%For every matroid polytope $P$, 
We define $\mathcal{F}_{\sec}(P)$ to be the normal fan to the secondary polytope of $P$. 
The union of cones in $\mathcal{F}_{\sec}(P)$ corresponding to regular subdivisions of $P$ by matroid polytopes
is a subfan denoted by $\mathcal{F}_{\Laf}(P)$.
The \emph{Lafforgue toric variety}~$\mathcal{A}^P$ 
is the toric variety of $\mathcal{A}^P_0$ associated to the fan $\mathcal{F}_{\Laf}(P)$.
We define $\mathbf{\Omega}^P$ to be the quotient 
$$\mathbf{\Omega}^P=\left(G(r,n)\cap\mathbb{P}^{P,0}\right)/H\subseteq \mathcal{A}^P_0.$$
The \emph{Lafforgue variety} $\overline{\mathbf{\Omega}}^P$ is a certain projective subscheme
of the toric variety $\mathcal{A}^P$,
the precise definition of which we will not need.
$\overline{\mathbf{\Omega}}^P$ contains $\mathbf{\Omega}^P$ as an open subset, but in general $\overline{\mathbf{\Omega}}^P$ is reducible \cite[Proposition~3.10]{KT06}.
The closure of $\mathbf{\Omega}^P$ in $\overline{\mathbf{\Omega}}^P$ is called the {\em main stratum}.
If $P=\Delta(r,n)$, then $G(r,n)\cap\mathbb{P}^{P,0}=G^0(r,n)$. Therefore, $\mathbf{\Omega}^P=\mathbf{X}(r,n)$.
By~\cite[2.9]{KT06}, $\overline{\mathbf{X}}(r,n)$ and the main stratum in 
$\overline{\mathbf{\Omega}}^P$ have the same normalization $\overline{\mathbf{X}}(r,n)^\nu$. 

\begin{remark}
\label{lafforgue'svarietyemptyinnonrealizablecase}
We note that $P\subseteq\Delta(r,n)$ is the matroid polytope of a non-realizable matroid
if and only if $\mathbf{\Omega}^P=\emptyset$  (see \cite[2.6]{KT06}).
The main stratum is then of course also empty but 
we imagine that $\overline{\mathbf{\Omega}}^P$ can still be non-empty, although we do not know an example.
\end{remark}

\begin{definition}
Given a matroid polytope $P$ and a face $F\subseteq P$, we have an induced morphism of Lafforgue's toric varieties $\mathcal{A}^P\rightarrow\mathcal{A}^F$ by restricting piece-wise affine functions. It is called the \emph{face map}
and restricts to $\overline{\mathbf{\Omega}}^Q$ giving a morphism 
$$\overline{\mathbf{\Omega}}^Q\rightarrow\overline{\mathbf{\Omega}}^F,$$
which we also call a face map.
Note that $\Delta(r,n)$ has $2n$ facets given by hyperplanes $x_i=0$ and $x_i=1$. 
These facets are equivalent to $\Delta(r,n-1)$ and $\Delta(r-1,n-1)$, respectively.
\end{definition}

\begin{lemma}
\label{lemmaforlafforgue'storicvarieties}
Let $Q\subseteq\Delta(r,n)$ be a matroid polytope of rank $r$ and dimension $n-1$ such that either $r\geq3$ or $r=2$ and $n\geq5$. Let $\mathcal{F}$ be the collection of facets of $Q$ that are contained in the boundary of $\Delta(r,n)$. 
Then the fibers of the product of face maps
\[
f\colon\mathcal{A}^Q\rightarrow\prod_{F\in\mathcal{F}}\mathcal{A}^F
\]
do not contain any complete subvarieties.
\end{lemma}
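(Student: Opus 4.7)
Plan. The morphism $f$ is toric, so I would reduce the question to the combinatorics of the fan $\mathcal{F}_{\Laf}(Q)$. Cones of $\mathcal{F}_{\Laf}(Q)$ are in bijection with regular matroid subdivisions $\mathcal{P}$ of $Q$, with the zero cone corresponding to the trivial subdivision $\{Q\}$ and refinement of subdivisions corresponding to containment of cones; similarly for each $\mathcal{F}_{\Laf}(F)$. The linear map $\varphi$ underlying $f$ sends the cone $\sigma_\mathcal{P}$ to $\prod_{F}\sigma_{\mathcal{P}|_F}$; in particular, $\sigma_\mathcal{P}\subseteq\ker(\varphi)$ precisely when $\mathcal{P}|_F$ is the trivial subdivision of $F$ for every $F\in\mathcal{F}$.

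The combinatorial heart of the argument is the claim that the only cone of $\mathcal{F}_{\Laf}(Q)$ contained in $\ker(\varphi)$ is the zero cone. Suppose $\mathcal{P}$ were a nontrivial matroid subdivision of $Q$ with $\mathcal{P}|_F=\{F\}$ for every $F\in\mathcal{F}$. Then $\{Q\}$ is strictly coarser than $\mathcal{P}$, so by Corollary~\ref{extensiontomatroidpolytopesubdivisions} applied with $\mathscr{P}=\mathcal{P}$ and $\mathscr{P}'=\{Q\}$, there would exist a facet $E=Q|_F$ of $Q$ (with $F$ a facet of $\Delta(r,n)$) on which $\{Q\}|_E=\{E\}$ is strictly coarser than $\mathcal{P}|_E$, contradicting the triviality of $\mathcal{P}|_F$.

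For the toric-geometric conclusion, since no positive-dimensional cone of $\mathcal{F}_{\Laf}(Q)$ lies in $\ker(\varphi)$, the kernel subtorus of the map of tori is closed in $\mathcal{A}^Q$, and the fiber of $f$ over the identity of the target torus is just this affine torus (hence contains no complete subvariety). An arbitrary complete subvariety $Y$ in a fiber $f^{-1}(y)$ meets a unique minimal torus orbit $O(\sigma_\mathcal{P})$ in a dense open subset, so $Y\subseteq V(\sigma_\mathcal{P})$, and the restriction of $f$ to $V(\sigma_\mathcal{P})$ is again a toric morphism of the same type: the star fan $\mathrm{Star}(\sigma_\mathcal{P})$ parametrizes refinements of $\mathcal{P}$ and decomposes, after identifying cells of $\mathcal{P}$, as a product of Lafforgue fans of the cells, with the induced face maps still of restriction-to-boundary-facets type. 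Applying the combinatorial argument of the previous paragraph cell-by-cell again rules out any positive-dimensional kernel cone in $\mathrm{Star}(\sigma_\mathcal{P})$, which forces $Y$ to be a point.

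The main technical difficulty will be in the last step: precisely identifying the star fan $\mathrm{Star}(\sigma_\mathcal{P})$ with a product of Lafforgue fans for the cells of $\mathcal{P}$, verifying that the boundary facets relevant for each cell still satisfy the hypotheses $r\geq 3$ or $r=2$ and $n\geq 5$, and checking that the induced linear map decomposes compatibly with that product so that Corollary~\ref{extensiontomatroidpolytopesubdivisions} can be invoked on each cell. Once these identifications are in place, the entire argument reduces to iterated applications of that single corollary.
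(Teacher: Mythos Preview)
Your approach is correct in outline, but you are working harder than necessary, and the ``main technical difficulty'' you flag is entirely avoidable. The point is that Corollary~\ref{extensiontomatroidpolytopesubdivisions} already compares \emph{any} pair of matroid subdivisions $\mathscr{P}'\prec\mathscr{P}$ of $Q$, not just the trivial one against a nontrivial one. So once you pass to the orbit closure $V(\sigma_{\mathcal{P}})$, the statement ``no positive-dimensional cone of $\mathrm{Star}(\sigma_{\mathcal{P}})$ lies in $\ker(\varphi)$'' is immediate from a single application of Corollary~\ref{extensiontomatroidpolytopesubdivisions} with $\mathscr{P}'=\mathcal{P}$ and $\mathscr{P}''$ any strict refinement; there is no need to decompose the star fan as a product of Lafforgue fans of the cells and re-run the argument cell-by-cell. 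Once that is established, the kernel subtorus $K$ is closed in $V(\sigma_{\mathcal P})$, your complete $Y$ lies in a coset of $K$, and you conclude as you say.

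The paper's proof is shorter still and bypasses the star-fan bookkeeping altogether. Given a complete curve $C\subseteq\mathcal{A}^Q$, let $S'=\overline{O(\sigma_{\mathscr{P}'})}$ be the minimal closed toric stratum containing $C$. Since $O(\sigma_{\mathscr{P}'})$ is affine and $C$ is projective and meets it, $C$ must also meet a strictly smaller stratum $S=\overline{O(\sigma_{\mathscr{P}})}$ with $\mathscr{P}$ strictly refining $\mathscr{P}'$. Choose $p\in C\cap S$ and $p'\in C\cap(S'\setminus S)$. Now apply Corollary~\ref{extensiontomatroidpolytopesubdivisions} once to the pair $\mathscr{P}'\prec\mathscr{P}$ to produce a facet $E\in\mathcal{F}$ with $\mathscr{P}'|_E\prec\mathscr{P}|_E$; then the face map $f_E\colon\mathcal{A}^Q\to\mathcal{A}^E$ sends $p$ and $p'$ to distinct toric strata of $\mathcal{A}^E$, so $f(p)\neq f(p')$. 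This is the same combinatorial input as yours, but deployed once rather than iteratively.
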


\begin{proof}
Let $C\subseteq\mathcal{A}^Q$ be a complete curve. Let $S'\subseteq\mathcal{A}^Q$ be the minimal closed toric stratum containing $C$. More explicitly, we can write $S'=\overline{T'}$ where $\overline{T'}\subseteq\mathcal{A}^Q$ is the closure of a torus $T'$ of the appropriate dimension. Since the torus $T'$ is affine, $C$ is projective, and $C\cap T'\neq\emptyset$ by the minimality of $S'$, we cannot have that $C\subseteq T'$, hence $C$ intersects the closure of another torus orbit $\overline{T}\subsetneq\overline{T'}$. Then $S=\overline{T}$ is a toric stratum properly contained in $S'$ which intersects $C$ nontrivially. Let $p\in S\cap C$ and $p'\in(S'\setminus S)\cap C$.

The stratum $S'$ (resp. $S$) corresponds to a matroid polytope subdivision $\mathscr{P}'$ (resp. $\mathscr{P}$) of~$Q$. Since $S\subsetneq S'$, the subdivision $\mathscr{P}'$ is coarser than $\mathscr{P}$. Then we can apply Corollary~\ref{extensiontomatroidpolytopesubdivisions} and find a facet $E\in\mathcal{F}$ such that $\mathscr{P}'|_E$ is coarser than $\mathscr{P}|_E$. Then $\mathscr{P}'|_E$ and $\mathscr{P}|_E$ correspond to two toric strata $\Sigma\subsetneq\Sigma'$ of $\mathcal{A}^E$. Let $f_E$ be the face map $\mathcal{A}^Q\rightarrow\mathcal{A}^E$. Then $f_E(p')\in\Sigma'\setminus\Sigma$ and $f_E(p)\in\Sigma$, implying that $f(p)\neq f(p')$. Therefore $C$ is not contracted by the morphism~$f$. It follows that $f$ 
does not contract any complete curve.
\end{proof}

%--------------------------------------------------

\begin{definition}
Let $Q\subseteq\Delta(r,n)$ be a matroid polytope of rank $r$. We say that $Q$ is \emph{framed} provided it is realized by a hyperplane arrangement $\mathcal{H}$ in $\mathbb{P}^{r-1}$ containing $r+1$ hyperplanes in general linear position.
Note that such a matroid is automatically connected.
\end{definition}

\begin{proof}[Proof of Theorem~\ref{mainresultforapplications}]
We apply Lemma~\ref{lemmaforlafforgue'storicvarieties} to $\mathcal{A}^Q$ recursively until we map to a product
\[
f'\colon\mathcal{A}^Q\rightarrow\prod_{F\in\mathcal{F}'}\mathcal{A}^F,
\]
where $\mathcal{F}'$ consists of the faces of $Q$ of rank $2$ that are contained in the boundary of $\Delta(r,n)$. We know that the above morphism does not contract any complete curve.

Let $M$ be the matroid associated to $F\in\mathcal{F}'$. There are the following two possibilities:
\begin{enumerate}

\item $M$ is isomorphic to $U_{2,4}$, hence $F$ is equivalent to $\Delta(2,4)$;

\item $M$ is isomorphic to $\widetilde{U}_{2,4}$ (see Remark~\ref{counterexampleinrank2}).

\end{enumerate}
Therefore, $\mathcal{F}$ is the subset of $\mathcal{F}'$ of facets that are not of type (2). Consider the composition
\[
f\colon\mathcal{A}^Q\xrightarrow{f'}\prod_{F\in\mathcal{F}'}\mathcal{A}^F\xrightarrow{\pi}\prod_{F\in\mathcal{F}}\mathcal{A}^F.
\]
Then $f$ does not contract any complete curve either. Assume by contradiction $C\subseteq\mathcal{A}^Q$ is a complete curve contracted by $f$. We know $f'$ does not contract $C$, so $\pi$ contracts $f'(C)$. Since $\pi$ contracts all the factors $\mathcal{A}^F$ with $F$ of type (2), then $C$ has to map onto some $\mathcal{A}^F$ with $F$ of type (2). But $\mathcal{A}^F\cong\mathbb{G}_m$, which is a contradiction because $C$ is complete and $\mathcal{A}^F$ is affine. Note that $\mathcal{A}^F\cong\mathbb{G}_m$ because $F$ does not have any proper subdivision into matroid subpolytopes. Restricting $f$ to the projective variety $\overline{\mathbf{\Omega}}^Q$, we obtain the claimed finite morphism.

Finally, suppose that $Q$ is framed.
We claim that the product of face maps \eqref{sgsGSRH}
is a locally closed embedding and thus its restriction 
to the main stratum of $\overline{\mathbf{\Omega}}^Q$
is a finite birational morphism.
It suffices to  show $\mathbf{\Omega}^Q\rightarrow\prod_{F\in\mathcal{F}}\overline{\mathbf{\Omega}}^F$ is injective on $R$-points, where $(R,\mathfrak{m})$ is any local $\Bbbk$-algebra with $R/\mathfrak{m}\cong\Bbbk$. Let $p\in\mathbf{\Omega}^Q(R)$ and let $(\mathbb{P}_R^{r-1},\sum_{i=1}^nH_i)$ be the hyperplane arrangement parametrized by it
(the projective bundle is trivial because the frame gives a section of the associated $\PGL_r$-torsor).
The images of $p$ under the face maps (in other words, the cross-ratios of the hyperplane arrangement) are described as follows. Let $a_i$ be the vector of coefficients of the hyperplane $H_i$. Fix distinct indices $i_1,\ldots,i_{r-2},i,j,k,\ell\in[n]$ (these determine a face $F\in\mathcal{F}$). Then the corresponding cross-ratio is given by the point
\[
[|a_{i_1},\ldots,a_{i_{r-2}},a_i,a_j||a_{i_1},\ldots,a_{i_{r-2}},a_k,a_\ell|:|a_{i_1},\ldots,a_{i_{r-2}},a_i,a_k||a_{i_1},\ldots,a_{i_{r-2}},a_j,a_\ell|]\in\mathbb{P}^1.
\]
%\[
%\frac{|a_{i_1},\ldots,a_{i_{r-2}},a_i,a_j|\cdot|a_{i_1},\ldots,a_{i_{r-2}},a_k,a_\ell|}{|a_{i_1},\ldots,a_{i_{r-2}},a_i,a_k|\cdot|a_{i_1},\ldots,a_{i_{r-2}},a_j,a_\ell|}\in\mathbb{P}^1.
%\]
We~show that the hyperplane arrangement $(\mathbb{P}_R^{r-1},\sum_{i=1}^nH_i)$ can be uniquely reconstructed by the data of these cross-ratios.
Since $R$ is local, the determinant of the matrix of coefficients of any $r$ hyperplanes among $H_1,\ldots,H_{r+1}$ is invertible.  In particular, up to $\PGL_r(R)$-action, we can assume $H_1,\ldots,H_{r+1}$ are the standard hyperplanes. In other words,
we can assume $(a_1,\ldots,a_{r+1})=(\mathrm{Id}_r,\mathbf{1})$, where $\mathrm{Id}_r$ is the $r\times r$ identity matrix and $\mathbf{1}=(1,\ldots,1)$. We show that any $b\in\{a_{r+2},\ldots,a_n\}$ is uniquely determined by the cross-ratios. Consider the cross-ratio corresponding to the choice $(i_1,\ldots,i_{r-2})=(1,\ldots,r-2)$, $i=r+1,a_j=b,k=r-1,\ell=r$, which is given by $[b_{r-1}-b_r:b_{r-1}]$. This fixes $\frac{b_r}{b_{r-1}}\in\mathbb{P}^1$. In general, by letting $(i_1,\ldots,i_{r-2})=(1,\ldots,\widehat{s},\ldots,\widehat{t},\ldots,r)$, $i=r+1,a_j=b,k=s,\ell=s+1$, we can fix the ratio $\frac{b_t}{b_s}$, hence also $\frac{b_s}{b_t}$.
Therefore, to determine $b$, let $b_s$ be one of the nonzero coordinates of $b$ (there exists at least one because $b$ represents the coefficients of a hyperplane). Then all the ratios $\frac{b_1}{b_s},\ldots,\frac{b_r}{b_s}$ are fixed, hence the coefficients $b_1,\ldots,b_r$ are uniquely determined up to a common scalar, proving what we wanted.
\end{proof}

%--------------------------------------------------

\begin{corollary}\label{productofforgetfulmapsisfinite}
Let $r\geq2$ and $n\geq r+2$. 
The product of forgetful morphisms
\[
\overline{\mathbf{X}}(r,n)\rightarrow\prod_{I\in\binom{[n]}{r+2}}
\overline{\mathbf{X}}(r,I)
\]
and the rational map
$
\overline{\mathbf{X}}(r,n)\rightarrow\prod\limits_{v_1,\ldots,v_{r+2}\in\{1,\ldots,n\}}\bP^1
$
are finite birational morphisms onto their image. For $r\geq3$, each map $\overline{\mathbf{X}}(r,n)\rightarrow\bP^1$ 
is the cross-ratio of $4$ points obtained by intersecting hyperplanes  $H_{v_1},H_{v_2},H_{v_3},H_{v_4}$
with the line $H_{v_5}\cap\ldots\cap H_{v_{r+2}}$.
\end{corollary}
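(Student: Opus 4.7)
The plan is to deduce this corollary from Theorem~\ref{mainresultforapplications} applied to the full hypersimplex $Q=\Delta(r,n)$, viewed as the matroid polytope of the uniform matroid $U_{r,n}$. It has rank $r$ and dimension $n-1$, so the hypothesis on $(r,n)$ matches that of the theorem; the remaining edge case $r=2$, $n=4$ is trivial since $\overline{\mathbf{X}}(2,4)=\mathbb{P}^1$ and the target of the rational map is a single factor of $\mathbb{P}^1$. Note that $\Delta(r,n)$ is framed: a generic realization of $U_{r,n}$ in $\mathbb{P}^{r-1}$ contains $r+1$ hyperplanes in general linear position, using $n\geq r+2$.

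Next, I would enumerate the boundary faces $F\subseteq\Delta(r,n)$ equivalent to $\Delta(2,4)$: each such face is cut out by requiring $x_a=1$ for $a$ in some $A\subseteq[n]$ with $|A|=r-2$ and $x_b=0$ for $b$ in a disjoint $B\subseteq[n]$ with $|B|=n-r-2$, so that the four remaining indices $V=[n]\setminus(A\cup B)$ carry the $\Delta(2,4)$ factor. Under the dictionary between matroid contraction/deletion and intersection/forgetting of hyperplanes, the face map $\overline{\mathbf{\Omega}}^{\Delta(r,n)}\to\overline{\mathbf{\Omega}}^F=\mathbb{P}^1$ sends $(H_1,\ldots,H_n)$ on the open stratum to the cross-ratio of the four points $H_{v_i}\cap L$ on the line $L=\bigcap_{a\in A}H_a$, where $V=\{v_1,\ldots,v_4\}$. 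This is exactly the cross-ratio map in the statement; enumerating the target over ordered tuples $(v_1,\ldots,v_{r+2})$ merely duplicates each face factor with all orderings of $A$ and $V$, which does not affect any of the properties we are after.

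Applying Theorem~\ref{mainresultforapplications} produces a finite morphism $\overline{\mathbf{\Omega}}^{\Delta(r,n)}\to\prod_F\mathbb{P}^1$ whose restriction to the main stratum is birational onto its image. Since by \cite[2.9]{KT06} the normalization of this main stratum is $\overline{\mathbf{X}}(r,n)^\nu$, we obtain a finite birational morphism $\overline{\mathbf{X}}(r,n)^\nu\to\prod_F\mathbb{P}^1$. Independently, each cross-ratio is already a morphism on $\overline{\mathbf{X}}(r,n)$ itself: it factors as a Kapranov forgetful morphism $\overline{\mathbf{X}}(r,n)\to\overline{\mathbf{X}}(r,r+2)$ composed with a cross-ratio $\overline{\mathbf{X}}(r,r+2)\cong\overline{\mathrm{M}}_{0,r+2}\to\mathbb{P}^1$ (using Gale duality for $r\geq 3$). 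The finite map on the normalization therefore descends through the finite surjection $\overline{\mathbf{X}}(r,n)^\nu\to\overline{\mathbf{X}}(r,n)$, and standard fiber-counting together with properness of $\overline{\mathbf{X}}(r,n)$ forces the resulting map $\overline{\mathbf{X}}(r,n)\to\prod\mathbb{P}^1$ to be finite and birational onto its image.

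For the product of forgetful morphisms, the cross-ratio product factors as
\[
\overline{\mathbf{X}}(r,n)\to\prod_{I\in\binom{[n]}{r+2}}\overline{\mathbf{X}}(r,I)\to\prod_{(v_1,\ldots,v_{r+2})}\mathbb{P}^1,
\]
so the same fiber-counting argument forces the first arrow to be finite and birational onto its image as well. The main obstacle I anticipate is the combinatorial verification that the face maps $\overline{\mathbf{\Omega}}^{\Delta(r,n)}\to\overline{\mathbf{\Omega}}^F$ literally realize cross-ratios under the Gelfand--MacPherson correspondence, together with the descent from $\overline{\mathbf{X}}(r,n)^\nu$ to $\overline{\mathbf{X}}(r,n)$; once these are in hand, finiteness and birationality follow formally from Theorem~\ref{mainresultforapplications}.
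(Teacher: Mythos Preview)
Your proposal is correct and follows essentially the same route as the paper: apply Theorem~\ref{mainresultforapplications} to $Q=\Delta(r,n)$, identify the $\Delta(2,4)$-faces as indexed by a choice of $r-2$ contracted and $n-r-2$ deleted indices, interpret the face maps as cross-ratios, and transfer finiteness and birationality through the identification $\overline{\mathbf{X}}(r,n)^\nu\cong(\overline{\mathbf{\Omega}}^{\Delta(r,n)})^\nu$. The only notable difference is in how the two targets are compared: the paper invokes \cite[Theorem~9.18]{HKT09} to get a closed embedding $\overline{\mathrm{M}}_{0,I}\hookrightarrow\prod_{J\in\binom{I}{r-2}}\overline{\mathrm{M}}_{0,I\setminus J}$, which makes the passage from $\prod_F\mathbb{P}^1$ to $\prod_I\overline{\mathbf{X}}(r,I)$ immediate, whereas you factor the cross-ratio product through $\prod_I\overline{\mathbf{X}}(r,I)$ and deduce finiteness of the first arrow from finiteness of the composite via properness and quasi-finiteness. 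Both work; the paper's version is slightly cleaner since it avoids the descent argument, while yours is more self-contained and also handles the edge case $r=2$, $n=4$ explicitly, which the paper leaves implicit.
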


\begin{proof}
Consider the product $\prod_{F\in\mathcal{F}}\overline{\mathbf{\Omega}}^F$ in Theorem~\ref{mainresultforapplications} for $Q=\Delta(r,n)$. Each $\overline{\mathbf{\Omega}}^F\cong \overline{\mathrm{M}}_{0,4}$
and the face $F$
%. Moreover, the faces of $\Delta(r,n)$ equivalent to $\Delta(2,4)$ 
can be described as follows. Choose distinct indices $i,j,k,\ell,i_1,\ldots,i_{r-2}\in[n]$. Then $F$ is the convex hull of the set
$
\{e_a+e_b+e_{i_1}+\ldots+e_{i_{r-2}}\mid a,b\in\{i,j,k,\ell\}\}.
$
%In particular, there are $\binom{n}{4}\binom{n-4}{r-2}$ faces in $\mathcal{F}$.
This gives the following identification:
\[
\prod_{I\in\binom{[n]}{r+2}}\prod_{J\in\binom{I}{r-2}}\overline{\mathrm{M}}_{0,I\setminus J}\cong\prod_{F\in\mathcal{F}}\overline{\mathbf{\Omega}}^F.
\]
%where, at level of indexing sets, $I\in\binom{[n]}{r+2}$ and $J\in\binom{I}{r-2}$ corresponds to the face $F\in\mathcal{F}$ given by the convex hull of the six points
%\[
%\left\{e_a+e_b+\sum_{j\in J}e_j\;\middle|\;a,b\in I\setminus J\right\}.
%\]

For each $I\in\binom{[n]}{r+2}$, we have isomorphisms $\overline{\mathbf{X}}(r,I)\cong\overline{\mathbf{X}}(2,I)\cong\overline{\mathrm{M}}_{0,I}$. We claim that $f\colon\overline{\mathbf{X}}(r,n)\rightarrow\prod_{I\in\binom{[n]}{r+2}}\overline{\mathrm{M}}_{0,I}$ is finite and birational onto its image.
Given $I\in\binom{[n]}{r+2}$, the product of forgetful morphisms $\overline{\mathrm{M}}_{0,I}\rightarrow\prod_{J\in\binom{I}{r-2}}\overline{\mathrm{M}}_{0,I\setminus J}$ is a closed embedding \cite[Theorem 9.18]{HKT09}. 
%Let $Q=\Delta(r,n)$ and let $\mathcal{F}$ be the collection of faces of $Q$ that are equivalent to $\Delta(2,4)$. The isomorphism $\overline{\mathrm{M}}_{0,4}\cong\overline{\mathbf{\Omega}}^{\Delta(2,4)}$ induces
%This gives us a morphism
%\[
%\prod_{I\in\binom{[n]}{r+2}}\overline{\mathrm{M}}_{0,I}\rightarrow\prod_{F\in\mathcal{F}}\overline{\mathbf{\Omega}}^F.
%\]
Now consider the following commutative diagram:
\begin{center}
\begin{tikzpicture}[>=angle 90]
\matrix(a)[matrix of math nodes,
row sep=2em, column sep=2.5em,
text height=2.0ex, text depth=1.50ex]
{\overline{\mathbf{X}}(r,n)^\nu&\overline{\mathbf{X}}(r,n)&\prod_{I\in\binom{[n]}{r+2}}\overline{\mathrm{M}}_{0,I}\\
(\overline{\mathbf{\Omega}}^Q)^\nu&\overline{\mathbf{\Omega}}^Q&\prod_{F\in\mathcal{F}}\overline{\mathbf{\Omega}}^F.\\};
\path[->] (a-1-1) edge node[]{}(a-1-2);
\path[->] (a-1-1) edge node[left]{$\cong$}(a-2-1);
\path[->] (a-2-1) edge node[]{}(a-2-2);
\path[->] (a-1-2) edge node[above]{$f$}(a-1-3);
\path[->] (a-1-3) edge node[]{}(a-2-3);
\path[->] (a-2-2) edge node[above]{$g$}(a-2-3);
\end{tikzpicture}
\end{center}
Since $g$ is finite and birational onto its image by Theorem~\ref{mainresultforapplications}, the morphism $f$ also is.
\end{proof}

%--------------------------------------------------------------------------------------------------------------------------------------------------
%--------------------------------------------------------------------------------------------------------------------------------------------------
%--------------------------------------------------------------------------------------------------------------------------------------------------

\section{Gerritzen and Piwek's cross-ratio variety \texorpdfstring{$\overline{\mathbf{B}}_n$}{Lg}}
\label{explicitclosedembedding}

We start by recalling the definition of the Gerritzen--Piwek's compactification $\overline{\mathbf{B}}_n$ \cite{GP91}. 
Let $\mathbf{U}_n\subseteq(\mathbb{P}^2)^n$ be an open subset 
parametrizing configurations $(x_1,\ldots,x_n)$ of $n$ distinct points
in $\mathbb{P}^2$ in general linear position and let 
$\mathbf{B}_n$ be the moduli space, the quotient of $\mathbf{U}_n$
by the free action of $\PGL_3$. We have the quotient morphism
$\mathbf{U}_n\to\mathbf{B}_n$.
%--------------------------------------------------
%Let $n\geq4$ be an integer and let $(x_1,\ldots,x_n)\in(\mathbb{P}^2)^n$. 
%We can represent this point by a $3\times n$ matrix with columns giving homogeneous coordinates of points.
%For any three distinct indices $i,j,k\in[n]=\{1,\ldots,n\}$, $i<j<k$, let $D_{ijk}$ be the dense open subset of $(\mathbb{P}^2)^n$ where the determinant $|x_i,x_j,x_k|$ is nonzero. Define
%$\mathbf{U}_n=\bigcap_{1\leq i,j,k\leq n}D_{ijk}$,
%which is parametrizing $n$-tuples of distinct points where no three points lie on a line. 
%The group $\PGL_3$ acts freely on $\mathbf{U}_n$, 
%let 
%$$\mathbf{B}_n=\mathbf{U}_n/\PGL_3$$ 
%be the quotient.
%It can be realized explicitly as a subset of $\mathbf{U}_n$ by 
If we normalize the points 
$x_1,x_2,x_3,x_4$
to be
$e_1=[1:0:0]$, $e_2=[0:1:0]$, $e_3=[0:0:1]$, $e_4=[1:1:1]$
applying a unique projective transformation, we obtain a section 
$$s\colon\mathbf{B}_n\to\mathbf{U}_n.$$
%which however is  asymmetric.
%To remedy this, let 
To ``symmetrize'' this map,
let $\mathcal{Q}_n$ be the set of ordered $5$-tuples of points and let $v\in\mathcal{Q}_n$. Define the map $q_v\colon\mathbf{B}_n\rightarrow\mathbb{P}^2$ as the 
image of the point $x_{v_5}$ under a unique projective transformation 
$\psi_{x_{v_1},\ldots,x_{v_4}}\in\PGL_3$
that sends $x_{v_1},x_{v_2},x_{v_3},x_{v_4}$ to
$e_1, e_2, e_3, e_4$, respectively.
The product morphism
\begin{equation*}
\prod_{v\in\mathcal{Q}_n}q_v\colon\mathbf{B}_n\rightarrow\prod_{\mathcal{Q}_n}\mathbb{P}^2
\end{equation*}
is an open immersion onto a closed subscheme of $\prod_{\mathcal{Q}_n}\mathbb{P}^2$. The \emph{Gerritzen--Piwek compactification} $\overline{\mathbf{B}}_n$ is the closure of $\mathbf{B}_n$ in $\prod_{\mathcal{Q}_n}\mathbb{P}^2$ under the above immersion. (Note that in \cite{GP91}, the compact moduli space is denoted by $\mathbf{B}_n$ and its interior is denoted by $\mathbf{B}_n^*$.) A slightly more economical embedding can be obtained as follows.

\begin{lemma}
\label{equivalentdefinitionofBnbarinsmallerproductofP2s}
Let $\mathcal{Q}_n'$ denote the set of ordered $5$-tuples in $[n]$ where the first four elements are in increasing order. 
$\overline{\mathbf{B}}_n$ is isomorphic to the closure of the image of ${\mathbf{B}}_n$ in $\prod_{\mathcal{Q}_n'}\mathbb{P}^2$ under the product of forgetful maps. 
\end{lemma}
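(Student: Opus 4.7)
The plan is to observe that the data in the embedding $\prod_{v\in\mathcal{Q}_n}q_v$ is highly redundant: the $24$ maps $q_v$ corresponding to different orderings of the first four entries of a given $5$-tuple differ from one another only by fixed projective automorphisms of $\mathbb{P}^2$. Consequently retaining only the representative with the first four entries in increasing order (i.e., $v'\in\mathcal{Q}_n'$) is enough to reconstruct all the remaining components.

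First I would verify the concrete redundancy: for $v=(v_1,\ldots,v_5)\in\mathcal{Q}_n$ and $\sigma\in S_4$ acting by $\sigma\cdot v=(v_{\sigma(1)},v_{\sigma(2)},v_{\sigma(3)},v_{\sigma(4)},v_5)$, the identity
\[
q_{\sigma\cdot v}=\tau_{\sigma^{-1}}\circ q_v
\]
holds, where $\tau_\sigma\in\PGL_3$ denotes the unique projective transformation sending $e_i$ to $e_{\sigma(i)}$ for $i=1,2,3,4$. This is an immediate uniqueness argument: both $\psi_{x_{v_{\sigma(1)}},\ldots,x_{v_{\sigma(4)}}}$ and $\tau_{\sigma^{-1}}\circ\psi_{x_{v_1},\ldots,x_{v_4}}$ lie in $\PGL_3$ and send $x_{v_{\sigma(i)}}$ to $e_i$ for each $i=1,\ldots,4$, so they coincide, and applying both sides to $x_{v_5}$ yields the displayed equality.

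Next I would package these identities into a single morphism
\[
\Phi\colon\prod_{v'\in\mathcal{Q}_n'}\mathbb{P}^2\longrightarrow\prod_{v\in\mathcal{Q}_n}\mathbb{P}^2
\]
as follows. Each $v\in\mathcal{Q}_n$ decomposes uniquely as $v=\sigma_v\cdot v'$ with $v'\in\mathcal{Q}_n'$ and $\sigma_v\in S_4$; define the $v$-th component of $\Phi$ to be $\tau_{\sigma_v^{-1}}$ applied to the $v'$-th input coordinate. The projection of $\prod_{\mathcal{Q}_n}\mathbb{P}^2$ onto the subproduct indexed by those $v$ with $\sigma_v=\mathrm{id}$ (which is naturally identified with $\prod_{\mathcal{Q}_n'}\mathbb{P}^2$), precomposed with $\Phi$, is the identity. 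Hence $\Phi$ is a section of this separated projection and is therefore a closed immersion.

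Finally, combining the two ingredients gives a commutative diagram
\[
\mathbf{B}_n\xrightarrow{\;\prod_{v'\in\mathcal{Q}_n'}q_{v'}\;}\prod_{\mathcal{Q}_n'}\mathbb{P}^2\xrightarrow{\;\Phi\;}\prod_{\mathcal{Q}_n}\mathbb{P}^2
\]
whose composition is the embedding $\prod_{v\in\mathcal{Q}_n}q_v$ used to define $\overline{\mathbf{B}}_n$. Because $\Phi$ is a closed immersion, it identifies the scheme-theoretic closure of the image of $\mathbf{B}_n$ in $\prod_{\mathcal{Q}_n'}\mathbb{P}^2$ with $\overline{\mathbf{B}}_n$, which is the statement of the lemma. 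The only step requiring care is the verification that $\Phi$ is a closed immersion, but I expect this to be entirely routine.
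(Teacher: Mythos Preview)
Your proposal is correct and follows essentially the same approach as the paper: both construct the same closed immersion $\Phi$ (the paper calls it $f$) from $\prod_{\mathcal{Q}_n'}\mathbb{P}^2$ to $\prod_{\mathcal{Q}_n}\mathbb{P}^2$ as a section of the projection, and conclude via the resulting commutative triangle. Your version is in fact slightly more detailed, since you explicitly derive and verify the identity $q_{\sigma\cdot v}=\tau_{\sigma^{-1}}\circ q_v$, whereas the paper simply asserts the commutativity of the diagram.
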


\begin{proof}
There is a map $\mu\colon\mathcal{Q}_n\rightarrow\mathcal{Q}_n'$ sending $v=(v_1,\ldots,v_5)$ to $(v_{\tau(v)(1)},\ldots,v_{\tau(v)(4)},v_5)$, where $\tau\colon\mathcal{Q}_n\rightarrow S_4$ is the map that associates to $v\in\mathcal{Q}_n$ the unique permutation $\tau(v)\in S_4$ such that $v_{\tau(v)(1)}<\ldots<v_{\tau(v)(4)}$. 
Given $\sigma\in S_4$, let $\varphi_\sigma$ be the unique projective linear transformation of $\mathbb{P}^2$ such that $\varphi_\sigma(e_i)=e_{\sigma(i)}$ for all $i=1,\ldots,4$. 
Let 
$f\colon\prod_{\mathcal{Q}_n'}\mathbb{P}^2\rightarrow\prod_{\mathcal{Q}_n}\mathbb{P}^2$
be the map $(p_v)_{v\in\mathcal{Q}_n'}\mapsto(\varphi_{\tau(v)}(p_{\mu(v)}))_{v\in\mathcal{Q}_n}$.
Let $\pi\colon\prod_{\mathcal{Q}_n}\mathbb{P}^2\rightarrow\prod_{\mathcal{Q}_n'}\mathbb{P}^2$ be the natural product of projection maps. 
Then $f$ is a section of $\pi$ and we have a commutative diagram
\begin{center}
\begin{tikzpicture}[>=angle 90]
\matrix(a)[matrix of math nodes,
row sep=2em, column sep=2.5em,
text height=1.5ex, text depth=0.50ex]
{&\prod_{\mathcal{Q}_n}\mathbb{P}^2\\
\mathbf{B}_n&\prod_{\mathcal{Q}_n'}\mathbb{P}^2.\\};
\path[right hook->] (a-2-1) edge node[above left]{$\iota$}(a-1-2);
\path[->] (a-2-1) edge node[below]{$\pi\circ\iota$}(a-2-2);
\path[->] (a-2-2) edge node[right]{$f$}(a-1-2);
\end{tikzpicture}
\end{center}
The result follows.
\end{proof}

%--------------------------------------------------------------------------------------------------------------------------------------------------

Next we prove Theorem~\ref{isomorphismbetweenkapranovcompandgpcomp}:
there  exists  a  finite  birational  morphism $\overline{\mathbf{X}}(3,n)\rightarrow\overline{\mathbf{B}}_n$.
\begin{proof}[Proof of Theorem~\ref{isomorphismbetweenkapranovcompandgpcomp}]
Consider the following commutative diagram: 
\begin{center}
\begin{tikzpicture}[>=angle 90]
\matrix(a)[matrix of math nodes,
row sep=2em, column sep=2.5em,
text height=2.0ex, text depth=1.50ex]
{\mathbf{X}(3,n)&\overline{\mathbf{X}}(3,n)&\prod_{I\in\binom{[n]}{5}}\overline{\mathbf{X}}(3,I)&\prod_{I\in\binom{[n]}{5}}\overline{\mathrm{M}}_{0,I}\\
\mathbf{B}_n&\overline{\mathbf{B}}_n&&\prod_{\mathcal{Q}_n'}\mathbb{P}^2.\\};
\path[right hook->] (a-1-1) edge node[]{}(a-1-2);
\path[->] (a-1-1) edge node[left]{$\cong$}(a-2-1);
\path[right hook->] (a-2-1) edge node[]{}(a-2-2);
\path[->] (a-1-2) edge node[]{}(a-1-3);
\path[->] (a-1-3) edge node[above]{$\cong$}(a-1-4);
\path[right hook->] (a-1-4) edge node[]{}(a-2-4);
\path[right hook->] (a-2-2) edge node[]{}(a-2-4);
\end{tikzpicture}
\end{center}
Recall that
the morphism $\overline{\mathbf{X}}(3,n)\rightarrow\prod_{I\in\binom{[n]}{5}}\overline{\mathbf{X}}(3,I)$ is finite and birational onto its image by Corollary~\ref{productofforgetfulmapsisfinite}.
The embedding $\prod_{I\in\binom{[n]}{5}}\overline{\mathrm{M}}_{0,I}\hookrightarrow\prod_{\mathcal{Q}_n'}\mathbb{P}^2$ is obtained by applying Lemma~\ref{productofkapranovmapsn=5} to each copy of $\overline{\mathrm{M}}_{0,I}$.
This gives a finite birational morphism $\overline{\mathbf{X}}(3,n)\rightarrow\overline{\mathbf{B}}_n$.
%The isomorphism between the normalizations of $\overline{\mathbf{B}}_n$ and $\overline{\mathbf{X}}(3,n)$ is guaranteed by Zariski's Main Theorem.
\end{proof}

For the next lemma, recall that the \emph{$i$-th Kapranov's map} \cite{Kap93b} is a birational morphism $\overline{\mathrm{M}}_{0,n}\rightarrow\mathbb{P}^{n-3}$ given by the linear system~$|\psi_i|$.

\begin{lemma}
\label{productofkapranovmapsn=5}
The product of Kapranov's maps $\overline{\mathrm{M}}_{0,5}\rightarrow\prod_{i=1}^5\mathbb{P}^2$ is a closed embedding. \end{lemma}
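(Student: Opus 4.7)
The strategy is to verify that $\Phi := \prod_{i=1}^5 \phi_i \colon \overline{\mathrm{M}}_{0,5} \to \prod_{i=1}^5 \mathbb{P}^2$ is a proper morphism that is both injective on closed points and everywhere injective on tangent spaces; such a proper monomorphism is automatically a closed immersion. Properness is free from properness of $\overline{\mathrm{M}}_{0,5}$, so the content reduces to injectivity and unramifiedness.

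Recall that the $i$-th Kapranov map $\phi_i$ realizes $\overline{\mathrm{M}}_{0,5}$ as the blowup of $\mathbb{P}^2$ at four points in general position, with exceptional divisors precisely the four boundary divisors $D_{ij}$, $j \in [5] \setminus \{i\}$. Thus $\phi_i$ is an isomorphism on $\overline{\mathrm{M}}_{0,5} \setminus \bigcup_{j \neq i} D_{ij}$; in particular, at any point $p$ lying on none of the $D_{ij}$ (for this fixed $i$), the scheme-theoretic fiber $\phi_i^{-1}(\phi_i(p))$ is the reduced point $\{p\}$, and the differential $d(\phi_i)_p$ is an isomorphism.

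The key combinatorial observation is then: for every $p \in \overline{\mathrm{M}}_{0,5}$ there exists $i \in [5]$ such that $p \notin D_{ij}$ for all $j \neq i$. Indeed, two boundary divisors $D_{\{a,b\}}$ and $D_{\{c,d\}}$ meet in $\overline{\mathrm{M}}_{0,5}$ if and only if $\{a,b\} \cap \{c,d\} = \emptyset$, and $[5]$ contains no three pairwise disjoint $2$-subsets. So $p$ belongs to at most two boundary divisors, and the indices appearing in the corresponding pairs cover at most four of the five labels; any remaining label $i$ has the required property. Given such an $i$, the previous paragraph immediately gives injectivity of $\Phi$ at $p$ (from $\phi_i(q) = \phi_i(p) \Rightarrow q = p$) and injectivity of $d\Phi_p$ (from that of $d(\phi_i)_p$).

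The only nontrivial case is the codimension-two boundary stratum, a point $p \in D_{\{a,b\}} \cap D_{\{c,d\}}$ where four of the five labels are already used; here it is essential that $\overline{\mathrm{M}}_{0,5}$ is a surface, so that no three boundary divisors share a common point and the fifth label is guaranteed to be available to furnish a Kapranov map which is a local isomorphism at $p$. Once this combinatorial input is in place, the rest is mechanical.
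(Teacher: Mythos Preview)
Your proof is correct and follows essentially the same approach as the paper. Both arguments hinge on the same combinatorial observation: for every point $p \in \overline{\mathrm{M}}_{0,5}$ there is some $i$ such that $p$ lies on none of the $D_{ij}$, so $\phi_i$ is a local isomorphism at $p$; the paper phrases the conclusion as ``there is an open $U \ni p$ on which some $\sigma_i$ is an isomorphism onto its image,'' while you unpack this into the injectivity-plus-unramifiedness criterion, but the substance is identical.
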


\begin{proof}
The boundary of $\overline{\mathrm{M}}_{0,5}$ consists of $10$ irreducible curves isomorphic to $\mathbb{P}^1$. These are denoted by $D_I$, where $I\subseteq[5]$ is a subset of size $2$. Moreover, two distinct boundary divisors $D_I,D_J$ intersect if and only if $I\subseteq J^c$. For $i\in[5]$, the Kapranov's map $\sigma_i$ contracts the boundary divisors $D_I$ such that $i\in I$, and away from these divisors $\sigma_i$ is an isomorphism. Given this description, it is clear that for all $x\in\overline{\mathrm{M}}_{0,5}$, there exists an open subset $U\subseteq\overline{\mathrm{M}}_{0,5}$ containing $x$ and $i\in[5]$ such that $\sigma_i|_U$ is an isomorphism onto its image. This implies that the product of Kapranov's maps $\overline{\mathrm{M}}_{0,5}\rightarrow\prod_{i=1}^5\mathbb{P}^2$ is a closed embedding.
%Let $\sigma_i\colon\overline{\mathrm{M}}_{0,5}\rightarrow\mathbb{P}^2$ the $i$-th Kapranov map, $i\in[5]$, and let $\sigma$ be their product. $\sigma$ restricts to $\mathrm{M}_{0,5}$ giving a diagonal embedding into $\prod_{i=1}^5U$ where $U\subseteq\mathbb{P}^2$ is the complement of the braid arrangement. Now consider a boundary divisor $D_{ij}\subseteq\overline{\mathrm{M}}_{0,5}$. $\sigma$ restricts to $D_{ij}$ giving a closed embedding because $\sigma_i,\sigma_j$ map $D_{ij}$ to a point, and for $k\neq i,j$ we have that $\sigma_k$ maps $D_{ij}$ isomorphically to a $\mathbb{P}^1$. In conclusion, $\sigma$ is a closed embedding.\attn{why?} 
\end{proof}

\begin{remark}
The product of Kapranov's maps $\overline{\mathrm{M}}_{0,8}\rightarrow\prod_{i=1}^8\mathbb{P}^5$ is not injective. Indeed, the curve $C=D_{12}\cap D_{34}\cap D_{56}\cap D_{78}\subseteq\overline{\mathrm{M}}_{0,8}$ is contracted by $\sigma_1$ because $\sigma_1(D_{12})$ is a point. Analogously, $\sigma_2,\ldots,\sigma_8$ contract $C$. Hence, the product of Kapranov's maps contracts~$C$.
%The product of Kapranov's maps $\overline{\mathrm{M}}_{0,6}\rightarrow\prod_{i=1}^6\mathbb{P}^3$ is not injective. Consider the curve $C=D_{12}\cap D_{56}\subseteq\overline{\mathrm{M}}_{0,6}$. Then $C$ is contracted by $\sigma_1$ because $\sigma_1(D_{12})$ is a point. Analogously, $\sigma_2,\sigma_5,\sigma_6$ contract $C$. To show that $\sigma_3$ contracts $C$ observe that $C\subseteq D_{123}\cap D_{124}$ \attn{This is wrong I think. In fact, maybe one needs to go to n=8?}, and $\sigma_3(D_{123}),\sigma_3(D_{124})$ are two different lines in $\mathbb{P}^3$. So $\sigma_3(C)$ has to be a point. A similar argument shows that also $\sigma_4$ contracts $C$. Hence, the product of Kapranov's maps contracts the curve $C$.
\end{remark}

\begin{remark}
We have a commutative  diagram of closed embeddings
\begin{center}
\begin{tikzpicture}[>=angle 90]
\matrix(a)[matrix of math nodes,
row sep=2em, column sep=2.5em,
text height=2.0ex, text depth=1.50ex]
{\overline{\mathbf{B}}_n&\prod_{\mathcal{Q}_n'}\mathbb{P}^2&\\
&\prod_{I\in\binom{[n]}{5}}\overline{\mathrm{M}}_{0,I}&\prod_{\mathcal{Q}_n'}\mathbb{P}^1,\\};
\path[right hook->] (a-1-1) edge node[]{}(a-1-2);
\path[right hook->] (a-1-1) edge node[]{}(a-2-2);
\path[right hook->] (a-2-2) edge node[]{}(a-1-2);
\path[right hook->] (a-2-2) edge node[]{}(a-2-3);
\end{tikzpicture}
\end{center}
and we can embed $\prod_{\mathcal{Q}_n'}\mathbb{P}^1\hookrightarrow\prod_{\mathcal{Q}_n}\mathbb{P}^1$ using a strategy analogous to the proof of 
Lemma~\ref{equivalentdefinitionofBnbarinsmallerproductofP2s}. In conclusion, we can view $\overline{\mathbf{B}}_n$ as a closed subvariety of $\prod_{\mathcal{Q}_n}\mathbb{P}^1$ and the embedding %$\overline{\mathbf{B}}_n\hookrightarrow\prod_{\mathcal{Q}_n}\mathbb{P}^1$ 
can be interpreted as follows: given an ordered quintuple $v\in\mathcal{Q}_n$, define $\mathbf{B}_n\rightarrow\mathbb{P}^1$ by sending $(p_1,\ldots,p_n)$ to the cross-ratio of four points in $\mathbb{P}^1$ obtained by projecting $p_{v_1},\ldots,p_{v_4}$ from~$p_{v_5}$.
As a consequence of this discussion,  $\overline{\mathbf{B}}_5
\cong\overline{\mathrm{M}}_{0,5}\cong \overline{\mathbf{X}}(3,5)$.
\end{remark}

%--------------------------------------------------------------------------------------------------------------------------------------------------

\section{Mustafin joins: general theory}

Mustafin joins were defined in the Introduction. Here we collect some basic facts.

\begin{remark}[{\cite[\S2]{CHSW11}}]
\label{equationsmustafinvariety}
Let $\Sigma=\{L_1,\ldots,L_s\}\subseteq \mathfrak{B}_3^0$. 
%Equations of the Mustafin join $\mathbb{P}(\Sigma)$ can be obtained as follows. 
Fix a $K$-basis $e_1,e_2,e_3$ for $K^3$. %Let $x_1,x_2,x_3$ be the dual basis of $(K^3)^\vee$. 
For each $j=1,\ldots,s$, let $L_j=f_{1j}R+f_{2j}R+f_{3j}R$ and let $g_j\in\GL_3(K)$ be the matrix with columns $f_{1j}, f_{2j}, f_{3j}$.
%the transformation
%\[
%g_je_1=f_{1j},~g_je_2=f_{2j},~g_je_3=f_{3j}.
%\]
%If $x_{1j},x_{2j},x_{3j}$ is the dual basis of $f_{1j},f_{2j},f_{3j}$, then
%\[
%x_1=g_jx_{1j},~x_2=g_jx_{2j},~x_3=g_jx_{3j}.
%\]
Denote by $X$ the matrix $(x_{ij})_{1\leq i\leq3,1\leq j\leq s}$, 
where we interpret the $j$-th column as homogeneous coordinates on the $j$-th copy of $\mathbb{P}^2$. Let $g(X)$ be the matrix obtained by applying $g_j$ to the $j$-th column of $X$ for all $j$. Then $\mathbb{P}(\Sigma)$ is isomorphic to the subscheme of $(\mathbb{P}_R^2)^s$ cut out by the multihomogeneous ideal $I_2(g(X))\cap R[X]$, where $I_2$ denotes the ideal generated by the $2\times2$ minors.
% (see \cite[\S2]{CHSW11}).
% Although this is not explicitly discussed in \cite{GP91}, one can see that his is the way they compute Mustafin joins by comparing with \cite[\S3,3]{GP91}.
For example, 
let $\Sigma=\{[L_1],[L_2]\}$, where
\[
L_1=e_1R+e_2R+e_3R,~L_2=te_1R+e_2R+e_3R.
\]
%Let $\Sigma=\{[L_1],[L_2]\}$. 
%Let us compute $\mathbb{P}(\Sigma)_\Bbbk$. Let $x_1,x_2,x_3$ (resp. $y_1,y_2,y_3$) be the dual basis of $e_1,e_2,e_3$ (resp. $te_1,e_2,e_3$). 
The ideal of $\mathbb{P}(\Sigma)\subseteq\mathbb{P}(L_1)\times_R\mathbb{P}(L_2)$ is generated by the $2\times2$ minors of the matrix
\begin{displaymath}
\left( \begin{array}{cc}
x_{11}&tx_{12}\\
x_{21}&x_{22}\\
x_{31}&x_{32}
\end{array} \right).
\end{displaymath}
The special fiber $\mathbb{P}(\Sigma)_\Bbbk$ %is the fiber of $\mathbb{P}(\Sigma)\rightarrow\Spec(R)$ over the maximal ideal, which 
is defined %in $\mathbb{P}_x^2\times\mathbb{P}_y^2$ 
by the following equations:
\[
x_{11}x_{22}=0,~x_{11}x_{32}=0,~x_{21}x_{32}-x_{31}x_{22}=0.
\]
Its irreducible components  are given by $V(x_{22},x_{32})\cong\mathbb{P}^2$ and $V(x_{11},x_{21}x_{32}-x_{31}x_{22})\cong\mathbb{F}_1$. These are glued along a line in $\mathbb{P}^2$ and the exceptional divisor in $\mathbb{F}_1$ as shown in Figure~\ref{degeneration5points1}.
\end{remark}

\begin{figure}[hbtp]
\begin{tikzpicture}[scale=0.5]
	\draw[line width=1pt] (0,6) -- (6,6);
	\draw[line width=1pt] (6,0) -- (6,6);
	\draw[line width=1pt] (0,6) -- (6,0);
	\draw[line width=1pt] (3,6) -- (6,3);

	\node at (3.7,3.7) {$\mathbb{F}_1$};

	\node at (5.2,5.2) {$\mathbb{P}^2$};
\end{tikzpicture}
\caption{Central fiber of the Mustafin join $\mathbb{P}(\Sigma)$ in Remark~\ref{equationsmustafinvariety}.}
\label{degeneration5points1}
\end{figure}
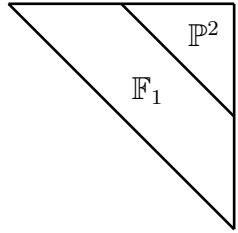

\begin{remark}
\label{mustafindegenerationsofP2andP2dualarenotthesame}
For a lattice $L$ %, we adopted the convention that $\mathbb{P}(L)=\Proj(\Sym L^\vee)$. On the other hand, 
we can define $\mathbb{P}^\vee(L)$ as $\Proj(\Sym L)$, and given a finite subset $\Sigma\subseteq\mathfrak{B}_3^0$, we can define $\mathbb{P}^\vee(\Sigma)$ accordingly. In general, the Mustafin joins $\mathbb{P}(\Sigma)$ and $\mathbb{P}^\vee(\Sigma)$ have non-isomorphic central fibers. For instance, consider $\Sigma=\{e_1R+e_2R+e_3R,te_1R+e_2R+t^2e_3R,e_1R+te_2R+t^2e_3R\}$. Then $\mathbb{P}(\Sigma)_\Bbbk$ can be found in \cite[Figure 6 (iii), first picture]{CHSW11}, and $\mathbb{P}^\vee(\Sigma)_\Bbbk$ is in \cite[Figure 6 (i), first picture]{CHSW11}.
\end{remark}

\begin{remark}
\label{degenerationfrompicture}
%In the Brihat-Tits building $\mathfrak{B}_3^0$, the 
An \emph{apartment} $A\subseteq\mathfrak{B}_3^0$ corresponding to a basis $e_1,e_2,e_3$ of $K^3$ is the subset consisting of all lattice classes of the form
\[
[t^\alpha e_1R+t^\beta e_2R+t^\gamma e_3R],
\]
for $\alpha,\beta,\gamma\in\mathbb{Z}$. Given a finite subset $\Sigma\subseteq A$, the central fiber of the Mustafin join $\mathbb{P}(\Sigma)$ can be computed as follows \cite[\S4]{CHSW11}. The apartment $A$ is identified with the tropical torus $\mathbb{R}^3/\mathbb{R}\mathbf{1}$, where $\mathbf{1}=(1,1,1)$, under the following bijection:
\[
[t^\alpha e_1R+t^\beta e_2R+t^\gamma e_3R]\mapsto(-\alpha,-\beta,-\gamma).
\]
To each point $p\in\mathbb{R}^3/\mathbb{R}\mathbf{1}$, one can associate a tropical line (a spider with three legs)
\[
\ell_p=\{v\in\mathbb{R}^3/\mathbb{R}\mathbf{1}\mid\max_{i=1,2,3}\{v_i-p_i\}~\textrm{is achieved at least twice}\}.
\]
Under the identification $A=\mathbb{R}^3/\mathbb{R}\mathbf{1}$, a lattice class $[L]\in\Sigma$ determines a tropical line $\ell_{[L]}$. The union of the bounded regions determined by the tropical lines $\ell_{[L]}$, $[L]\in\Sigma$, gives a tropical polytope $P_{\Sigma}$, which is the min-convex hull of $\Sigma$. Consider the set of points $p\in P_{\Sigma}$ which correspond to a lattice class in $\Sigma$, or $p$ is the intersection of at least two tropical lines $\ell_{[L]}$, $[L]\in\Sigma$. Such a point $p$ determines a projective toric variety whose polytope has edges orthogonal to the rays generating from $p$. Gluing all these polytopes we obtain a regular mixed polyhedral subdivision of $m\Delta_2$, where $m=|\Sigma|$ and $\Delta_2$ is the standard $2$-dimensional simplex, which  determines the central fiber of $\mathbb{P}(\Sigma)$. We illustrate this procedure in Figure~\ref{centralfibermustafinjoinconfigurationinoneapartment}.
\end{remark}

\begin{figure}[hbtp]
\begin{tikzpicture}[scale=0.5]
	\draw[->,line width=1pt] (0-4,0+2) -- (1-4,1+2);
	\draw[->,line width=1pt] (0-4,0+2) -- (-1-4,0+2);
	\draw[->,line width=1pt] (0-4,0+2) -- (0-4,-1+2);

	\node at (-5.5-1,2) {{\tiny$+(1,0,0)$}};
	\node at (-3.2-1,.5) {{\tiny$+(0,1,0)$}};
	\node at (-1-1,3.7) {{\tiny$+(0,0,1)$}};

	\draw[line width=1pt] (0,0) -- (9,0);
	\draw[line width=1pt] (0,0) -- (0,3);
	\draw[line width=1pt] (0,0) -- (-3,-3);

	\node at (0,0) {$\bullet$};
	\node at (0.7,-0.5) {{\tiny$(0,0,0)$}};

	\draw[line width=1pt] (6,-3) -- (6+3,-3);
	\draw[line width=1pt] (6,-3) -- (6,-3+6);
	\draw[line width=1pt] (6,-3) -- (6-2,-3-2);

	\node at (6,-3) {$\bullet$};
	\node at (6+1.3,-3-0.5) {{\tiny$(-3,0,-1)$}};

	\draw[dashed,line width=1pt] (-2,1) -- (1,1);
	\draw[dashed,line width=1pt] (1,1) -- (1,-2);
	\draw[dashed,line width=1pt] (-2,1) -- (1,-2);

	\draw[dashed,line width=1pt] (6-2,-3+1) -- (6+1,-3+1);
	\draw[dashed,line width=1pt] (6+1,-3+1) -- (6+1,-3-2);
	\draw[dashed,line width=1pt] (6-2,-3+1) -- (6+1,-3-2);

	\draw[dashed,line width=1pt] (6-1,0+1) -- (6+1,0+1);
	\draw[dashed,line width=1pt] (6-1,0-1) -- (6+1,0-1);
	\draw[dashed,line width=1pt] (6+1,0+1) -- (6+1,0-1);
	\draw[dashed,line width=1pt] (6-1,0+1) -- (6-1,0-1);

	\draw[line width=1pt] (0+11,0+1) -- (6+11,0+1);
	\draw[line width=1pt] (6+11,0+1) -- (6+11,-6+1);
	\draw[line width=1pt] (0+11,0+1) -- (6+11,-6+1);
	\draw[line width=1pt] (3+11,0+1) -- (3+11,-3+1);
	\draw[line width=1pt] (3+11,-3+1) -- (6+11,-3+1);
\end{tikzpicture}
\caption{Illustration of the procedure described in Remark~\ref{degenerationfrompicture} to compute $\mathbb{P}(\Sigma)_\Bbbk$, where $\Sigma=\{[e_1R+e_2R+e_3R],[t^3e_1R+e_2R+te_3R]\}$.}
\label{centralfibermustafinjoinconfigurationinoneapartment}
\end{figure}
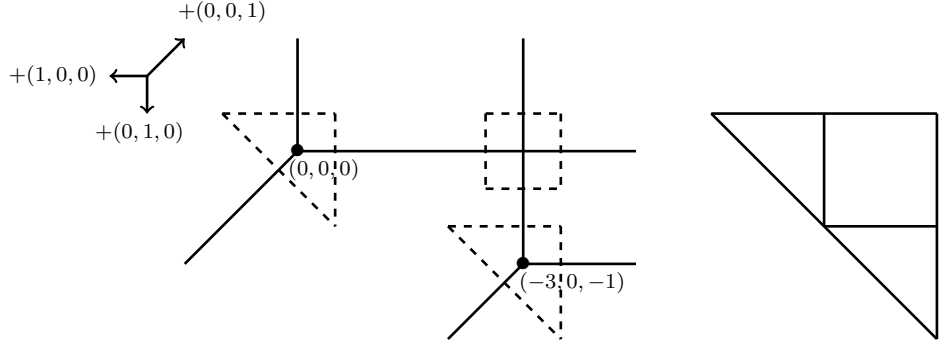

\begin{definition}
Let $[L_0],[L]\in\mathfrak{B}_3^0$. 
Let $z\in\mathbb{Z}$ be a unique integer such that $tL_0\subseteq t^zL$, but $L_0\nsubseteq t^zL$. 
Following \cite[\S5]{CHSW11}, we define a linear subspace
\emph{induced by $[L]$} as
\[
W_{L_0}(L)=\mathbb{P}\left((t^zL\cap L_0)/tL_0\right)\subseteq\mathbb{P}(L_0)_\Bbbk.
\]
\end{definition}

\begin{remark}
\label{allabouttwolattices}
Let $\Sigma=\{[L_1],[L_2]\}$, $L_1\ne L_2$. 
Consider the diagram
\begin{center}
\begin{tikzpicture}[>=angle 90]
\matrix(a)[matrix of math nodes,
row sep=2em, column sep=1.5em,
text height=1.5ex, text depth=0.25ex]
{&\mathbb{P}(\Sigma)_\Bbbk&\\
\mathbb{P}(L_1)_\Bbbk&&\mathbb{P}(L_2)_\Bbbk,\\};
\path[->] (a-1-2) edge node[]{}(a-2-1);
\path[->] (a-1-2) edge node[]{}(a-2-3);
\end{tikzpicture}
\end{center}
where the morphisms from $\mathbb{P}(\Sigma)_\Bbbk\subseteq\mathbb{P}(L_1)_\Bbbk\times\mathbb{P}(L_2)_\Bbbk$ are induced by the usual projections.
There are two options for the central fiber $\mathbb{P}(\Sigma)_\Bbbk$:
\begin{enumerate}

\item $\mathbb{P}(\Sigma)_\Bbbk$ is the gluing of $\mathbb{P}^2$ and $\mathbb{F}^1$ along a line and the exceptional divisor, respectively;

\item $\mathbb{P}(\Sigma)_\Bbbk$ is the union of two copies of $\mathbb{P}^2$ and $\mathbb{F}_0$. Each $\mathbb{P}^2$ is glued along a line to a ruling of $\mathbb{F}_0$, these two rulings intersect.

\end{enumerate}
A comprehensive list of possibilities for the induced linear subspaces $W_{L_1}(L_2)\subseteq\mathbb{P}(L_1)_\Bbbk$ and $W_{L_2}(L_1)\subseteq\mathbb{P}(L_2)_\Bbbk$ is shown in Figure~\ref{diagramsfortwolatticesandinducedlinearspaces}, where $W_{L_1}(L_2)$ and $W_{L_2}(L_1)$ are the images of appropriate irreducible components of $\mathbb{P}(\Sigma)_\Bbbk$
in $\mathbb{P}(L_1)_\Bbbk$ and $\mathbb{P}(L_1)_\Bbbk$, respectively.
To prove these claims, %one can argue as follows. We 
we can assume that $[L_1],[L_2]$ lie in the same apartment \cite[Proposition~4.11]{Wer01} %In particular, we can write
\[
L_1=e_1R+e_2R+e_3R,~L_2=t^\alpha e_1R+t^\beta e_2R+t^\gamma e_3R,
\]
for some $K$-basis $e_1,e_2,e_3$ of $K^3$. %, where $\alpha,\beta,\gamma\in\mathbb{Z}$ are not all equal. 
If two among $\{\alpha,\beta,\gamma\}$ are equal, then without loss of generality
%one can assume up to permuting $e_1,e_2,e_3$ and rescaling by a power of $t$ that 
$0=\alpha=\beta\neq\gamma$. In this case, $\mathbb{P}(\Sigma)_\Bbbk$ is as in (1) above. Moreover, 
\begin{itemize}

\item If $\gamma<0$, then $W_{L_1}(L_2)$ is point, and $W_{L_2}(L_1)$ is a line;

\item If $\gamma>0$, then $W_{L_1}(L_2)$ is a line, and $W_{L_2}(L_1)$ is a point.

\end{itemize}
%This proves the first two pictures in Figure~\ref{diagramsfortwolatticesandinducedlinearspaces}. 
If $\alpha,\beta,\gamma$ are all distinct, then without loss of generality %we can assume up to permuting $e_1,e_2,e_3$ and rescaling by a power of $t$ that 
$0=\alpha<\beta<\gamma$. Then $\mathbb{P}(\Sigma)_\Bbbk$ is as in (2) and the induced linear spaces $W_{L_1}(L_2)$ and $W_{L_2}(L_1)$ are both lines.
\end{remark}

\begin{figure}[hbtp]
\begin{tikzpicture}[scale=0.5]
	\draw[line width=1pt] (0,4) -- (4,4);
	\draw[line width=1pt] (4,0) -- (4,4);
	\draw[line width=1pt] (0,4) -- (4,0);
	\node at (4,4) {$\bullet$};
	\node at (2,5) {$\mathbb{P}(L_1)_\Bbbk$};
	\node at (6,4) {{\small$W_{L_1}(L_2)$}};

	\draw[line width=1pt] (0+6,4+3) -- (4+6,4+3);
	\draw[line width=1pt] (4+6,0+3) -- (4+6,4+3);
	\draw[line width=1pt] (0+6,4+3) -- (4+6,0+3);
	\draw[line width=1pt] (8,4+3) -- (10,4+1);
	\fill[gray!50,nearly transparent] (8,4+3) -- (10,4+1) -- (4+6,4+3) -- cycle;
	\node at (8,8) {$\mathbb{P}(\Sigma)_\Bbbk$};

	\draw[line width=1pt] (0+12,4) -- (4+12,4);
	\draw[line width=1pt] (4+12,0) -- (4+12,4);
	\draw[line width=2.5pt] (0+12,4) -- (4+12,0);
	\fill[gray!50,nearly transparent] (0+12,4) -- (4+12,4) -- (4+12,0) -- cycle;
	\node at (14,5) {$\mathbb{P}(L_2)_\Bbbk$};
	\node at (12.5,1.5) {{\small$W_{L_2}(L_1)$}};

	\draw[->,line width=1pt] (5,5.5) -- (4,4.5);
	\draw[->,line width=1pt] (10.5,5.5) -- (12,4.5);

%-------

%	\draw[line width=1pt] (0,4-9) -- (4,4-9);
%	\draw[line width=1pt] (4,0-9) -- (4,4-9);
%	\draw[line width=2.5pt] (0,4-9) -- (4,0-9);
%	\node at (0.5,4-12.5+1) {{\small$W_{L_1}(L_2)$}};
%	\draw[line width=1pt] (0+6,4+3-9) -- (4+6,4+3-9);
%	\draw[line width=1pt] (4+6,0+3-9) -- (4+6,4+3-9);
%	\draw[line width=1pt] (0+6,4+3-9) -- (4+6,0+3-9);
%	\draw[line width=1pt] (8,4+3-9) -- (10,4+1-9);
%	\fill[gray!50,nearly transparent] (0+6,4+3-9) -- (8,4+3-9) -- (10,4+1-9) -- (4+6,0+3-9) -- cycle;
%	\draw[line width=1pt] (0+12,4-9) -- (4+12,4-9);
%	\draw[line width=1pt] (4+12,0-9) -- (4+12,4-9);
%	\draw[line width=1pt] (0+12,4-9) -- (4+12,0-9);
%	\fill[gray!50,nearly transparent] (0+12,4-9) -- (4+12,4-9) -- (4+12,0-9) -- cycle;
%	\node at (4+12,4-9) {$\bullet$};
%	\node at (4+12+2,4-9) {{\small$W_{L_2}(L_1)$}};
%	\draw[->,line width=1pt] (5,5.5-9) -- (4,4.5-9);
%	\draw[->,line width=1pt] (10.5,5.5-9) -- (12,4.5-9);

%-------

	\draw[line width=1pt] (0,4-9) -- (4,4-9);
	\draw[line width=2.5pt] (4,0-9) -- (4,4-9);
	\draw[line width=1pt] (0,4-9) -- (4,0-9);
	\node at (6,2-9) {{\small$W_{L_1}(L_2)$}};
	\draw[line width=1pt] (0+6,4+3-9) -- (4+6,4+3-9);
	\draw[line width=1pt] (4+6,0+3-9) -- (4+6,4+3-9);
	\draw[line width=1pt] (0+6,4+3-9) -- (4+6,0+3-9);
	\draw[line width=1pt] (8,4+3-9) -- (8,4+3-9-2);
	\draw[line width=1pt] (8,4+1-9) -- (10,4+1-9);
	\fill[gray!50,nearly transparent] (8,4+1-9) -- (10,4+1-9) -- (4+6,0+3-9) -- cycle;
	\draw[line width=2.5pt] (0+12,4-9) -- (4+12,4-9);
	\draw[line width=1pt] (4+12,0-9) -- (4+12,4-9);
	\draw[line width=1pt] (0+12,4-9) -- (4+12,0-9);
	\fill[gray!50,nearly transparent] (0+12,4-9) -- (4+12,4-9) -- (4+12,0-9) -- cycle;
	\node at (14,5-9) {$W_{L_2}(L_1)$};
    \draw[->,line width=1pt] (5,5.5-9) -- (4,4.5-9);
	\draw[->,line width=1pt] (10.5,5.5-9) -- (12,4.5-9);

%	\draw[line width=1pt] (0,4-18) -- (4,4-18);
%	\draw[line width=2.5pt] (4,0-18) -- (4,4-18);
%	\node at (6,2-18) {{\small$W_{L_1}(L_2)$}};
%	\draw[line width=1pt] (0+6,4+3-18) -- (4+6,4+3-18);
%	\draw[line width=1pt] (4+6,0+3-18) -- (4+6,4+3-18);
%%%	\draw[line width=1pt] (8,4+1-18) -- (10,4+1-18);
%	\fill[gray!50,nearly transparent] (8,4+1-18) -- (10,4+1-18) -- (4+6,0+3-18) -- cycle;
%	\draw[line width=2.5pt] (0+12,4-18) -- (4+12,4-18);
%	\draw[line width=1pt] (4+12,0-18) -- (4+12,4-18);
%	\draw[line width=1pt] (0+12,4-18) -- (4+12,0-18);
%	\fill[gray!50,nearly transparent] (0+12,4-18) -- (4+12,4-18) -- (4+12,0-18) -- cycle;
%	\node at (14,5-18) {$W_{L_2}(L_1)$};
 %   \draw[->,line width=1pt] (5,5.5-18) -- (4,4.5-18);
%	\draw[->,line width=1pt] (10.5,5.5-18) -- (12,4.5-18);
\end{tikzpicture}
\caption{The linear subspaces $W_{L_1}(L_2)$ and $W_{L_2}(L_1)$ in relation to $\mathbb{P}(\Sigma)_\Bbbk$.}
\label{diagramsfortwolatticesandinducedlinearspaces}
\end{figure}
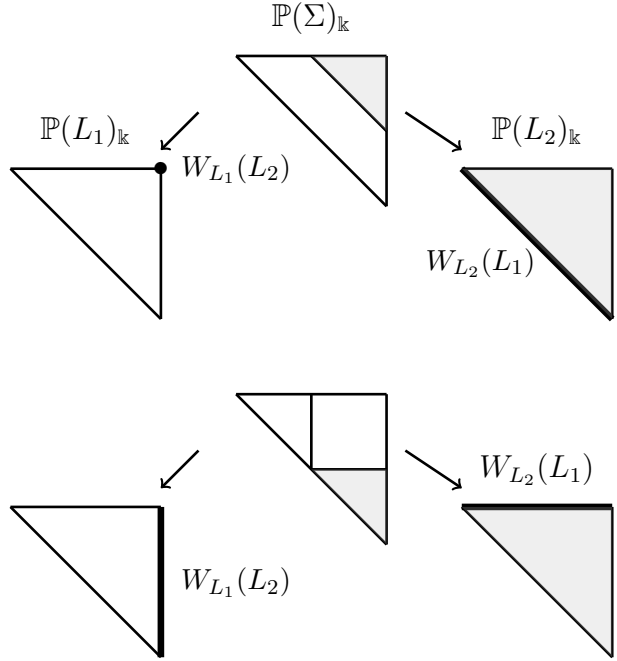

\begin{definition}[\cite{CHSW11}]
An irreducible component of the central fiber $\mathbb{P}(\Sigma)_\Bbbk$ %of the Mustafin join 
is  {\em primary} if it projects birationally onto one of the $\mathbb{P}(L)_\Bbbk$, $L\in\Sigma$.
Other components are  {\em secondary}.
\end{definition}

\begin{lemma}
\label{smoothsectionifminimum}
Let $s=[v]\in\mathbb{P}^2(K)$ and let $\Sigma$ be a finite subset of $\mathfrak{B}_3^0$. %If we let $s=[v]$ for a choice of $v\in K^3$, then 
We can write
$$\Sigma=\{[L_1],\ldots,[L_m]\}$$
for a unique choice of lattices
if we assume that $v\in L_j\setminus tL_j$ for all $j$. 
Suppose that one of the lattices $L_{i_0}\subseteq L_1\cap \ldots\cap L_m$. Then %by inclusion and assume it has a minimum element $L_{i_0}$. Then 
$\overline{s}(0)\in\mathbb{P}(\Sigma)_\Bbbk$ is a smooth point contained in the primary component corresponding to~$L_{i_0}$.
In other words, $\mathbb{P}(\Sigma)$ is smooth along the section~$s$.
\end{lemma}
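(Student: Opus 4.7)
The plan is to prove the stronger statement that the projection $\pi_{i_0}\colon\mathbb{P}(\Sigma)\to\mathbb{P}(L_{i_0})$ is an isomorphism on a Zariski open neighbourhood of the section $\overline{s}$. Since $\mathbb{P}(L_{i_0})\to\Spec(R)$ is smooth, this will immediately force $\mathbb{P}(\Sigma)$ to be smooth along $\overline{s}$, and will identify $\overline{s}(0)$ as a point of the primary component corresponding to $L_{i_0}$ (on which $\pi_{i_0}$ is birational by definition).

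First I would fix bases of $L_{i_0}$ and of each $L_j$, and write $v=v_1e_1+v_2e_2+v_3e_3$ with respect to the chosen basis $e_1,e_2,e_3$ of $L_{i_0}$; since $v\notin tL_{i_0}$, at least one $v_k$ is a unit in $R$. For each $j$, let $A^{(j)}\in\mathrm{Mat}_{3\times 3}(R)$ be the matrix expressing the basis of $L_{i_0}$ in terms of the chosen basis of $L_j$; the entries lie in $R$ precisely because $L_{i_0}\subseteq L_j$. The three linear forms in $x_1,x_2,x_3$ obtained from $A^{(j)}$ define a rational map $\varphi_j\colon\mathbb{P}(L_{i_0})\dashrightarrow\mathbb{P}(L_j)$, with $\varphi_{i_0}=\id$.

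The key observation is that each $\varphi_j$ is regular at the special fibre point $\overline{s}^{L_{i_0}}(0)\in\mathbb{P}(L_{i_0})_\Bbbk$. Indeed, the images of $v$ under these three linear forms are, by construction, the coordinates of $v$ with respect to the chosen basis of $L_j$; the hypothesis $v\in L_j\setminus tL_j$ asserts that at least one such coordinate is a unit of $R$, so the three forms do not simultaneously vanish modulo $t$ at $\overline{s}^{L_{i_0}}(0)$. Hence there is an open neighbourhood $U\subseteq\mathbb{P}(L_{i_0})$ of $\overline{s}^{L_{i_0}}(0)$ on which $\varphi_1,\ldots,\varphi_m$ are simultaneously regular. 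The product $\phi=(\varphi_1,\ldots,\varphi_m)\colon U\to\prod_j\mathbb{P}(L_j)$ agrees on the generic fibre with the diagonal embedding $\mathbb{P}^2_K\hookrightarrow\prod_j\mathbb{P}(L_j)$ (under the canonical identifications $L_j\otimes_R K=K^3$), so by taking Zariski closures $\phi(U)\subseteq\mathbb{P}(\Sigma)$.

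By construction $\pi_{i_0}\circ\phi=\id_U$, so $\phi$ is a closed immersion of the reduced irreducible $3$-dimensional scheme $U$ into the reduced irreducible $3$-dimensional open subset $\pi_{i_0}^{-1}(U)\subseteq\mathbb{P}(\Sigma)$, which must therefore be an isomorphism. Consequently $\pi_{i_0}$ restricts to an isomorphism $\pi_{i_0}^{-1}(U)\xrightarrow{\sim}U$ onto the smooth open $U\subseteq\mathbb{P}(L_{i_0})$, and smoothness of $\mathbb{P}(\Sigma)$ along $\overline{s}$ follows. The main (indeed essentially only) nontrivial step is verifying the regularity of each $\varphi_j$ at $\overline{s}^{L_{i_0}}(0)$, which is exactly where the hypothesis $v\in L_j\setminus tL_j$ is used.
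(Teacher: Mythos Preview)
Your argument is correct and takes a genuinely different route from the paper's. The paper invokes the structural result of \cite[\S5]{CHSW11} that the primary component of $\mathbb{P}(\Sigma)_\Bbbk$ corresponding to $L_{i_0}$ is the blow-up of $\mathbb{P}(L_{i_0})_\Bbbk$ along the linear subspaces $W_{L_{i_0}}(L_j)=\mathbb{P}\bigl((t^{z_j}L_j\cap L_{i_0})/tL_{i_0}\bigr)$; it then checks that the reduction of $v$ avoids each $W_{L_{i_0}}(L_j)$ by observing that $z_j>0$ (from $L_{i_0}\subseteq L_j$) forces $t^{z_j}L_j\subseteq tL_j$, while $v\notin tL_j$ by hypothesis. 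Your approach instead builds an explicit local inverse to $\pi_{i_0}$: the inclusion $L_{i_0}\subseteq L_j$ gives $R$-linear forms, and the hypothesis $v\in L_j\setminus tL_j$ is exactly the condition that these forms do not vanish simultaneously at $\overline{s}^{L_{i_0}}(0)$, so the product $\phi$ is regular there and furnishes a section of $\pi_{i_0}$ over a neighbourhood $U$; the equal-dimension closed-immersion argument then promotes this to a local isomorphism.

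Your argument is more self-contained: it avoids the black box from \cite{CHSW11} and directly proves a slightly stronger statement, namely that $\pi_{i_0}$ is an isomorphism over a Zariski neighbourhood of $\overline{s}^{L_{i_0}}(0)$. The paper's route, on the other hand, is more informative globally: it identifies the primary component as a specific blow-up and pinpoints the entire non-isomorphism locus of $\pi_{i_0}$ on that component, information which is used later in the paper (e.g.\ in the proof of Theorem~\ref{sectionssmoothcentralfiber}). Both arguments use the hypothesis $v\in L_j\setminus tL_j$ at essentially the same moment, just phrased differently: for you it is regularity of the rational map $\varphi_j$; for the paper it is avoidance of the blow-up centre $W_{L_{i_0}}(L_j)$.
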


\begin{proof}
%It's clear that representatives $L_i\in [L_i]$ are defined by our normalization uniquely.
For each $j\neq i_0$, fix an integer $z_j$ such that $tL_{i_0}\subseteq t^{z_j}L_j\setminus t^{z_j+1}L_j$. %Observe that, by the minimality of $L_{i_0}$, 
We  have $z_j>0$. Otherwise, if $z_j\leq0$, then $L_{i_0}\subseteq L_j\subseteq t^{z_j}L_j$, which gives a contradiction.
By \cite[\S5]{CHSW11}, the primary component corresponding to $L_{i_0}$ is obtained by blowing up $\mathbb{P}(L_{i_0})_\Bbbk$ along the proper linear subspaces $W_{L_{i_0}}(L_j)$ for $j\neq i_0$ (in particular, it is smooth). So, we would be done if we can show that the image of $v$ in $L_{i_0}/tL_{i_0}$ is not contained in $(t^{z_j}L_j\cap L_{i_0})/tL_{i_0}$ for all $j\neq i_0$. But this is clear, because $t^{z_j}L_j\subseteq tL_j$ %(here we used that $z_j>0$), 
and $v\notin tL_j$ by hypothesis.
\end{proof}

Finally, we study how the Mustafin join changes 
when we add a lattice to the configuration.

\begin{lemma}
\label{criterionforlinescontracted}
Let %$[L_0],\ldots,[L_m]\in\mathfrak{B}_3^0$. Define 
$\Sigma=\{[L_1],\ldots,[L_m]\}\subseteq \mathfrak{B}_3^0$ and $\Sigma'=\Sigma\cup\{[L_0]\}$. Let  $\widetilde{C}\subseteq\mathbb{P}(\Sigma')_\Bbbk$ be the primary component corresponding to $[L_0]$. Let $\ell\subseteq\mathbb{P}(L_0)_\Bbbk$ be a line and denote by $\widetilde{\ell}$ its strict transform in $\widetilde{C}$. Then $\widetilde{\ell}$ is contracted by the projection $\pi\colon\mathbb{P}(\Sigma')\rightarrow\mathbb{P}(\Sigma)$ if and only if $\ell$ intersects every linear subspace $W_{L_0}(L_j)\subseteq\mathbb{P}(L_0)_\Bbbk$. % for $j=1,\ldots,m$. 
Otherwise, $\pi|_{\widetilde{\ell}}$ is injective.
%The exceptional locus of $\pi|_{\widetilde{C}}$ is the union of the lines $\widetilde{\ell}$  contracted by $\pi$. %such that $\ell$ intersects all the linear subspaces $W_{L_0}(L_j)$.
%\attn{I think we also need to prove or reference that the union of these lines $\ell$ is precisely the exceptional locus} \luca{Sorry, why do we need this? Is it just for completion of information? I am asking to make sure because I do not think it is necessary to prove Theorem 5.23. Or I am missing something? We only use this lemma in the proof of Theorem 5.23.} \attn{I think we need to demonstrate this for Theorem 5.23 -- we are
%arguing that the section is smooth by showing that the contraction is a local isomorphism at this point. But what if it contracts some, say, conic passing through this point?}
\end{lemma}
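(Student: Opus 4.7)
The plan is to reduce the question to a componentwise analysis. The map $\pi|_{\widetilde{C}}$ is the product of the projections $p_j\colon\widetilde{C}\to\mathbb{P}(L_j)_\Bbbk$ for $j=1,\ldots,m$, so $\widetilde{\ell}$ is contracted by $\pi$ if and only if every $p_j$ contracts it, and $\pi|_{\widetilde{\ell}}$ is injective as soon as some $p_j|_{\widetilde{\ell}}$ is. By \cite[\S5]{CHSW11}, $\widetilde{C}$ is obtained from $\mathbb{P}(L_0)_\Bbbk$ by iteratively blowing up the subspaces $W_{L_0}(L_j)$, and for each $j$ the morphism $p_j$ factors through the single blowup $\mathrm{Bl}_{W_{L_0}(L_j)}\mathbb{P}(L_0)_\Bbbk$, which is the primary component for $[L_0]$ in the two-lattice Mustafin join $\mathbb{P}(\{[L_0],[L_j]\})_\Bbbk$.

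Next I would carry out a two-lattice analysis via Remark~\ref{allabouttwolattices} applied to each $\{[L_0],[L_j]\}$. When $[L_0]=[L_j]$ one has $W_{L_0}(L_j)=\emptyset$ and $p_j$ is an isomorphism, so $p_j|_{\widetilde{\ell}}$ is injective and the condition ``$\ell$ meets $W_{L_0}(L_j)$'' fails, consistently with the claim. Otherwise $W_{L_0}(L_j)$ is either a line or a point. If it is a line, blowing up along a Cartier divisor is trivial and $p_j$ is a constant morphism with image the point $W_{L_j}(L_0)$, so $p_j$ contracts every $\widetilde{\ell}$, while $\ell\cap W_{L_0}(L_j)\neq\emptyset$ automatically since any two lines in $\mathbb{P}^2$ meet. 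If $W_{L_0}(L_j)=\{P_j\}$ is a point, then $\mathrm{Bl}_{P_j}\mathbb{P}(L_0)_\Bbbk\cong\mathbb{F}_1$ and $p_j$ is the classical projection from $P_j$ onto the line $W_{L_j}(L_0)\cong\mathbb{P}^1\hookrightarrow\mathbb{P}(L_j)_\Bbbk$; hence $p_j$ contracts $\widetilde{\ell}$ precisely when $P_j\in\ell$, and otherwise $p_j|_{\widetilde{\ell}}$ is injective.

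Assembling the cases yields the stated equivalence: $\widetilde{\ell}$ is contracted by $\pi$ iff every $p_j$ contracts it, iff for every $j$ either $W_{L_0}(L_j)$ is a line (automatic intersection) or $W_{L_0}(L_j)=\{P_j\}$ is a point on $\ell$, iff $\ell$ meets every $W_{L_0}(L_j)$. If this condition fails, there is an index $j_0$ with $W_{L_0}(L_{j_0})=\{P_{j_0}\}$ off $\ell$, and the previous paragraph shows $p_{j_0}|_{\widetilde{\ell}}$ is already injective, whence so is $\pi|_{\widetilde{\ell}}$.

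The main obstacle I expect is extracting the precise geometric description of each $p_j$ from Remark~\ref{allabouttwolattices}: one must identify the primary component for $[L_0]$ inside $\mathbb{P}(\{[L_0],[L_j]\})_\Bbbk$ and read off how it maps to $\mathbb{P}(L_j)_\Bbbk$ in both the line case (constant) and the point case (projection from $P_j$ onto $W_{L_j}(L_0)$). Once this dictionary is set up, the rest is a clean geometric check about lines in $\mathbb{P}^2$ meeting points versus lines.
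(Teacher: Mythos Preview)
Your approach is essentially the paper's own: reduce each projection to the two-lattice diagram $\mathbb{P}(L_0)_\Bbbk \leftarrow \mathbb{P}(\{[L_0],[L_j]\})_\Bbbk \rightarrow \mathbb{P}(L_j)_\Bbbk$ and inspect the possibilities via Remark~\ref{allabouttwolattices}. One small imprecision worth fixing: when $W_{L_0}(L_j)$ is a line you write that $p_j$ is constant ``with image the point $W_{L_j}(L_0)$'', but in case~(2) of Remark~\ref{allabouttwolattices} both induced subspaces are lines; nonetheless the primary component for $[L_0]$ still projects to a single point of $\mathbb{P}(L_j)_\Bbbk$ (a point on the line $W_{L_j}(L_0)$, not the line itself), so your conclusion that $p_j$ contracts $\widetilde{\ell}$ is unaffected.
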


\begin{proof}
Suppose $\ell$ intersects every linear subspace $W_{L_0}(L_j)$. For every~$j$, let $\Lambda=\{[L_0],[L_j]\}$, and consider the diagram $\mathbb{P}(L_0)_\Bbbk\leftarrow\mathbb{P}(\Lambda)_\Bbbk\rightarrow\mathbb{P}(L_j)_\Bbbk$. After inspecting all the possibilities for $\mathbb{P}(\Lambda)_\Bbbk$ (see Remark~\ref{allabouttwolattices}) and $W_{L_0}(L_j)$ (which is a point or a line), we can see that the image of $\widetilde{\ell}$ in $\mathbb{P}(L_j)_\Bbbk$ of $\widetilde{\ell}$ is a point. Since %$j\in[m]$ was arbitrary, 
this is true for every $j$, %all $j\in[m]$, implying that 
$\pi$ contracts $\widetilde{\ell}$.

Conversely, if $\ell$ does not intersect a linear space $W_{L_0}(L_j)$ for some $j$ then by considering the same diagram (where now we only have to consider the case where $W_{L_0}(L_j)$ is a point), we see that the image of $\widetilde{\ell}$ in $\mathbb{P}(L_j)_\Bbbk$ is not contracted, in fact  $\pi|_{\widetilde{\ell}}$ is injective. %This concludes the proof.
\end{proof}

\begin{remark}
An almost identical statement appears in \cite[Lemma 5.10, Proposition~5.11]{CHSW11}, but with stronger hypotheses: 
we do not  require that $\pi|_{\widetilde{C}}$ is birational onto its image.
\end{remark}

\section{Stable lattices of arcs and their Mustafin joins}
\label{Mustafinjoinsofstablelattices}

Here we focus on Mustafin joins for arcs considered in \cite{GP91}.
Consider $K$-points 
$$a_1,\ldots,a_n\in\mathbb{P}^2(K)$$ in general linear position, 
i.e.~an arc $\mathbf{a}=(a_1,\ldots,a_n)\colon\Spec(K)\rightarrow\mathbf{B}_n$.  
We denote by $\Sigma_\mathbf{a}$ the subset of $\mathfrak{B}_3^0$ of stable lattice classes with respect to $\mathbf{a}$. 
Recall from the introduction that a lattice $\Lambda$ is stable
if at least four of the limits $\overline{a}^L_1(0),\ldots,\overline{a}^L_n(0)$ in the central fiber $\mathbb{P}(L)_\Bbbk\subseteq\mathbb{P}(L)$ are in general linear position. 
There exists a unique lattice class stabilizing any given quadruple \cite{KT06}, although the same lattice can stabilize several quadruples.
The Mustafin join $\mathbb{P}(\Sigma_\mathbf{a})$ has extra structure,
namely $n$ {\em sections} $\overline{a}_1,\ldots,\overline{a}_n$ which are defined as follows.
%Let $a=(a_1,\ldots,a_n)\colon\Spec(K)\rightarrow\mathbf{B}_n$. 
%Then one can canonically define $n$ sections of the Mustafin join $\mathbb{P}(\Sigma_a)\rightarrow\Spec(R)$ as follows. 
%For all $i=1,\ldots,n$, the 
Every component $a_i$ can be viewed as a section of $\mathbb{P}(\Sigma_\mathbf{a})_K\rightarrow\Spec(K)$
using an isomorphism $\mathbb{P}(\Sigma_\mathbf{a})_K\cong\mathbb{P}_K^2$. By the valuative criterion of properness, 
$a_i$ admits a unique extension $\overline{a}_i\colon\Spec(R)\rightarrow\mathbb{P}(\Sigma_\mathbf{a})$, which is the claimed section.

\begin{lemma}\label{disjointsectionsmustafinjoin}
%Let $a=(a_1,\ldots,a_n)\colon\Spec(K)\rightarrow\mathbf{B}_n$. Then 
The sections $\overline{a}_1,\ldots,\overline{a}_n$ of the Mustafin join $\mathbb{P}(\Sigma_\mathbf{a})$ %\rightarrow\Spec(R)$ 
are pairwise disjoint.
\end{lemma}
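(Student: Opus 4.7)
The plan is to reduce the disjointness of the $n$ sections on the Mustafin join $\mathbb{P}(\Sigma_{\mathbf{a}})$ to a disjointness statement inside a single factor of the ambient product $\prod_{M \in \Sigma_{\mathbf{a}}} \mathbb{P}(M)$. Since $\mathbb{P}(\Sigma_{\mathbf{a}})$ is defined as the closure of a diagonal embedding inside this product, each section $\overline{a}_i$ decomposes as the tuple $(\overline{a}_i^M)_{M \in \Sigma_{\mathbf{a}}}$, where $\overline{a}_i^M$ is the unique extension of $a_i$ to a section of $\mathbb{P}(M) \to \Spec(R)$ provided by the valuative criterion. Hence, to separate $\overline{a}_i(0)$ from $\overline{a}_j(0)$ in $\mathbb{P}(\Sigma_{\mathbf{a}})_{\Bbbk}$, it suffices to exhibit one lattice class $[L] \in \Sigma_{\mathbf{a}}$ whose projection already separates the two limits in $\mathbb{P}(L)_{\Bbbk}$.

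The input that produces such an $[L]$ is the fact, quoted from \cite{KT06} just before the lemma, that every quadruple of points in general linear position admits a (unique) stable lattice class. Given distinct $i \neq j$ in $[n]$, I would fix two additional distinct indices $k, \ell \in [n] \setminus \{i, j\}$; this is possible because the arc setting already presupposes $n \geq 4$, and the case $n = 4$ is handled by the unique stable lattice for the full configuration. Feeding the quadruple $(a_i, a_j, a_k, a_\ell)$ into the cited fact yields a lattice class $[L]$ for which the four limits $\overline{a}_i^L(0), \overline{a}_j^L(0), \overline{a}_k^L(0), \overline{a}_\ell^L(0)$ are in general linear position in $\mathbb{P}(L)_{\Bbbk} \cong \mathbb{P}_{\Bbbk}^2$. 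In particular $[L]$ is stable, so $[L] \in \Sigma_{\mathbf{a}}$, and $\overline{a}_i^L(0) \neq \overline{a}_j^L(0)$.

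There is no serious obstacle beyond these two observations: the only point requiring care is verifying that the projection $\mathbb{P}(\Sigma_{\mathbf{a}}) \to \mathbb{P}(L)$ onto the $L$-factor carries the section $\overline{a}_i$ to $\overline{a}_i^L$, and similarly for $j$. This is immediate from the diagonal nature of the embedding defining the Mustafin join, together with the uniqueness clause of the valuative criterion: the projection agrees with the identity on the generic fiber $\mathbb{P}_K^2$, and both $\mathrm{pr}_L \circ \overline{a}_i$ and $\overline{a}_i^L$ are extensions of the same $K$-point to a section of $\mathbb{P}(L) \to \Spec(R)$, so they must coincide. Combined with the previous paragraph, this forces $\overline{a}_i(0) \neq \overline{a}_j(0)$ for every $i \neq j$, completing the argument.
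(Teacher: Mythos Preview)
Your proof is correct and follows essentially the same approach as the paper: separate $\overline{a}_i(0)$ and $\overline{a}_j(0)$ by projecting to a single factor $\mathbb{P}(L)$ for a stable lattice $[L]\in\Sigma_{\mathbf{a}}$ that distinguishes them, and invoke the uniqueness in the valuative criterion to identify the projected section with $\overline{a}_i^L$. The paper's version is terser---it simply asserts that such an $L_h$ exists ``by the definition of stable lattices''---whereas you make the construction explicit by choosing a quadruple containing $i$ and $j$.
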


\begin{proof}
%The sections are clearly disjoint over $\Spec(K)$ because the $K$-points $a_1,\ldots,a_n$ are in general linear position by assumption. So we only have 
It suffices to check the claim on the central fiber $\mathbb{P}(\Sigma_\mathbf{a})_\Bbbk$.
Let $1\le i<j\le n$. Write $\Sigma_\mathbf{a}=\{L_1,\ldots,L_m\}$. By the definition of stable lattices, % with respect to $a$ 
there exists $h\in\{1,\ldots,m\}$ such that $\overline{a}_i(0),\overline{a}_j(0)\in\mathbb{P}(L_h)_\Bbbk$ are distinct. Then from the commutativity of the following diagram:
\begin{center}
\begin{tikzpicture}[>=angle 90]
\matrix(a)[matrix of math nodes,
row sep=2em, column sep=2em,
text height=1.5ex, text depth=0.25ex]
{\mathbb{P}(\Sigma_\mathbf{a})&\mathbb{P}(L_1)\times\cdots\times\mathbb{P}(L_m)\\
\Spec(R)&\mathbb{P}(L_h),\\};
\path[right hook->] (a-1-1) edge node[above]{}(a-1-2);
\path[->] (a-1-2) edge node[]{}(a-2-2);
\path[->] (a-2-1) edge node[]{}(a-1-1);
\path[->] (a-2-1) edge node[]{}(a-2-2);
\end{tikzpicture}
\end{center}
we can conclude that $\overline{a}_i(0),\overline{a}_j(0)\in\mathbb{P}(\Sigma_\mathbf{a})_k$ are also distinct.
\end{proof}

\begin{example}
\label{seconddegeneration5points}
Let $(a_1,\ldots,a_5)=([1:0:0],[0:1:0],[0:0:1],[1:1:1],[t^2:1:t])$.
Then
\[
\Sigma_\mathbf{a}=\{[e_1R+e_2R+e_3R],\ [t^2e_1R+e_2R+te_3R],\ [te_1R+e_2R+te_3R]\}.
\]
To prove this, %it is enough to check that for every quadruple $(a_{i_1},\ldots,a_{i_4})$, there exists $[L]\in\Sigma_a$ stabilizing it. In what follows, 
we list all  possible quadruples ${i_1},\ldots,{i_4}$ and the corresponding stable lattice %$L\in\Sigma_a$.
\begin{align*}
&\{1234\},\quad \{1345\} & ~e_1R+e_2R+e_3R,\\
&\{1235\}, \quad \{2345\} & ~t^2e_1R+e_2R+te_3R,\\
&\{1245\}&~te_1R+e_2R+te_3R.
\end{align*}
%Let us also compute the degenerate central fiber of $\mathbb{P}(\Sigma_a)$. 
Using the procedure of Remark~\ref{degenerationfrompicture}, 
the central fiber %we have that 
$\mathbb{P}(\Sigma_\mathbf{a})_\Bbbk$ with sections is shown in Figure~\ref{degeneration5points2}.

%Consider the one-parameter family cut out by the vanishing of the $2\times2$ minors of the matrix
%\begin{displaymath}
%\left( \begin{array}{ccc}
%x_1&t^2y_1&tz_1\\
%x_2&y_2&z_2\\
%x_3&ty_3&tz_3
%\end{array} \right).
%\end{displaymath}
%The central fiber in $\mathbb{P}_x^2\times\mathbb{P}_y^2\times\mathbb{P}_z^2$ is cut out by
%\begin{displaymath}
%\left\{ \begin{array}{ll}
%y_3z_2-y_2z_3=0\\
%x_3z_2=0\\
%x_1z_2=0\\
%y_3z_1=0\\
%y_2z_1=0\\
%x_3z_1-x_1z_3=0\\
%x_1y_3=0\\
%x_3y_2=0\\
%x_1y_2=0.
%\end{array} \right.
%\end{displaymath}
%The irreducible components are given by
%\[
%V(y_2,y_3,z_2,x_3z_1-x_1z_3)\cong\mathbb{F}_1,~V(x_1,x_3,z_1,y_3z_2-y_2z_3)\cong\mathbb{F}_1,
%\]
%\[
%V(x_1,y_2,z_1,z_2)\cong\mathbb{P}^1\times\mathbb{P}^1,~V(x_1,x_3,y_2,y_3)\cong\mathbb{P}^2.
%\]
%These are glued together as shown in Figure~\ref{degeneration5points2}.
\end{example}

\begin{figure}[hbtp]
\begin{tikzpicture}[scale=0.6]
	\draw[line width=1pt] (0,0) -- (0,1);
	\draw[line width=1pt] (0,0) -- (1,0);
	\draw[line width=1pt] (0,0) -- (-1,-1);
	\draw[line width=1pt] (1,0) -- (1+2,0);
	\draw[line width=1pt] (1,0) -- (1,3);
	\draw[line width=1pt] (1,0) -- (1-1.5,-1.5);
	\draw[line width=1pt] (0,1) -- (3,1);
	\draw[line width=1pt] (0,1) -- (0,1+2);
	\draw[line width=1pt] (0,1) -- (-1.5,1-1.5);

	\fill (0,0) circle (5pt);
	\fill (1,0) circle (5pt);
	\fill (0,1) circle (5pt);
\end{tikzpicture}
\hspace{1in}
\begin{tikzpicture}[scale=0.5]
	\draw[line width=1pt] (0,6) -- (6,6);
	\draw[line width=1pt] (6,0) -- (6,6);
	\draw[line width=1pt] (0,6) -- (6,0);
	\draw[line width=1pt] (2,4) -- (6,4);
	\draw[line width=1pt] (4,6) -- (4,2);

	\node at (3,5.2) {$\mathbb{F}_1$};

	\node at (5.2,3) {$\mathbb{F}_1$};

	\node at (5,5) {$\mathbb{F}_0$};
	
	\node at (3.47,3.47) {$\mathbb{P}^2$};

	\fill (0,6) circle (5pt);
	\fill (6,0) circle (5pt);
	\fill (6,6) circle (5pt);
	\fill (2,5) circle (5pt);
	\fill (5,2) circle (5pt);

	\node at (-0.5,6) {$1$};
	\node at (6.5,0) {$2$};
	\node at (6.5,6) {$3$};
	\node at (1.5,5.5) {$4$};
	\node at (5.5,1.5) {$5$};
\end{tikzpicture}
\caption{
The lattice classes in $\Sigma_\mathbf{a}$ (in the  apartment corresponding to the basis $e_1,e_2,e_3$) and the central fiber of the Mustafin join $\mathbb{P}(\Sigma_\mathbf{a})$ of Example~\ref{seconddegeneration5points}.} %On the left, the lattice classes in $\Sigma_a$ in the apartment corresponding to the basis $e_1,e_2,e_3$.}
\label{degeneration5points2}
\end{figure}
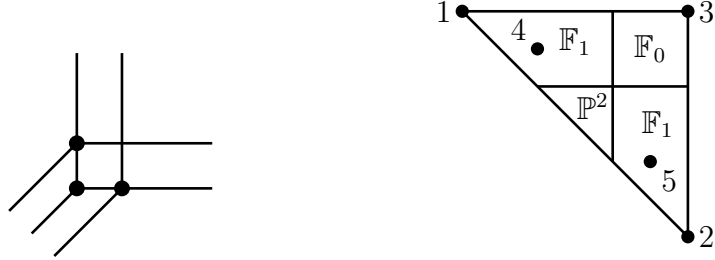

%\begin{remark}
%In Example~\ref{firstdegeneration5points} the two lattice classes $e_1R+e_2R+e_3R,te_1R+e_2R+e_3R$ are the only two stable lattices with respect to the arc $a\colon\Spec(K)\rightarrow B_5$ given by
%\[
%a(t)=([1:0:0],[0:1:0],[0:0:1],[1:1:1],[t:1:-1]).
%\]
%To show this, one can follow the same strategy we adopted in Example~\ref{seconddegeneration5points}.
%\end{remark}

In the remainder of \S\ref{Mustafinjoinsofstablelattices} we will show that the Mustafin join of stable lattices $\mathbb{P}(\Sigma_\mathbf{a})\rightarrow\Spec(R)$
is smooth along 
the sections 
$\overline{a}_1,\ldots,\overline{a}_n$,
i.e.~
%of the Mustafin join $\mathbb{P}(\Sigma_a)\rightarrow\Spec(R)$ determine $n$ smooth points 
$\overline{a}_1(0),\ldots,\overline{a}_n(0)$
are smooth points on the central fiber $\mathbb{P}(\Sigma_\mathbf{a})_\Bbbk$. % (see Theorem~\ref{sectionssmoothcentralfiber}). 
This was claimed in \cite[\S4.2]{GP91}, but no proof was given.
We fix vectors 
$$v_1,\ldots,v_n\in K^3$$ 
such that $a_i=[v_i]\in\bP^2(K)$ are in general linear position. 
After reordering, it suffices to prove this for the first section~$\overline{a}_1$.
We start by introducing 
canonical normalizations of vectors $v_1,\ldots,v_n$ as well as lattices $\Lambda$ for all lattice classes $[\Lambda]$ in the building.

%--------------------------------------------------

%--------------------------------------------------

\begin{definition}
We call a lattice $\Lambda$ \emph{normalized with respect to $v_1$} if $v_1\in\Lambda\setminus t\Lambda$.
\end{definition}

\begin{lemdef}\label{xfvxfbfb}
For any quadruple  $I\subseteq\{1,\ldots,n\}$,
write $\sum\limits_{i\in I}k_iv_i=0$, $k_i\in K\setminus\{0\}$.
The lattice $L_I:=\sum\limits_{i\in I}k_iv_iR$
stabilizes the quadruple $\{a_i\mid i\in I\}$. 
After rescaling the coefficients $k_i$, we can assume that all the lattices $L_I$ are normalized with respect to $v_1$.
%We can write
%$$\Sigma_a=\left\{[L_I]\mid I\in\binom{[n]}{4}\right\},$$
%where of course the same $L_I$ could stabilize different quadruples.
\end{lemdef}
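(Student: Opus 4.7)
The plan is to verify in turn that each $L_I$ is a well-defined lattice, that it stabilizes the prescribed quadruple, and that a single common rescaling of the $k_i$ achieves the normalization condition. Since $a_1,\ldots,a_n$ are in general linear position, any three of the vectors $\{v_i\}_{i \in I}$ form a $K$-basis of $K^3$, so the four vectors $\{v_i\}_{i \in I}$ admit a unique (up to an overall scalar) linear relation $\sum_{i \in I} k_i v_i = 0$, and every $k_i$ lies in $K^\ast$. Fixing any $i_0 \in I$ and using $k_{i_0} v_{i_0} = -\sum_{i \ne i_0} k_i v_i$, the submodule $L_I = \sum_{i \in I}(k_i v_i)R$ is generated by the three vectors $\{k_i v_i\}_{i \in I \setminus \{i_0\}}$. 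These are $K$-linearly independent, so they form an $R$-basis of $L_I$; in particular $L_I$ is a rank-three lattice in $K^3$.

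Next I would check stability. In the basis $\{k_i v_i\}_{i \ne i_0}$ of $L_I/tL_I \cong \Bbbk^3$, the four reductions $\overline{k_i v_i}$ for $i \in I$ are the three coordinate vectors together with $(-1,-1,-1)$; any three of these are $\Bbbk$-linearly independent, so the four images in $\mathbb{P}(L_I)_\Bbbk$ are in general linear position. Since the choice of $i_0$ was arbitrary, $k_i v_i \in L_I \setminus t L_I$ for every $i \in I$, hence its reduction modulo $tL_I$ coincides with the limit $\overline{a}_i^{L_I}(0)$ of the section extending $a_i = [v_i]$ by the valuative criterion. Therefore $L_I$ stabilizes the quadruple $\{a_i \mid i \in I\}$.

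Finally, for the normalization, the lattice property of $L_I$ provides a unique integer $z_I \in \mathbb{Z}$ with $v_1 \in t^{z_I} L_I \setminus t^{z_I + 1} L_I$. Replacing each $k_i$ by $t^{z_I} k_i$ scales $L_I$ to $t^{z_I} L_I$, which is normalized with respect to $v_1$; performing this rescaling independently for each quadruple $I$ completes the proof. The main conceptual step will be the identification $\overline{k_i v_i} = \overline{a}_i^{L_I}(0)$, which is precisely what makes the algebraic condition $\sum_{i \in I} k_i v_i = 0$ detect the geometric stability of the lattice $L_I$.
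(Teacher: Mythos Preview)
Your proof is correct and follows essentially the same approach as the paper's: both pick three of the $k_iv_i$ as an $R$-basis of $L_I$ and observe that the four reductions are the standard frame $[1:0:0],[0:1:0],[0:0:1],[1:1:1]$ in $\mathbb{P}(L_I)_\Bbbk$. You are simply more explicit than the paper about why $L_I$ is a lattice, why $k_iv_i\in L_I\setminus tL_I$, and how the normalization is achieved by a common rescaling $k_i\mapsto t^{z_I}k_i$ (the paper leaves this last point to the reader).
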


\begin{proof}
Since the vectors $\{v_i\mid i\in I\}$ are linearly dependent, 
we can write $\sum\limits_{i\in I}k_iv_i=0$ for some $k_i\in K$ not all 
equal to $0$.
In fact $k_i\neq0$ for all $i$ because any three vectors are linearly independent. In the basis $k_1v_1, k_2v_2, k_3v_3$ of $L_I$, 
the limits $a_i(0)$, $i\in I$, are 
%Up to relabeling, %we can assume that $I=\{1,2,3,4\}$. Let $e_1,e_2,e_3$ be the canonical basis of $K^3$ and denote by $x_1,x_2,x_3$ its dual basis. Let $M$ be the unique linear transformation sending $k_1v_1,k_2v_2,k_3v_3$ to 
$[1:0:0]$, $[0:1:0]$, $[0:0:1]$, $[1:1:1]$, respectively. Thus  $L_I$ stabilizes the quadruple $\{a_i\mid i\in I\}$.
%It follows from \eqref{dependencerelationamongquadruple} that $M$ sends $k_4v_4$ to $(-1,-1,-1)$. Define $(y_1,y_2,y_3)^t=M(x_1,x_2,x_3)^t$. If $f_1,f_2,f_3$ is the dual basis of $y_1,y_2,y_3$, then $(f_1,f_2,f_3)^t=(M^t)^{-1}(e_1,e_2,e_3)^t$. Therefore, a stabilizing lattice for the quadruple is $f_1R+f_2R+f_3R$. Notice that the rows of $(M^t)^{-1}$ are exactly the coordinates of $k_1v_1,k_2v_2,k_3v_3$ with respect to the basis $e_1,e_2,e_3$, showing that
%\[
%f_1R+f_2R+f_3R=k_1v_1R+k_2v_2R+k_3v_3R=L_I.
%\]
%(Notice that $L_I$ is generated by any three among $k_1v_1,\ldots,k_4v_4$.) In conclusion, $L_I$ stabilizes the quadruple $a_i$, $i\in I$.
\end{proof}

\begin{definition}
Fix an isomorphism $\varphi\colon\bigwedge^3K^3\xrightarrow{\cong}K$ and
let $\val\colon K\setminus\{0\}\to\mathbb{Z}$ be the usual valuation. 
For three linearly independent vectors $u,v,w$ in $K^3$, consider the log volume
\[
\|u,v,w\|=\val(\varphi(u\wedge v\wedge w)).
\]
\end{definition}

\begin{lemdef}
\label{vectorsnormalizedwrtanotherone}
We can rescale $v_2,\ldots,v_n$ uniquely
by $t^{p_2},\ldots,t^{p_n}$ so that 
\begin{enumerate}

\item For any quadruple $1,i,j,k$, we have $\|v_i,v_j,v_k\|\geq\|v_1,v_j,v_k\|$;

\item For all $i\neq1$, there exists a quadruple $1,i,j,k$ such that $\|v_i,v_j,v_k\|=\|v_1,v_j,v_k\|$.

\end{enumerate}
In this case we say that $v_1,\ldots,v_n$ are \emph{normalized with respect to $v_1$}.
\end{lemdef}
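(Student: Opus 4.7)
The plan is to rewrite conditions (1) and (2) as a decoupled system of integer inequalities on the $p_i$'s and then solve it explicitly. The first step will be the observation that the log volume is additive under rescaling: setting $p_1 := 0$ and $v_i' := t^{p_i}v_i$, we have
\[
\|v_i', v_j', v_k'\| = p_i+p_j+p_k+\|v_i,v_j,v_k\|
\]
by multilinearity of the wedge product and the valuation property $\val(tu)=\val(u)+1$. Substituting this into the inequality of (1) for the quadruple $(1,i,j,k)$ and cancelling $p_j+p_k$, condition (1) rewrites as: for every $i\geq 2$ and every unordered pair $\{j,k\}\subseteq[n]\setminus\{1,i\}$,
\[
p_i \geq \|v_1,v_j,v_k\| - \|v_i,v_j,v_k\|.
\]
Thus (1) becomes a system of lower bounds on each $p_i$ that decouples index by index, and (2) becomes the assertion that each such system is tight.

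The second step is to define, for each $i\geq 2$,
\[
p_i := \max_{\{j,k\}\subseteq[n]\setminus\{1,i\}}\bigl(\|v_1,v_j,v_k\|-\|v_i,v_j,v_k\|\bigr).
\]
The maximum ranges over a finite nonempty set of integers: general linear position of $v_1,\ldots,v_n$ ensures any three are linearly independent, so each log volume is a finite integer. Hence $p_i\in\mathbb{Z}$, and rescaling by $t^{p_i}$ is legitimate. With this choice, (1) holds by construction and (2) holds because the maximum is attained.

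Uniqueness then follows formally: any $(p_i)$ satisfying (1) must have $p_i$ at least the displayed maximum, while any $(p_i)$ satisfying (2) must have $p_i$ equal to some term appearing in the maximum, hence at most the maximum. Combining forces $p_i$ to equal the displayed formula. The heart of the argument is the translation of (1) into an index-by-index system; once this is done, existence, uniqueness, and integrality are all immediate, so I do not anticipate any genuine obstacle beyond this rewriting.
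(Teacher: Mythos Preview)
Your proof is correct and follows essentially the same approach as the paper's: both reduce the problem to the observation that the inequality $\|t^{p_i}v_i,v_j,v_k\|\geq\|v_1,v_j,v_k\|$ depends only on $p_i$ (not on $p_j,p_k$, since these cancel from both sides), so each $p_i$ is determined independently as the least integer satisfying it, equivalently your displayed maximum. Your write-up is slightly more explicit about the cancellation and the uniqueness argument, but the idea is identical.
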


\begin{proof}
Let $i\in\{2,\ldots,n\}$ and define $p_i$ to be the least  integer such that
\[
\|t^{p_i}v_i,v_j,v_k\|=p_i+\|v_i,v_j,v_k\|\geq\|v_1,v_j,v_k\|,
\]
for all distinct $j,k\neq1,i$.
Note that the inequality depends not on $v_j, v_k$ but only on the corresponding points $a_j,a_k\in\bP^2(K)$.
By minimality of $p_i$, equality is achieved for at least one pair $j,k$. 
Therefore, $t^{p_2}v_2,\ldots,t^{p_n}v_n$ satisfy properties (1) and (2).
\end{proof}

\begin{lemma}
\label{latticeusingglobalnormalizationisstable}
Assume $v_1,\ldots,v_n$ are normalized with respect to $v_1$. If $1\in I\in\binom{[n]}{4}$, then 
\[
\Lambda_I=\{u\in K^3\mid\|u,v_j,v_k\|\geq\|v_1,v_j,v_k\|~\mathrm{for~all}~j,k\in I\setminus\{1\}\}
\]
is a lattice normalized with respect to $v_1$ and stabilizing the quadruple $a_i$, $i\in I$.
\end{lemma}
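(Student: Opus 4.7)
The plan is to exhibit an explicit $R$-basis of $\Lambda_I$ by expanding arbitrary vectors in the $K$-basis $v_i,v_j,v_k$ (where $I\setminus\{1\}=\{i,j,k\}$), and then to identify $\Lambda_I$ with a suitable rescaling of the lattice $L_I$ from Lemma--Definition~\ref{xfvxfbfb}, transferring the stabilization property via that identification.

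First I would write every $u\in K^3$ uniquely as $u=\alpha v_i+\beta v_j+\gamma v_k$ and compute
$\|u,v_j,v_k\|=\val(\alpha)+\|v_i,v_j,v_k\|$, and analogous formulas for $\|u,v_i,v_k\|$ and $\|u,v_i,v_j\|$, obtained by wedging and noting that signs are invisible to the valuation. Substituting into the defining conditions of $\Lambda_I$ turns them into three coordinate-wise valuation inequalities $\val(\alpha)\ge a_i$, $\val(\beta)\ge a_j$, $\val(\gamma)\ge a_k$, where $a_i=\|v_1,v_j,v_k\|-\|v_i,v_j,v_k\|$ and similarly for $a_j,a_k$. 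This immediately displays $\Lambda_I$ as the free rank-$3$ $R$-module with basis $t^{a_i}v_i,\,t^{a_j}v_j,\,t^{a_k}v_k$, hence a lattice.

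Next I would verify normalization: $v_1\in\Lambda_I$ is tautological, and if $v_1=tu$ with $u\in\Lambda_I$ then, for instance, $\|u,v_j,v_k\|=\|v_1,v_j,v_k\|-1$ would violate the defining inequality, so $v_1\notin t\Lambda_I$. For stability, I would use that $v_i,v_j,v_k,v_1$ are linearly dependent: writing $\sum_{\ell\in I}k_\ell v_\ell=0$ with $k_\ell\in K^*$ and applying the wedge trick gives $\val(k_\ell/k_1)=a_\ell$ for $\ell\in\{i,j,k\}$, so $k_\ell/k_1$ is a unit times $t^{a_\ell}$. Therefore $\Lambda_I=k_1^{-1}L_I$, i.e.\ $\Lambda_I$ coincides on the nose with the normalization of the lattice $L_I$ from Lemma--Definition~\ref{xfvxfbfb}, which stabilizes the quadruple $\{a_\ell\mid\ell\in I\}$; stabilization is a property of the lattice class, so it transfers.

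Alternatively, one can argue stabilization directly from the basis: the reductions $\overline{t^{a_i}v_i},\overline{t^{a_j}v_j},\overline{t^{a_k}v_k}$ in $\Lambda_I/t\Lambda_I$ give $[1\!:\!0\!:\!0],[0\!:\!1\!:\!0],[0\!:\!0\!:\!1]$, while the expansion of $v_1$ in this basis has all three coefficients units (their valuations equal $a_i,a_j,a_k$ by the Cramer-type computation above), so $\overline{a}_1^{\Lambda_I}(0)$ has all nonzero coordinates and the four limits are in general linear position. The hypothesis that $v_1,\ldots,v_n$ are normalized with respect to $v_1$ is not heavily used in this single lemma; it will matter in the next steps (comparing $\Lambda_I$ across different quadruples $I$). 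The main technical point, rather than an obstacle, is simply the bookkeeping converting wedge-product relations into valuation identities; once that dictionary is in place every claim in the statement follows mechanically.
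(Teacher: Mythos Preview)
Your argument is correct and follows essentially the same route as the paper: both proofs expand $u$ in the basis $v_i,v_j,v_k$, use Cramer's rule/wedge computations to convert the defining inequalities into coordinate-wise valuation bounds, and thereby identify $\Lambda_I$ with the (normalized) lattice $L_I$ of Lemma--Definition~\ref{xfvxfbfb}. The paper phrases this as showing the two containments $L_I\subseteq\Lambda_I$ and $\Lambda_I\subseteq L_I$ directly, whereas you first write down the explicit basis $t^{a_i}v_i,t^{a_j}v_j,t^{a_k}v_k$ and then match it to $L_I$, but the underlying computation is the same; your observation that the global normalization hypothesis is not essential here (only the lattice-class data matters) is also accurate.
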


\begin{proof}
Assume $I=\{1,\ldots,4\}$. 
Write $v_1=c_2v_2+c_3v_3+c_4v_4$ for $c_2,c_3,c_4\in K$.
By Lemma--Definition~\ref{xfvxfbfb}, the lattice $L_I=c_2v_2R+c_3v_3R+c_4v_4R$ stabilizes $a_1,\ldots,a_4$.  By Cramer's rule,
\begin{equation}
\label{cramer1}
c_2=\frac{\varphi(v_1\wedge v_3\wedge v_4)}{\varphi(v_2\wedge v_3\wedge v_4)},~c_3=-\frac{\varphi(v_1\wedge v_2\wedge v_4)}{\varphi(v_2\wedge v_3\wedge v_4)},~c_4=\frac{\varphi(v_1\wedge v_2\wedge v_3)}{\varphi(v_2\wedge v_3\wedge v_4)}.
\end{equation}
%We claim that $L_I=\Lambda_I$.
Let $u\in L_I$. Then $u=r_2c_2v_2+r_3c_3v_3+r_4c_4v_4$ for some $r_2,r_3,r_4\in R$. By Cramer's rule,
\begin{equation}
\label{cramer2}
r_2c_2=\frac{\varphi(u\wedge v_3\wedge v_4)}{\varphi(v_2\wedge v_3\wedge v_4)},~r_3c_3=-\frac{\varphi(u\wedge v_2\wedge v_4)}{\varphi(v_2\wedge v_3\wedge v_4)},~r_4c_4=\frac{\varphi(u\wedge v_2\wedge v_3)}{\varphi(v_2\wedge v_3\wedge v_4)}.
\end{equation}
Since $\val(r_ic_i)\geq\val(c_i)$,  \eqref{cramer1} and \eqref{cramer2} imply that $\|u,v_j,v_k\|\geq\|v_1,v_j,v_k\|$. Thus $L_I\subseteq\Lambda_I$. For the other containment, let $u\in\Lambda_I$
and write $u=d_2v_2+d_3v_3+d_4v_4$ 
for some $d_2,d_3,d_4\in K$. Since $\|u,v_j,v_k\|\geq\|v_1,v_j,v_k\|$ for all $j,k\in I\setminus\{1\}$, we have
$\val(d_i)\geq\val(c_i)$ for all $i=2,3,4$ by Cramer's rule.
Thus $d_i=r_ic_i$ for some $r_i\in R$, implying that $u\in L_I$.
\end{proof}

\begin{corollary}
\label{intersectionallstablelatticeswrt1}
Assume $v_1,\ldots,v_n$ are normalized with respect to $v_1$. 
Write $\Sigma_\mathbf{a}=\{[L_j]\}$, where every $L_j$ is normalized with respect to $v_1$.
Then
\[
\Lambda=\bigcap_{[L_j]\in\Sigma_\mathbf{a}}L_j=\{u\in K^3\mid\|u,v_j,v_k\|\geq\|v_1,v_j,v_k\|~\mathrm{for~all}~j,k\in [n]\setminus\{1\}\}.
\]
\end{corollary}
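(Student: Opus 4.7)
Let $\Lambda'$ denote the right-hand side. The plan is to establish both inclusions $\Lambda\subseteq\Lambda'$ and $\Lambda'\subseteq\Lambda$ using Lemma~\ref{latticeusingglobalnormalizationisstable} to handle the ``easy'' lattices and a direct log-volume computation for the rest.

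For $\Lambda\subseteq\Lambda'$, I would fix arbitrary distinct indices $j,k\in[n]\setminus\{1\}$ and pick any $i\in[n]\setminus\{1,j,k\}$. By Lemma~\ref{latticeusingglobalnormalizationisstable}, the lattice $\Lambda_{\{1,i,j,k\}}$ is normalized with respect to $v_1$ and stabilizes the quadruple indexed by $\{1,i,j,k\}$, so $[\Lambda_{\{1,i,j,k\}}]\in\Sigma_\mathbf{a}$ and $\Lambda_{\{1,i,j,k\}}$ itself appears as one of the $L_j$ in the chosen list of normalized representatives. For $u\in\Lambda$, the inclusion $u\in\Lambda_{\{1,i,j,k\}}$ gives $\|u,v_j,v_k\|\geq\|v_1,v_j,v_k\|$; ranging over $(j,k)$ proves $u\in\Lambda'$.

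For $\Lambda'\subseteq\Lambda$, it suffices to show $\Lambda'\subseteq L$ for each normalized representative $L$ of a class in $\Sigma_\mathbf{a}$. If $L$ stabilizes a quadruple $I\ni 1$, then by Lemma~\ref{latticeusingglobalnormalizationisstable} we have $L=\Lambda_I$, whose defining inequalities are among the ones defining $\Lambda'$, so the inclusion is immediate. The main obstacle is the remaining case $1\notin I=\{a,b,c,d\}$. Writing $\sum_{i\in I}k_iv_i=0$ we get $L=k_av_aR+k_bv_bR+k_cv_cR$ (the relation lets us eliminate $v_d$). A Cramer's rule computation analogous to the one proving Lemma~\ref{latticeusingglobalnormalizationisstable} then characterizes
\[
L=\{u\in K^3\mid\|u,v_p,v_q\|\geq\val(k_r)+\|v_r,v_p,v_q\|~\text{for each}~\{p,q,r\}=\{a,b,c\}\}.
\]

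To finish, I would apply the normalization hypothesis $v_1\in L$: plugging $u=v_1$ into each of the three inequalities above yields $\val(k_r)+\|v_r,v_p,v_q\|\leq\|v_1,v_p,v_q\|$ for each pair $\{p,q\}\subset\{a,b,c\}\subseteq[n]\setminus\{1\}$. Hence for any $u\in\Lambda'$,
\[
\|u,v_p,v_q\|\geq\|v_1,v_p,v_q\|\geq\val(k_r)+\|v_r,v_p,v_q\|,
\]
so $u$ satisfies the defining inequalities of $L$ and lies in $L$. The conceptual content of the hard step is that the $v_1$-normalization of $L$ is precisely what ensures the bounds $\|v_1,v_p,v_q\|$ dominate those coming from the $k_r$'s, making $\Lambda'$'s inequalities uniformly stronger than those cutting out every stable lattice, including the ones whose stabilized quadruple omits $1$.
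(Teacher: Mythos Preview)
Your argument is correct and shares the same skeleton as the paper's: both identify $\Lambda'$ with $\bigcap_{1\in I}\Lambda_I$ via Lemma~\ref{latticeusingglobalnormalizationisstable}, and both handle the case $1\notin I$ by exploiting the normalization $v_1\in L_I$. The difference is in how that last step is carried out. You characterize $L_I$ by three log-volume inequalities and plug in $u=v_1$ to bound their right-hand sides by $\|v_1,v_p,v_q\|$, thereby showing directly that the defining inequalities of $\Lambda'$ dominate those of $L_I$. The paper instead writes $v_1=\sum_{i\in I}r_ik_iv_i$ with $r_i\in R$, picks $j\in I$, and constructs the sublattice $\sum_{i\in I\setminus\{j\}}(r_i-r_j)k_iv_iR\subseteq L_I$, which it then identifies with $L_{(I\setminus\{j\})\cup\{1\}}$; this yields $\bigcap_{1\in J}L_J\subseteq L_I$ without ever writing $L_I$ in inequality form. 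Your version is a bit more explicit and computational; the paper's is more structural in that it exhibits, for each quadruple omitting $1$, a specific quadruple containing $1$ whose lattice is nested inside. The two are essentially translations of one another through Cramer's rule.
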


\begin{proof}
We use the notation of Lemma--Definition~\ref{xfvxfbfb}. By Lemma~\ref{latticeusingglobalnormalizationisstable},
it suffices to prove that $\bigcap\limits_{I\in\binom{[n]}{4}}L_I=\bigcap\limits_{1\in I\in\binom{[n]}{4}}L_I$.

Let $I\in\binom{[n]}{4}$, $1\not\in I$. It suffices to show that there exists $J\in\binom{[n]}{4}$ such that $1\in J$ and $L_J\subseteq L_I$. By assumption, $v_1\in L_I$, hence we can write
\[
v_1=\sum_{i\in I}r_ik_iv_i,
\]
for some $r_i\in R$. Let $j\in I$ be arbitrary
and write $k_jv_j=\sum_{i\in I\setminus\{j\}}(-k_i)v_i$. By substituting into the previous expression for $v_1$ we obtain
\begin{equation}
\label{relationfornewlattice}
v_1=\sum_{i\in I\setminus\{j\}}(r_i-r_j)k_iv_i. 
\end{equation}
Let $\Lambda=\sum_{i\in I\setminus\{j\}}(r_i-r_j)k_iv_iR$. 
Since $\Lambda$ is contained in $L_I$, it suffices to show 
that $L_{(I\setminus\{j\})\cup\{1\}}=\Lambda$. By Lemma--Definition~\ref{xfvxfbfb} and \eqref{relationfornewlattice}, this is true up to rescaling, so we only have to show that $\Lambda$ is normalized with respect to $v_1$. By \eqref{relationfornewlattice} we have that $v_1\in\Lambda$, so we just need to show that $t^{-1}v_1\notin\Lambda$. But this is true because $t^{-1}v_1\notin L_I$.
\end{proof}

\begin{theorem}
\label{sectionssmoothcentralfiber}
Let $\mathbf{a}=(a_1,\ldots,a_n)\in(\mathbb{P}^2)^n(K)$ be in general linear position. Then the Mustafin join $\mathbb{P}(\Sigma_\mathbf{a})\rightarrow\Spec(R)$ is smooth
along the $n$ disjoint sections $\overline{a}_1,\ldots,\overline{a}_n$.
\end{theorem}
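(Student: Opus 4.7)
By symmetry, it suffices to prove $\overline{a}_1(0)$ is a smooth point of $\mathbb{P}(\Sigma_\mathbf{a})_\Bbbk$. The plan is to apply Lemma~\ref{smoothsectionifminimum} with $s = a_1$ and $v = v_1$; this requires producing a lattice $L_{i_0} \in \Sigma_\mathbf{a}$ contained in every other element of $\Sigma_\mathbf{a}$, once all lattices of $\Sigma_\mathbf{a}$ are normalized so that $v_1 \in L \setminus tL$. Equivalently, $\Sigma_\mathbf{a}$ must admit a minimum element under inclusion after normalization with respect to $v_1$.

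Accordingly, normalize $v_1, \ldots, v_n$ with respect to $v_1$ as in Lemma--Definition~\ref{vectorsnormalizedwrtanotherone}, and normalize every lattice in $\Sigma_\mathbf{a}$ with respect to $v_1$. Corollary~\ref{intersectionallstablelatticeswrt1} furnishes the explicit description
$$
\Lambda := \bigcap_{[L]\in\Sigma_\mathbf{a}} L = \{u \in K^3 \mid \|u, v_j, v_k\| \geq \|v_1, v_j, v_k\| \text{ for all } j, k \in [n]\setminus\{1\}\}.
$$
Clearly $v_1 \in \Lambda \setminus t\Lambda$, and property~(2) of Lemma--Definition~\ref{vectorsnormalizedwrtanotherone} ensures that in fact $v_i \in \Lambda \setminus t\Lambda$ for \emph{every} $i \in [n]$: containment in $\Lambda$ is property~(1), and the tight inequality in property~(2) rules out $v_i \in t\Lambda$. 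Consequently, each limit $\overline{a}_i^\Lambda(0) = [v_i \bmod t\Lambda]$ is a non-zero class in $\Lambda/t\Lambda \cong \Bbbk^3$.

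The core step is to show $[\Lambda] \in \Sigma_\mathbf{a}$: concretely, that $\Lambda$ stabilizes some quadruple $I = \{1, i_2, i_3, i_4\}$. Setting $d := \val(\bigwedge^3 \Lambda)$, this reduces to finding $I$ with $\|v_a, v_b, v_c\| = d$ for every triple $\{a, b, c\} \subseteq I$, since the images of the $v_a$'s then form bases of $\Lambda/t\Lambda$, giving general linear position in $\mathbb{P}(\Lambda)_\Bbbk$. The plan is to construct $I$ by an extremal argument: start from a triple $\{1, i_2, i_3\}$ for which the inequality defining $\Lambda$ is tight (this gives $\|v_1, v_{i_2}, v_{i_3}\| = d$, and such a triple exists because all $v_i \in \Lambda$ and primitive triples must exhaust a basis of $\Lambda/t\Lambda$), then extend to $i_4$ using property~(2) of the normalization to force tightness of $\|v_1, v_{i_2}, v_{i_4}\|, \|v_1, v_{i_3}, v_{i_4}\|, \|v_{i_2}, v_{i_3}, v_{i_4}\|$, invoking the general-linear-position of $\mathbf{a}$ to exclude degenerate candidate extensions. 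This combinatorial extension step is the main obstacle: it requires a careful interplay between the valuation inequalities controlling $\Lambda$ and the structure of the normalization of the $v_i$.

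Once $[\Lambda] \in \Sigma_\mathbf{a}$ is established, setting $L_{i_0} = \Lambda$ in Lemma~\ref{smoothsectionifminimum}---the containment $\Lambda \subseteq L$ for every $L \in \Sigma_\mathbf{a}$ holds by construction---yields that $\overline{a}_1(0)$ is a smooth point of the primary component of $\mathbb{P}(\Sigma_\mathbf{a})_\Bbbk$ corresponding to $\Lambda$. Combined with Lemma~\ref{disjointsectionsmustafinjoin}, smoothness of $\mathbb{P}(\Sigma_\mathbf{a})$ along all $n$ disjoint sections $\overline{a}_1, \ldots, \overline{a}_n$ follows.
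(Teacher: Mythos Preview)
Your reduction to Lemma~\ref{smoothsectionifminimum} via the intersection lattice $\Lambda$ is the right opening move, and indeed the paper's proof begins the same way. But the ``core step'' on which everything hinges---that $[\Lambda]\in\Sigma_{\mathbf a}$ always---is \emph{false}. The combinatorial extension argument you sketch cannot be completed, because there are arcs for which $\Lambda$ stabilizes no quadruple at all.

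A concrete counterexample is furnished by the type~\#7 degeneration analysed in Lemma~\ref{non-planardegenerationgp7}: take
\[
\mathbf a(t)=\bigl([1{:}0{:}0],[0{:}1{:}0],[0{:}0{:}1],[1{:}1{:}1],[t{:}1{:}1{+}t],[1{+}t{:}1{:}t]\bigr).
\]
Normalizing the five stable lattices $L_0,\dots,L_4$ with respect to $v_1=e_1$ and intersecting gives $\Lambda=e_1R+te_2R+te_3R$. In $\mathbb P(\Lambda)_\Bbbk$ (basis $e_1,te_2,te_3$) the six limit points are
\[
[1{:}0{:}0],\ [0{:}1{:}0],\ [0{:}0{:}1],\ [0{:}1{:}1],\ [0{:}1{:}1],\ [0{:}1{:}0],
\]
so five of the six lie on the line $\{x_1=0\}$ and no four are in general position. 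Hence $[\Lambda]\notin\Sigma_{\mathbf a}$, and Lemma~\ref{smoothsectionifminimum} cannot be applied directly.

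The paper addresses exactly this failure: when $\Sigma_{\mathbf a}$ has no minimum, one enlarges to $\Sigma_{\mathbf a}'=\Sigma_{\mathbf a}\cup\{[\Lambda]\}$, so that Lemma~\ref{smoothsectionifminimum} gives smoothness of $\overline a_1(0)$ in $\mathbb P(\Sigma_{\mathbf a}')_\Bbbk$ on the primary component $\widetilde C$ over $\Lambda$. The remaining work---and it is substantial---is to control the projection $\pi\colon\mathbb P(\Sigma_{\mathbf a}')\to\mathbb P(\Sigma_{\mathbf a})$: one shows $\pi|_{\widetilde C}$ is birational (by producing two distinct point-type induced subspaces $W_\Lambda(\Lambda_I)$ via the tightness condition~(2) of Lemma--Definition~\ref{vectorsnormalizedwrtanotherone}), and then that no line of $\mathbb P(\Lambda)_\Bbbk$ through $\overline a_1(0)$ meets every $W_\Lambda(\Lambda_I)$, invoking Lemmas~\ref{latticerestrictionpoint} and~\ref{alignedlimitsstablelattice}. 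Your proposal is missing this entire second half.
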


\begin{proof}
We need to show that the sections determine 
smooth points of the central fiber.
Up~to permuting the indices, it suffices to show that $\overline{a}_1(0)\in\mathbb{P}(\Sigma_a)_\Bbbk$ is a smooth point. Write
\[
a_1=[v_1],\ldots,a_n=[v_n],
\]
where $v_1,\ldots,v_n$ are normalized with respect to $v_1$ (Lemma--Definition~\ref{vectorsnormalizedwrtanotherone}). Using Lemma~\ref{latticeusingglobalnormalizationisstable}, we can write the set of stable lattices with respect to $\mathbf{a}$ as
\[
\Sigma_\mathbf{a}=\left\{[\Lambda_I]\;\middle|\;I\in\binom{[n]}{4}\right\},
\]
where  $\Lambda_I$  is normalized with respect to $v_1$ and %by Lemma~\ref{latticeusingglobalnormalizationisstable} 
stabilizes the quadruple $a_i$, $i\in I$. If the set $\Sigma_\mathbf{a}$ has a minimal element with respect to inclusion of lattices then we are done by Lemma~\ref{smoothsectionifminimum}. 

Suppose there is no minimal
element in $\Sigma_\mathbf{a}$ and consider $\Lambda$ of Corollary~\ref{intersectionallstablelatticeswrt1},  the intersection of all the $\Lambda_I$'s.
Then $[\Lambda]\not\in\Sigma_\mathbf{a}$.
Define $\Sigma_\mathbf{a}'=\Sigma_\mathbf{a}\cup\{[\Lambda]\}$. Let $\widetilde{C}$ be the primary component of $\mathbb{P}(\Sigma_\mathbf{a}')_\Bbbk$ corresponding to~$[\Lambda]$. Recall that $\widetilde{C}$ is the blow up of $\mathbb{P}(\Lambda)_\Bbbk$ at the linear subspaces $W_{\Lambda}(\Lambda_I)$ for $I\in\binom{[n]}{4}$ (see \cite[Proposition 5.6]{CHSW11}). By Lemma~\ref{smoothsectionifminimum}, 
$\overline{a}_i(0)\in\mathbb{P}(\Sigma_\mathbf{a}')_\Bbbk$ is a smooth point, and $\widetilde{C}$ is the primary component containing $\overline{a}_1(0)\in\mathbb{P}(\Sigma_\mathbf{a}')_\Bbbk$. In particular, $\overline{a}_1(0)\in\mathbb{P}(\Lambda)_\Bbbk$ is not contained in any of the linear subspaces $W_{\Lambda}(\Lambda_I)$, $I\in\binom{[n]}{4}$.

We have a morphism $\pi\colon\mathbb{P}(\Sigma_\mathbf{a}')\rightarrow\mathbb{P}(\Sigma_\mathbf{a})$. We will show later on in the proof
that $\pi|_{\widetilde{C}}$ is birational onto its image. For now, let us assume it. If $\ell\subseteq\mathbb{P}(\Lambda)_\Bbbk$ is a line, then we denote by $\widetilde{\ell}\subseteq\widetilde{C}$ its strict transform. 
By \cite[Lemma 5.10]{CHSW11}, $\widetilde{\ell}$ is contracted by $\pi$ if and only if $\ell$ intersects all linear subspaces $W_{\Lambda}(\Lambda_I)$. Moreover, by \cite[Proposition~5.11]{CHSW11}, the union of all contracted $\widetilde{\ell}$'s is the exceptional locus of $\pi|_{\widetilde{C}}$. So it suffices to show that there is no line $\ell\subseteq\mathbb{P}(\Lambda)_\Bbbk$ such that $\overline{a}_1(0)\in\widetilde{\ell}$ and %$\widetilde{\ell}$ is contracted by $\pi$. If by contradiction such a line $\ell$ exists, then 
$\ell$ intersects all the linear subspaces $W_{\Lambda}(\Lambda_I)$, $I\in\binom{[n]}{4}$. 
Arguing by contradiction, suppose $\ell$ exists. Since $\pi|_{\widetilde{C}}$ is birational, there exist $\Lambda_J,\Lambda_K$ inducing two distinct points
$
W_{\Lambda}(\Lambda_J),W_{\Lambda}(\Lambda_K)\in\mathbb{P}(\Lambda)_\Bbbk.
$
These  points lie on~$\ell$. %, otherwise $\widetilde{\ell}$ cannot be contracted. 
By~Lemma~\ref{latticerestrictionpoint},  $W_{\Lambda}(\Lambda_J)=\overline{a}_j(0)$ and $W_{\Lambda}(\Lambda_K)=\overline{a}_k(0)$ for some sections $\overline{a}_j$ and $\overline{a}_k$. Therefore, $\overline{a}_1(0),\overline{a}_j(0),\overline{a}_k(0)$ are three distinct limit points on $\ell$. By Lemma~\ref{alignedlimitsstablelattice},  $\ell$ is a linear subspace of $\mathbb{P}(\Lambda)_\Bbbk$ induced by a stable lattice. But this is a contradiction, since we know that $\overline{a}_1(0)\in\mathbb{P}(\Lambda)_\Bbbk$ cannot lie on any of the linear spaces $W_{\Lambda}(\Lambda_I)$.

To conclude the proof, it remains to show that $\pi|_{\widetilde{C}}$ is birational onto its image. 
For any  $i\in\{2,\ldots,n\}$, there is a
quadruple $I=\{1,i,j,k\}$ such that 
%[n]\setminus\{1\}$ and let $j,k\neq1,i$ such that 
$\|v_i,v_j,v_k\|=\|v_1,v_j,v_k\|$ (Lemma--Definition~\ref{vectorsnormalizedwrtanotherone}). %If $I=\{1,i,j,k\}$ then 
We claim that $\overline{a}_i(0)\in\mathbb{P}(\Lambda)_\Bbbk$ is disjoint from $W_{\Lambda}(\Lambda_I)=\mathbb{P}((t^{z_I}\Lambda_I\cap\Lambda)/t\Lambda)$, where $z_I$ is that unique integer such that 
$t\Lambda\subseteq t^{z_I}\Lambda_I$ and $\Lambda\nsubseteq t^{z_I}\Lambda_I$.
Note that $z_I>0$.
It is enough to show that $v_i\notin t^{z_I}\Lambda_I\subseteq t\Lambda_I$. %which is true because $j,k$ was chosen so that $\|v_i,v_j,v_k\|=\|v_1,v_j,v_k\|$. Indeed, 
Assume by contradiction that $v_i\in t\Lambda_I$. So $v_i=tu$, where $u\in K^3$ satisfies $\|u,v_j,v_k\|\geq\|v_1,v_j,v_k\|$ (see Lemma~\ref{latticeusingglobalnormalizationisstable}). %But we also know that $\|v_i,v_j,v_k\|=\|v_1,v_j,v_k\|$, 
Hence
\[
\|u,v_j,v_k\|\geq\|v_1,v_j,v_k\|=\|v_i,v_j,v_k\|=\|tu,v_j,v_k\|=1+\|u,v_j,v_k\|,
\]
which is impossible.
%Since $\overline{a}_i(0)\in\mathbb{P}(\Lambda)_\Bbbk$ does not lie on
Next we claim that $W_{\Lambda}(\Lambda_I)$ is a point. If not, then both $\overline{a}_1(0),\overline{a}_i(0)\in\mathbb{P}(\Lambda)_\Bbbk$  lie outside the line $W_{\Lambda}(\Lambda_I)$. On the other hand,
the limits $\overline{a}_1(0),\overline{a}_i(0)\in\mathbb{P}(\Lambda_I)_\Bbbk$ are distinct because  $\Lambda_I$ stabilizes $a_1,a_i,a_j,a_k$. 
Analyzing both 
possible diagrams $\mathbb{P}(\Lambda)_\Bbbk\leftarrow\mathbb{P}(\Lambda,\Lambda_I)_\Bbbk\rightarrow\mathbb{P}(\Lambda_I)_\Bbbk$ of Figure~\ref{diagramsfortwolatticesandinducedlinearspaces},
we arrive at a contradiction.
%To understand this last claim, by Remark~\ref{allabouttwolattices} consider the two possible diagrams $\mathbb{P}(\Lambda)_\Bbbk\leftarrow\mathbb{P}(\Lambda,\Lambda_I)_\Bbbk\rightarrow\mathbb{P}(\Lambda_I)_\Bbbk$, which are shown in Figure~\ref{diagramsfortwolatticesandinducedlinearspaces}, second and third row. Having that $\overline{a}_1(0),\overline{a}_i(0)\in\mathbb{P}(\Lambda_I)_\Bbbk$ are equal is impossible because recall $\Lambda_I$ stabilizes $a_1,a_i,a_j,a_k$, so $W_{\Lambda}(\Lambda_I)$ is a point. 
Since $W_{\Lambda}(\Lambda_I)$ is a point, by Lemma~\ref{latticerestrictionpoint} it is equal to $\overline{a}_h(0)$ for some $h\in[n]\setminus\{1\}$. Now repeat the same argument above with $i$ replaced by $h$ to find a second induced linear subspace which is a point distinct from $\overline{a}_1(0)$ and $W_{\Lambda}(\Lambda_I)$.
By Lemma~\ref{criterionforlinescontracted}, this implies that $\pi|_{\widetilde{C}}$ is birational onto its image.
\end{proof}

\begin{lemma}
\label{latticerestrictionpoint}
Let $\mathbf{a}=(a_1,\ldots,a_n)\in(\mathbb{P}^2)^n(K)$ be in general linear position and let $[L_0]\in\mathfrak{B}_3^0$ be arbitrary. Let $[L]\in\Sigma_\mathbf{a}$ be a stable lattice class and assume that the linear subspace $W_{L_0}(L)\subseteq\mathbb{P}(L_0)_\Bbbk$ is a point. Then there exist distinct $i,j\in[n]$ such that
\[
\overline{a}_i(0)=W_{L_0}(L)=\overline{a}_j(0).
\]
\end{lemma}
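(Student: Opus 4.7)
The plan is to combine the stability of $L$ with respect to $\mathbf{a}$ with the explicit structure of the two-lattice Mustafin join $\mathbb{P}(\{[L_0],[L]\})$ described in Remark~\ref{allabouttwolattices}. Since $L\in\Sigma_\mathbf{a}$, at least four of the limits $\overline{a}_k^L(0)\in\mathbb{P}(L)_\Bbbk$ are in general linear position. The hypothesis that $W_{L_0}(L)$ is a point rules out both case~(2) of Remark~\ref{allabouttwolattices} (in which both induced subspaces are lines) and the case $[L_0]=[L]$ (in which $W_{L_0}(L)=\mathbb{P}(L_0)_\Bbbk$); so we are necessarily in case~(1) with $W_L(L_0)\subseteq\mathbb{P}(L)_\Bbbk$ a line. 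Since any line in $\mathbb{P}^2$ meets at most two of four points in general linear position, at least two of the good limits, say $\overline{a}_i^L(0)$ and $\overline{a}_j^L(0)$ with $i\ne j$, lie strictly outside $W_L(L_0)$.

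Next I unpack the structure of $\mathbb{P}(\{[L_0],[L]\})_\Bbbk$. From Remark~\ref{allabouttwolattices} (case~(1) with $W_L(L_0)$ a line), its central fiber is $\mathbb{P}^2\cup\mathbb{F}_1$, where the $\mathbb{P}^2$ is the primary component for $L$ (equal to $\mathbb{P}(L)_\Bbbk$, since blowing up a line in $\mathbb{P}^2$ does nothing), and $\mathbb{F}_1$ is the primary component for $L_0$, namely the blow-up of $\mathbb{P}(L_0)_\Bbbk$ at the point $W_{L_0}(L)$. They are glued along $W_L(L_0)\subseteq\mathbb{P}(L)_\Bbbk$ and the exceptional divisor in $\mathbb{F}_1$. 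The projection to $\mathbb{P}(L_0)_\Bbbk$ restricts to the blow-down on $\mathbb{F}_1$ and contracts the entire primary $\mathbb{P}(L)_\Bbbk$ component onto the single point $W_{L_0}(L)$.

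Finally, the valuative criterion of properness gives each $a_k$ a unique section $\overline{a}_k$ of $\mathbb{P}(\{[L_0],[L]\})$, and the limit $\overline{a}_k(0)$ projects simultaneously to $\overline{a}_k^{L_0}(0)$ and $\overline{a}_k^L(0)$. For $k\in\{i,j\}$, since $\overline{a}_k^L(0)\notin W_L(L_0)$ and the only irreducible component of $\mathbb{P}(\{[L_0],[L]\})_\Bbbk$ that dominates points of $\mathbb{P}(L)_\Bbbk$ outside $W_L(L_0)$ is the primary $\mathbb{P}(L)_\Bbbk$, the point $\overline{a}_k(0)$ must lie in the interior of this primary component. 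Projecting down to $\mathbb{P}(L_0)_\Bbbk$ then forces $\overline{a}_k^{L_0}(0)=W_{L_0}(L)$ for both $k=i$ and $k=j$, which is the desired conclusion.

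The main subtlety is verifying the projection behavior of the second paragraph, specifically that the primary $\mathbb{P}(L)_\Bbbk$ contracts to $W_{L_0}(L)$ while the $\mathbb{F}_1$ dominates $\mathbb{P}(L_0)_\Bbbk$; this is implicit in Remark~\ref{allabouttwolattices} but one may wish to verify it on an apartment model along the lines of \cite[\S4]{CHSW11}, using the valuation convention that $v\in L\setminus tL$ to pin down which component each section lands on.
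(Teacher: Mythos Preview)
Your proof is correct and follows essentially the same approach as the paper: both reduce to the two-lattice Mustafin join $\mathbb{P}(\{[L_0],[L]\})$ in case~(1) of Remark~\ref{allabouttwolattices} and use that the primary $\mathbb{P}^2$ component (for $L$) contracts to the point $W_{L_0}(L)$ under the projection to $\mathbb{P}(L_0)_\Bbbk$. The only cosmetic difference is that the paper phrases the last step as a contrapositive (if at most one of the four stabilized limits hits $W_{L_0}(L)$, then their images in $\mathbb{P}(L)_\Bbbk$ fail to be in general position), whereas you argue directly that two of the four good limits must lie off the line $W_L(L_0)$ and hence land in the $\mathbb{P}^2$ component.
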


\begin{proof}
Up to relabeling the indices, assume that $a_1,\ldots,a_4$ is the quadruple stabilized by $[L]$. We show that at least two among $\overline{a}_1(0),\ldots,\overline{a}_4(0)\in\mathbb{P}(L_0)_\Bbbk$ equal $W_{L_0}(L)$. Let $\Sigma=\{[L_0],[L]\}$ and consider the diagram $\mathbb{P}(L_0)_\Bbbk\leftarrow\mathbb{P}(\Sigma)_\Bbbk\rightarrow\mathbb{P}(L)_\Bbbk$.
Since $W_{L_0}(L)$ is a point, by Remark~\ref{allabouttwolattices},
$\mathbb{P}(\Sigma)_\Bbbk$ is isomorphic to the gluing of $\mathbb{P}^2$ and $\mathbb{F}_1$ along a line and the exceptional divisor respectively. The morphism $\mathbb{P}(\Sigma)_\Bbbk\rightarrow\mathbb{P}(L_0)_\Bbbk$ is the blow up at the point $W_{L_0}(L)$, and the morphism $\mathbb{P}(\Sigma)_\Bbbk\rightarrow\mathbb{P}(L)_\Bbbk$ contracts $\mathbb{F}_1$ along the ruling. Therefore, if no two points among $\overline{a}_1(0),\ldots,\overline{a}_4(0)\in\mathbb{P}(L_0)_\Bbbk$ are equal to $W_{L_0}(L)$, then  $\overline{a}_1(0),\ldots,\overline{a}_4(0)\in\mathbb{P}(L)_\Bbbk$ would not be in general linear position.
\end{proof}

\begin{lemma}
\label{alignedlimitsstablelattice}
Let $\mathbf{a}=(a_1,\ldots,a_n)\in(\mathbb{P}^2)^n(K)$ be in general linear position and let $[L_0]\in\mathfrak{B}_3^0$ be arbitrary. If three limit points $\overline{a}_i(0),\overline{a}_j(0),\overline{a}_k(0)\in\mathbb{P}(L_0)_\Bbbk$ are distinct and contained in a line $\ell$, then any $[L]\in\Sigma_\mathbf{a}$ stabilizing $a_i,a_j,a_k$ has the property that $W_{L_0}(L)=\ell$.
\end{lemma}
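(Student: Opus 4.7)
The plan is to argue by contradiction using the two-lattice Mustafin join $\mathbb{P}(\Sigma)$ for $\Sigma = \{[L_0], [L]\}$, leveraging the complete classification of its central fiber recalled in Remark~\ref{allabouttwolattices} and Figure~\ref{diagramsfortwolatticesandinducedlinearspaces}. First I would dispose of the degenerate case $[L]=[L_0]$: if this held, the four limits of the quadruple stabilized by $L$ would sit in general linear position in $\mathbb{P}(L_0)_\Bbbk = \mathbb{P}(L)_\Bbbk$, which contradicts three of them being collinear on $\ell$. So I may assume $[L] \neq [L_0]$, and study the projections $\pi_0, \pi_L$ from $\mathbb{P}(\Sigma)_\Bbbk$ onto $\mathbb{P}(L_0)_\Bbbk$ and $\mathbb{P}(L)_\Bbbk$. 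The key preliminary observation is that over any point $x \in \mathbb{P}(L_0)_\Bbbk \setminus W_{L_0}(L)$, the fiber of $\pi_0$ is a single point which necessarily lies in the primary component $P_0$ corresponding to $[L_0]$ (every other component of $\mathbb{P}(\Sigma)_\Bbbk$ projects into $W_{L_0}(L)$); in particular, the $\mathbb{P}(L)_\Bbbk$-limit of any section whose $L_0$-limit lies off $W_{L_0}(L)$ is computed by restricting $\pi_L$ to $P_0$.

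Assuming for contradiction that $\ell \neq W_{L_0}(L)$, I would then split on whether $W_{L_0}(L)$ is a line or a point as in Remark~\ref{allabouttwolattices}. If $W_{L_0}(L)$ is a line (covering both the sub-case where $W_L(L_0)$ is a point and the sub-case where both induced spaces are lines and the central fiber has a secondary $\mathbb{F}_0$ component), the geometric upshot is that $\pi_L|_{P_0}$ is constant, contracting $P_0$ onto a single point of $\mathbb{P}(L)_\Bbbk$ lying on $W_L(L_0)$. Since the two lines $\ell$ and $W_{L_0}(L)$ meet in at most one point, at least two of $\overline{a}_i(0), \overline{a}_j(0), \overline{a}_k(0)$ lie off $W_{L_0}(L)$ and therefore have equal $\mathbb{P}(L)_\Bbbk$-limits, contradicting that $L$ stabilizes a quadruple containing $a_i, a_j, a_k$.

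If instead $W_{L_0}(L)$ is a point, then $W_L(L_0)$ is a line, $P_0$ is the Hirzebruch surface $\mathbb{F}_1$ obtained by blowing up $\mathbb{P}(L_0)_\Bbbk$ at $W_{L_0}(L)$, and $\pi_L|_{P_0}$ contracts the rulings of $\mathbb{F}_1$ onto $W_L(L_0)$; these rulings correspond to the pencil of lines through $W_{L_0}(L)$ in $\mathbb{P}(L_0)_\Bbbk$. I would then split once more: if $\ell$ passes through $W_{L_0}(L)$, at least two of the three distinct limits sit on $\ell$ away from $W_{L_0}(L)$, hence on the same ruling, and again have equal $\mathbb{P}(L)_\Bbbk$-limits. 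If instead $\ell$ misses $W_{L_0}(L)$, then the three distinct limits lie on three distinct lines of the pencil, so their $\mathbb{P}(L)_\Bbbk$-images are three distinct points of the line $W_L(L_0)$, producing three collinear points in the quadruple stabilized by $L$---again contradicting general linear position.

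The main obstacle is verifying the claims about $\pi_L|_{P_0}$ in each of the three sub-configurations (constant on $P_0$ in the two sub-cases of Case A; ruling contraction onto $W_L(L_0)$ in Case B). These follow from Remark~\ref{allabouttwolattices} once one reduces to a single apartment via \cite[Proposition~4.11]{Wer01} and works in the coordinate presentation of Remark~\ref{equationsmustafinvariety}, but the bookkeeping of which component absorbs which locus is the delicate part and deserves to be written out carefully.
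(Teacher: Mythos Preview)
Your proposal is correct and follows essentially the same approach as the paper: both set up the two-lattice diagram $\mathbb{P}(L_0)_\Bbbk\leftarrow\mathbb{P}(\Sigma)_\Bbbk\rightarrow\mathbb{P}(L)_\Bbbk$ with $\Sigma=\{[L_0],[L]\}$, invoke the classification in Remark~\ref{allabouttwolattices}, and run through the possible positions of the three collinear limits relative to $W_{L_0}(L)$ to force a contradiction with general linear position in $\mathbb{P}(L)_\Bbbk$. The paper compresses the case analysis into a single sentence, whereas you have written it out explicitly; your sub-cases and the geometric claims about $\pi_L|_{P_0}$ in each configuration are all accurate.
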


\begin{proof}
Let $[L]\in\Sigma_\mathbf{a}$ be any stable lattice class stabilizing a quadruple that includes $a_i,a_j,a_k$. Define $\Sigma=\{[L_0],[L]\}$ and consider the diagram $\mathbb{P}(L_0)_\Bbbk\leftarrow\mathbb{P}(\Sigma)_\Bbbk\rightarrow\mathbb{P}(L)_\Bbbk$, where in $\mathbb{P}(L_0)_\Bbbk$ the three limit points $\overline{a}_i(0),\overline{a}_j(0),\overline{a}_k(0)$ are aligned, but the same limits in $\mathbb{P}(L)_\Bbbk$ are in general linear position.
By Remark~\ref{allabouttwolattices}, there are two possibilities for $\mathbb{P}(\Sigma)_\Bbbk$. Considering all the possible ways the three aligned limit points in $\mathbb{P}(L_0)_\Bbbk$ can be related with respect to $W_{L_0}(L)$, and considering that these limits are in general linear position in $\mathbb{P}(L)_\Bbbk$, we see that $W_{L_0}(L)$ is a line and $\overline{a}_i(0),\overline{a}_j(0),\overline{a}_k(0)\in W_{L_0}(L)$.
\end{proof}

%--------------------------------------------------------------------------------------------------------------------------------------------------

\section{Universal Mustafin join for arcs}
\label{UniversalMustafin}

%--------------------------------------------------

\begin{example}\label{familyoverBn}
We start with  a counter-example to the claim \cite[\S4, Proposition~1]{GP91}
that $\overline{\mathbf{B}}_n$ is the moduli space of Mustafin joins for arcs.
Define arcs $\mathbf{a},\mathbf{b}\colon\Spec(K)\rightarrow\mathbf{B}_6$,
\[
\mathbf{a}(t)=([1:0:0],[0:1:0],[0:0:1],[1:1:1],[1:t:t^2],[t:1:t^2]),
\]
\[
\mathbf{b}(t)=([1:0:0],[0:1:0],[0:0:1],[1:1:1],[1:t^2:t^3],[t:1:t^3]).
\]

We claim that the limit points $\overline{\mathbf{a}}(0),\overline{\mathbf{b}}(0)\in\overline{\mathbf{B}}_6$ are equal. 
Recall from \S\ref{explicitclosedembedding} that $\overline{\mathbf{B}}_6$ can be embedded in $\prod_{\mathcal{Q}_6'}\mathbb{P}^2$, where $\mathcal{Q}_6'$ is the set of ordered quintuples $v=(v_1,\ldots,v_5)$ of distinct indices in $\{1,\ldots,6\}$ such that $v_1<\ldots<v_4$. We claim that for all $30$ quintuples $v\in\mathcal{Q}_6'$,
\[
\lim_{t\rightarrow0}\psi_{a_{v_1}(t),\ldots,a_{v_4}(t)}(a_{v_5}(t))=\lim_{t\rightarrow0}\psi_{b_{v_1}(t),\ldots,b_{v_4}(t)}(b_{v_5}(t)),
\]
where the morphism $\psi$ is defined in \S\ref{explicitclosedembedding}. 
The limits are computed in Table~\ref{30crossratios}.

Next we compute $\mathbb{P}(\Sigma_\mathbf{a})_\Bbbk$ and $\mathbb{P}(\Sigma_\mathbf{b})_\Bbbk$. The respective stable lattices are given by
\begin{align*}
\Sigma_\mathbf{a}=\{&[e_1R+e_2R+e_3R],[te_1R+e_2R+t^2e_3R],[e_1R+te_2R+t^2e_3R],\\
&[e_1R+e_2R+t^2e_3R],[e_1R+te_2R+te_3R],[te_1R+e_2R+te_3R]\},
\end{align*}
\begin{align*}
\Sigma_\mathbf{b}=\{&[e_1R+e_2R+e_3R],[te_1R+e_2R+t^3e_3R],[e_1R+t^2e_2R+t^3e_3R],\\
&[te_1R+e_2R+te_3R],[e_1R+t^2e_2R+t^2e_3R],[e_1R+e_2R+t^3e_3R]\}.
\end{align*}
We compute the central fibers $\mathbb{P}(\Sigma_\mathbf{a})_\Bbbk\not\cong\mathbb{P}(\Sigma_\mathbf{b})_\Bbbk$ using 
Remark~\ref{degenerationfrompicture} (see Figure~\ref{spidersforaandb}). The surfaces are illustrated in Figure~\ref{GPcounterexample}, where $\mathbb{P}(\Sigma_\mathbf{a})_\Bbbk$ is on the left and $\mathbb{P}(\Sigma_\mathbf{b})_\Bbbk$ is on the right.

\begin{table}[hbtp]
\centering
\begin{tabular}{c|c||c|c}
\toprule
Quintuple & Limit in $\mathbb{P}^2$ & Quintuple & Limit in $\mathbb{P}^2$\\
\hline
$12345$ & $[1:0:0]$ & $13465$ & $[1:0:0]$\\
$12346$ & $[0:1:0]$ & $13562$ & $[1:1:1]$\\
$12354$ & $[0:0:1]$ & $13564$ & $[0:1:0]$\\
$12356$ & $[0:1:0]$ & $14562$ & $[1:1:1]$\\
$12364$ & $[0:0:1]$ & $14563$ & $[0:1:0]$\\
$12365$ & $[1:0:0]$ & $23451$ & $[1:1:1]$\\
$12453$ & $[0:0:1]$ & $23456$ & $[1:0:0]$\\
$12456$ & $[0:1:0]$ & $23461$ & $[0:1:1]$\\
$12463$ & $[0:0:1]$ & $23465$ & $[0:1:1]$\\
$12465$ & $[1:0:0]$ & $23561$ & $[0:0:1]$\\
$12563$ & $[1:0:1]$ & $23564$ & $[0:1:0]$\\
$12564$ & $[1:0:1]$ & $24561$ & $[0:0:1]$\\
$13452$ & $[0:1:1]$ & $24563$ & $[0:1:0]$\\
$13456$ & $[0:1:1]$ & $34561$ & $[0:0:1]$\\
$13462$ & $[1:1:1]$ & $34562$ & $[1:1:1]$\\
\bottomrule
\end{tabular}
\caption{Coordinates of the limit $\overline{\mathbf{a}}(0)=\overline{\mathbf{b}}(0)\in\overline{\mathbf{B}}_6\subseteq\prod_{Q_6'}\mathbb{P}^2$. %  in the proof of Theorem~\ref{zsdjbhsdjbds}.
}
\label{30crossratios}
\end{table}

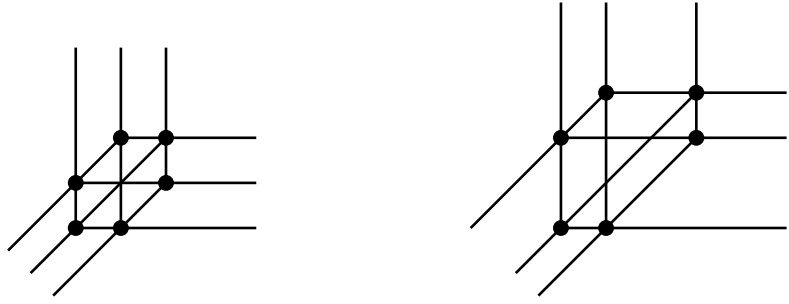
\begin{figure}[hbtp]
\begin{tikzpicture}[scale=0.6]
	\draw[line width=1pt] (0,0) -- (1,0);
	\draw[line width=1pt] (0,0) -- (0,1);
	\draw[line width=1pt] (1,0) -- (1+3,0);
	\draw[line width=1pt] (1,0) -- (1,2);
	\draw[line width=1pt] (0,1) -- (2,1);
	\draw[line width=1pt] (0,1) -- (0,1+3);
	\draw[line width=1pt] (1,2) -- (1+1,2);
	\draw[line width=1pt] (1,2) -- (1,2+2);
	\draw[line width=1pt] (1,2) -- (1-2.5,2-2.5);
	\draw[line width=1pt] (2,1) -- (2+2,1);
	\draw[line width=1pt] (2,1) -- (2,1+1);
	\draw[line width=1pt] (2,1) -- (2-2.5,1-2.5);
	\draw[line width=1pt] (2,2) -- (2+2,2);
	\draw[line width=1pt] (2,2) -- (2,2+2);
	\draw[line width=1pt] (2,2) -- (2-3,2-3);

	\fill (0,0) circle (5pt);
	\fill (1,0) circle (5pt);
	\fill (0,1) circle (5pt);
	\fill (1,2) circle (5pt);
	\fill (2,1) circle (5pt);
	\fill (2,2) circle (5pt);
\end{tikzpicture}
\hspace{1in}
\begin{tikzpicture}[scale=0.6]
	\draw[line width=1pt] (3,3) -- (3+2,3);
	\draw[line width=1pt] (3,3) -- (3,3+2);
	\draw[line width=1pt] (3,3) -- (3-4,3-4);
	\draw[line width=1pt] (3,2) -- (3+2,2);
	\draw[line width=1pt] (3,2) -- (3,2+1);
	\draw[line width=1pt] (3,2) -- (3-3.5,2-3.5);
	\draw[line width=1pt] (1,3) -- (1+2,3);
	\draw[line width=1pt] (1,3) -- (1,3+2);
	\draw[line width=1pt] (1,3) -- (1-3,3-3);
	\draw[line width=1pt] (0,2) -- (0,2+3);
	\draw[line width=1pt] (0,2) -- (0+3,2);
	\draw[line width=1pt] (0,2) -- (0,2);
	\draw[line width=1pt] (1,0) -- (1,0+3);
	\draw[line width=1pt] (1,0) -- (1+4,0);
	\draw[line width=1pt] (0,0) -- (0+1,0);
	\draw[line width=1pt] (0,0) -- (0,0+2);

	\fill (0,0) circle (5pt);
	\fill (1,0) circle (5pt);
	\fill (0,2) circle (5pt);
	\fill (1,3) circle (5pt);
	\fill (3,2) circle (5pt);
	\fill (3,3) circle (5pt);
\end{tikzpicture}
\caption{The lattice classes in $\Sigma_\mathbf{a}$ and $\Sigma_\mathbf{b}$ in the standard apartment. % corresponding to the basis $e_1,e_2,e_3$.
}
\label{spidersforaandb}
\end{figure}

\begin{figure}[hbtp]
\begin{tikzpicture}[scale=0.5]
	\draw[line width=1pt] (0,9) -- (9,9);
	\draw[line width=1pt] (9,0) -- (9,9);
	\draw[line width=1pt] (0,9) -- (9,0);

	\draw[line width=1pt] (3,6) -- (4.5,6);
	\draw[line width=1pt] (7.5,6) -- (9,6);
	\draw[line width=1pt] (6,4.5) -- (7.5,4.5);
	\draw[line width=1pt] (6,4.5) -- (6,3);
	\draw[line width=1pt] (7.5,4.5) -- (7.5,6);
	\draw[line width=1pt] (7.5,4.5) -- (9,3);
	\draw[line width=1pt] (4.5,6) -- (6,4.5);
	\draw[line width=1pt] (4.5,6) -- (4.5,7.5);
	\draw[line width=1pt] (4.5,7.5) -- (6,7.5);
	\draw[line width=1pt] (6,7.5) -- (7.5,6);
	\draw[line width=1pt] (6,9) -- (6,7.5);
	\draw[line width=1pt] (3,9) -- (4.5,7.5);

	\draw[line width=1pt] (10,9) -- (19,9);
	\draw[line width=1pt] (19,0) -- (19,9);
	\draw[line width=1pt] (10,9) -- (19,0);

	\draw[line width=1pt] (13,6) -- (14.5,6);
	\draw[line width=1pt] (17.5,6) -- (19,6);
	\draw[line width=1pt] (16,4.5) -- (17.5,4.5);
	\draw[line width=1pt] (16,4.5) -- (16,3);
	\draw[line width=1pt] (17.5,4.5) -- (17.5,6);
	\draw[line width=1pt] (17.5,4.5) -- (19,3);
	\draw[line width=1pt] (14.5,6) -- (16,4.5);
	\draw[line width=1pt] (14.5,6) -- (14.5,7.5);
	\draw[line width=1pt] (14.5,7.5) -- (16,7.5);
	\draw[line width=1pt] (16,7.5) -- (17.5,6);
	\draw[line width=1pt] (16,9) -- (16,7.5);
	\draw[line width=1pt] (13,9) -- (14.5,7.5);
	\draw[line width=1pt] (14.5,6) -- (16,6);
	\draw[line width=1pt] (16,6) -- (16,7.5);
	\draw[line width=1pt] (16,6) -- (17.5,4.5);

	\node at (5,4.9) {$\mathbb{F}_1$};
	\node at (15,4.9) {$\mathbb{F}_1$};

	\node at (8.4,4.9) {$\mathbb{F}_1$};
	\node at (18.4,4.9) {$\mathbb{F}_1$};

	\node at (5,8.3) {$\mathbb{F}_1$};
	\node at (15,8.3) {$\mathbb{F}_1$};

	\node at (3,7.5) {$\Bl_1\mathbb{F}_0$};
	\node at (13,7.5) {$\Bl_1\mathbb{F}_0$};

	\node at (7.7,7.5) {$\Bl_1\mathbb{F}_0$};
	\node at (17.7,7.5) {$\Bl_1\mathbb{F}_0$};

	\node at (7.7,2.8) {$\Bl_1\mathbb{F}_0$};
	\node at (17.7,2.8) {$\Bl_1\mathbb{F}_0$};

	\node at (15.3,6.7) {$\mathbb{F}_0$};
	\node at (16.8,6.1) {$\mathbb{F}_0$};
	\node at (16,5.3) {$\mathbb{F}_0$};

	\node at (6,6) {$\Bl_3\mathbb{P}^2$};
\end{tikzpicture}
\caption{Example of $\mathbb{P}(\Sigma_\mathbf{a})_\Bbbk$ and $\mathbb{P}(\Sigma_\mathbf{b})_\Bbbk$ with $\overline{\mathbf{a}}(0)=\overline{\mathbf{b}}(0)\in\overline{\mathbf{B}}_6$.}
\label{GPcounterexample}
\end{figure}
\end{example}

Let us also pinpoint a mistake in the proof of \cite[\S4, Proposition~1]{GP91}
and then explain how to construct the correct moduli space of Mustafin joins. Consider the diagram
\begin{equation}\label{sdgegefg}
\begin{aligned}
\begin{tikzpicture}[>=angle 90]
\matrix(a)[matrix of math nodes,
row sep=2em, column sep=2em,
text height=1.5ex, text depth=0.25ex]
{\mathbf{B}_n\times\mathbb{P}^2&\mathbf{B}_n\times(\mathbb{P}^2)^{\binom{n}{4}}\\
\mathbf{B}_n,&\\};
\path[right hook->] (a-1-1) edge node[above]{}(a-1-2);
\path[->] (a-1-1) edge node[left]{}(a-2-1);
\path[->] (a-1-2) edge node[right]{}(a-2-1);
\end{tikzpicture}
\end{aligned}
\end{equation}
where the horizontal map $\Psi$ sends 
$$((p_1,\ldots,p_n),p)\mapsto((p_1,\ldots,p_n),(\psi_{p_{v_1},\ldots,p_{v_4}}(p))_{v}).$$ (Recall from \S\ref{explicitclosedembedding} that, for any quadruple $v_1,\ldots,v_4$, $\psi_{p_{v_1},\ldots,p_{v_4}}$ is the unique element of $\PGL_3$ sending $p_{v_1},\ldots,p_{v_4}$ to the standard frame.) Let $\overline{\mathbf{F}}_n$ be the closure of the image of $\Psi$ in 
$\overline{\mathbf{B}}_n\times(\mathbb{P}^2)^{\binom{n}{4}}$ and consider
a morphism $\mu\colon\overline{\mathbf{F}}_n\to \overline{\mathbf{B}}_n$. It was assumed in \cite[\S4, Proposition~1]{GP91}
that formation of~$\mu$ commutes with arbitrary base-changes $S\to{\mathbf{B}}_n$.
However, this is wrong (Example~\ref{familyoverBn}), 
in particular $\mu$ is not flat. A  remedy is provided by the Grothendieck's universal flattening morphism as in \cite{Ray72}. Consider the morphism
$$\Phi\colon{\mathbf{B}}_n\to \overline{\mathbf{B}}_n\times\Hilb\left((\mathbb{P}^2)^{\binom{n}{4}}\right),$$
where the first component is the inclusion and the second component sends $(p_1,\ldots,p_n)\in{\mathbf{B}}_n$
to $\Psi((p_1,\ldots,p_n)\times\bP^2)$. By $\mathrm{Hilb}\left((\mathbb{P}^2)^{\binom{n}{4}}\right)$ we mean the connected component of the Hilbert scheme of closed subschemes in $(\mathbb{P}^2)^{\binom{n}{4}}$ which parametrizes the diagonally embedded $\mathbb{P}^2$ in $(\mathbb{P}^2)^{\binom{n}{4}}$. The correct moduli space is the closure of the image of $\Phi$ and the universal Mustafin join for arcs
is the pullback of the universal family of the Hilbert scheme. We~will analyze this construction in  detail, although
we will use the multigraded Hilbert scheme of $(\mathbb{P}^2)^{\binom{n}{4}}$
instead of the usual Hilbert scheme in order to have Proposition~\ref{sRGsrgwrH}.

\begin{definition}[\cite{HS04}]
For a commutative ring $k$, let $S=k[x_1,\ldots,x_n]$ be a polynomial ring with a grading by an abelian group~$A$
given by a semigroup homomorphism $\deg\colon\mathbb{N}^n\rightarrow A$. %Here we identify a monomial $x_1^{u_1}\cdot\ldots\cdot x_n^{u_n}\in S$ with its vector of exponents $(u_1,\ldots,u_n)\in\mathbb{N}^n$. %We denote by $A_+$ the image of $\deg$. 
Fix a function 
$$h\colon A\rightarrow\mathbb{N}.$$ % supported on $A_+$
The {\em multigraded Hilbert scheme} $\mathbf{H}^h_S$ parametrizes all 
ideals in $S$ homogeneous with respect to $\deg$ and with Hilbert function $h$, i.e.~such that
$$\dim_k S_a/I_a=h(a)\quad \hbox{\rm for every}\quad a\in A.$$ 
For every $k$-algebra $R$, the set of $R$-points
$\mathbf{H}^h_S(R)$ is the set of homogeneous ideals $I\subseteq R\otimes_k S$
such that $(R\otimes S)_a/I_a$ is a locally free $R$-module of rank $h(a)$  for every~$a$.
The scheme $\mathbf{H}_S^h$ is quasi-projective over $k$, in fact 
projective if $1\in S$ is the only monomial of degree $0$.
\end{definition}

\begin{example}\label{srgsrgr}
We are interested in the multigraded Hilbert scheme of %a product of projective planes 
$(\bP^2)^N$.
Here 
$$S=\Bbbk[x_{ij}\mid i=1,2,3,\ j=1,\ldots,N],$$ 
where $\Bbbk$ is our algebraically closed base field and $[x_{1j}:x_{2j}:x_{3j}]$ are homogeneous
 coordinates of the $j$-th copy of~$\mathbb{P}^2$,
with the usual multigrading by~$\mathbb{Z}^N$. 
The numerical function $h$ is the Hilbert function of the diagonally 
 embedded~$\bP^2\hookrightarrow(\bP^2)^N$.
A detailed study of this case can be found in \cite{CS10}. 
The authors prove that $\mathbf{H}_S^h$ is connected and all ideals parametrized by it are radical and Cohen--Macaulay.
The morphism from $\mathbf{H}_S^h$ to the  Hilbert scheme of $(\bP^2)^N$
given by taking $\Proj(R\otimes_\Bbbk S)/I$ is injective on $\Bbbk$-points,
although it is not clear if it is a closed embedding. It follows that 
\end{example}

\begin{lemma}\label{sdfvsdfv}
The projection from $(\mathbb{P}^2)^{N}$ to $(\mathbb{P}^2)^{N-1}$ induces a morphism of multigraded 
Hilbert schemes  (with the Hilbert function of the diagonal). 
\end{lemma}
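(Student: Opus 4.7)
The plan is to describe the projection $\pi\colon (\mathbb{P}^2)^N \to (\mathbb{P}^2)^{N-1}$ (say, dropping the last factor; the general case is symmetric) at the level of multigraded coordinate rings and then build the induced morphism of multigraded Hilbert schemes as a natural transformation of functors of points. Concretely, set $S' = \Bbbk[x_{ij}\mid i=1,2,3,\ j=1,\ldots,N-1]\subseteq S$, so that $\pi$ corresponds to the graded inclusion $S'\hookrightarrow S$, where we regard $S'$ with its $\mathbb{Z}^{N-1}$-grading. Let $h,h'$ be the Hilbert functions of the diagonal $\mathbb{P}^2$ in $(\mathbb{P}^2)^N$ and $(\mathbb{P}^2)^{N-1}$, respectively.

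First I would match up the Hilbert functions. The diagonal $\Delta\subseteq(\mathbb{P}^2)^N$ is the image of $[y_1\!:\!y_2\!:\!y_3]\mapsto([y_1\!:\!y_2\!:\!y_3],\ldots,[y_1\!:\!y_2\!:\!y_3])$, so the restriction $\mathcal{O}(a_1,\ldots,a_N)|_\Delta\cong\mathcal{O}(a_1+\cdots+a_N)$ gives $h(a_1,\ldots,a_N)=\binom{\sum a_j+2}{2}$ when all $a_j\geq 0$ (and $0$ otherwise). In particular $h(a,0)=h'(a)$ for all $a\in\mathbb{Z}^{N-1}$. A second elementary observation: a monomial in $S$ of $\mathbb{Z}^N$-degree $(a,0)$ involves no $x_{iN}$, hence lies in $S'$; so the identification $S_{(a,0)}=S'_a$ as $\Bbbk$-vector spaces is canonical, and similarly for any base change to a $\Bbbk$-algebra~$R$.

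Next I would define the natural transformation. For a $\Bbbk$-algebra $R$ and an $R$-point $I\in\mathbf{H}_S^h(R)$, set
$$I':=I\cap(R\otimes_\Bbbk S')\subseteq R\otimes_\Bbbk S'.$$
By construction $I'$ is $\mathbb{Z}^{N-1}$-homogeneous, and $I'_a=I_{(a,0)}$ under the identification $(R\otimes S')_a=(R\otimes S)_{(a,0)}$. Hence
$$(R\otimes S')_a\big/I'_a \;=\; (R\otimes S)_{(a,0)}\big/I_{(a,0)},$$
which is locally free of rank $h(a,0)=h'(a)$ by the defining property of $\mathbf{H}_S^h$. Thus $I'\in\mathbf{H}_{S'}^{h'}(R)$, and the assignment $I\mapsto I'$ is manifestly functorial in $R$. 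By representability of multigraded Hilbert schemes \cite{HS04}, this natural transformation is induced by a morphism of $\Bbbk$-schemes $\mathbf{H}_S^h\to\mathbf{H}_{S'}^{h'}$.

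There is no substantial obstacle: the only points to verify are the graded identification $S_{(a,0)}=S'_a$ (so that intersection with $R\otimes S'$ is compatible with the multigrading) and the numerical equality $h(a,0)=h'(a)$ coming from the geometry of the diagonal; both are immediate. The resulting morphism is clearly compatible with the geometric picture: on $\Bbbk$-points it sends a closed subscheme $Z\subseteq(\mathbb{P}^2)^N$ to its scheme-theoretic image $\pi(Z)\subseteq(\mathbb{P}^2)^{N-1}$, which for the diagonal $\Delta\subseteq(\mathbb{P}^2)^N$ recovers the diagonal in $(\mathbb{P}^2)^{N-1}$.
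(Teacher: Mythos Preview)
Your proof is correct and follows essentially the same approach as the paper: define the natural transformation $I\mapsto I\cap(R\otimes_\Bbbk S')$ and check that its Hilbert function is $h'(a)=h(a,0)$. You have simply filled in the details (the explicit formula for $h$ and the identification $S_{(a,0)}=S'_a$) that the paper leaves implicit.
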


\begin{proof}
Let $S$ and $S'$ be multigraded coordinate rings of $(\mathbb{P}^2)^{N}$ and $(\mathbb{P}^2)^{N-1}$. The natural transformation of functors of points
takes a multihomogeneous ideal $I\subseteq R\otimes_\Bbbk S$ to the ideal of the projection,
which is $I\cap (R\otimes_\Bbbk S')$.
Its numerical function is $h'=h(a_1,\ldots,a_{N-1},0)$. 
\end{proof}

%--------------------------------------------------

\begin{definition}
\label{definitionofthecorrectcompactification}
Let $\mathbf{H}_S^h$ be the multigraded Hilbert scheme
of $(\mathbb{P}^2)^{\binom{n}{4}}$ 
as in Example~\ref{srgsrgr}
%and let $\mathbf{H}_S^h$ be its multigraded Hilbert scheme
 (with $N=\binom{n}{4}$).
The diagram \eqref{sdgegefg} induces an embedding 
$$\Phi:\,\mathbf{B}_n\hookrightarrow\overline{\mathbf{B}}_n\times\mathbf{H}_S^h.$$ 
Let $\XGP(3,n)$ be the Zariski closure of the image of $\Phi$. %\footnote{We can also define $\XGP'(3,n)$ as the Zariski closure of the image $\mathbf{B}_n$ in $\overline{\mathbf{B}}_n\times\Hilb(\mathbb{P}^2)^{\binom{n}{4}}$.By \cite[Proposition 3.1.]{CS10}, the morphism $\XGP(3,n)\to\XGP'(3,n)$vis bijective, therefore their normalizations are equal. We prefer $\XGP(3,n)$ by technical reasons, see below.} 
 Let $\overline{\mathcal{M}}\rightarrow\XGP(3,n)$ 
%(resp.~$\widetilde{\mathcal{M}}\rightarrow\widetilde{\mathbf{X}}_{\GP}(3,n)$
be the pullback of the 
universal family of the (usual, not multigraded) Hilbert scheme.
\end{definition}

\begin{theorem}
\label{indeedcorrectfamily}
The family $\overline{\mathcal{M}}$ is the universal Mustafin join for point configurations.
Concretely, take an arc $\mathbf{a}\colon\Spec(K)\rightarrow\mathbf{B}_n$ and its unique extension $\overline{\mathbf{a}}\colon\Spec(R)\rightarrow\XGP(3,n)$. Then $\overline{\mathbf{a}}^*\overline{\mathcal{M}}$ is isomorphic to the Mustafin join $\mathbb{P}(\Sigma_\mathbf{a})$, where $\Sigma_\mathbf{a}$ is the set of stable lattices with respect to $\mathbf{a}$.
\end{theorem}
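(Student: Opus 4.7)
The plan is to identify $\overline{\mathbf{a}}^*\overline{\mathcal{M}}$ with the scheme-theoretic flat limit in $(\mathbb{P}_R^2)^{\binom{n}{4}}$ of the twisted diagonal $\tau\colon\mathbb{P}_K^2\hookrightarrow(\mathbb{P}_K^2)^{\binom{n}{4}}$ defined by $p\mapsto(\psi_{a_{v_1},\ldots,a_{v_4}}(p))_{v_1<\cdots<v_4}$, and then to verify by an explicit basis calculation that this flat limit is isomorphic to $\mathbb{P}(\Sigma_\mathbf{a})$.

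First, since $\mathbf{H}_S^h$ is projective (as $1\in S$ is the only monomial of multidegree $0$) and $\overline{\mathbf{B}}_n$ is projective, $\XGP$ is proper, so the extension $\overline{\mathbf{a}}$ exists and is unique. Let $\overline{\mathbf{a}}_H\colon\Spec(R)\to\mathbf{H}_S^h$ be the composition with the projection. By the universal property of the multigraded Hilbert scheme, $\overline{\mathbf{a}}_H$ classifies a flat, multihomogeneous closed subscheme of $(\mathbb{P}_R^2)^{\binom{n}{4}}$, namely $\overline{\mathbf{a}}^*\overline{\mathcal{M}}$. Its generic fiber is $\tau(\mathbb{P}_K^2)$ by construction of $\Phi$. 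Since $R$ is a DVR and flatness over $R$ amounts to torsion-freeness, $\overline{\mathbf{a}}^*\overline{\mathcal{M}}$ equals the scheme-theoretic closure of $\tau(\mathbb{P}_K^2)$ in $(\mathbb{P}_R^2)^{\binom{n}{4}}$.

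Next, I would factor $\tau$ as a standard diagonal into projectivized lattices. For each quadruple $I=\{i_1<\cdots<i_4\}$, Lemma--Definition~\ref{xfvxfbfb} yields a stable lattice $L_I=\sum_{j=1}^{3}k_{i_j}v_{i_j}R$ together with the distinguished basis $(k_{i_1}v_{i_1},k_{i_2}v_{i_2},k_{i_3}v_{i_3})$. This basis identifies $\mathbb{P}(L_I)\cong\mathbb{P}_R^2$ so that on the generic fiber $a_{i_j}\mapsto e_j$ for $j=1,2,3$ and $a_{i_4}\mapsto[1:1:1]$. A direct coordinate computation shows that the induced map $\mathbb{P}_K^2\to\mathbb{P}(L_I)_K$ coincides with $\psi_{a_{i_1},\ldots,a_{i_4}}$, since both are characterized by sending $a_{i_j}$ to $e_j$ and $a_{i_4}$ to $[1:1:1]$. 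Assembling over all $I$, the twisted diagonal $\tau$ factors as the honest diagonal $\mathbb{P}_K^2\hookrightarrow\prod_I\mathbb{P}(L_I)_K$ followed by the product of basis isomorphisms $\prod_I\mathbb{P}(L_I)_R\xrightarrow{\sim}(\mathbb{P}_R^2)^{\binom{n}{4}}$.

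Finally, I would collapse repeated factors. By the uniqueness of the stabilizing lattice class per quadruple~\cite{KT06}, the set $\{[L_I]:I\in\binom{[n]}{4}\}$ is exactly $\Sigma_\mathbf{a}$. Since $\mathbb{P}(L)$ depends only on the equivalence class $[L]\in\mathfrak{B}_3^0$, whenever $[L_I]=[L_J]$ there is a canonical identification $\mathbb{P}(L_I)\cong\mathbb{P}(L_J)$ over $R$, and these assemble into a closed embedding
\[
\delta\colon\prod_{[L]\in\Sigma_\mathbf{a}}\mathbb{P}(L)_R\hookrightarrow\prod_{I\in\binom{[n]}{4}}\mathbb{P}(L_I)_R
\]
restricting to the identity between the two diagonals on generic fibers. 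Taking scheme-theoretic closures, $\delta$ carries $\mathbb{P}(\Sigma_\mathbf{a})$ (defined as the closure in the left product) isomorphically onto the closure in the right product, which equals $\overline{\mathbf{a}}^*\overline{\mathcal{M}}$ by Step~1. The $n$ sections on both sides, being the unique valuative-criterion extensions of $a_1,\ldots,a_n$, match automatically under $\delta$.

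The main obstacle will be the careful bookkeeping in Step~2: the identification of $\tau$ with a plain diagonal requires a compatible choice of basis and ordering for each $L_I$, and the identifications $\mathbb{P}(L_I)\cong\mathbb{P}(L_J)$ used in Step~3 must be checked to be compatible both with the diagonal embedding of $\mathbb{P}_K^2$ and with the sections $\overline{a}_i$. The remaining ingredients---that a flat family over a DVR is the scheme-theoretic closure of its generic fiber, and that the universal family of $\mathbf{H}_S^h$ is flat with the prescribed Hilbert function---are standard.
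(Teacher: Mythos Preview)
Your proposal is correct and follows essentially the same approach as the paper: both identify $\overline{\mathbf{a}}^*\overline{\mathcal{M}}$ and $\mathbb{P}(\Sigma_\mathbf{a})$ as the unique flat extension over $R$ of the same twisted diagonal $\mathbb{P}_K^2\hookrightarrow(\mathbb{P}_K^2)^{\binom{n}{4}}$, using that each quadruple $I$ is stabilized by a unique lattice in $\Sigma_\mathbf{a}$ to build the embedding $\mathbb{P}(\Sigma_\mathbf{a})\hookrightarrow(\mathbb{P}_R^2)^{\binom{n}{4}}$. The paper's proof is terser---it simply notes that $\mathbb{P}(\Sigma_\mathbf{a})$ is flat and proper over $R$, embeds it via the quadruple-to-lattice map, and invokes the universal property of the Hilbert scheme together with uniqueness of the extension---whereas you spell out the basis identification $\mathbb{P}(L_I)\cong\mathbb{P}_R^2$ via Lemma--Definition~\ref{xfvxfbfb} and the collapsing of repeated factors explicitly; these are exactly the details the paper leaves implicit.
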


\begin{proof}
Recall that $\mathbb{P}(\Sigma_\mathbf{a})\rightarrow\Spec(R)$ is flat and proper, and that $\mathbb{P}(\Sigma_\mathbf{a})$ is the Zariski closure of $\Spec(K)\times\mathbb{P}^2$ inside $\prod\limits_{L\in\Sigma_\mathbf{a}}\mathbb{P}(L)$. Therefore, we have the following commutative diagram:
\begin{center}
\begin{tikzpicture}[>=angle 90]
\matrix(a)[matrix of math nodes,
row sep=2em, column sep=2em,
text height=1.5ex, text depth=0.25ex]
{\mathbb{P}(\Sigma_\mathbf{a})&\Spec(R)\times(\mathbb{P}^2)^{\binom{n}{4}}\\
\Spec(R).&\\};
\path[right hook->] (a-1-1) edge node[above]{}(a-1-2);
\path[->] (a-1-1) edge node[left]{}(a-2-1);
\path[->] (a-1-2) edge node[right]{}(a-2-1);
\end{tikzpicture}
\end{center}
The horizontal map comes from the fact that for each quadruple, there is a unique stable lattice $L\in\Sigma_\mathbf{a}$ stabilizing it. 
The diagram  induces a morphism from $\Spec(R)$
to the (usual) Hilbert scheme of $(\mathbb{P}^2)^{\binom{n}{4}}$.
By its universal property, we have that $\mathbb{P}(\Sigma_\mathbf{a})\cong\overline{\mathbf{a}}^*\overline{\mathcal{M}}$.
\end{proof}

\begin{remark}
The corresponding $R$-point of $\mathbf{H}^h_S$, i.e.~the multi-homogeneous ideal of $\mathbb{P}(\Sigma_\mathbf{a})$,
can be computed as in Remark~\ref{equationsmustafinvariety}.
\end{remark}

\begin{corollary}
We have a birational morphism $\XGP(3,n)\to\overline{\mathbf{B}}_n$.
In particular, we have a morphism of normalizations $\XGP(3,n)^\nu\to\overline{\mathbf{B}}_n^\nu\cong\overline{\mathbf{X}}(3,n)^\nu$.
\end{corollary}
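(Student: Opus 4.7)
The proof is essentially a formal consequence of the construction. My plan is as follows.

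First, observe that by Definition~\ref{definitionofthecorrectcompactification}, $\XGP(3,n)$ sits inside the product $\overline{\mathbf{B}}_n\times\mathbf{H}_S^h$ as the Zariski closure of the image of $\Phi$. The first projection $\overline{\mathbf{B}}_n\times\mathbf{H}_S^h\to\overline{\mathbf{B}}_n$ therefore restricts to a morphism
\[
p\colon\XGP(3,n)\longrightarrow\overline{\mathbf{B}}_n.
\]
The composition $\mathbf{B}_n\xrightarrow{\Phi}\XGP(3,n)\xrightarrow{p}\overline{\mathbf{B}}_n$ agrees with the tautological open immersion $\mathbf{B}_n\hookrightarrow\overline{\mathbf{B}}_n$, since the first component of $\Phi$ is by construction this inclusion. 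Hence $p$ restricts to an isomorphism on the dense open subset $\mathbf{B}_n$, which is what I need to prove birationality once surjectivity is established.

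Second, I need surjectivity. The multigraded Hilbert scheme $\mathbf{H}_S^h$ is projective over $\Bbbk$ because $S$ is $\mathbb{Z}^N$-graded with only $1$ in degree $0$ (cf.\ \cite{HS04}), so $\overline{\mathbf{B}}_n\times\mathbf{H}_S^h$ is projective and $\XGP(3,n)$, being closed in it, is proper. Therefore $p$ is a proper morphism whose image is closed and contains the dense subset $\mathbf{B}_n\subseteq\overline{\mathbf{B}}_n$, so $p(\XGP(3,n))=\overline{\mathbf{B}}_n$. Combined with the previous paragraph, $p$ is a proper birational morphism $\XGP(3,n)\to\overline{\mathbf{B}}_n$.

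Finally, passing to normalizations is formal: any proper birational morphism $p$ between reduced schemes lifts to a morphism on normalizations $p^\nu\colon\XGP(3,n)^\nu\to\overline{\mathbf{B}}_n^\nu$, because the composition $\XGP(3,n)^\nu\to\XGP(3,n)\to\overline{\mathbf{B}}_n$ factors through $\overline{\mathbf{B}}_n^\nu$ by the universal property of normalization (the source is normal and the map is finite on an open dense subset mapping to the normal locus). Invoking Theorem~\ref{isomorphismbetweenkapranovcompandgpcomp} to identify $\overline{\mathbf{B}}_n^\nu\cong\overline{\mathbf{X}}(3,n)^\nu$ yields the claim.

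There is no real obstacle here; the only subtle point is confirming that $\mathbf{H}_S^h$ is projective in the case at hand (so that $\XGP(3,n)$ is indeed proper and $p$ is proper rather than merely dominant), which follows from the standard criterion recalled in Example~\ref{srgsrgr}.
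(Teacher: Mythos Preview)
Your proof is correct and follows the same approach as the paper: restrict the first projection $\overline{\mathbf{B}}_n\times\mathbf{H}_S^h\to\overline{\mathbf{B}}_n$ to $\XGP(3,n)$ and observe it is the identity on the dense open $\mathbf{B}_n$. You add a bit more detail than the paper does (surjectivity via properness of $\mathbf{H}_S^h$, and the passage to normalizations), but the argument is the same.
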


\begin{proof}
This follows from Definition~\ref{definitionofthecorrectcompactification}. As $\overline{\mathbf{X}}_{\GP}(3,n)\subseteq\overline{\mathbf{B}}_n\times\mathbf{H}_S^h$, we have a morphism $\overline{\mathbf{X}}_{\GP}(3,n)\rightarrow\overline{\mathbf{B}}_n$ given by the restriction of the projection onto the first factor. This morphism is birational as it restricts to the identity on $\mathbf{B}_n$.
\end{proof}

%--------------------------------------------------

\begin{theorem}\label{sgsrgRWGwrhr}
We have an isomorphism $\XGP(3,5)\cong \overline{\mathbf{B}}_5\cong\overline{\mathrm{M}}_{0,5}$.
\end{theorem}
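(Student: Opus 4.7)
The identification $\overline{\mathbf{B}}_5\cong\overline{\mathrm{M}}_{0,5}$ is already established in the final remark of \S\ref{explicitclosedembedding}, so the task is to show that the birational morphism $\pi\colon\XGP(3,5)\to\overline{\mathbf{B}}_5$ from the previous corollary is an isomorphism. Both source and target are projective: $\overline{\mathbf{B}}_5$ by construction, and $\XGP(3,5)$ because it sits inside $\overline{\mathbf{B}}_5\times\mathbf{H}_S^h$ which is projective (cf.~Example~\ref{srgsrgr}). Thus $\pi$ is proper and birational. Since $\overline{\mathbf{B}}_5\cong\overline{\mathrm{M}}_{0,5}$ is a smooth projective surface (the del Pezzo of degree $5$) and hence normal, by Zariski's Main Theorem it suffices to verify that $\pi$ has finite fibers, i.e.\ that $\pi$ is injective on $\Bbbk$-points.

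Concretely, the plan is to show that for any two arcs $\mathbf{a},\mathbf{b}\colon\Spec(K)\to\mathbf{B}_5$ with $\overline{\mathbf{a}}(0)=\overline{\mathbf{b}}(0)\in\overline{\mathbf{B}}_5$, the central fibers $\mathbb{P}(\Sigma_\mathbf{a})_\Bbbk$ and $\mathbb{P}(\Sigma_\mathbf{b})_\Bbbk$ define the same $\Bbbk$-point of $\mathbf{H}_S^h$, where $S$ is the multigraded coordinate ring of $(\mathbb{P}^2)^{\binom{5}{4}}=(\mathbb{P}^2)^5$. Away from the boundary, injectivity is tautological. Over the boundary of $\overline{\mathbf{B}}_5\cong\overline{\mathrm{M}}_{0,5}$, the $S_5$-action reduces the analysis to a few model strata: up to symmetry, the boundary divisors $D_{ij}$ and their pairwise intersections $D_{ij}\cap D_{kl}$ (for $\{i,j\}\cap\{k,l\}=\emptyset$). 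The plan is to compute, for a representative arc limiting to each such stratum, the set $\Sigma_\mathbf{a}\subseteq\mathfrak{B}_3^0$ and the resulting Mustafin join via the apartment procedure of Remark~\ref{degenerationfrompicture} (as carried out in Example~\ref{seconddegeneration5points}), and observe that both the combinatorial type of $\Sigma_\mathbf{a}$ and the scheme structure of $\mathbb{P}(\Sigma_\mathbf{a})_\Bbbk\subseteq(\mathbb{P}_\Bbbk^2)^5$ depend only on the stratum of $\overline{\mathbf{B}}_5$.

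The crucial, rigidifying feature of $n=5$ is that there are only $\binom{5}{4}=5$ quadruples, so each stable lattice class in $\Sigma_\mathbf{a}$ stabilizes a unique quadruple, and the multiset $\{\Sigma_\mathbf{a}\mid I\in\binom{[5]}{4}\}$ is in turn determined by the incidence data of the $5$ limit points, which is precisely what is recorded by $\overline{\mathbf{a}}(0)\in\overline{\mathbf{B}}_5$ (via the cross-ratio embedding into $\prod_{\mathcal{Q}_5}\mathbb{P}^1$). By contrast, for $n=6$ there are $\binom{6}{4}=15$ quadruples and the assignment is no longer forced, which is what allows Example~\ref{familyoverBn} to furnish a counterexample.

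The main obstacle is the case-by-case verification that, at the boundary vertices of $\overline{\mathrm{M}}_{0,5}$ (the most degenerate stable $5$-pointed rational curves), the ideal of $\mathbb{P}(\Sigma_\mathbf{a})_\Bbbk$ inside $(\mathbb{P}_\Bbbk^2)^5$ is the same for any approaching arc. I would carry this out by fixing a frame so that $a_1,\ldots,a_4$ are the standard frame and using Lemmas~\ref{latticeusingglobalnormalizationisstable} and~\ref{vectorsnormalizedwrtanotherone} to write down $\Sigma_\mathbf{a}$ explicitly in terms of the valuations of the coordinates of $a_5$ (and, for boundary vertices where two of $a_1,\ldots,a_4$ also coincide, a symmetric normalization), then read off the multihomogeneous ideal via Remark~\ref{equationsmustafinvariety}. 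Finitely many combinatorial types arise, each matched with a single stratum of $\overline{\mathrm{M}}_{0,5}$, completing the proof.
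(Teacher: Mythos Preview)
Your strategy---prove that $\pi\colon\XGP(3,5)\to\overline{\mathbf{B}}_5$ is injective on $\Bbbk$-points and apply Zariski's Main Theorem---is valid but differs substantially from the paper's approach. The paper instead constructs the universal family over $\overline{\mathrm{M}}_{0,5}$ explicitly: it starts from the smooth conic bundle arising from Hassett's space $\overline{\mathrm{M}}_{0,(1/2)^5}$, embeds it in a $\mathbb{P}^2$-bundle via the relative anticanonical series, then performs blow-ups along the coincidence loci of sections and a small contraction (justified via relative MMP with suitable boundary divisors) to obtain the correct fibers over every stratum. Embedding this family fiberwise into $(\mathbb{P}^2)^5$ produces a morphism $\overline{\mathrm{M}}_{0,5}\to\XGP(3,5)$ inverse to $\pi$, and ZMT finishes. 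Your approach avoids building the family but trades this for an arc-by-arc uniqueness check; the paper's approach is heavier upfront but yields the family as a byproduct.

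Two points in your proposal need correction. First, the claim that ``each stable lattice class in $\Sigma_\mathbf{a}$ stabilizes a unique quadruple'' is false---Example~\ref{seconddegeneration5points} exhibits a lattice stabilizing two of the five quadruples. (What you need, and what holds, is only that the assignment $I\mapsto[L_I]$ is well-defined.) Second, and more substantively, you write that the scheme structure of $\mathbb{P}(\Sigma_\mathbf{a})_\Bbbk\subseteq(\mathbb{P}_\Bbbk^2)^5$ ``depends only on the stratum of $\overline{\mathbf{B}}_5$'', but over a $1$-stratum it must depend on the \emph{point}; only the combinatorial type is constant along strata. What actually needs verifying is that the limit ideal in $\mathbf{H}_S^h$ is determined by the limit point $b\in\overline{\mathbf{B}}_5$, and this requires tracking how the \emph{framed} projections $\mathbb{P}(\Sigma_\mathbf{a})\to\mathbb{P}^2_I$ (normalized by $\psi_{a_{v_1},\ldots,a_{v_4}}$) degenerate---more than knowing $\Sigma_\mathbf{a}$ as a set of lattice classes. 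This is certainly doable for $n=5$, but it is the real content of the argument, and your ``incidence data'' sentence glosses over it.
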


\begin{proof}
Since $\overline{\mathbf{B}}_5\cong\overline{\mathrm{M}}_{0,5}$ is normal (in fact smooth), it suffices to exhibit
a family of Mustafin joins for point configurations over $\overline{\mathrm{M}}_{0,5}$ inducing a morphism to $\Hilb\left((\mathbb{P}^2)^{\binom{5}{4}}\right)$.

The space $\overline{\mathrm{M}}_{0,5}$ is isomorphic to Hassett's moduli space $\overline{\mathrm{M}}_{0,(\frac{1}{2})^5}$ \cite{Has03}.
%The universal family over $\overline{\mathrm{M}}_{0,(\frac{1}{2})^5}$ is 
This gives a smooth conic bundle $\pi\colon\overline{\mathcal{C}}\rightarrow\overline{\mathrm{M}}_{0,5}$
with sections $s_1,\ldots,s_5$ such that
at most two 
sections are equal on each fiber.
The relative anti-canonical divisor $-K_\pi$ induces an embedding of $\overline{\mathcal{C}}$ into the $\mathbb{P}^2$-bundle $\overline{\mathcal{P}}'''$ with an associated vector bundle the pushforward of $-K_\pi$.
%$\mathbb{P}^2\times\overline{\mathrm{M}}_{0,5}=\mathbb{P}_{\overline{\mathrm{M}}_{0,5}}^2$.
So we have a $\mathbb{P}^2$-bundle $\overline{\mathcal{P}}'''\rightarrow\overline{\mathrm{M}}_{0,5}$ 
with $5$ sections. Its fibers over a general point of a $1$-stratum (resp.~over a $0$-stratum) of $\overline{\mathrm{M}}_{0,5}$ are illustrated in the left picture of Figure~\ref{modificationsfibersinfamily1} (resp.~the left picture of Figure~\ref{modificationsfibersinfamily2}).

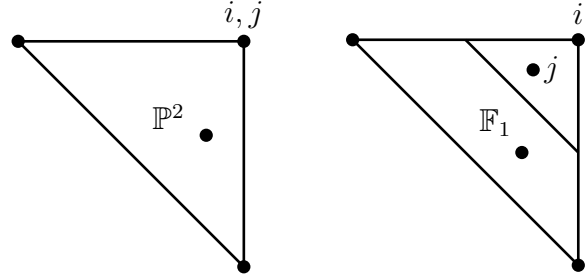
\begin{figure}[hbtp]
\centering
\begin{tabular}{>{\centering\arraybackslash}m{4cm}>{\centering\arraybackslash}m{4cm}}
\begin{tikzpicture}[scale=0.5]

	\draw[line width=1pt] (0,6) -- (6,6);
	\draw[line width=1pt] (6,0) -- (6,6);
	\draw[line width=1pt] (0,6) -- (6,0);
	\fill (0,6) circle (5pt);
	\fill (6,0) circle (5pt);
	\fill (6,6) circle (5pt);
	\fill (5,3.5) circle (5pt);

	\node at (6,6.7) {$i,j$};
	\node at (4,4) {$\mathbb{P}^2$};

\end{tikzpicture}
&
\begin{tikzpicture}[scale=0.5]

	\draw[line width=1pt] (0,6) -- (6,6);
	\draw[line width=1pt] (6,0) -- (6,6);
	\draw[line width=1pt] (0,6) -- (6,0);
	\draw[line width=1pt] (3,6) -- (6,3);
	\fill (0,6) circle (5pt);
	\fill (6,0) circle (5pt);
	\fill (6,6) circle (5pt);
	\fill (4.5,3) circle (5pt);
	\fill (4.8,5.2) circle (5pt);

	\node at (6,6.7) {$i$};
	\node at (5.3,5.2) {$j$};
	\node at (3.8,3.8) {$\mathbb{F}_1$};

\end{tikzpicture}
\end{tabular}
\caption{Fibers of families over a general point of the $1$-strata.}
\label{modificationsfibersinfamily1}
\end{figure}

\begin{figure}[hbtp]
\centering
\begin{tabular}{>{\centering\arraybackslash}m{3.5cm}>{\centering\arraybackslash}m{3.5cm}>{\centering\arraybackslash}m{3.5cm}>{\centering\arraybackslash}m{3.5cm}}
\begin{tikzpicture}[scale=0.4]

	\draw[line width=1pt] (0,6) -- (6,6);
	\draw[line width=1pt] (6,0) -- (6,6);
	\draw[line width=1pt] (0,6) -- (6,0);
	\fill (0,6) circle (5pt);
	\fill (6,0) circle (5pt);
	\fill (6,6) circle (5pt);

	\node at (0.5,6.7) {$i,j$};
	\node at (7,0) {$k,\ell$};
	\node at (4,4) {$\mathbb{P}^2$};

\end{tikzpicture}
&
\begin{tikzpicture}[scale=0.4]

	\draw[line width=1pt] (0,6) -- (6,6);
	\draw[line width=1pt] (6,0) -- (6,6);
	\draw[line width=1pt] (0,6) -- (6,0);
	\draw[line width=1pt] (2,4) -- (2,6);
	\draw[line width=1pt] (4,2) -- (6,2);
	\fill (0,6) circle (5pt);
	\fill (1.2,5.4) circle (5pt);
	\fill (6,0) circle (5pt);
	\fill (5.4,1.2) circle (5pt);
	\fill (6,6) circle (5pt);

	\node at (0,6.7) {$i$};
	\node at (1.6,5.4) {$j$};
	\node at (6.7,0) {$k$};
	\node at (5.05,1.6) {$\ell$};
	\node at (4.2,4.2) {$\Bl_2\mathbb{P}^2$};

\end{tikzpicture}
&
\begin{tikzpicture}[scale=0.4]

	\draw[line width=1pt] (0,6) -- (6,6);
	\draw[line width=1pt] (6,0) -- (6,6);
	\draw[line width=1pt] (0,6) -- (6,0);
	\draw[line width=1pt] (3,6) -- (3,4.8);
	\draw[line width=1pt] (3,4.8) -- (1.2,4.8);
	\draw[line width=1pt] (6,3) -- (4.8,3);
	\draw[line width=1pt] (4.8,3) -- (4.8,1.2);
	\draw[dashed,line width=1pt] (3,4.8) -- (4.8,3);
	\fill (0,6) circle (5pt);
	\fill (1.7,5.4) circle (5pt);
	\fill (6,0) circle (5pt);
	\fill (5.4,1.7) circle (5pt);
	\fill (6,6) circle (5pt);

	\node at (0,6.7) {$i$};
	\node at (2.1,5.4) {$j$};
	\node at (6.7,0) {$k$};
	\node at (5.1,2.2) {$\ell$};
	\node at (3.5,3.5) {$\mathbb{F}_1$};

\end{tikzpicture}
&
\begin{tikzpicture}[scale=0.4]

	\draw[line width=1pt] (0,6) -- (6,6);
	\draw[line width=1pt] (6,0) -- (6,6);
	\draw[line width=1pt] (0,6) -- (6,0);
	\draw[line width=1pt] (4,6) -- (4,2);
	\draw[line width=1pt] (6,4) -- (2,4);
	\fill (0,6) circle (5pt);
	\fill (2.5,5) circle (5pt);
	\fill (6,0) circle (5pt);
	\fill (5,2.5) circle (5pt);
	\fill (6,6) circle (5pt);

	\node at (0,6.7) {$i$};
	\node at (3,5) {$j$};
	\node at (6.7,0) {$k$};
	\node at (5.1,3.2) {$\ell$};
	\node at (5,5) {$\mathbb{F}_0$};

\end{tikzpicture}
\end{tabular}
\caption{Fibers of families %$\mathbb{P}_{\overline{\mathrm{M}}_{0,5}}^2\rightarrow\overline{\mathrm{M}}_{0,5}$ 
over the $0$-strata.}
\label{modificationsfibersinfamily2}
\end{figure}
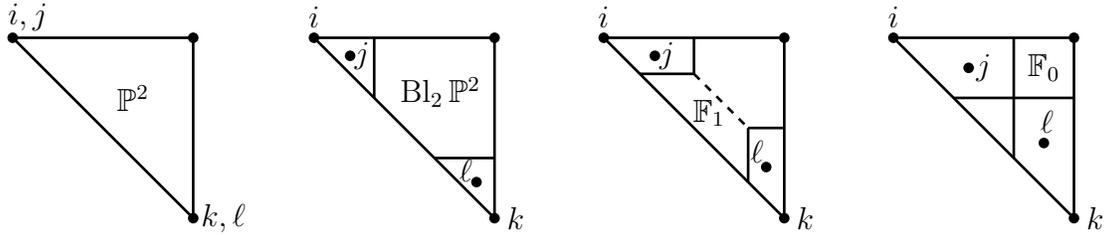

For distinct $i,j\in\{1,\ldots,5\}$, the curves
$C_{ij}=\mathrm{im}(s_i)\cap\mathrm{im}(s_j)$ are smooth and pairwise disjoint. Let 
$\overline{\mathcal{P}}''$ be the blow up of $\overline{\mathcal{P}}'''$ along these curves.
The fibers of $\overline{\mathcal{P}}''\rightarrow\overline{\mathrm{M}}_{0,5}$ over a general point of a $1$-stratum (resp.~over a $0$-stratum) of $\overline{\mathrm{M}}_{0,5}$ are illustrated in the right picture of Figure~\ref{modificationsfibersinfamily1} (resp.~the second picture from the left in Figure~\ref{modificationsfibersinfamily2}).

Given distinct $i,j,k,\ell\in\{1,\ldots,5\}$, let $p\in\overline{\mathrm{M}}_{0,5}$ be the $0$-stratum such that $s_i(p)=s_j(p),s_k(p)=s_\ell(p)$, and let $L_{ij,k\ell}\subseteq\overline{\mathcal{P}}'''$ be the line in the $\bP^2$ over $p$ passing through these two points. %Let $L=\amalg L_{ij,k\ell}$. 
Let $\overline{\mathcal{P}}'$ be the blow up of $\overline{\mathcal{P}}''$ along the strict transforms of the lines $L_{ij,k\ell}$. 
This blow up affects the fibers over the $0$-strata, and the fiber $X$ of $\overline{\mathcal{P}}'\rightarrow\overline{\mathrm{M}}_{0,5}$ over a $0$-stratum
is illustrated in the third picture from the left in Figure~\ref{modificationsfibersinfamily2}.
The $(-1)$-curve $E\subseteq X$ on the $\mathbb{F}_1$ component is also a $(-1)$-curve
in $\Bl_2\mathbb{P}^2$ (this is the curve dashed in Figure~\ref{modificationsfibersinfamily2}). 
We claim that $E$ can be contracted.
Up to relabeling, we can assume $s_1(p)=s_2(p)$ and $s_3(p)=s_4(p)$.  Endow the family $\mathbb{P}_{\mathrm{M}_{0,5}}^2\rightarrow\mathrm{M}_{0,5}$ with the relative effective Cartier divisor $\mathcal{D}\subseteq\mathbb{P}_{\mathrm{M}_{0,5}}$ giving on the general fiber the line arrangement shown on the left of Figure~\ref{divisorforcontraction}. The Zariski closure $\overline{\mathcal{D}}\subseteq\overline{\mathcal{P}}'$ specializes to $X$ giving the line arrangement $D_p$ shown on the right of Figure~\ref{divisorforcontraction}.

\begin{figure}[hbtp]
\begin{tikzpicture}[scale=0.5]

	\draw[line width=1pt] (5,5) [partial ellipse=0:360:6cm and 3cm];

	\fill (5,8) circle (5pt);
	\fill (-0.5,3.8) circle (5pt);
	\fill (0.6,3) circle (5pt);
	\fill (10.5,3.8) circle (5pt);
	\fill (9.4,3) circle (5pt);

	\draw[line width=1pt,red] (5,8) -- (-0.5,3.8);
	%\draw[line width=1pt,red] (5,8) -- (0.6,3);
	\draw[line width=1pt,red] (5,8) -- (10.5,3.8);
	%\draw[line width=1pt,red] (5,8) -- (9.4,3);
	\draw[line width=1pt,red] (-0.5,3.8) -- (9.4,3);
	\draw[line width=1pt,red] (0.6,3) -- (10.5,3.8);
	\draw[line width=1pt,red] (-2,4.89) -- (2.1,1.9);
	\draw[line width=1pt,red] (7.9,1.9) -- (12,4.89);

	\node at (5,8.6) {$5$};
	\node at (-0.9,3.4) {$1$};
	\node at (0.1,2.5) {$2$};
	\node at (11,3.5) {$3$};
	\node at (9.5,2.5) {$4$};

	\draw[line width=1pt,red] (14,8) -- (21,8);
	\draw[line width=1pt,red] (21,1) -- (21,8);
	\draw[line width=1pt] (14,8) -- (21,1);
	\draw[line width=1pt] (17.5,8) -- (17.5,6.6);
	\draw[line width=1pt] (19.54,4.6) -- (21,4.6);
	\draw[line width=1pt] (17.54,6.6) -- (15.4,6.6);
	\draw[line width=1pt] (19.54,4.6) -- (19.54,2.45);
	\draw[line width=1pt] (17.5,6.64) -- (19.54,4.6);

	\draw[line width=1pt,red] (17.5,7.3) -- (21,7.3);
	\draw[line width=1pt,red] (17.5,7.3) -- (14,8);
	\draw[line width=1pt,red] (20.3,4.6) -- (20.3,8);
	\draw[line width=1pt,red] (20.3,4.6) -- (21,1);
	\draw[line width=1pt,red] (16.2,6.6) -- (19.5,4);
	\draw[line width=1pt,red] (16.8,6.6) -- (19.5,3.4);
	\draw[line width=1pt,red] (16.2,6.6) -- (16.2,8);
	\draw[line width=1pt,red] (16.8,6.6) -- (16.8,8);
	\draw[line width=1pt,red] (19.5,4) -- (21,4);
	\draw[line width=1pt,red] (19.5,3.4) -- (21,3.4);

\end{tikzpicture}
\caption{In red, a fiber of $\overline{\mathcal{D}}$ over 
a point in $\mathrm{M}_{0,5}$ (left) and a $0$-stratum (right).} 
\label{divisorforcontraction}
\end{figure}
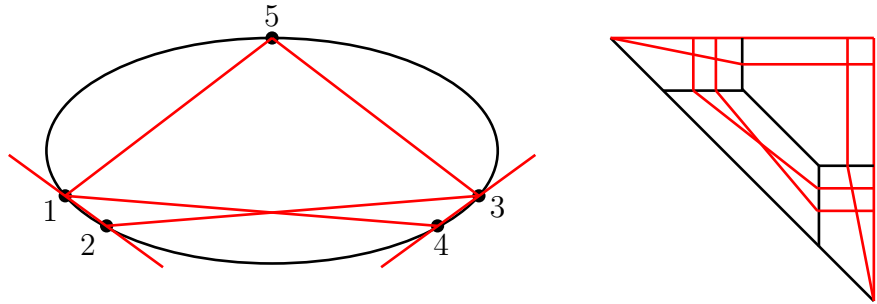

The pair $(X,D_p)$ is semi-log canonical and $K_X+D_p$ is nef but not ample:
its intersection with $E$ is $0$. Applying the relative minimal model program to $(\overline{\mathcal{P}}',K+\overline{\mathcal{D}})\rightarrow\overline{\mathrm{M}}_{0,5}$ in the neighborhood of $p$ gives a morphism contracting exactly $E$ over an open neighborhood of~$p$. Similarly, we can construct the contractions of the remaining $14$ curves of type $E$ and glue them to obtain a small contraction $\overline{\mathcal{P}}'\rightarrow\overline{\mathcal{P}}$. A priori, $\overline{\mathcal{P}}$ is an algebraic space, but, as we are going to see later, it is actually a projective variety.
The fibers of $\gamma\colon\overline{\mathcal{P}}\rightarrow\overline{\mathrm{M}}_{0,5}$ over the $0$-strata are described by the rightmost picture in Figure~\ref{modificationsfibersinfamily2}. We denote by $\widetilde{s}_1,\ldots,\widetilde{s}_5\colon\overline{\mathrm{M}}_{0,5}\rightarrow\overline{\mathcal{P}}$ the strict transforms of the sections $s_1,\ldots,s_5$.

To define a morphism $\overline{\mathrm{M}}_{0,5}\rightarrow\overline{\mathbf{X}}_{\GP}(3,5)$, we first construct a commutative diagram
\begin{center}
\begin{tikzpicture}[>=angle 90]
\matrix(a)[matrix of math nodes,
row sep=2em, column sep=2em,
text height=1.5ex, text depth=0.25ex]
{\overline{\mathcal{P}}&\overline{\mathrm{M}}_{0,5}\times(\mathbb{P}^2)^{\binom{5}{4}}\\
\overline{\mathrm{M}}_{0,5}.&\\};
\path[right hook->] (a-1-1) edge node[above]{}(a-1-2);
\path[->] (a-1-1) edge node[left]{$\gamma$}(a-2-1);
\path[->] (a-1-2) edge node[below right]{$\pi_1$}(a-2-1);
\end{tikzpicture}
\end{center}
We claim that the rational map $\varphi\colon\overline{\mathcal{P}}\dashrightarrow\overline{\mathrm{M}}_{0,5}\times(\mathbb{P}^2)^{\binom{5}{4}}$, defined over $\mathrm{M}_{0,5}=\mathbf{B}_5$ in Definition~\ref{definitionofthecorrectcompactification}, %Let us define an extension $\overline{\varphi}\colon\overline{\mathcal{P}}\rightarrow\overline{\mathrm{M}}_{0,5}\times(\mathbb{P}^2)^{\binom{5}{4}}$.
 is regular. In other words, each of the $\binom{5}{4}$ maps 
from $\overline{\mathcal{P}}$ to $\mathbb{P}^2$ is regular. 
Let $C\subseteq\overline{\mathrm{M}}_{0,5}$ be the open part of a $1$-stratum, e.g.~the one corresponding to the degeneration where $s_1(C)=s_2(C)$. The fibers of $\gamma$ over $C$ are illustrated at the top of Figure~\ref{contractionsfibersover1strata}. %given by the gluing of $\mathbb{P}^2$ and $\mathbb{F}_1$. The images of $\widetilde{s}_1,\widetilde{s}_2$ (resp. $\widetilde{s}_3,\widetilde{s}_4,\widetilde{s}_5$) intersect the $\mathbb{P}^2$ (respectively $\mathbb{F}_1$) component of the fibers. 

\begin{figure}[hbtp]
\begin{tikzpicture}[scale=0.5]

	\draw[line width=1pt] (0,6) -- (6,6);
	\draw[line width=1pt] (6,6) -- (6,0);
	\draw[line width=1pt] (6,0) -- (0,6);
	\draw[line width=1pt] (3,6) -- (6,3);
%	\fill[gray!50,nearly transparent] (1.5,4.5) -- (4.5,4.5) -- (4.5,1.5) -- cycle;
%	\fill[gray!50,nearly transparent] (1.5,4.5) -- (4.5,4.5) -- (4.5,6) -- (0,6) -- cycle;
%	\fill[gray!50,nearly transparent] (4.5,1.5) -- (4.5,4.5) -- (6,4.5) -- (6,0) -- cycle;
	\fill (6,0) circle (5pt);
	\fill (0,6) circle (5pt);
	\fill (6,6) circle (5pt);
	\fill (4,3) circle (5pt);
	\fill (4.5,5.5) circle (5pt);

	\node at (6.5,6) {$1$};
	\node at (5,5.1) {$2$};
	\node at (-.5,6) {$3$};
	\node at (6.5,0) {$5$};
	\node at (4.6,3.2) {$4$};

	\draw[->,line width=1pt] (2,2) -- (1,1);
	\draw[line width=1pt] (0,0) -- (-3,0);
	\draw[line width=1pt] (-3,0) -- (0,-3);
	\draw[line width=1pt] (0,0) -- (0,-3);
%	\fill[gray!50,nearly transparent] (0,0) -- (-3,0) -- (0,-3) -- cycle;
	\fill (0,0) circle (5pt);
	\fill (-3,0) circle (5pt);
	\fill (0,-3) circle (5pt);
	\fill (-1,-1) circle (5pt);
	\node at (0.9,0) {$1,2$};
	\node at (-3.5,0) {$3$};
	\node at (0.5,-3) {$5$};
	\node at (-0.5,-1) {$4$};

	\draw[->,line width=1pt] (7,2) -- (8,1);
	\draw[line width=1pt] (12,0) -- (9,0);
	\draw[line width=1pt] (9,0) -- (12,-3);
	\draw[line width=1pt] (12,0) -- (12,-3);
%	\fill[gray!50,nearly transparent] (0,0) -- (-3,0) -- (0,-3) -- cycle;
	\fill (12,0) circle (5pt);
	\fill (9,0) circle (5pt);
	\fill (12,-3) circle (5pt);
	\fill (10.5,-0.5) circle (5pt);
	\fill (11,-2) circle (5pt);
	\node at (12.9,0) {$1$};
	\node at (8.5,0) {$3$};
	\node at (12.5,-3) {$5$};
	\node at (11.2,-0.5) {$2$};
	\node at (11.5,-1.7) {$4$};

	\node at (1.1,2.1) {$c_1$};
	\node at (7.9,2.1) {$c_2$};
\end{tikzpicture}
\caption{Effect of the contractions $c_1,c_2$ on the fiber $\gamma^{-1}(p)$ for $p$ in the interior of a $1$-stratum of $\overline{\mathrm{M}}_{0,5}$.}
\label{contractionsfibersover1strata}
\end{figure}
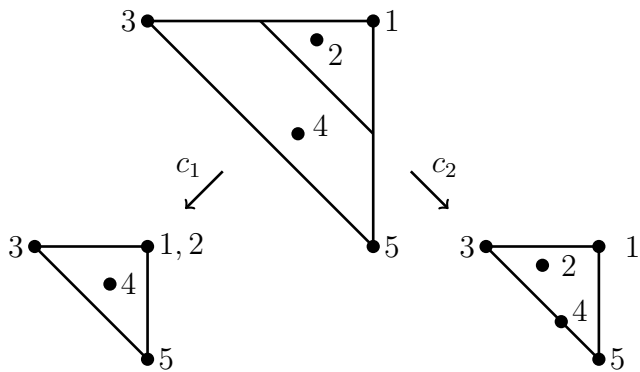

Let $U=\mathrm{M}_{0,5}\cup C$ be an open neighborhood of $C$ in $\overline{\mathrm{M}}_{0,5}$. We have contractions $c_1$ and~$c_2$ defined on $\gamma^{-1}(U)$ such that, over $C$, $c_1$ fiberwisely collapses the $\mathbb{P}^2$ component to a point, and $c_2$ contracts $\mathbb{F}_1$ along the ruling. 
The effect of these contractions on the fibers over $C$ is represented in Figure~\ref{contractionsfibersover1strata}.
These contractions are constructed by applying the relative $K+D$ minimal model program for an appropriate divisor $D$ on $\gamma^{-1}(U)$. Namely, on the fiber $\mathbb{P}^2$ over a point in $\mathrm{M}_{0,5}$, we consider the lines $\overline{13},\overline{14},\overline{35},\overline{45}$ (resp. $\overline{12},\overline{13},\overline{14},\overline{15}$), and the divisor $D$ is their closure in $\gamma^{-1}(U)$.
The induced contraction is $c_1$ (resp. $c_2$).  We define $\overline{\varphi}_U\colon\gamma^{-1}(U)\rightarrow U\times(\mathbb{P}^2)^{\binom{5}{4}}$ as follows. Given $x\in\gamma^{-1}(U)$, let $\overline{\varphi}_U(x)=(\gamma(x),(q_i)_{i=1}^5)$, where the $q_i$ are defined as follows. For exactly one $j\in\{1,2\}$, the four points $c_j(\widetilde{s}_1(x)),\ldots,\widehat{c_j(\widetilde{s}_i(x))},\ldots,c_j(\widetilde{s}_5(x))\in\mathbb{P}^2$ are in general linear position. So define $q_i$ to be the image of $c_j(x)$ under the unique projective linear transformation sending $c_j(\widetilde{s}_1(x)),\ldots,\widehat{c_j(\widetilde{s}_i(x))},\ldots,c_j(\widetilde{s}_5(x))\in\mathbb{P}^2$ in standard position.

Let $p$ be a $0$-stratum of $\overline{\mathrm{M}}_{0,5}$, for example
suppose it corresponds to a degeneration where $s_1(p)=s_2(p)$ and $s_3(p)=s_4(p)$. The fiber $\gamma^{-1}(p)$ is in the top right corner of Figure~\ref{contractionsfibersover0strata}.
\begin{figure}[hbtp]
\begin{tikzpicture}[scale=0.5]

	\draw[line width=1pt] (0,6) -- (6,6);
	\draw[line width=1pt] (6,6) -- (6,0);
	\draw[line width=1pt] (6,0) -- (0,6);
	\draw[line width=1pt] (4.5,6) -- (4.5,1.5);
	\draw[line width=1pt] (1.5,4.5) -- (6,4.5);
%	\fill[gray!50,nearly transparent] (1.5,4.5) -- (4.5,4.5) -- (4.5,1.5) -- cycle;
%	\fill[gray!50,nearly transparent] (1.5,4.5) -- (4.5,4.5) -- (4.5,6) -- (0,6) -- cycle;
%	\fill[gray!50,nearly transparent] (4.5,1.5) -- (4.5,4.5) -- (6,4.5) -- (6,0) -- cycle;
	\fill (6,0) circle (5pt);
	\fill (0,6) circle (5pt);
	\fill (6,6) circle (5pt);
	\fill (5.6,3) circle (5pt);
	\fill (3,5.6) circle (5pt);

	\node at (6.5,6) {$5$};
	\node at (3.2,5.1) {$2$};
	\node at (-.5,6) {$1$};
	\node at (6.5,0) {$3$};
	\node at (5.1,3.2) {$4$};

	\draw[->,line width=1pt] (-1,5) -- (-3,5);
	\draw[line width=1pt] (-4,6) -- (-7,6);
	\draw[line width=1pt] (-7,6) -- (-4,3);
	\draw[line width=1pt] (-4,6) -- (-4,3);
%	\fill[gray!50,nearly transparent] (-4,6) -- (-7,6) -- (-4,3) -- cycle;
	\fill (-7,6) circle (5pt);
	\fill (-4,6) circle (5pt);
	\fill (-4,3) circle (5pt);
	\fill (-5,5) circle (5pt);
	\node at (-3.5,6) {$5$};
	\node at (-4.4,4.8) {$2$};
	\node at (-7.5,6) {$1$};
	\node at (-3.1,3) {$3,4$};

	\draw[->,line width=1pt] (2,2) -- (1,1);
	\draw[line width=1pt] (0,0) -- (-3,0);
	\draw[line width=1pt] (-3,0) -- (0,-3);
	\draw[line width=1pt] (0,0) -- (0,-3);
%	\fill[gray!50,nearly transparent] (0,0) -- (-3,0) -- (0,-3) -- cycle;
	\fill (0,0) circle (5pt);
	\fill (-3,0) circle (5pt);
	\fill (0,-3) circle (5pt);
	\fill (0,-1.5) circle (5pt);
	\fill (-1.5,0) circle (5pt);
%	\fill (-2,0) circle (5pt);
	\node at (0.5,0) {$5$};
	\node at (-1.2,-0.5) {$2$};
	\node at (-3.5,0) {$1$};
	\node at (0.5,-3) {$3$};
	\node at (0.5,-1.5) {$4$};

	\draw[->,line width=1pt] (5,-1) -- (5,-3);
	\draw[line width=1pt] (6,-4) -- (3,-4);
	\draw[line width=1pt] (3,-4) -- (6,-7);
	\draw[line width=1pt] (6,-4) -- (6,-7);
%	\fill[gray!50,nearly transparent] (6,-4) -- (3,-4) -- (6,-7) -- cycle;
	\fill (6,-4) circle (5pt);
	\fill (3,-4) circle (5pt);
	\fill (6,-7) circle (5pt);
	\fill (5,-5) circle (5pt);
	\node at (6.5,-4) {$5$};
	\node at (2.9,-3.5) {$1,2$};
	\node at (6.5,-7) {$3$};
	\node at (4.5,-4.8) {$4$};

	\node at (-2,5.5) {$c_3$};
	\node at (1.1,2.1) {$c_1$};
	\node at (5.6,-2) {$c_2$};

\end{tikzpicture}
\caption{Contractions $c_1,c_2,c_3$ on the fiber $\gamma^{-1}(p)$. % for $p$ equal to a $0$-stratum of $\overline{\mathrm{M}}_{0,5}$.
}
\label{contractionsfibersover0strata}
\end{figure}
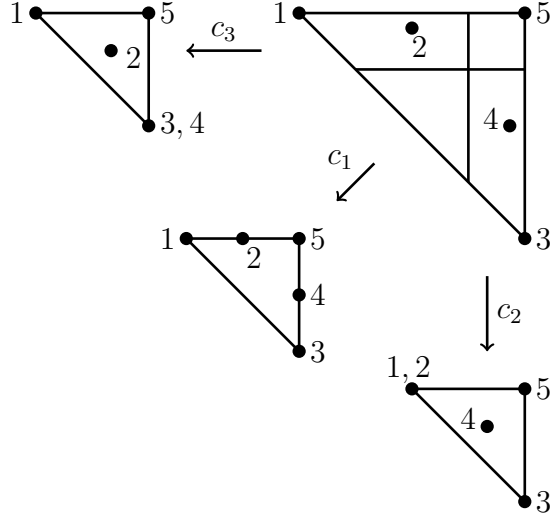
The points $\widetilde{s}_1(p),\widetilde{s}_2(p)$ are contained in one copy of $\mathbb{F}_1$, $\widetilde{s}_3(p),\widetilde{s}_4(p)$ are contained in the other copy of $\mathbb{F}_1$, and $\widetilde{s}_5(p)\in\mathbb{F}_0$. Let $U\subseteq\overline{\mathrm{M}}_{0,5}$ be an open neighborhood of $p$ avoiding all the strata of $\overline{\mathrm{M}}_{0,5}$ not specializing to $p$. Using the relative minimal model program as above, we can construct three contractions of $\gamma^{-1}(U)$ which we label $c_1$ (use the lines $\overline{14},\overline{15},\overline{23},\overline{35}$), $c_2$ (use the lines $\overline{14},\overline{15},\overline{34},\overline{35}$), and $c_3$ (use the lines $\overline{12},\overline{15},\overline{23},\overline{35}$).
%On $\gamma^{-1}(p)$, $c_1$ collapses $\mathbb{F}_0$ (choose the lines $\overline{14},\overline{15},\overline{23},\overline{35}$), $c_2$ contracts a copy of $\mathbb{F}_1$ along the ruling and $\mathbb{P}^2$ (choose the lines $\overline{14},\overline{15},\overline{34},\overline{35}$), and $c_3$ contracts the other copy of $\mathbb{F}_1$ along the ruling and $\mathbb{P}^2$ (choose the lines $\overline{12},\overline{15},\overline{23},\overline{35}$). 
See Figure~\ref{contractionsfibersover0strata}
for the effect of these contractions on the fiber over $p$.
The fibers over the adjacent $1$-strata %specializing to $p$ 
are contracted accordingly. We define $\overline{\varphi}_U(x)=(\gamma(x),(q_i)_{i=1}^5)$ as in the previous case.

The morphisms $\overline{\varphi}_U$ glue giving a morphism 
$\overline{\varphi}\colon\overline{\mathcal{P}}\rightarrow\overline{\mathrm{M}}_{0,5}\times(\mathbb{P}^2)^{\binom{5}{4}}$, 
which is an embedding. 
Indeed, by Nakayama's lemma, it suffices to show that the restriction to each
fiber is a closed embedding into $(\mathbb{P}^2)^{\binom{5}{4}}$, which follows from the fact that the fibers of $\gamma$ are central fibers of Mustafin joins (see Remark~\ref{allabouttwolattices}).
Let $\XGP'(3,5)$ be the Zariski closure of the image of $\mathbf{B}_5$ in $\overline{\mathbf{B}}_5\times\Hilb\left((\mathbb{P}^2)^{\binom{5}{4}}\right)$.
The morphism $\overline{\varphi}$ induces a birational morphism $\overline{\mathrm{M}}_{0,5}\rightarrow\XGP'(3,5)$.
Since we also have a birational morphism $\XGP'(3,5)\rightarrow\overline{\mathbf{B}}_5\cong\overline{\mathrm{M}}_{0,5}$, we have  $\XGP'(3,5)\cong\overline{\mathrm{M}}_{0,5}$.
By~\cite[Proposition~3.1]{CS10}, the morphism $\XGP(3,5)\to\XGP'(3,5)$
is bijective. Since $\overline{\mathrm{M}}_{0,5}$ is normal,
$\XGP(3,5)\cong\XGP'(3,5)\cong\overline{\mathrm{M}}_{0,5}$
by Zariski's Main Theorem.
\end{proof}

%--------------------------------------------------

\begin{proposition}\label{sRGsrgwrH}
For each $i=1,\ldots,n$, there exists a forgetful morphism
\[
\overline{\mathbf{X}}_{\GP}(3,n)\rightarrow\overline{\mathbf{X}}_{\GP}(3,n-1)
\]
extending the obvious forgetful map ${\mathbf{B}}_n\to {\mathbf{B}}_{n-1}$ that forgets the $i$-th point.
\end{proposition}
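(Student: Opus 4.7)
Proof proposal: I would build the forgetful morphism in two stages, one on each of the two factors defining $\overline{\mathbf{X}}_{\GP}(3,n)\subseteq\overline{\mathbf{B}}_n\times\mathbf{H}_S^h$, and then check that the product restricts to the Zariski closure of the image of $\Phi$.

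First, I would produce the forgetful morphism on the $\overline{\mathbf{B}}_n$-factor. By Lemma~\ref{equivalentdefinitionofBnbarinsmallerproductofP2s}, $\overline{\mathbf{B}}_n$ is the closure of $\mathbf{B}_n$ inside $\prod_{\mathcal{Q}_n'}\mathbb{P}^2$ under the cross-ratio embedding. The quintuples $v\in\mathcal{Q}_n'$ that do not involve the index $i$ correspond (after relabeling $[n]\setminus\{i\}$ with $[n-1]$) to the elements of $\mathcal{Q}_{n-1}'$, and the cross-ratios $q_v(p_1,\ldots,p_n)$ for such $v$ depend only on the remaining $n-1$ points. Thus projection onto these factors sends $\mathbf{B}_n\to\mathbf{B}_{n-1}$ compatibly with the cross-ratio embeddings, and taking Zariski closures yields the desired morphism $\overline{\mathbf{B}}_n\to\overline{\mathbf{B}}_{n-1}$.

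Next I would produce the forgetful morphism on the Hilbert scheme factor. Write $N=\binom{n}{4}$ and $N'=\binom{n-1}{4}$. The projection $(\mathbb{P}^2)^N\to(\mathbb{P}^2)^{N'}$ that forgets the factors indexed by quadruples containing $i$ can be realized as a composition of $N-N'$ projections that forget one factor at a time, so by iterating Lemma~\ref{sdfvsdfv} it induces a morphism $\mathbf{H}_S^h\to\mathbf{H}_{S'}^{h'}$ of multigraded Hilbert schemes. The key point is that both Hilbert functions are those of the diagonally embedded $\mathbb{P}^2$: since the projection of the diagonal $\mathbb{P}^2\hookrightarrow(\mathbb{P}^2)^N$ is the diagonal in $(\mathbb{P}^2)^{N'}$, the numerical function $h'(a_1,\ldots,a_{N'})=h(a_1,\ldots,a_{N'},0,\ldots,0)$ provided by Lemma~\ref{sdfvsdfv} equals the Hilbert function of the diagonal in $(\mathbb{P}^2)^{N'}$ that cuts out $\mathbf{H}_{S'}^{h'}$, so the iterated projection lands in the correct Hilbert scheme.

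Combining the two constructions gives a morphism
$$\pi_i\colon\overline{\mathbf{B}}_n\times\mathbf{H}_S^h\longrightarrow\overline{\mathbf{B}}_{n-1}\times\mathbf{H}_{S'}^{h'}.$$
The compatibility check to perform is $\pi_i\circ\Phi_n=\Phi_{n-1}\circ f_i$, where $f_i\colon\mathbf{B}_n\to\mathbf{B}_{n-1}$ is the naive forgetful map: for $(p_1,\ldots,p_n)\in\mathbf{B}_n$, the first component follows from the previous paragraph, while the second component holds because the diagonal $\mathbb{P}^2\hookrightarrow(\mathbb{P}^2)^N$ built from the cross-ratio maps $\psi_{p_{v_1},\ldots,p_{v_4}}$ indexed by quadruples projects to the diagonal $\mathbb{P}^2\hookrightarrow(\mathbb{P}^2)^{N'}$ built from the same cross-ratio maps indexed by quadruples not containing $i$, and those maps only see the remaining $n-1$ points. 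Hence $\pi_i$ carries $\Phi_n(\mathbf{B}_n)$ into $\Phi_{n-1}(\mathbf{B}_{n-1})$; taking Zariski closures gives the required morphism $\overline{\mathbf{X}}_{\GP}(3,n)\to\overline{\mathbf{X}}_{\GP}(3,n-1)$.

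The main obstacle, and the reason to work with the multigraded Hilbert scheme rather than the usual one, is precisely the existence of the morphism of Lemma~\ref{sdfvsdfv}: a projection of subschemes is not in general flat, so it does not give a morphism on ordinary Hilbert schemes, but at the level of multigraded Hilbert functors the rule $I\mapsto I\cap(R\otimes_\Bbbk S')$ is well-behaved. Everything else is either formal (Zariski closures) or a bookkeeping check about cross-ratios of subconfigurations.
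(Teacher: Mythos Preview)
Your proposal is correct and follows essentially the same approach as the paper: construct the forgetful morphism factorwise on $\overline{\mathbf{B}}_n\times\mathbf{H}_S^h$ using projections and Lemma~\ref{sdfvsdfv}, then pass to closures. The only cosmetic difference is that the paper handles the $\overline{\mathbf{B}}_n$ factor via the $(\mathbb{P}^1)^{\mathcal{Q}_n}$ embedding rather than the $\prod_{\mathcal{Q}_n'}\mathbb{P}^2$ embedding you use, but both work identically; your write-up is in fact more explicit about the compatibility check and the Hilbert-function matching than the paper's terse version.
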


\begin{proof}
Say $i=n$. Following the definition of $\XGP(3,n)$ as the closure of ${\mathbf{B}}_n$ in the product
of $\overline{\mathbf{B}}_n$ and the multigraded Hilbert scheme and
the definition of $\overline{\mathbf{B}}_n$ as the closure of ${\mathbf{B}}_n$ in $(\mathbb{P}^1)^{\mathcal{Q}_n}$, 
we need to show two things.
Firstly, the projection from $(\mathbb{P}^1)^{\mathcal{Q}_n}$ to  $(\mathbb{P}^1)^{\mathcal{Q}_{n-1}}$
induces the forgetful morphism $\overline{\mathbf{B}}_n\to \overline{\mathbf{B}}_{n-1}$, which is clear.
Secondly, the projection from $(\mathbb{P}^2)^{\binom{n}{4}}$
to $(\mathbb{P}^2)^{\binom{n-1}{4}}$ induces a morphism of multigraded 
Hilbert schemes (with the Hilbert function of the diagonal), which is Lemma~\ref{sdfvsdfv}.
\end{proof}

\begin{definition}
Let $I\in\binom{[n]}{5}$, $i\in I$. Consider the composition of forgetful morphisms 
$$\XGP(3,n)\rightarrow\XGP(3,I)\cong\overline{\mathrm{M}}_{0,5},$$ 
and denote by $f_{I,i}$ the composition of this map with the $i$-th Kapranov's map $\overline{\mathrm{M}}_{0,5}\rightarrow\mathbb{P}^2$.
%We apply morphisms $f_{I,i}$ to construct $n$ sections for the morphism $\overline{\mathcal{M}}\rightarrow\overline{\mathbf{X}}_{\GP}(3,n)$. 

Given $k\in\{1,\ldots,n\}$, let $\binom{[n]}{4}_k\subseteq\binom{[n]}{4}$ be the subset of quadruples $J=\{j_1<\ldots<j_4\}$ containing $k$. Then define $\chi_k\colon\binom{[n]}{4}_k\rightarrow\mathbb{P}^2$ to be the following function:
\begin{displaymath}
\chi_k(J)=\left\{ \begin{array}{l}
[1:0:0]~\textrm{if}~k=j_1\\
{[}0:1:0{]}~\textrm{if}~k=j_2\\
{[}0:0:1{]}~\textrm{if}~k=j_3\\
{[}1:1:1{]}~\textrm{if}~k=j_4.
\end{array} \right.
\end{displaymath}

Finally, for $k=1,\ldots,n$, consider the morphism 
$$s_k\colon\overline{\mathbf{X}}_{\GP}(3,n)\rightarrow\overline{\mathbf{X}}_{\GP}(3,n)\times(\mathbb{P}^2)^{\binom{n}{4}}$$ defined as
\[
s_k=\id_{\overline{\mathbf{X}}_{\GP}(3,n)}\times\prod_{\substack{J\in\binom{[n]}{4},\\k\notin J}}f_{J\cup\{k\},k}\times\prod_{\substack{J\in\binom{[n]}{4},\\k\in J}}\chi_k(J).
\]
\end{definition}

\begin{theorem}
The morphisms $s_1,\ldots,s_n$ are sections $\overline{\mathbf{X}}_{\GP}(3,n)\rightarrow\overline{\mathcal{M}}$ of the universal Mustafin join of point configurations. They 
give  $n$ smooth distinct points on each fiber.
\end{theorem}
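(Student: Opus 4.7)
The plan is to prove the statement in three stages: first show that on the dense open $\mathbf{B}_n \subseteq \overline{\mathbf{X}}_{\GP}(3,n)$ the morphism $s_k$ coincides with the $k$-th marked section of the universal family $\overline{\mathcal{M}}|_{\mathbf{B}_n}$; then extend to the whole space by closedness of $\overline{\mathcal{M}}$; finally identify the specialization of $s_k$ at any closed point with the corresponding Mustafin section, enabling application of Theorem~\ref{sectionssmoothcentralfiber} and Lemma~\ref{disjointsectionsmustafinjoin}.

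For the first stage, recall that over $\mathbf{B}_n$ the universal family $\overline{\mathcal{M}}$ is the image of the diagonal embedding $\mathbf{B}_n \times \mathbb{P}^2 \hookrightarrow \mathbf{B}_n \times (\mathbb{P}^2)^{\binom{n}{4}}$ coming from diagram~\eqref{sdgegefg}, sending $((p_1,\ldots,p_n), p)$ to $((p_1,\ldots,p_n), (\psi_J(p))_J)$. Plugging in $p = p_k$, the $J$-th component is $\psi_J(p_k)$. For $J \not\ni k$, this equals, via the isomorphism $\overline{\mathbf{X}}_{\GP}(3, J\cup\{k\}) \cong \overline{\mathrm{M}}_{0,5}$, the image of $(p_1,\ldots,p_n)$ under the forgetful map followed by the $k$-th Kapranov map $\overline{\mathrm{M}}_{0,5} \to \mathbb{P}^2$ (which normalizes the four points labelled by $J$ to the standard frame in order of increasing label), hence equals $f_{J \cup \{k\}, k}$ on $\mathbf{B}_n$. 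For $J \ni k$, writing $k = j_i$ for the $i$-th smallest element of $J$, a short computation using the relation $\sum_{i \in J} k_i v_i = 0$ from Lemma--Definition~\ref{xfvxfbfb} shows $\psi_J(p_k) = e_i$ (with the convention $e_4 = [1:1:1]$), which is exactly $\chi_k(J)$. Hence $s_k$ agrees with the $k$-th marked section of $\overline{\mathcal{M}}|_{\mathbf{B}_n}$.

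For the second stage, since $\overline{\mathcal{M}}$ is closed in $\overline{\mathbf{X}}_{\GP}(3,n) \times (\mathbb{P}^2)^{\binom{n}{4}}$, the preimage $s_k^{-1}(\overline{\mathcal{M}})$ is closed in the irreducible variety $\overline{\mathbf{X}}_{\GP}(3,n)$ and contains the dense open $\mathbf{B}_n$, hence equals the whole space. That $s_k$ is a section of $\overline{\mathcal{M}} \to \overline{\mathbf{X}}_{\GP}(3,n)$ is immediate because its first coordinate is the identity. For the third stage, fix a closed point $x \in \overline{\mathbf{X}}_{\GP}(3,n)$ and choose a DVR $R$ dominating the local ring at $x$ whose generic point lies in $\mathbf{B}_n$; this yields an arc $\mathbf{a}\colon\Spec(K) \to \mathbf{B}_n$ whose unique extension satisfies $\overline{\mathbf{a}}(0) = x$. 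By Theorem~\ref{indeedcorrectfamily}, $\overline{\mathbf{a}}^*\overline{\mathcal{M}} \cong \mathbb{P}(\Sigma_{\mathbf{a}})$. The composition $s_k \circ \overline{\mathbf{a}}$ is a section of this family whose restriction to the generic point equals $a_k$ by the first stage; by the valuative criterion applied to the proper morphism $\mathbb{P}(\Sigma_{\mathbf{a}}) \to \Spec(R)$, it must coincide with the Mustafin section $\overline{a}_k$. Therefore $s_k(x) = \overline{a}_k(0)$, which is a smooth point of the fiber by Theorem~\ref{sectionssmoothcentralfiber}, and the points $s_1(x), \ldots, s_n(x)$ are pairwise distinct by Lemma~\ref{disjointsectionsmustafinjoin}.

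The main obstacle is the identification in the first stage: one must carefully reconcile the diagonal embedding $\mathbf{B}_n \times \mathbb{P}^2 \hookrightarrow \mathbf{B}_n \times (\mathbb{P}^2)^{\binom{n}{4}}$ underlying the Hilbert-scheme construction of $\overline{\mathcal{M}}$ with the combinatorial definition of $s_k$ through the forgetful-Kapranov maps $f_{J\cup\{k\},k}$ and the constants $\chi_k(J)$. With that in hand, the remaining ingredients---closedness, the valuative criterion, and the previously established smoothness and disjointness of sections in $\mathbb{P}(\Sigma_{\mathbf{a}})$---conclude the argument.
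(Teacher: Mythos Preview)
Your proof is correct and follows essentially the same three-step structure as the paper's proof: verify that $s_k$ lands in $\overline{\mathcal{M}}$ over $\mathbf{B}_n$, extend by closedness, then identify the value at any closed point with the Mustafin section $\overline{a}_k(0)$ via an arc and invoke Theorem~\ref{sectionssmoothcentralfiber} and Lemma~\ref{disjointsectionsmustafinjoin}. You spell out the first stage in more detail than the paper (which simply asserts ``we already know''), and your invocation of Lemma--Definition~\ref{xfvxfbfb} for the case $k\in J$ is unnecessary---$\psi_J(p_k)=e_i$ is immediate from the definition of $\psi_J$---but this does not affect correctness.
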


\begin{proof}
Since we already know that $s_k$ maps the interior of $\overline{\mathbf{X}}_{\GP}(3,n)$ into $\overline{\mathcal{M}}$, 
the image of $s_k$ is contained in $\overline{\mathcal{M}}$
by continuity.
Let $x\in\overline{\mathbf{X}}_{\GP}(3,n)$. Let $\mathbf{a}\colon\Spec(K)\rightarrow\mathbf{B}_n$ be an arc such that the unique extension $\overline{\mathbf{a}}\colon\Spec(R)\rightarrow\overline{\mathbf{X}}_{\GP}(3,n)$ satisfies $\overline{\mathbf{a}}(0)=x$. Then $\overline{\mathcal{M}}\rightarrow\overline{\mathbf{X}}_{\GP}(3,n)$ and $s_1,\ldots,s_k$ pullback to $\Spec(R)$ giving the Mustafin join $\mathbb{P}(\Sigma_\mathbf{a})\rightarrow\Spec(R)$ with the sections $\overline{a}_1,\ldots,\overline{a}_n$. The points $\overline{a}_1(0),\ldots,\overline{a}_n(0)\in\mathbb{P}(\Sigma_\mathbf{a})_\Bbbk$ are distinct by Lemma~\ref{disjointsectionsmustafinjoin} and smooth by Theorem~\ref{sectionssmoothcentralfiber}, implying the theorem.
\end{proof}

Finally, we give a criterion to establish which  
$n$-pointed degenerations of $\mathbb{P}^2$ arise as 
fibers of the universal Mustafin join 
$\overline{\mathcal{M}}\to\overline{\mathbf{X}}_{\GP}(3,n)$  of point configurations.

\begin{lemma}
\label{npointedcentralfibermustafinjoinnocalculation}
Let $X$ be a projective surface 
with smooth points $p_1,\ldots,p_n\in X$ such that
\begin{enumerate}

\item There exists $\Sigma=\{[L_1],\ldots,[L_m]\}\subseteq\mathfrak{B}_3^0$ such that $X\cong\mathbb{P}(\Sigma)_\Bbbk$ (recall that if $\Sigma$ is contained in an apartment, then $\mathbb{P}(\Sigma)_\Bbbk$ can be easily computed using Remark~\ref{degenerationfrompicture});

\item There exists a surjective map $\binom{[n]}{4}\rightarrow\Sigma$ associating to a quadruple of distinct indices $i,j,k,h\in[n]$ a lattice class $[L_r]\in\Sigma$ such that the images of the points $p_i,p_j,p_k,p_h$ under the composition $X\cong\mathbb{P}(\Sigma)_\Bbbk\to\mathbb{P}(L_r)_\Bbbk$ are in general linear position.

\end{enumerate}
Then there exists $\mathbf{a}\in(\mathbb{P}^2)^n(K)$ in general linear position such that $X\cong\mathbb{P}(\Sigma_\mathbf{a})_\Bbbk$, and $p_1,\ldots,p_n$ correspond respectively to $\overline{a}_1(0),\ldots,\overline{a}_n(0)$ under this isomorphism.
\end{lemma}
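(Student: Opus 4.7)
The plan is to build the arc $\mathbf{a}$ by lifting each marked point $p_i$ individually to an $R$-point of $\mathbb{P}(\Sigma)$ using smoothness, and then to verify that the set of stable lattices of the resulting arc coincides with $\Sigma$. The Mustafin join $\mathbb{P}(\Sigma)\rightarrow\Spec(R)$ is flat (it is defined as the Zariski closure in $\prod_j\mathbb{P}(L_j)$ of the generic fiber, hence dominates the DVR $R$) and by hypothesis its central fiber $X$ is smooth at $p_i$. The fiber criterion for smoothness then gives that $\mathbb{P}(\Sigma)\to\Spec(R)$ is smooth at $p_i$. Since $R=\Bbbk[[t]]$ is a complete local ring with residue field $\Bbbk$, one can lift $p_i$ successively modulo $t^k$ using formal smoothness and pass to the limit, producing an $R$-section $\widetilde{p}_i\colon\Spec(R)\rightarrow\mathbb{P}(\Sigma)$ with $\widetilde{p}_i(0)=p_i$. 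Define $a_i:=\widetilde{p}_i|_{\Spec(K)}\in\mathbb{P}(\Sigma)_K\cong\mathbb{P}_K^2$.

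The first thing to verify is that $\mathbf{a}=(a_1,\ldots,a_n)$ is in general linear position. Suppose for contradiction that three of the $a_i$, say $a_1,a_2,a_3$, are collinear in $\mathbb{P}_K^2$. By hypothesis~(2) applied to the quadruple $\{1,2,3,4\}$, there exists $[L]\in\Sigma$ such that the images of $p_1,\ldots,p_4$ in $\mathbb{P}(L)_\Bbbk$ under the projection $X\cong\mathbb{P}(\Sigma)_\Bbbk\to\mathbb{P}(L)_\Bbbk$ are in general linear position. The generic fiber of $\mathbb{P}(\Sigma)\to\mathbb{P}(L)$ is the identity $\mathbb{P}_K^2\xrightarrow{\sim}\mathbb{P}_K^2$, so the images of $a_1,a_2,a_3$ in $\mathbb{P}(L)_K$ remain collinear. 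Taking the Zariski closure in $\mathbb{P}(L)$ of the line through them gives a flat family of lines whose central fiber is a line in $\mathbb{P}(L)_\Bbbk$ containing the specializations of the sections $\overline{a}_i^L$, $i=1,2,3$, which are the images of $p_1,p_2,p_3$ in $\mathbb{P}(L)_\Bbbk$. This contradicts their general linear position.

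To finish, I verify $\Sigma_\mathbf{a}=\Sigma$ and that the sections match. For each $[L]\in\Sigma$, surjectivity in~(2) provides a quadruple $I$ assigned to $[L]$. The composition of $\widetilde{p}_i$ with the projection $\mathbb{P}(\Sigma)\to\mathbb{P}(L)$ is the unique extension of the generic-fiber image of $a_i$ to an $R$-section of $\mathbb{P}(L)$, i.e.\ $\overline{a}_i^L$; hence the specializations $\overline{a}_i^L(0)$ for $i\in I$ are precisely the images of $p_i$ in $\mathbb{P}(L)_\Bbbk$, which lie in general linear position by~(2), so $[L]\in\Sigma_\mathbf{a}$. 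Conversely, any $[L_0]\in\Sigma_\mathbf{a}$ stabilizes some quadruple $I$, and by the uniqueness of the lattice class stabilizing a given quadruple (recalled at the beginning of \S\ref{Mustafinjoinsofstablelattices}), $[L_0]$ equals the class $[L_r]\in\Sigma$ assigned to $I$ by~(2), so $[L_0]\in\Sigma$. Thus $\Sigma_\mathbf{a}=\Sigma$, whence $\mathbb{P}(\Sigma_\mathbf{a})=\mathbb{P}(\Sigma)$; by the valuative criterion of properness $\overline{a}_i=\widetilde{p}_i$, so $\overline{a}_i(0)=p_i$ under $X\cong\mathbb{P}(\Sigma_\mathbf{a})_\Bbbk$. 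The main delicate step is the general-position verification in the second paragraph, where the flatness of the closure of the putative line over $R$ is essential; once that is in hand, the matching of stable lattices is forced by the uniqueness statement for quadruples.
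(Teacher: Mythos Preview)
Your proof is correct and follows the same approach as the paper: lift the $p_i$ to sections of $\mathbb{P}(\Sigma)\to\Spec(R)$ via smoothness (the paper invokes the ``infinitesimal lifting property''), then argue $\Sigma_\mathbf{a}=\Sigma$ from the hypotheses. The paper's proof is extremely terse---it simply asserts that the resulting arc is in general linear position and that $\Sigma_\mathbf{a}=\Sigma$ ``by the definition of stable lattices and by our assumptions''---whereas you supply the missing verifications (the collinearity-degeneration argument and the two inclusions using surjectivity and uniqueness of the stabilizing lattice class).
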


\begin{proof}
Since $p_1,\ldots,p_n$ are smooth points of $X$ (and hence smooth points of $\mathbb{P}(\Sigma)_\Bbbk$), by the infinitesimal lifting property we can find $\mathbf{a}\in(\mathbb{P}^2)^n(K)$ in general linear position such that $\overline{a}_1(0),\ldots,\overline{a}_n(0)\in\mathbb{P}(\Sigma)_\Bbbk$ correspond respectively to $p_1,\ldots,p_n$ under the isomorphism $\mathbb{P}(\Sigma)_\Bbbk\cong X$. By the definition of stable lattices and by our assumptions, we have that $\Sigma_\mathbf{a}=\Sigma$.
\end{proof}

\begin{example}
\label{case8comesfromanarc}
Let $(X;p_1,\ldots,p_6)$ be the $6$-pointed reducible surface $\#8$ in Table~\ref{tbl:GPdegenerationspart1}. We~claim that it arises as $(\mathbb{P}(\Sigma_\mathbf{a})_\Bbbk;\overline{a}_1(0),\ldots,\overline{a}_6(0))$ for some arc $\mathbf{a}\in(\mathbb{P}^2)^6(K)$ in general linear position.
Lemma~\ref{npointedcentralfibermustafinjoinnocalculation}~(1) is satisfied by choosing $\Sigma=\{[L_1],[L_2],[L_3]\}$, where
\[
L_1=e_1R+e_2R+e_3R,~L_2=te_1R+e_2R+te_3R,~L_3=e_1R+t^2e_2R+te_3R.
\]
Figure~\ref{GPdegenerationincase8} shows that the assumption~(2) is satisfied as well.
\end{example}

\begin{figure}[hbtp]
\begin{tikzpicture}[scale=0.5]

	\draw[line width=1pt] (0,6) -- (6,6);
	\draw[line width=1pt] (6,6) -- (6,0);
	\draw[line width=1pt] (6,0) -- (0,6);
	\draw[line width=1pt] (4.5,6) -- (4.5,1.5);
	\draw[line width=1pt] (1.5,4.5) -- (6,4.5);
	\fill[gray!50,nearly transparent] (1.5,4.5) -- (4.5,4.5) -- (4.5,1.5) -- cycle;
	\fill[gray!50,nearly transparent] (1.5,4.5) -- (4.5,4.5) -- (4.5,6) -- (0,6) -- cycle;
	\fill[gray!50,nearly transparent] (4.5,1.5) -- (4.5,4.5) -- (6,4.5) -- (6,0) -- cycle;
	\fill (6,0) circle (5pt);
	\fill (0,6) circle (5pt);
	\fill (6,6) circle (5pt);
	\fill (5.6,3) circle (5pt);
	\fill (1.5,5.5) circle (5pt);
	\fill (3,5.6) circle (5pt);

	\node at (6.5,6) {$1$};
	\node at (3.2,5.1) {$2$};
	\node at (1.7,5) {$3$};
	\node at (-.5,6) {$4$};
	\node at (6.5,0) {$5$};
	\node at (5.1,3.2) {$6$};

	\draw[->,line width=1pt] (-1,5) -- (-3,5);
	\draw[line width=1pt] (-4,6) -- (-7,6);
	\draw[line width=1pt] (-7,6) -- (-4,3);
	\draw[line width=1pt] (-4,6) -- (-4,3);
	\fill[gray!50,nearly transparent] (-4,6) -- (-7,6) -- (-4,3) -- cycle;
	\fill (-7,6) circle (5pt);
	\fill (-4,6) circle (5pt);
	\fill (-4,3) circle (5pt);
	\fill (-5.7,5.3) circle (5pt);
	\fill (-4.4,5.4) circle (5pt);
	\node at (-3.5,6) {$1$};
	\node at (-4.4,4.8) {$2$};
	\node at (-5.3,5) {$3$};
	\node at (-7.5,6) {$4$};
	\node at (-3.1,3) {$5,6$};

	\draw[->,line width=1pt] (2,2) -- (1,1);
	\draw[line width=1pt] (0,0) -- (-3,0);
	\draw[line width=1pt] (-3,0) -- (0,-3);
	\draw[line width=1pt] (0,0) -- (0,-3);
	\fill[gray!50,nearly transparent] (0,0) -- (-3,0) -- (0,-3) -- cycle;
	\fill (0,0) circle (5pt);
	\fill (-3,0) circle (5pt);
	\fill (0,-3) circle (5pt);
	\fill (0,-1.5) circle (5pt);
	\fill (-1,0) circle (5pt);
	\fill (-2,0) circle (5pt);
	\node at (0.5,0) {$1$};
	\node at (-0.8,-0.5) {$2$};
	\node at (-1.8,-0.5) {$3$};
	\node at (-3.5,0) {$4$};
	\node at (0.5,-3) {$5$};
	\node at (0.5,-1.5) {$6$};

	\draw[->,line width=1pt] (5,-1) -- (5,-3);
	\draw[line width=1pt] (6,-4) -- (3,-4);
	\draw[line width=1pt] (3,-4) -- (6,-7);
	\draw[line width=1pt] (6,-4) -- (6,-7);
	\fill[gray!50,nearly transparent] (6,-4) -- (3,-4) -- (6,-7) -- cycle;
	\fill (6,-4) circle (5pt);
	\fill (3,-4) circle (5pt);
	\fill (6,-7) circle (5pt);
	\fill (5,-5) circle (5pt);
	\node at (6.5,-4) {$1$};
	\node at (2.9,-3.5) {$2,3,4$};
	\node at (6.5,-7) {$5$};
	\node at (4.5,-4.8) {$6$};

	\node at (1,2.5) {\#8};

\end{tikzpicture}
\caption{Maps to the primary components in Example~\ref{case8comesfromanarc}.}
\label{GPdegenerationincase8}
\end{figure}
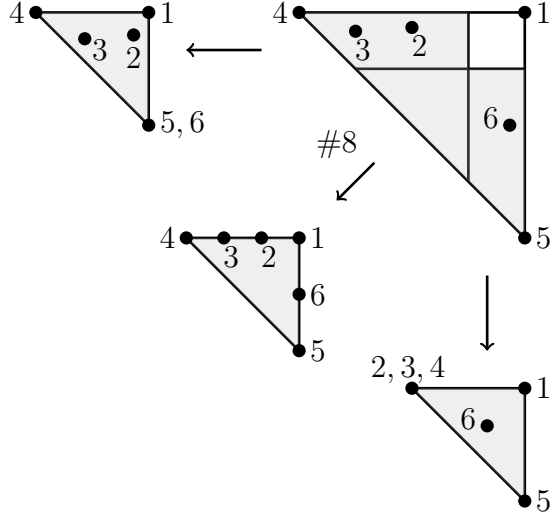

%----------------------------------------------------------------------------------------------------

\section{Analogue of the Losev--Manin space for moduli of points in \texorpdfstring{$\mathbb{P}^2$}{Lg}}\label{sGsgsGsehsRH}

The Grothendieck--Knudsen moduli space $\overline{\mathrm{M}}_{0,n}=\overline{\mathbf{X}}(2,n)$ 
of stable rational curves has a toric analogue,
the Losev--Manin space \cite{LM00,CT20}, 
which parametrizes chains of rational curves with 
$m$ {\em light} and 
two {\em heavy} points (one at each end of the chain).
The light points can coincide with each other but not with the nodes or the heavy points.
The Losev--Manin space is a toric variety associated with the permutohedron.
In this and the next sections we will discuss
analogues of the Losev--Manin space for moduli spaces of points 
and lines 
in~$\bP^2$. We~start with a toric analogue of the Kapranov Chow quotient space $\overline{\mathbf{X}}(3,n)=(\mathbb{P}^2)^n/\!/\PGL_3$, which will be the Chow quotient 
$(\mathbb{P}^2)^m/\!/\mathbb{G}_m^2$ for $n=m+3$.

%--------------------------------------------------

Denote by $Y_\mathcal{F}$ the toric variety associated to a fan $\mathcal{F}$. Let $T\subseteq Y_\mathcal{F}$ be the dense open torus. By \cite{KSZ91},
the Chow quotient $Y_\mathcal{F}/\!/H$ by a subtorus $H\subseteq T$
is a toric variety (not necessarily normal) of the quotient torus $T/H$. The~associated fan of the normalization
is called the {\em quotient fan}.
We recall its description in our case: $Y_\mathcal{F}=(\mathbb{P}^2)^m$ and $H$ is the dense open torus in $\mathbb{P}^2$ acting diagonally on $(\mathbb{P}^2)^m$.
The Chow quotient parametrizes translates $(p_1^{-1},\ldots,p_m^{-1})\cdot\mathbb{P}^2\subseteq(\mathbb{P}^2)^m$ of the diagonally embedded $\bP^2\subseteq(\mathbb{P}^2)^m$
and their limits in the Chow variety, where $(p_1,\ldots,p_m)\in(\mathbb{G}_m^2)^m$.

Let $N$ be the lattice of $1$-parameter subgroups of $H$ and let $\mathcal{P}$ be the fan of $\mathbb{P}^2$ in $N_\mathbb{R}\cong\mathbb{R}^2$. The product fan $\mathcal{P}^m$ in $N_\mathbb{R}^m$ gives the toric variety $(\mathbb{P}^2)^m$. In what follows, we identify $N_\mathbb{R}^m$ with~$\mathbb{R}^{2m}$ and we view $\mathbb{R}^2$ as diagonally embedded in $\mathbb{R}^{2m}$. Given $v\in\mathbb{R}^{2m}$, define
\[
\mathcal{P}_{v}^m=\{\sigma\in\mathcal{P}^m\mid\sigma\cap(v+\mathbb{R}^2)\neq\emptyset\}.
\]
Vectors $v,w\in\mathbb{R}^{2m}$ are called {\em equivalent} provided $\mathcal{P}_{v}^m=\mathcal{P}_{w}^m$. The closures of equivalence classes are rational convex cones in $\mathbb{R}^{2m}$  invariant under translations by $\mathbb{R}^2$. 
The images of these cones under the quotient map
\[
q\colon\mathbb{R}^{2m}\rightarrow\mathbb{R}^{2m}/\mathbb{R}^2
\]
form the fan $\mathcal{Q}_m$ called the \emph{quotient fan}.
We call $Y_{\mathcal{Q}_m}$
the {\em toric Kapranov space}. 
It is 
a normalization of the Chow quotient and the (multigraded) Hilbert quotient of 
$(\mathbb{P}^2)^m$ by~$H$ (the closure of $T/H$
in the (multigraded) Hilbert scheme of $(\mathbb{P}^2)^m$),
see Example~\ref{srgsrgr}.

We would like to describe cones in $\mathcal{Q}_m$
explicitly.

\begin{definition}
\label{combinatoriallyequivalentsubdivisions}
Given $(v,v')\in\mathbb{R}^2$, a {\em spider} $S(v,v')$ is the union of three rays in~$\mathbb{R}^2$:
\[
S(v,v')=\{(-v+t,-v')\mid t\in\mathbb{R}_{\geq0}\}\cup\{(-v,-v'+t)\mid t\in\mathbb{R}_{\geq0}\}\cup\{(-v-t,-v'-t)\mid t\in\mathbb{R}_{\geq0}\}.
\]
It induces a fan $\Sigma(v,v')$ centered at $(-v,-v')$ with seven cones of various dimensions.

Let $v=(v_1,v_1',\ldots,v_m,v_m')\in\mathbb{R}^{2m}$. The union of the spiders $\cup_{i=1}^mS(v_i,v_i')$ subdivides $\mathbb{R}^2$ into the union of $0$, $1$, and $2$-dimensional convex cells (some of these cells are unbounded). We~denote this convex subdivision of $\mathbb{R}^2$ by $\mathcal{S}(v)$.

Let $v,w\in\mathbb{R}^{2m}$. We say that $\mathcal{S}(v)$ and $\mathcal{S}(w)$ are \emph{combinatorially equivalent} if for all choices of cones $\sigma_i\in\Sigma(v_i,v_i'),\tau_i\in\Sigma(w_i,w_i')$ such that $\sigma_i+(v_i,v_i')=\tau_i+(w_i,w_i')$, we have  $\cap_{i=1}^m\sigma_i\neq\emptyset$ if and only if $\cap_{i=1}^m\tau_i\neq\emptyset$. %This is an equivalence relation, and 
We denote by $[\mathcal{S}(v)]$ the equivalence class of $\mathcal{S}(v)$. %See the first row of Table~\ref{exampleofcombinatoriallyequivalentsubdivision} for an example. 
If~$[\mathcal{S}(v)]=[\mathcal{S}(w)]$, then we say that two cells in $\mathcal{S}(v)$ and $\mathcal{S}(w)$ are \emph{corresponding} if they can be written as $\cap_{i=1}^m\sigma_i$ and $\cap_{i=1}^m\tau_i$ where $\sigma_i+(v_i,v_i')=\tau_i+(w_i,w_i')$ for all $i$ (see Table~\ref{exampleofcombinatoriallyequivalentsubdivision}).
\end{definition}

\begin{table}[hbtp]
\centering
\begin{tabular}{|>{\centering\arraybackslash}m{5cm}|>{\centering\arraybackslash}m{5cm}|}
\hline
\vspace{.1in}
\begin{tikzpicture}[scale=0.5]

	\draw[line width=1pt] (0,0) -- (0+5,0);
	\draw[line width=1pt] (0,0) -- (0,0+5);
	\draw[line width=1pt] (0,0) -- (0-1,0-1);

	\draw[line width=1pt] (4,2) -- (4+1,2);
	\draw[line width=1pt] (4,2) -- (4,2+3);
	\draw[line width=1pt] (4,2) -- (4-3,2-3);

	\draw[line width=1pt] (2,4) -- (2+3,4);
	\draw[line width=1pt] (2,4) -- (2,4+1);
	\draw[line width=1pt] (2,4) -- (2-3,4-3);

	\fill (0,0) circle (5pt);
	\fill (4,2) circle (5pt);
	\fill (2,4) circle (5pt);

	\node at (-0.5,3) {$1$};
	\node at (1,4) {$2$};
	\node at (3,4.7) {$3$};
	\node at (4.6,4.7) {$4$};
	\node at (4.6,3) {$5$};
	\node at (4,1) {$6$};
	\node at (3,-0.7) {$7$};
	\node at (0.5,-0.7) {$8$};
	\node at (-0.5,0.5) {$9$};
	\node at (2,2) {$10$};

\end{tikzpicture}
&\vspace{.1in}
\begin{tikzpicture}[scale=0.5]

	\draw[line width=1pt] (0,0) -- (0+7,0);
	\draw[line width=1pt] (0,0) -- (0,0+7);
	\draw[line width=1pt] (0,0) -- (0-1,0-1);

	\draw[line width=1pt] (6,2) -- (6+1,2);
	\draw[line width=1pt] (6,2) -- (6,2+5);
	\draw[line width=1pt] (6,2) -- (6-3,2-3);

	\draw[line width=1pt] (4,6) -- (4+3,6);
	\draw[line width=1pt] (4,6) -- (4,6+1);
	\draw[line width=1pt] (4,6) -- (4-5,6-5);

	\fill (0,0) circle (5pt);
	\fill (6,2) circle (5pt);
	\fill (4,6) circle (5pt);

	\node at (-0.5,3) {$1$};
	\node at (2,6) {$2$};
	\node at (5,6.7) {$3$};
	\node at (6.6,6.7) {$4$};
	\node at (6.6,4) {$5$};
	\node at (6,1) {$6$};
	\node at (5,-0.7) {$7$};
	\node at (1.5,-0.7) {$8$};
	\node at (-0.5,0.5) {$9$};
	\node at (3.5,3) {$10$};

\end{tikzpicture}
\\
\hline
\vspace{.1in}
\begin{tikzpicture}[scale=0.5]

	\draw[line width=1pt] (0,0) -- (0+4,0);
	\draw[line width=1pt] (0,0) -- (0,0+2);
	\draw[line width=1pt] (0,0) -- (0-1,0-1);

	\draw[line width=1pt] (1,-2) -- (1+3,-2);
	\draw[line width=1pt] (1,-2) -- (1,-2+4);
	\draw[line width=1pt] (1,-2) -- (1-1,-2-1);

	\draw[line width=1pt] (3,1) -- (3+1,1);
	\draw[line width=1pt] (3,1) -- (3,1+1);
	\draw[line width=1pt] (3,1) -- (3-4,1-4);

	\fill (0,0) circle (5pt);
	\fill (1,-2) circle (5pt);
	\fill (3,1) circle (5pt);

\end{tikzpicture}
&\vspace{.1in}
\begin{tikzpicture}[scale=0.5]

	\draw[line width=1pt] (0,0) -- (0+4,0);
	\draw[line width=1pt] (0,0) -- (0,0+3);
	\draw[line width=1pt] (0,0) -- (0-1,0-1);

	\draw[line width=1pt] (2,-1) -- (2+2,-1);
	\draw[line width=1pt] (2,-1) -- (2,-1+4);
	\draw[line width=1pt] (2,-1) -- (2-1,-1-1);

	\draw[line width=1pt] (3,2) -- (3+1,2);
	\draw[line width=1pt] (3,2) -- (3,2+1);
	\draw[line width=1pt] (3,2) -- (3-4,2-4);

	\fill (0,0) circle (5pt);
	\fill (2,-1) circle (5pt);
	\fill (3,2) circle (5pt);

\end{tikzpicture}
\\
\hline
\end{tabular}
\caption{In the top row, an example of combinatorially equivalent subdivisions $\mathcal{S}(v)$ and~ $\mathcal{S}(w)$.  
Corresponding $2$-cells are labeled with the same number. In~the second row, an example of not combinatorially equivalent subdivisions.}
\label{exampleofcombinatoriallyequivalentsubdivision}
\end{table}

\begin{lemma}
\label{preliminaryfibersfamilyoverchowquotient}
Intersections of cones $\sigma\in\mathcal{P}_v^m$ with $v+\mathbb{R}^2\cong\mathbb{R}^2$ give the subdivision~$\mathcal{S}(v)$. In~particular, 
if $v,w\in\mathbb{R}^{2m}$  then
$[\mathcal{S}(v)]=[\mathcal{S}(w)]$ if and only if $[v],[w]\in\mathbb{R}^{2m}/\mathbb{R}^2$ lie in the relative interior of 
the same cone of the quotient fan $\mathcal{Q}_m$.
\end{lemma}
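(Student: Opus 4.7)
The plan is to reduce both statements to the identification of the trace of the product fan $\mathcal{P}^m$ on the translated diagonal $v + \mathbb{R}^2$ with the subdivision $\mathcal{S}(v)$, so that most of the work is a chase through definitions.

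First I would parametrize $v + \mathbb{R}^2 \subseteq \mathbb{R}^{2m}$ by $\mathbb{R}^2$ via $(s,s') \mapsto v + (s,s',s,s',\ldots,s,s')$. For a product cone $\sigma = \sigma_1 \times \cdots \times \sigma_m \in \mathcal{P}^m$, a point $(s,s')$ lies in $\sigma \cap (v + \mathbb{R}^2)$ if and only if $(v_i + s,\, v_i' + s') \in \sigma_i$ for every $i$, i.e.\ if and only if $(s,s') \in \bigcap_{i=1}^m \bigl(\sigma_i - (v_i, v_i')\bigr)$. Observing that $\Sigma(v_i, v_i')$ is just the fan $\mathcal{P}$ recentered at $-(v_i, v_i')$---its three rays being the legs of the spider $S(v_i, v_i')$---the non-empty intersections $\sigma \cap (v + \mathbb{R}^2)$ (exactly those with $\sigma \in \mathcal{P}_v^m$) are precisely the cells of the common refinement of the $\Sigma(v_i, v_i')$, which by definition is $\mathcal{S}(v)$. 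This settles part (1).

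For part (2) I would combine this computation with the definitions of combinatorial equivalence and of the quotient fan. By part (1), the equality $[\mathcal{S}(v)] = [\mathcal{S}(w)]$---requiring that for every tuple of cones of $\mathcal{P}$ the intersections $\bigcap_i(\sigma_i - (v_i, v_i'))$ and $\bigcap_i(\sigma_i - (w_i, w_i'))$ are simultaneously non-empty or simultaneously empty---is exactly the assertion $\mathcal{P}_v^m = \mathcal{P}_w^m$. On the other hand, by the Kapranov--Sturmfels--Zelevinsky description recalled from \cite{KSZ91}, the cones of $\mathcal{Q}_m$ are the images under $q$ of the closures of equivalence classes for $v \sim w \Leftrightarrow \mathcal{P}_v^m = \mathcal{P}_w^m$; the relative interior of such a cone is the image of the equivalence class itself, while its proper faces correspond to strictly larger $\mathcal{P}_{\bullet}^m$. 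Hence $[v]$ and $[w]$ lie in the relative interior of the same cone of $\mathcal{Q}_m$ precisely when $v \sim w$, i.e.\ precisely when $[\mathcal{S}(v)] = [\mathcal{S}(w)]$.

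I do not foresee a genuine obstacle: the argument is an unwinding of definitions. The one subtle point is the identification of the cells of $\mathcal{S}(v)$---including the lower-dimensional ones---with \emph{all} non-empty intersections $\sigma \cap (v + \mathbb{R}^2)$, together with the match between the seven cones of $\mathcal{P}$ in each factor and the seven cones of $\Sigma(v_i, v_i')$ appearing in the combinatorial-equivalence definition. Once the parametrization of the first step is fixed this is automatic.
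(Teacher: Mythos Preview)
Your proposal is correct and follows essentially the same approach as the paper: both identify the trace of a product cone $\sigma_1\times\cdots\times\sigma_m\in\mathcal{P}^m$ on $v+\mathbb{R}^2$ with the intersection $\bigcap_i(\sigma_i-(v_i,v_i'))$, which is exactly a cell of $\mathcal{S}(v)$. The paper's proof is in fact terser than yours---it only writes out the correspondence for part (1) and leaves the ``In particular'' as an immediate consequence of the KSZ definition of the quotient fan---so your explicit unpacking of part (2) via $[\mathcal{S}(v)]=[\mathcal{S}(w)]\Leftrightarrow\mathcal{P}_v^m=\mathcal{P}_w^m$ is a welcome elaboration, not a different argument.
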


\begin{proof}
Let $C\subseteq\mathbb{R}^2$ be an arbitrary cell in $\mathcal{S}(v)$. Then for $i=1,\ldots,m$ there exist cones $\sigma_i\in\Sigma(v_i,v_i')$ such that $C=\cap_{i=1}^m\sigma_i$. Note that
\[
\tau=((v_1,v_1')+\sigma_1)\times\cdots\times((v_m,v_m')+\sigma_m)
\]
is a cone in the product fan $\mathcal{P}_v^m$. It is clear that
$
v+C=(v+\mathbb{R}^2)\cap\tau
$.
\end{proof}

%--------------------------------------------------

\begin{table}[hbtp]
\centering
\begin{tabular}{|>{\centering\arraybackslash}m{2.3cm}|>{\centering\arraybackslash}m{2.3cm}|>{\centering\arraybackslash}m{2.3cm}|>{\centering\arraybackslash}m{2.3cm}|>{\centering\arraybackslash}m{2.3cm}|>{\centering\arraybackslash}m{2.3cm}|}
\hline
a&b&c&d&e&f
\\
\hline
\vspace{.1in}
\begin{tikzpicture}[scale=0.4]

	\fill (0,0) circle (5pt);

	\draw[line width=1pt] (0,0) -- (0+1,0);
	\draw[line width=1pt] (0,0) -- (0,0+1);
	\draw[line width=1pt] (0,0) -- (0-1,0-1);

\end{tikzpicture}
&\vspace{.1in}
\begin{tikzpicture}[scale=0.4]

	\fill (0,0) circle (5pt);
	\fill (1,1) circle (5pt);

	\draw[line width=1pt] (0,0) -- (0+2,0);
	\draw[line width=1pt] (0,0) -- (0,0+2);
	\draw[line width=1pt] (0,0) -- (0-1,0-1);

	\draw[line width=1pt] (1,1) -- (1+1,1);
	\draw[line width=1pt] (1,1) -- (1,1+1);
	\draw[line width=1pt] (1,1) -- (1-2,1-2);

\end{tikzpicture}
&\vspace{.1in}
\begin{tikzpicture}[scale=0.4]

	\fill (0,0) circle (5pt);
	\fill (2,1) circle (5pt);

	\draw[line width=1pt] (0,0) -- (0-1,0-1);
	\draw[line width=1pt] (0,0) -- (0+3,0);
	\draw[line width=1pt] (0,0) -- (0,0+2);

	\draw[line width=1pt] (2,1) -- (2+1,1);
	\draw[line width=1pt] (2,1) -- (2,1+1);
	\draw[line width=1pt] (2,1) -- (2-2,1-2);

\end{tikzpicture}
&\vspace{.1in}
\begin{tikzpicture}[scale=0.4]

	\fill (0,0) circle (5pt);
	\fill (0,1) circle (5pt);
	\fill (1,0) circle (5pt);

	\draw[line width=1pt] (0,0) -- (0-1,0-1);
	\draw[line width=1pt] (0,0) -- (0+2,0);
	\draw[line width=1pt] (0,0) -- (0,0+2);

	\draw[line width=1pt] (0,1) -- (0+2,1);
	\draw[line width=1pt] (0,1) -- (0-1,1-1);

	\draw[line width=1pt] (1,0) -- (1,0+2);
	\draw[line width=1pt] (1,0) -- (1-1,0-1);

\end{tikzpicture}
&\vspace{.1in}
\begin{tikzpicture}[scale=0.4]

	\fill (0,0) circle (5pt);
	\fill (1,1) circle (5pt);
	\fill (2,2) circle (5pt);

	\draw[line width=1pt] (0,0) -- (0+3,0);
	\draw[line width=1pt] (0,0) -- (0,0+3);

	\draw[line width=1pt] (1,1) -- (1+2,1);
	\draw[line width=1pt] (1,1) -- (1,1+2);

	\draw[line width=1pt] (2,2) -- (2+1,2);
	\draw[line width=1pt] (2,2) -- (2,2+1);
	\draw[line width=1pt] (2,2) -- (2-3,2-3);

\end{tikzpicture}
&\vspace{.1in}
\begin{tikzpicture}[scale=0.4]

	\fill (0,0) circle (5pt);
	\fill (1,1) circle (5pt);
	\fill (1,0) circle (5pt);

	\draw[line width=1pt] (1,1) -- (1-2,1-2);
	\draw[line width=1pt] (1,1) -- (1+1,1);

	\draw[line width=1pt] (1,0) -- (1-1,0-1);
	\draw[line width=1pt] (1,0) -- (1+1,0);
	\draw[line width=1pt] (1,0) -- (1,0+2);

	\draw[line width=1pt] (0,0) -- (0+1,0);
	\draw[line width=1pt] (0,0) -- (0,0+2);

\end{tikzpicture}
\\
\hline
g&h&i&j&k&l
\\
\hline
\vspace{.1in}
\begin{tikzpicture}[scale=0.4]

	\fill (0,0) circle (5pt);
	\fill (2,1) circle (5pt);
	\fill (1,2) circle (5pt);

	\draw[line width=1pt] (0,0) -- (0+3,0);
	\draw[line width=1pt] (0,0) -- (0,0+3);
	\draw[line width=1pt] (0,0) -- (0-1,0-1);

	\draw[line width=1pt] (1,2) -- (1+2,2);
	\draw[line width=1pt] (1,2) -- (1,2+1);
	\draw[line width=1pt] (1,2) -- (1-2,2-2);

	\draw[line width=1pt] (2,1) -- (2+1,1);
	\draw[line width=1pt] (2,1) -- (2,1+2);
	\draw[line width=1pt] (2,1) -- (2-2,1-2);

\end{tikzpicture}
&\vspace{.1in}
\begin{tikzpicture}[scale=0.4]

	\fill (0,0) circle (5pt);
	\fill (1,1) circle (5pt);
	\fill (3,2) circle (5pt);

	\draw[line width=1pt] (0,0) -- (0+4,0);
	\draw[line width=1pt] (0,0) -- (0,0+3);

	\draw[line width=1pt] (1,1) -- (1,1+2);
	\draw[line width=1pt] (1,1) -- (1+3,1);
	\draw[line width=1pt] (1,1) -- (1-2,1-2);

	\draw[line width=1pt] (3,2) -- (3+1,2);
	\draw[line width=1pt] (3,2) -- (3,2+1);
	\draw[line width=1pt] (3,2) -- (3-3,2-3);

\end{tikzpicture}
&\vspace{.1in}
\begin{tikzpicture}[scale=0.4]

	\fill (0,0) circle (5pt);
	\fill (1,0) circle (5pt);
	\fill (2,1) circle (5pt);

	\draw[line width=1pt] (2,1) -- (2-2,1-2);
	\draw[line width=1pt] (2,1) -- (2+1,1);
	\draw[line width=1pt] (2,1) -- (2,1+1);

	\draw[line width=1pt] (1,0) -- (1,0+2);

	\draw[line width=1pt] (0,0) -- (0,0+2);
	\draw[line width=1pt] (0,0) -- (0+3,0);
	\draw[line width=1pt] (0,0) -- (0-1,0-1);

\end{tikzpicture}
&\vspace{.1in}
\begin{tikzpicture}[scale=0.4]

	\fill (1,0) circle (5pt);
	\fill (0,1) circle (5pt);
	\fill (2,1) circle (5pt);

	\draw[line width=1pt] (1,0) -- (1+2,0);
	\draw[line width=1pt] (1,0) -- (1,0+2);

	\draw[line width=1pt] (0,1) -- (0+3,1);
	\draw[line width=1pt] (0,1) -- (0,1+1);
	\draw[line width=1pt] (0,1) -- (0-1,1-1);

	\draw[line width=1pt] (2,1) -- (2,1+1);
	\draw[line width=1pt] (2,1) -- (2-2,1-2);

\end{tikzpicture}
&\vspace{.1in}
\begin{tikzpicture}[scale=0.4]

	\fill (0,0) circle (5pt);
	\fill (0,1) circle (5pt);
	\fill (2,1) circle (5pt);

	\draw[line width=1pt] (2,1) -- (2-2,1-2);
	\draw[line width=1pt] (2,1) -- (2,1+1);

	\draw[line width=1pt] (0,1) -- (0-1,1-1);
	\draw[line width=1pt] (0,1) -- (0+3,1);

	\draw[line width=1pt] (0,0) -- (0-1,0-1);
	\draw[line width=1pt] (0,0) -- (0+3,0);
	\draw[line width=1pt] (0,0) -- (0,0+2);

\end{tikzpicture}
&\vspace{.1in}
\begin{tikzpicture}[scale=0.4]

	\fill (0,1) circle (5pt);
	\fill (1,0) circle (5pt);
	\fill (3,2) circle (5pt);

	\draw[line width=1pt] (3,2) -- (3-3,2-3);
	\draw[line width=1pt] (3,2) -- (3+1,2);
	\draw[line width=1pt] (3,2) -- (3,2+1);

	\draw[line width=1pt] (1,0) -- (1-1,0-1);
	\draw[line width=1pt] (1,0) -- (1,0+3);
	\draw[line width=1pt] (1,0) -- (1+3,0);

	\draw[line width=1pt] (0,1) -- (0-1,1-1);
	\draw[line width=1pt] (0,1) -- (0,1+2);
	\draw[line width=1pt] (0,1) -- (0+4,1);

\end{tikzpicture}
\\
\hline
m&n&o&p&q&r
\\
\hline
\vspace{.1in}
\begin{tikzpicture}[scale=0.4]

	\fill (0,0) circle (5pt);
	\fill (2,1) circle (5pt);
	\fill (3,2) circle (5pt);

	\draw[line width=1pt] (0,0) -- (0-1,0-1);
	\draw[line width=1pt] (0,0) -- (0+4,0);
	\draw[line width=1pt] (0,0) -- (0,0+3);

	\draw[line width=1pt] (3,2) -- (3-3,2-3);
	\draw[line width=1pt] (3,2) -- (3+1,2);
	\draw[line width=1pt] (3,2) -- (3,2+1);

	\draw[line width=1pt] (2,1) -- (2+2,1);
	\draw[line width=1pt] (2,1) -- (2,1+2);

\end{tikzpicture}
&\vspace{.1in}
\begin{tikzpicture}[scale=0.4]

	\fill (0,2) circle (5pt);
	\fill (1,0) circle (5pt);
	\fill (2,1) circle (5pt);

	\draw[line width=1pt] (2,1) -- (2-2,1-2);
	\draw[line width=1pt] (2,1) -- (2+1,1);
	\draw[line width=1pt] (2,1) -- (2,1+2);

	\draw[line width=1pt] (0,2) -- (0-1,2-1);
	\draw[line width=1pt] (0,2) -- (0+3,2);
	\draw[line width=1pt] (0,2) -- (0,2+1);

	\draw[line width=1pt] (1,0) -- (1+2,0);
	\draw[line width=1pt] (1,0) -- (1,0+3);

\end{tikzpicture}
&\vspace{.1in}
\begin{tikzpicture}[scale=0.4]

	\fill (0,1) circle (5pt);
	\fill (1,0) circle (5pt);
	\fill (3,1) circle (5pt);

	\draw[line width=1pt] (0,1) -- (0,1+1);
	\draw[line width=1pt] (0,1) -- (0+4,1);
	\draw[line width=1pt] (0,1) -- (0-1,1-1);

	\draw[line width=1pt] (1,0) -- (1,0+2);
	\draw[line width=1pt] (1,0) -- (1+3,0);
	\draw[line width=1pt] (1,0) -- (1-1,0-1);

	\draw[line width=1pt] (3,1) -- (3,1+1);
	\draw[line width=1pt] (3,1) -- (3-2,1-2);

\end{tikzpicture}
&\vspace{.1in}
\begin{tikzpicture}[scale=0.4]

	\fill (0,0) circle (5pt);
	\fill (0,2) circle (5pt);
	\fill (2,1) circle (5pt);

	\draw[line width=1pt] (0,0) -- (0-1,0-1);
	\draw[line width=1pt] (0,0) -- (0+3,0);
	\draw[line width=1pt] (0,0) -- (0,0+3);

	\draw[line width=1pt] (2,1) -- (2-2,1-2);
	\draw[line width=1pt] (2,1) -- (2+1,1);
	\draw[line width=1pt] (2,1) -- (2,1+2);

	\draw[line width=1pt] (0,2) -- (0-1,2-1);
	\draw[line width=1pt] (0,2) -- (0+3,2);

\end{tikzpicture}
&\vspace{.1in}
\begin{tikzpicture}[scale=0.4]

	\fill (0,1) circle (5pt);
	\fill (2,2) circle (5pt);
	\fill (1,0) circle (5pt);

	\draw[line width=1pt] (0,1) -- (0-1,1-1);
	\draw[line width=1pt] (0,1) -- (0+3,1);
	\draw[line width=1pt] (0,1) -- (0,1+2);

	\draw[line width=1pt] (1,0) -- (1-1,0-1);
	\draw[line width=1pt] (1,0) -- (1+2,0);
	\draw[line width=1pt] (1,0) -- (1,0+3);

	\draw[line width=1pt] (2,2) -- (2,2+1);
	\draw[line width=1pt] (2,2) -- (2+1,2);
	\draw[line width=1pt] (2,2) -- (2-3,2-3);

\end{tikzpicture}
&\vspace{.1in}
\begin{tikzpicture}[scale=0.4]

	\fill (0,1) circle (5pt);
	\fill (1,0) circle (5pt);
	\fill (2,4) circle (5pt);

	\draw[line width=1pt] (0,1) -- (0-1,1-1);
	\draw[line width=1pt] (0,1) -- (0+3,1);
	\draw[line width=1pt] (0,1) -- (0,1+4);

	\draw[line width=1pt] (1,0) -- (1-1,0-1);
	\draw[line width=1pt] (1,0) -- (1+2,0);
	\draw[line width=1pt] (1,0) -- (1,0+5);

	\draw[line width=1pt] (2,4) -- (2-3,4-3);
	\draw[line width=1pt] (2,4) -- (2+1,4);
	\draw[line width=1pt] (2,4) -- (2,4+1);

\end{tikzpicture}
\\
\hline
s&t&u&&&
\\
\hline
\vspace{.1in}
\begin{tikzpicture}[scale=0.4]

	\fill (0,1) circle (5pt);
	\fill (1,3) circle (5pt);
	\fill (2,0) circle (5pt);

	\draw[line width=1pt] (0,1) -- (0-1,1-1);
	\draw[line width=1pt] (0,1) -- (0+3,1);
	\draw[line width=1pt] (0,1) -- (0,1+3);

	\draw[line width=1pt] (1,3) -- (1-2,3-2);
	\draw[line width=1pt] (1,3) -- (1+2,3);
	\draw[line width=1pt] (1,3) -- (1,3+1);

	\draw[line width=1pt] (2,0) -- (2-1,0-1);
	\draw[line width=1pt] (2,0) -- (2+1,0);
	\draw[line width=1pt] (2,0) -- (2,0+4);

\end{tikzpicture}
&\vspace{.1in}
\begin{tikzpicture}[scale=0.4]

	\fill (0,0) circle (5pt);
	\fill (1,2) circle (5pt);
	\fill (2,4) circle (5pt);

	\draw[line width=1pt] (0,0) -- (0-1,0-1);
	\draw[line width=1pt] (0,0) -- (0+3,0);
	\draw[line width=1pt] (0,0) -- (0,0+5);

	\draw[line width=1pt] (1,2) -- (1-2,2-2);
	\draw[line width=1pt] (1,2) -- (1+2,2);
	\draw[line width=1pt] (1,2) -- (1,2+3);

	\draw[line width=1pt] (2,4) -- (2-3,4-3);
	\draw[line width=1pt] (2,4) -- (2+1,4);
	\draw[line width=1pt] (2,4) -- (2,4+1);

\end{tikzpicture}
&\vspace{.1in}
\begin{tikzpicture}[scale=0.4]

	\fill (-1,0) circle (5pt);
	\fill (2,2) circle (5pt);
	\fill (1,-1) circle (5pt);

	\draw[line width=1pt] (-1,0) -- (-1-1,0-1);
	\draw[line width=1pt] (-1,0) -- (-1+4,0);
	\draw[line width=1pt] (-1,0) -- (-1,0+3);

	\draw[line width=1pt] (1,-1) -- (1-1,-1-1);
	\draw[line width=1pt] (1,-1) -- (1+2,-1);
	\draw[line width=1pt] (1,-1) -- (1,-1+4);

	\draw[line width=1pt] (2,2) -- (2,2+1);
	\draw[line width=1pt] (2,2) -- (2+1,2);
	\draw[line width=1pt] (2,2) -- (2-4,2-4);

\end{tikzpicture}
&\vspace{.1in}
&\vspace{.1in}
&\vspace{.1in}
\\
\hline
\end{tabular}
\caption{All  combinatorial types of Mustafin triangles (up to symmetries) enumerating all cones in $\mathcal{Q}_3$. The table is from \cite[Figure 6]{CHSW11}.}
\label{triplesoflatticesinapartment}
\end{table}

\begin{definition}
\label{fanforfamilyoverchowquotient}
Next we define the family over $Y_{\mathcal{Q}_m}$.
Let $\mathcal{R}_m$ be the fan in $\mathbb{R}^{2m}$ 
with cones 
$\eta=q^{-1}(\xi)\cap\tau$ for  $\xi\in\mathcal{Q}_m$ and $\tau\in\mathcal{P}^m$.
%given by intersections of cones in $\mathcal{P}^m$ with cones in $q^{-1}(\mathcal{Q}_m)$,
\end{definition}

\begin{proposition}
\label{flatfamilywithreducedfibersontoricchowquotientP2}
We have 
a commutative diagram of morphisms of toric varieties:
\begin{center}
\begin{tikzpicture}[>=angle 90]
\matrix(a)[matrix of math nodes,
row sep=2em, column sep=2em,
text height=1.5ex, text depth=0.25ex]
{Y_{\mathcal{R}_m}&Y_{\mathcal{Q}_m}\times(\mathbb{P}^2)^m\\
Y_{\mathcal{Q}_m}.&\\};
\path[right hook->] (a-1-1) edge node[]{}(a-1-2);
\path[->] (a-1-1) edge node[]{}(a-2-1);
\path[->] (a-1-2) edge node[]{}(a-2-1);
\end{tikzpicture}
\end{center}
The top row is an embedding, and the toric morphism $Y_{\mathcal{R}_m}\rightarrow Y_{\mathcal{Q}_m}$ is flat with reduced fibers.
\end{proposition}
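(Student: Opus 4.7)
The plan is to first verify that $\mathcal{R}_m$ is indeed a fan and exhibit the two toric morphisms, then check that the product map is a closed embedding, and finally address the flatness with reduced fibers. Each cone $\eta=q^{-1}(\xi)\cap\tau$ is a rational polyhedral cone, being the intersection of two such in $\mathbb{R}^{2m}$ (the preimage $q^{-1}(\xi)$ is a rational cone containing $\ker q=\mathbb{R}^2$ as lineality space). Faces of $\eta$ have the form $q^{-1}(\xi')\cap\tau'$ for $\xi'\leq\xi$ and $\tau'\leq\tau$, and intersections of two cones of $\mathcal{R}_m$ are again of this form, so $\mathcal{R}_m$ is a fan. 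The identity on $N=\mathbb{Z}^{2m}$ realizes $\mathcal{R}_m$ as a refinement of $\mathcal{P}^m$, inducing a morphism $Y_{\mathcal{R}_m}\to(\mathbb{P}^2)^m$, while $q\colon N\to N/\mathbb{Z}^2$ sends each $\eta$ into $\xi\in\mathcal{Q}_m$, inducing $Y_{\mathcal{R}_m}\to Y_{\mathcal{Q}_m}$. The triangle commutes by construction.

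For the closed embedding into $Y_{\mathcal{Q}_m}\times(\mathbb{P}^2)^m$, the relevant lattice map is the graph $v\mapsto(q(v),v)$ from $N$ into $(N/\mathbb{Z}^2)\oplus N$, which is injective with saturated image (the graph of any lattice homomorphism is a saturated sublattice). Each cone $\eta=q^{-1}(\xi)\cap\tau$ is carried bijectively onto the intersection of this graph with the product cone $\xi\times\tau$ in $\mathcal{Q}_m\times\mathcal{P}^m$. By the standard toric criterion (a toric morphism is a closed embedding when the lattice map is injective with saturated image and each source cone maps to the intersection of the image lattice with a target cone), this gives the desired closed immersion.

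The main obstacle is flatness with reduced fibers for $Y_{\mathcal{R}_m}\to Y_{\mathcal{Q}_m}$. I plan to invoke the combinatorial criterion that a toric morphism $Y_{\mathcal{F}}\to Y_{\mathcal{F}'}$ induced by $\phi\colon N\to N'$ is flat with reduced fibers if and only if for every $\eta\in\mathcal{F}$ the image $\phi(\eta)$ is a cone $\xi\in\mathcal{F}'$ and $\phi$ sends $\eta\cap N$ surjectively onto $\xi\cap N'$. The cone-level surjectivity $q(\eta)=\xi$ reduces, for $w\in\mathrm{relint}(\xi)$ with an arbitrary lift $\tilde w$, to $(\tilde w+\mathbb{R}^2)\cap\tau\ne\emptyset$. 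This follows from Lemma~\ref{preliminaryfibersfamilyoverchowquotient}: the collection $\mathcal{P}^m_{\tilde w}$ depends only on the class $[\tilde w]\in\mathbb{R}^{2m}/\mathbb{R}^2$, which is constant on $\mathrm{relint}(\xi)$ by definition of the quotient fan, so $\tau\in\mathcal{P}^m_{v}$ for any $v\in\mathrm{relint}(\eta)$ forces $\tau\in\mathcal{P}^m_{\tilde w}$. Boundary cases of $\xi$ follow by passing to closures.

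For the lattice surjectivity, given a lattice class $w\in\xi\cap(N/\mathbb{Z}^2)$, saturation of $\mathbb{Z}^2\subset N$ provides a lattice lift $\tilde w_0\in N$, and the task becomes finding $u\in\mathbb{Z}^2$ with $\tilde w_0+u\in\tau$. Writing $\tau=\sigma_1\times\cdots\times\sigma_m$ with $\sigma_i\in\mathcal{P}$ and using coordinates $(a,b)$ on $\mathbb{R}^2$, the set of admissible $u$ is the rational polyhedron $P=\bigcap_{i=1}^m S_i\subset\mathbb{R}^2$, where each $S_i$ is an integer translate of $\sigma_i$; hence $P$ is defined by integer inequalities $\ell_{i,k}(a,b)\geq c_{i,k}$ with $c_{i,k}\in\mathbb{Z}$. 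Smoothness of $\mathbb{P}^2$ implies that the possible primitive dual generators of cones in $\mathcal{P}$ are $\pm e_1$, $\pm e_2$, $\pm(e_1-e_2)$, and a direct inspection shows that any two linearly independent such vectors form a $\mathbb{Z}^2$-basis. Consequently every vertex of $P$ is a lattice point, and if $P$ has no vertex it is translation-invariant along a lattice direction (a line, strip, half-plane, or all of $\mathbb{R}^2$ cut out by integer-defined boundaries), hence still contains lattice points. This produces the required lift in $\eta\cap N$ and completes the verification.
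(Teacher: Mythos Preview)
Your approach coincides with the paper's: realize the embedding via the graph map $v\mapsto([v],v)$ into $(\mathbb{R}^{2m}/\mathbb{R}^2)\times\mathbb{R}^{2m}$, then check the two combinatorial conditions (cone surjectivity and lattice-point surjectivity) that guarantee a toric morphism is flat with reduced fibers. Your treatment of lattice surjectivity is essentially the paper's argument: the polyhedron $P=\bigcap_i(\tau_i-(\tilde w_0)_i)\subset\mathbb{R}^2$ has all facet normals among $\pm e_1,\pm e_2,\pm(e_1-e_2)$, any two of which form a $\mathbb{Z}^2$-basis, so every vertex of $P$ is integral. (Your no-vertex case is in fact vacuous: the recession cone $\bigcap_i\tau_i$ sits inside a pointed cone of $\mathcal{P}$, so $P$ always has a vertex when nonempty.)

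There is, however, a gap in your verification of condition~(1). You assert $q(\eta)=\xi$ whenever $\eta=q^{-1}(\xi)\cap\tau$, but this fails for an arbitrary representation: one only has $q(\eta)=\xi\cap q(\tau)$, which may be a proper face of~$\xi$. Your appeal to Lemma~\ref{preliminaryfibersfamilyoverchowquotient} implicitly assumes that some $v\in\mathrm{relint}(\eta)$ has $[v]\in\mathrm{relint}(\xi)$, and this need not hold when $q(\tau)$ misses $\mathrm{relint}(\xi)$. The paper avoids this by invoking the description of $\mathcal{Q}_m$ as the common refinement of the images $q(\tau)$, $\tau\in\mathcal{P}^m$ \cite{KSZ91}; then $\xi\cap q(\tau)$ is automatically a cone of $\mathcal{Q}_m$. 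Your argument can also be repaired directly: replace $\xi$ by the minimal face $\xi_0\le\xi$ containing $q(\eta)$, so that $\mathrm{relint}(q(\eta))\subseteq\mathrm{relint}(\xi_0)$; then for $v\in\mathrm{relint}(\eta)$ one has $[v]\in\mathrm{relint}(\xi_0)$, and your reasoning gives $\xi_0\subseteq q(\eta)\subseteq\xi_0$.
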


\begin{proof}
Consider the injective linear map given by
$$
\mathbb{R}^{2m}\hookrightarrow(\mathbb{R}^{2m}/\mathbb{R}^2)\times\mathbb{R}^{2m},
\quad
v\mapsto([v],v).
$$
Then $\mathcal{R}_m$ is the restriction to $\mathbb{R}^{2m}$ of the product fan $\mathcal{Q}_m\times\mathcal{P}^m$.
By \cite[Theorem~2.1.4]{Mol21}, 
in order to show that $Y_{\mathcal{R}_m}\rightarrow Y_{\mathcal{Q}_m}$ is flat with reduced fibers,
we need to check that
%this is equivalent to showing that $Y_{\mathcal{R}_m}\rightarrow Y_{\mathcal{Q}_m}$ satisfies the following two properties:
\begin{enumerate}

\item Every cone $\eta\in\mathcal{R}_m$ surjects onto a cone $\sigma\in\mathcal{Q}_m$;

\item Whenever $q(\eta)=\sigma$, the lattice points in $\eta$ surject onto the lattice points in $\sigma$.
\end{enumerate}

Let $\eta\in\mathcal{R}_m$. Then %refore, by definition,
$\eta=q^{-1}(\xi)\cap\tau$ for some cones $\xi\in\mathcal{Q}_m$ and $\tau\in\mathcal{P}^m$. Thus
$
q(\eta)=\xi\cap q(\tau)
$.
By \cite[\S1]{KSZ91}, the quotient fan $\mathcal{Q}_m$ is
the common refinement of the images in $\mathbb{R}^{2m}/\mathbb{R}^2$ of cones in $\mathcal{P}^m$. For this reason, we have that $\xi\cap q(\tau)$ is a cone in $\mathcal{Q}_m$, proving part (1).

For part (2), let $\eta\in\mathcal{R}_m$ and let $[w]\in q(\eta)$ be a lattice point, which means that we can assume $w$ has integral coordinates. %We want to show that there exists a lattice point in $\eta$ mapping to $[w]$ through $q$. 
Write $\eta=q^{-1}(\xi)\cap\tau$ for  $\xi\in\mathcal{Q}_m$,  $\tau\in\mathcal{P}^m$, %. So~$q(\eta)=\xi\cap q(\tau)$, 
implying that $[w]\in\xi$ and $[w]\in q(\tau)$. Thus, $w\in q^{-1}(\xi)$ and $w+(x,x',\ldots,x,x')\in\tau$ for some $(x,x')\in\mathbb{R}^2$.

Write $\tau=\tau_1\times\cdots\times\tau_m$. %So $w+(x,x',\ldots,x,x')\in\tau$ implies that 
Then $(x,x')\in\tau_i-(w_i,w_i')$ for all~$i$. So $(x,x')$ is a point in the cell $C=\cap_{i=1}^m(\tau_i-(w_i,w_i'))$ of the subdivision $\mathcal{S}(w)$ of $\mathbb{R}^2$. Notice that $C$ always contains a point $(y,y')$ with integral coordinates. This is because, by the geometry of the fan $\mathcal{P}$, $C$ is a convex polytope with integral vertices and sides with slopes $0,1$, or vertical. %If $(y,y')\in C$ is an integral point, then we still have 

It follows that $w+(y,y',\ldots,y,y')\in\tau$
and therefore  $w+(y,y',\ldots,y,y')\in q^{-1}(\xi)\cap\tau=\eta$ is a lattice point mapping to $[w]$.
\end{proof}

\begin{remark}
One could adapt the above results for $Y^m$, where $Y$ is an arbitrary  projective toric surface, except for Proposition~\ref{flatfamilywithreducedfibersontoricchowquotientP2}, which 
used specific properties of the fan of~$\mathbb{P}^2$. The~conclusion of Proposition~\ref{flatfamilywithreducedfibersontoricchowquotientP2} holds for any projective toric surface such that the slopes of the rays of its associated fan are in the set $\{0,1,\infty\}$, i.e.~for a toric del Pezzo surface.
\end{remark}

\begin{lemma}
\label{droppingspiderspreservesequivalence}
For $i\in\{1,\ldots,m\}$, we have 
``forgetful'' toric morphisms of toric varieties 
$$Y_{\mathcal{Q}_m}\to Y_{\mathcal{Q}_{m-1}}$$
induced by 
linear maps $p_i\colon\mathbb{R}^{2m}\rightarrow\mathbb{R}^{2m-2}$,
$
v\mapsto(v_1,v_1',\ldots,v_{i-1},v_{i-1}',v_{i+1},v_{i+1}',\ldots,v_m,v_m')
$.
\end{lemma}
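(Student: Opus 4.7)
The plan is to verify that the linear map $p_i$ descends to a fan-compatible map on the quotient spaces, and therefore induces a toric morphism. The descent itself is trivial, so the substance lies in compatibility with the quotient fans $\mathcal{Q}_m$ and $\mathcal{Q}_{m-1}$. First, I would observe that $p_i$ sends the diagonal $\mathbb{R}^2 \subseteq \mathbb{R}^{2m}$ isomorphically onto the diagonal $\mathbb{R}^2 \subseteq \mathbb{R}^{2(m-1)}$, which produces an induced lattice map $\bar{p}_i\colon \mathbb{R}^{2m}/\mathbb{R}^2 \to \mathbb{R}^{2(m-1)}/\mathbb{R}^2$. It remains to show that for every cone $\sigma\in\mathcal{Q}_m$, the image $\bar{p}_i(\sigma)$ is contained in a single cone of $\mathcal{Q}_{m-1}$.

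By Lemma~\ref{preliminaryfibersfamilyoverchowquotient}, two points $[v],[w]\in\mathbb{R}^{2m}/\mathbb{R}^2$ lie in the relative interior of the same cone of $\mathcal{Q}_m$ precisely when $[\mathcal{S}(v)]=[\mathcal{S}(w)]$. The linear map $p_i$ corresponds geometrically to dropping the $i$-th spider, so $\mathcal{S}(p_i(v))=\bigcup_{j\neq i} S(v_j,v_j')$. The key combinatorial claim is therefore: if $[\mathcal{S}(v)]=[\mathcal{S}(w)]$, then $[\mathcal{S}(p_i(v))]=[\mathcal{S}(p_i(w))]$.

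To prove this claim, fix for $j\neq i$ matched cones $\sigma_j\in\Sigma(v_j,v_j')$ and $\tau_j\in\Sigma(w_j,w_j')$ with $\sigma_j+(v_j,v_j')=\tau_j+(w_j,w_j')$. Since the seven cones of $\Sigma(v_i,v_i')$ cover $\mathbb{R}^2$, we have
\[
\bigcap_{j\neq i}\sigma_j\;=\;\bigcup_{\sigma_i\in\Sigma(v_i,v_i')}\bigcap_{j=1}^m\sigma_j,
\]
so $\bigcap_{j\neq i}\sigma_j\neq\emptyset$ if and only if there exists $\sigma_i\in\Sigma(v_i,v_i')$ with $\bigcap_{j=1}^m\sigma_j\neq\emptyset$. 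For any such $\sigma_i$ there is a unique matched $\tau_i\in\Sigma(w_i,w_i')$ with $\tau_i+(w_i,w_i')=\sigma_i+(v_i,v_i')$, and the hypothesis $[\mathcal{S}(v)]=[\mathcal{S}(w)]$ converts $\bigcap_j\sigma_j\neq\emptyset$ into $\bigcap_j\tau_j\neq\emptyset$, hence $\bigcap_{j\neq i}\tau_j\neq\emptyset$. Symmetrizing proves the claim.

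Combining the claim with Lemma~\ref{preliminaryfibersfamilyoverchowquotient}, the image $\bar{p}_i(\mathrm{relint}(\sigma))$ lies in the relative interior of a single cone $\sigma'\in\mathcal{Q}_{m-1}$, and hence $\bar{p}_i(\sigma)\subseteq\sigma'$ by continuity. Thus $\bar{p}_i$ is a map of fans and induces the stated toric morphism $Y_{\mathcal{Q}_m}\to Y_{\mathcal{Q}_{m-1}}$. The main conceptual point, which is really just a matter of careful bookkeeping, is recognizing that forgetting a spider amounts to existentially quantifying out one index in the intersection condition defining combinatorial equivalence; this is clean precisely because $\Sigma(v_i,v_i')$ tiles $\mathbb{R}^2$.
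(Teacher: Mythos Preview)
Your proof is correct and follows essentially the same approach as the paper: both reduce to the combinatorial claim that $[\mathcal{S}(v)]=[\mathcal{S}(w)]$ implies $[\mathcal{S}(p_i(v))]=[\mathcal{S}(p_i(w))]$, and both prove it by using that $\Sigma(v_i,v_i')$ covers $\mathbb{R}^2$ to reinsert the dropped index and then apply the hypothesis. Your write-up is slightly more explicit about the descent of $p_i$ to the quotient and the conclusion that $\bar p_i$ is a map of fans, but the substance is identical.
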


\begin{proof}
Let $v,w\in\mathbb{R}^{2m}$. We claim that if $[\mathcal{S}(v)]=[\mathcal{S}(w)]$
then $[\mathcal{S}(p_i(v))]=[\mathcal{S}(p_i(w))]$ for all $i$. But 
this is immediate from Definition~\ref{combinatoriallyequivalentsubdivisions}: for $j\in\{1,\ldots,m\}\setminus\{i\}$, consider cones $\sigma_j\in\Sigma(v_j,v_j'),\tau_j\in\Sigma(w_j,w_j')$ such that $\sigma_j+(v_j,v_j')=\tau_j+(w_j,w_j')$. Assume that $\cap_{j\neq i}\sigma_j\neq\emptyset$. We want to show that $\cap_{j\neq i}\tau_j\neq\emptyset$. There exists $\sigma_i\in\Sigma(v_i,v_i')$ such that $\cap_i\sigma_i\neq\emptyset$ because $\Sigma(v_i,v_i')$ covers the entire $\mathbb{R}^2$. Let $\tau_i\in\Sigma(w_i,w_i')$ such that $\sigma_i+(v_i,v_i')=\tau_i+(w_i,w_i')$. Since $[\mathcal{S}(v)]=[\mathcal{S}(w)]$, we know that also $\cap_j\tau_j\neq\emptyset$, so $\cap_{j\neq i}\tau_j\neq\emptyset$, proving what we want.
\end{proof}

Next we give a modular interpretation of $Y_{\mathcal{Q}_m}$ 
as the moduli space of reducible surfaces with marked points.
The torus $(\mathbb{G}_m^2)^m$ is a configuration space of $m$
points 
$p_1,\ldots,p_m\in\mathbb{G}_m^2$. % and the quotient torus $(\mathbb{G}_m^2)^m/\mathbb{G}_m^2$ as the orbit space for the free $\mathbb{G}_m^2$ action. 
These {\em light} points are allowed to coincide.
If we add three {\em heavy} points 
$e_1,e_2,e_3$, the quotient torus
$(\mathbb{G}_m^2)^m/\mathbb{G}_m^2$ becomes isomorphic to the $\PGL_3$-orbit space
for $m+3$ points $$e_1,e_2,e_3,p_1,\ldots,p_m\in\mathbb{P}^2$$
such that any of the $m$ 
quadruples of the form $(p_i,e_1,e_2,e_3)$ is in linearly general position.
To~reduce to the torus action, we normalize $e_1,e_2,e_3$ to be the standard basis  vectors.
Given $K$-points $p_1,\ldots,p_m\in\mathbb{G}_m^2(K)$, 
we have $m$ lattices $L_i$ for $i=1,\ldots,m$ that stabilize quadruples 
of the form $(e_1,e_2,e_3,p_i)$ (some of these lattices can coincide). Notice that these lattices are in the same apartment that corresponds to the basis $e_1,e_2,e_3$.

\begin{proposition}\label{fibersfamilyfromspiders}
%Let $x\in Y_{\mathcal{Q}_m}$ and
The family $Y_{\mathcal{R}_m}\to Y_{\mathcal{Q}_m}$ is the pullback
of the universal family of the Hilbert scheme of $({\mathbb P}^2)^m$.
Let $\overline{\mathbf{a}}\colon\Spec(R)\rightarrow Y_{\mathcal{Q}_m}$ be a morphism such that $\overline{\mathbf{a}}(\Spec K)$
is contained in the quotient torus. Then %$(\mathbb{G}_m^2)^m/\mathbb{G}_m^2$.
%\subseteq Y_{\mathcal{Q}_m}$ such that the unique extension $\overline{\mathbf{a}}\colon\Spec(R)\rightarrow Y_{\mathcal{Q}_m}$ satisfies $\overline{\mathbf{a}}(0)=x$. 
$\overline{\mathbf{a}}^*Y_{\mathcal{R}_m}$ is isomorphic to the Mustafin join $\mathbb{P}(L_1,\ldots,L_m)$.
\end{proposition}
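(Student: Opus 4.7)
The plan is to prove the two assertions in sequence. For the first, I would invoke Proposition~\ref{flatfamilywithreducedfibersontoricchowquotientP2}: it already provides a closed embedding $Y_{\mathcal{R}_m}\hookrightarrow Y_{\mathcal{Q}_m}\times(\mathbb{P}^2)^m$ such that the composition with the projection to $Y_{\mathcal{Q}_m}$ is flat with reduced fibers. Over the dense torus of $Y_{\mathcal{Q}_m}$ the fiber is (a translate of) the diagonally embedded $\mathbb{P}^2\subseteq(\mathbb{P}^2)^m$, so its multigraded Hilbert function equals the function $h$ of Example~\ref{srgsrgr}; flatness propagates this to every fiber. The universal property of the multigraded Hilbert scheme then produces a canonical morphism $Y_{\mathcal{Q}_m}\to\mathbf{H}_S^h$ along which $Y_{\mathcal{R}_m}$ is the pullback of the universal family. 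Composing with the forgetful morphism from $\mathbf{H}_S^h$ to the ordinary Hilbert scheme of $(\mathbb{P}^2)^m$ (Example~\ref{srgsrgr}) finishes the first assertion.

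For the second assertion I would start by making $\overline{\mathbf{a}}$ concrete. An $R$-point of $Y_{\mathcal{Q}_m}$ whose generic point lies in the quotient torus is determined, up to this torus action, by a cocharacter which I lift to an integer vector $v=(v_1,v_1',\ldots,v_m,v_m')\in\mathbb{Z}^{2m}$. In the modular description discussed just before the proposition, this lift corresponds to the light points $p_i=[1:t^{v_i}:t^{v_i'}]\in\mathbb{P}^2(K)$, and, via the apartment identification of Remark~\ref{degenerationfrompicture} together with Lemma--Definition~\ref{xfvxfbfb}, to the stable lattices $L_i=e_1R+t^{v_i}e_2R+t^{v_i'}e_3R$.

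Because $Y_{\mathcal{R}_m}\to Y_{\mathcal{Q}_m}$ is flat, the pullback $\overline{\mathbf{a}}^*Y_{\mathcal{R}_m}$ is flat over $R$, hence equals the scheme-theoretic closure of its generic fiber in $\Spec(R)\times(\mathbb{P}^2)^m$. By the toric description of the embedding in Proposition~\ref{flatfamilywithreducedfibersontoricchowquotientP2} (combined with Lemma~\ref{preliminaryfibersfamilyoverchowquotient}), this generic fiber is the diagonal $\mathbb{P}^2_K\subseteq(\mathbb{P}^2_K)^m$ translated componentwise by $(p_1^{-1},\ldots,p_m^{-1})$, in agreement with the Chow quotient description recalled in the text. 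On the other hand, the $R$-basis $e_1,t^{v_i}e_2,t^{v_i'}e_3$ of $L_i$ gives a canonical isomorphism $\mathbb{P}(L_i)\cong\mathbb{P}^2_R$ under which the inclusion $\mathbb{P}^2_K\cong\mathbb{P}(L_i\otimes_R K)\hookrightarrow\mathbb{P}(L_i)$ is given by $[x:y:z]\mapsto[x:t^{-v_i}y:t^{-v_i'}z]$, i.e.~multiplication by $p_i^{-1}$. Taking the product of these identifications, the diagonal $\mathbb{P}^2_K\hookrightarrow\mathbb{P}(L_1)\times_R\cdots\times_R\mathbb{P}(L_m)$ defining $\mathbb{P}(L_1,\ldots,L_m)$ coincides with the translated diagonal above, so the two scheme-theoretic closures match, giving $\overline{\mathbf{a}}^*Y_{\mathcal{R}_m}\cong\mathbb{P}(L_1,\ldots,L_m)$.

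The main obstacle, in my view, is checking that the identifications above are independent of the chosen lift of the cocharacter: replacing $v$ by $v+(x,x',\ldots,x,x')$ leaves $\overline{\mathbf{a}}$ unchanged but rescales each $L_i$ by the common basis change $(e_1,e_2,e_3)\mapsto(e_1,t^{-x}e_2,t^{-x'}e_3)\in\PGL_3(K)$, which preserves the lattice classes $[L_i]\in\mathfrak{B}_3^0$ and therefore the Mustafin join. Once this equivariance is in place, extracting the generic fiber of $Y_{\mathcal{R}_m}$ from the explicit toric data of the fan $\mathcal{R}_m$ is routine.
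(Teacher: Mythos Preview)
Your argument is correct and follows the same route as the paper: both use flatness (Proposition~\ref{flatfamilywithreducedfibersontoricchowquotientP2}) to identify $\overline{\mathbf{a}}^*Y_{\mathcal{R}_m}$ with the Zariski closure of its generic fiber in $(\mathbb{P}^2_R)^m$, observe that this generic fiber is the translated diagonal $\mathbb{P}^2_K$, and recognize the closure as the Mustafin join.

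One small point: the paper works directly with an arbitrary $K$-point $(a_1,\ldots,a_m)\in(\mathbb{G}_m^2)^m(K)$ and sets $L_i=a_{i1}e_1R+a_{i2}e_2R+a_{i3}e_3R$, whereas you first reduce to the monomial case $p_i=[1:t^{v_i}:t^{v_i'}]$. That reduction is legitimate---a general $K$-point of the quotient torus differs from a cocharacter by an element of $T'(R)$, and the toric morphism $Y_{\mathcal{R}_m}\to Y_{\mathcal{Q}_m}$ is $T'$-equivariant---but your ``main obstacle'' paragraph only treats the $\mathbb{Z}^2$-ambiguity in lifting the cocharacter to $\mathbb{Z}^{2m}$, not this prior passage from a general arc to a monomial one. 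Either add a line explaining the $T'(R)$-equivariance, or simply follow the paper and carry general $a_i$ through the computation; the basis-change identification you wrote works verbatim with $a_{ij}$ in place of $t^{v_i},t^{v_i'}$.
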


\begin{proof}
The first part follows from Proposition~\ref{flatfamilywithreducedfibersontoricchowquotientP2}.
For the second part, write $\overline{\mathbf{a}}(\Spec K)=(a_1,\ldots,a_m)\mod \mathbb{G}_m^2$,
an element of $(\mathbb{G}_m^2)^m/\mathbb{G}_m^2(K)$. 
We have that 
$(\overline{\mathbf{a}}|_{\Spec K})^*Y_{\mathcal{R}_m}\cong\mathbb{P}_K^2$ is embedded in $(\mathbb{P}_K^2)^m$ by the diagonal embedding of $\mathbb{P}_K^2$ followed by the automorphism of $(\mathbb{P}_K^2)^m$ induced by the action of $(a_1^{-1},\ldots,a_m^{-1})$. The morphism $Y_{\mathcal{R}_m}\rightarrow Y_{\mathcal{Q}_m}$ is flat by Proposition~\ref{flatfamilywithreducedfibersontoricchowquotientP2}, so $\overline{\mathbf{a}}^*Y_{\mathcal{R}_m}$ equals the Zariski closure of $\mathbb{P}_K^2$ in $(\mathbb{P}_R^2)^m$ under the above embedding. This shows that $\overline{\mathbf{a}}^*Y_{\mathcal{R}_m}$ is equal to the Mustafin join of the lattices $L_i=a_{i1}e_1R+a_{i2}e_2R+a_{i3}e_3R$, where we view $a_i$ as the point in $\mathbb{P}^2$ given by $[a_{i1}:a_{i2}:a_{i3}]$.
\end{proof}

\begin{definition}\label{wrgawrgarhgar}
Let $\lambda_i$ to be the map of lattices $\mathbb{Z}^{2m}/\mathbb{Z}^2\rightarrow\mathbb{Z}^{2m}$ given by
\[
[v]\mapsto(v_1-v_i,v_1'-v_i',\ldots,v_m-v_i,v_m'-v_i').
\]
Each of these maps is a section of the quotient map 
$q\colon\mathbb{Z}^{2m}\to \mathbb{Z}^{2m}/\mathbb{Z}^2$.
%In particular, we have a commutative diagram
%\begin{center}
%\begin{tikzpicture}[>=angle 90]
%\matrix(a)[matrix of math nodes,
%row sep=2em, column sep=2em,
%text height=1.5ex, text depth=0.25ex]
%{\mathbb{R}^{2m}/\mathbb{R}^2&\mathbb{R}^{2m}\\
%&\mathbb{R}^{2m}/\mathbb{R}^2\\};
%\path[->] (a-1-1) edge node[above]{$\lambda_i$}(a-1-2);
%\path[->] (a-1-1) edge node[below left]{$\id$}(a-2-2);
%\path[->] (a-1-2) edge node[right]{$q$}(a-2-2);
%\end{tikzpicture}
%\end{center}
\end{definition}

\begin{proposition}\label{qwrgawrhwarh}\label{awrgasrgsrh}
The family
$Y_{{\mathcal{R}}_m}\rightarrow Y_{{\mathcal{Q}}_m}$
admits $m$ ``light'' sections $\ell_1,\ldots,\ell_m$, which are toric morphisms
induced by the linear maps $\lambda_1,\ldots,\lambda_m$, and 
three ``heavy'' constant sections  $t_1,t_2,t_3$
given by $t_i=Y_{{\mathcal{Q}}_m}\times \{e_i\}\times\ldots\times\{e_i\}\subseteq Y_{\mathcal{Q}_m}\times(\mathbb{P}^2)^m$
for $i=1,2,3$ (see Proposition~\ref{flatfamilywithreducedfibersontoricchowquotientP2}).
%These arise as follows. For a point $x$ in the dense open subtorus $\mathbb{G}_m^{2m}/\mathbb{G}_m^2\subseteq Y_{{\mathcal{Q}}_m}$, the fiber is $\mathbb{P}^2$ embedded in $(\mathbb{P}^2)^m$ by the diagonal embedding followed by the automorphism of $(\mathbb{P}^2)^m$ induced by $x$. Therefore, on the fiber over $x$ we can find the three torus fixed points $([1:0:0],\ldots,[1:0:0]),([0:1:0],\ldots,[0:1:0])$, and $([0:0:1],\ldots,[0:0:1])$. These give sections (why???) of $Y_{{\mathcal{R}}_m}\rightarrow Y_{{\mathcal{Q}}_m}$ over the torus $\mathbb{G}_m^{2m}/\mathbb{G}_m^2$. The closure in $Y_{{\mathcal{R}}_m}$ of the images of these sections give the sections $t_1,t_2,t_3$.
\end{proposition}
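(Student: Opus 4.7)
The claim has two independent components---the three heavy constant sections and the $m$ light toric sections---which I will handle separately, starting with the heavy ones since they are essentially formal. For the heavy sections, each $t_i$ is the composition of $\mathrm{id}_{Y_{\mathcal{Q}_m}}$ with the constant map to the torus-fixed point $(e_i,\ldots,e_i)\in(\mathbb{P}^2)^m$. Because $e_i$ is fixed under the $\mathbb{G}_m^2$-action, the twisted diagonal embedding $z\mapsto(a_1^{-1}z,\ldots,a_m^{-1}z)$ of Proposition~\ref{fibersfamilyfromspiders} sends $e_i$ to $(e_i,\ldots,e_i)$ for every $(a_1,\ldots,a_m)$, so this constant configuration lies on every Mustafin fiber over the dense torus of $Y_{\mathcal{Q}_m}$. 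Hence the image of $t_i$ in $Y_{\mathcal{Q}_m}\times(\mathbb{P}^2)^m$ lies in $Y_{\mathcal{R}_m}$ over the torus, and by closedness of $Y_{\mathcal{R}_m}$ entirely; projecting to the first factor yields $\mathrm{id}_{Y_{\mathcal{Q}_m}}$, so $t_i$ is a section as required.

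For the light sections, I first verify that $\lambda_i$ is a section of $q$ by a direct computation: $\lambda_i([v])$ differs from $v$ by $(v_i,v_i',\ldots,v_i,v_i')$, which lies in the diagonal $\mathbb{R}^2\subseteq\mathbb{R}^{2m}$ being quotiented out, so $q\circ\lambda_i=\mathrm{id}$. The substantive step is to upgrade $\lambda_i$ to a toric morphism $\ell_i\colon Y_{\mathcal{Q}_m}\to Y_{\mathcal{R}_m}$, for which it suffices to show that $\lambda_i$ sends every cone $\xi\in\mathcal{Q}_m$ into a cone of $\mathcal{R}_m$. Since $\lambda_i(\xi)\subseteq q^{-1}(\xi)$ automatically, Definition~\ref{fanforfamilyoverchowquotient} reduces the task to finding $\tau=\tau_1\times\cdots\times\tau_m\in\mathcal{P}^m$ with $\lambda_i(\xi)\subseteq\tau$, for then $\lambda_i(\xi)\subseteq q^{-1}(\xi)\cap\tau\in\mathcal{R}_m$. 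The $i$-th coordinate pair of $\lambda_i([v])$ is identically $(0,0)$, so I take $\tau_i=\{0\}$; for $j\neq i$, I must show the $j$-th pair $(v_j-v_i,v_j'-v_i')$ stays in a single cone of $\mathcal{P}$ as $[v]$ varies in the relative interior of $\xi$.

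This last assertion is where the combinatorial content of the proposition lies, and it is the step I expect to require the most care. The plan is to invoke Lemma~\ref{preliminaryfibersfamilyoverchowquotient}, which identifies $\xi$ with the combinatorial equivalence class $[\mathcal{S}(v)]$ of the spider arrangement. The spider $S(v_i,v_i')$ has rays in exactly the three ray directions of $\mathcal{P}$, so $\Sigma(v_i,v_i')$ is the translate of $\mathcal{P}$ by $(-v_i,-v_i')$; the combinatorial type $[\mathcal{S}(v)]$ records in particular which cone of $\Sigma(v_i,v_i')$ contains the center $(-v_j,-v_j')$ of the $j$-th spider, equivalently (after translating back) which cone of $\mathcal{P}$ contains the pairwise offset $(v_i-v_j,v_i'-v_j')$. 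Since crossing a ray of the $i$-th spider alters its intersection pattern with the $j$-th spider and therefore changes $\mathcal{P}_v^m$, this assignment is constant on the relative interior of $\xi$ and then on $\xi$ by closedness, yielding the desired $\tau_j$. The delicate point I anticipate is reconciling the sign conventions between the spider/tropical picture and the cocharacter/fan picture of $\mathbb{P}^2$, and confirming that the equivalence $[\mathcal{S}(v)]=[\mathcal{S}(w)]$ genuinely pins down the cone of $\mathcal{P}$ containing each pairwise offset rather than only a coarser invariant.
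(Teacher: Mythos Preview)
Your proof is correct and follows essentially the same approach as the paper's. The paper's proof is more terse---it omits the heavy sections entirely and for the light sections simply asserts that the cone of $\mathcal{P}$ containing $(v_j-v_i,v_j'-v_i')$ is ``completely determined by the relative position of the spiders $S(v_i,v_i')$ and $S(v_j,v_j')$''---whereas you unpack this via the identification $\Sigma(v_i,v_i')=\mathcal{P}+(-v_i,-v_i')$; the sign concern you flag is resolved by swapping the roles of $i$ and $j$ (i.e.\ reading off which cone of $\Sigma(v_j,v_j')$ contains $(-v_i,-v_i')$), which is also recorded by the combinatorial type and directly yields the cone of $\mathcal{P}$ containing $(v_j-v_i,v_j'-v_i')$.
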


\begin{proof}
We need to show that
each of the maps $\lambda_i$
%$\lambda_i\colon\mathbb{R}^{2m}/\mathbb{R}^2\rightarrow\mathbb{R}^{2m}$ 
induces a map of fans from ${\mathcal{Q}}_m$ to ${\mathcal{R}}_m$.
Suppose $[v],[w]\in\mathbb{R}^{2m}/\mathbb{R}^2$ 
are contained in the relative interior of 
a cone $\sigma$ of ${\mathcal{Q}}_m$.
In~order to show that $\lambda_i(\sigma)$ 
is contained in a cone of ${\mathcal{R}}_m$,
it suffices to show that $\lambda_i([v]),\lambda_i([w])$ are contained in the same cone of $\mathcal{P}^m$,
i.e.~that $(v_j-v_i,v_j'-v_i')$ and $(w_j-w_i,w_j'-w_i')$
are contained in the same cone of $\mathcal{P}$ for every $j$.
Note that the cone of $\mathcal{P}$ containing 
$(v_j-v_i,v_j'-v_i')$ (resp.~$(w_j-w_i,w_j'-w_i')$)
is completely determined by the relative position of the spiders
$S(v_i,v_i')$ and $S(v_j,v_j')$ (resp.~
$S(w_i,w_i')$ and $S(w_j,w_j')$).
Since $\mathcal{S}(v)$ and~ $\mathcal{S}(w)$ are combinatorially equivalent, the claim follows.
\end{proof}

\begin{corollary}\label{rgwRGsrgSRH}
Fibers of the family $Y_{\mathcal{R}_m}\rightarrow Y_{\mathcal{Q}_m}$ 
can be computed as in Remark~\ref{degenerationfrompicture}. A~spider decomposition $S(v)$ induces a regular mixed polyhedral subdivision of  $m\Delta_2$.
The~corresponding fiber is a broken toric variety
with irreducible components that correspond to $2$-cells of the subdivision of $m\Delta_2$.
Three heavy sections correspond to vertices of $m\Delta_2$.
Every light section~$\ell_i$ lies in 
the irreducible component that corresponds to the spider $S(v_i,v_i')$.
\end{corollary}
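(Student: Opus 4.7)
The plan is to deduce the four claims in the corollary by specializing Proposition~\ref{fibersfamilyfromspiders} to suitable one-parameter arcs and then invoking the tropical recipe of Remark~\ref{degenerationfrompicture}.

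First I would establish the fiber description. Fix a closed point $y \in Y_{\mathcal{Q}_m}$ lying in the relative interior of a cone $\sigma \in \mathcal{Q}_m$. Choose an integral lattice representative $v \in \mathbb{Z}^{2m}$ with $[v]$ in the relative interior of $\sigma$ and take the arc $\overline{\mathbf{a}}\colon\Spec(R)\to Y_{\mathcal{Q}_m}$ whose generic point is the class of $([t^{v_1}:t^{v_1'}:1],\dots,[t^{v_m}:t^{v_m'}:1])$. By Lemma~\ref{preliminaryfibersfamilyoverchowquotient} its closed point is $y$, and by Proposition~\ref{fibersfamilyfromspiders} the pullback $\overline{\mathbf{a}}^*Y_{\mathcal{R}_m}$ is the Mustafin join $\mathbb{P}(L_1,\ldots,L_m)$ with $L_j=t^{v_j}e_1R+t^{v_j'}e_2R+e_3R$. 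Under the identification of the apartment with $\mathbb{R}^3/\mathbb{R}\mathbf{1}\cong\mathbb{R}^2$ from Remark~\ref{degenerationfrompicture}, the lattice class $[L_j]$ becomes the center $(-v_j,-v_j')$ of the spider $S(v_j,v_j')$, so $\mathcal{S}(v)$ is literally the spider decomposition appearing in that remark. Passing to its dual mixed subdivision yields a regular subdivision of $m\Delta_2$ whose $2$-cells are in bijection with the $2$-cells of $\mathcal{S}(v)$ and hence with the irreducible components of the central fiber.

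The heavy sections are handled directly: by construction $t_i(y)=(y,e_i,\ldots,e_i)\in Y_{\mathcal{Q}_m}\times(\mathbb{P}^2)^m$, so $t_i$ lands in the $(\mathbb{G}_m^2)^m$-fixed point of $(\mathbb{P}^2)^m$ associated to the lattice vertex $m\cdot e_i\in m\Delta_2$, which is one of the three vertices of $m\Delta_2$. The main step is the identification of the irreducible component containing the light section $\ell_i$, for which I would trace $\ell_i$ through the arc $\overline{\mathbf{a}}$ above. The composition $\ell_i\circ\overline{\mathbf{a}}$ has $j$-th coordinate $[a_{j1}/a_{i1}:a_{j2}/a_{i2}:a_{j3}/a_{i3}]\in\mathbb{P}(L_j)(K)$, and in particular its $i$-th coordinate is $[1:1:1]$, which in the basis $(t^{v_i}e_1,t^{v_i'}e_2,e_3)$ of $L_i$ reduces modulo $t$ to a point in the dense torus of $\mathbb{P}(L_i)_\Bbbk$. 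Since the primary component of $\mathbb{P}(L_1,\ldots,L_m)_\Bbbk$ corresponding to $L_i$ is obtained from $\mathbb{P}(L_i)_\Bbbk$ by blowing up the subspaces $W_{L_i}(L_j)$ (cf.\ \cite[\S5]{CHSW11}), which are toric and hence disjoint from the dense torus, the specialization $\ell_i(y)$ lies in this primary component, i.e.\ the component dual to the spider $S(v_i,v_i')$.

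The main obstacle I anticipate is pinning down the dictionary between $\mathcal{S}(v)$ and the tropical-polytope subdivision of Remark~\ref{degenerationfrompicture}, including the passage to the dual mixed subdivision of $m\Delta_2$ and the precise labeling of primary versus secondary components; once this bookkeeping is fixed, the section analysis reduces to the short coordinate computation above.
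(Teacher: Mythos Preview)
Your overall strategy matches the paper's: reduce to an arc via Proposition~\ref{fibersfamilyfromspiders}, read off the fiber description and heavy sections from Remark~\ref{degenerationfrompicture}, and then locate each light section by a coordinate computation in the central fiber. As in the paper, the only substantive step is the claim about the light sections.

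Here the two arguments diverge. The paper normalises so that $a_i=[1:1:1]$, hence $L_i=e_1R+e_2R+e_3R$, and rescales the remaining $a_j$ to force $L_i\subseteq L_j$ for all $j$; Lemma~\ref{smoothsectionifminimum} then immediately places the limit of $\ell_i$ on the primary component of $L_i$. Your route avoids this normalisation by exploiting the apartment structure: every $W_{L_i}(L_j)$ is a coordinate subspace of $\mathbb{P}(L_i)_\Bbbk$, so a section whose $i$-th projection is the torus point $[1:1:1]$ cannot lie over any of them. Both arguments are valid; the paper's is a clean one-line citation of an existing lemma, while yours is more self-contained but needs one additional remark you left implicit---namely that \emph{every} component of $\mathbb{P}(\Sigma)_\Bbbk$ other than the $L_i$-primary one maps into the toric boundary of $\mathbb{P}(L_i)_\Bbbk$ (knowing that the blow-up centres on $C_i$ lie there is not by itself enough to conclude $\ell_i(y)\in C_i$). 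Two minor slips: your formula for the $j$-th coordinate is inverted (it should be $[a_{i1}/a_{j1}:a_{i2}/a_{j2}:a_{i3}/a_{j3}]$), though this is harmless since only the $i$-th coordinate $[1:1:1]$ is used; and the arc you wrote down has closed point in the \emph{cone} $\sigma$, not necessarily equal to the given $y$, so you should either choose $y$ to be a torus-orbit representative or note that flatness (Proposition~\ref{flatfamilywithreducedfibersontoricchowquotientP2}) makes the fiber constant along torus orbits in a stratum.
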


\begin{proof}
After our discussion so far, the only claim needing proof is the one about light sections. We can reduce this claim to considering a one-parameter family $\mathbf{a}\colon\Spec(R)\rightarrow Y_{\mathcal{Q}_m}$ with $\mathbf{a}(\Spec(K))$ contained in the quotient torus. Let us prove the statement for $\ell_1$, which, without loss of generality, we can assume equal to $[1:1:1]$. In particular, we have that $L_1=e_1R+e_2R+e_3R$. Let us rescale the points $[a_{j1}:a_{j2}:a_{j3}]$, $j\geq2$, so that $\val(a_{j1}),\val(a_{j2}),\val(a_{j3})\leq0$, with equality achieved at least once for each $j$. Then $v=(1,1,1)\in L_j\setminus tL_j$ for all $j=1,\ldots,m$. Moreover, $L_1$ is contained in the intersection $L_1\cap\ldots\cap L_m$. Therefore, by Lemma~\ref{smoothsectionifminimum} we have that the limit of $\ell_i$ in the central fiber of the Mustafin join $\mathbb{P}(L_1,\ldots,L_m)$ is a smooth point contained in the primary component corresponding to $L_1$, which is the one corresponding to the $i$-th spider.
\end{proof}

\begin{example}\label{fvqergqebrq}
Let us consider the arc
\[
\mathbf{a}=(e_1=[1:0:0],e_2=[0:1:0],e_3=[0:0:1],a_1=[1:1:1],a_2=[t^2:1:t]).
\]
We studied the $5$-pointed degeneration of $\mathbb{P}^2$ parametrized by $\overline{\mathbf{a}}(0)\in\overline{\mathbf{X}}_{\GP}(3,5)$ in Example~\ref{seconddegeneration5points}. We now compute the $5$-pointed fiber of $Y_{\mathcal{R}_2}\rightarrow Y_{\mathcal{Q}_2}$ over the limit point $\overline{\mathbf{a}}(0)\in Y_{\mathcal{Q}_2}$. By Proposition~\ref{fibersfamilyfromspiders}, such fiber is the central fiber of the Mustafin join of the following two lattices:
\[
L_1=e_1R+e_2R+e_3R,~L_2=t^2e_1R+e_2R+te_3R.
\]
We can compute $\mathbb{P}(L_1,L_2)_\Bbbk$ using Remark~\ref{degenerationfrompicture}, and the result is pictured in Figure~\ref{degeneration5points2parametbyYQ2}. Note that the limit of $a_i$ lies in the primary component corresponding to $L_i$ by Corollary~\ref{rgwRGsrgSRH}.
\end{example}

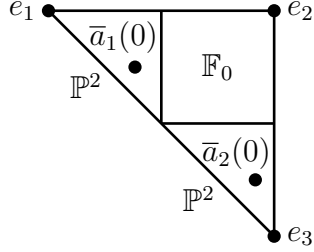
\begin{figure}[hbtp]
\begin{tikzpicture}[scale=0.5]
	\draw[line width=1pt] (0,6) -- (6,6);
	\draw[line width=1pt] (6,0) -- (6,6);
	\draw[line width=1pt] (0,6) -- (6,0);
	\draw[line width=1pt] (3,6) -- (3,3);
	\draw[line width=1pt] (6,3) -- (3,3);

	\node at (1,4) {$\mathbb{P}^2$};

	\node at (4,1) {$\mathbb{P}^2$};

	\node at (4.5,4.5) {$\mathbb{F}_0$};

	\fill (0,6) circle (5pt);
	\fill (6,0) circle (5pt);
	\fill (6,6) circle (5pt);
	\fill (2.3,4.5) circle (5pt);
	\fill (5.5,1.5) circle (5pt);

	\node at (-0.7,6) {$e_1$};
	\node at (6.7,6) {$e_2$};
	\node at (6.7,0) {$e_3$};
	\node at (2,5.3) {$\overline{a}_1(0)$};
	\node at (5,2.2) {$\overline{a}_2(0)$};
\end{tikzpicture}
\caption{Fiber of $Y_{\mathcal{R}_2}\rightarrow Y_{\mathcal{Q}_2}$ over $\overline{\mathbf{a}}(0)$ in Example~\ref{fvqergqebrq}.}
\label{degeneration5points2parametbyYQ2}
\end{figure}

%--------------------------------------------------

\section{Toric analogue of  \texorpdfstring{$\overline{\mathbf{X}}_{\GP}(3,n)$}{Lg}}\label{asrgasrhadrhd}

We will use the toric Kapranov spaces $Y_{\mathcal{Q}_m}$
in \S\ref{openpatchesfromtoricvarieties}
to construct open patches of the Kapranov space $\overline{\mathbf{X}}(3,n)$ that  cover
an open {\em planar locus} with toroidal singularities.
Its~preimage  in the Gerrizen--Piwek space $\overline{\mathbf{X}}_{\GP}(3,n)$
will be described using toric Gerrizen--Piwek spaces 
$Y_{\widetilde{\mathcal{Q}}_m}$ introduced in this section. 
While the same arc gives different 
fibers of the family $Y_{\mathcal{R}_m}$ 
(see ~Example~\ref{fvqergqebrq})
and the Gerrizen--Piwek family  
(see~Example~\ref{seconddegeneration5points}), 
under some conditions, which we investigate in this and the following sections, the Gerrizen--Piwek fiber can be reconstructed
from the fiber in $Y_{\mathcal{R}_m}$.
To~motivate the construction, write $n=m+3$ and take an arc
$$a=(e_1,e_2,e_3,a_1,\ldots,a_m)\in(\mathbb{P}^2)^n(K)$$ 
in linearly general position.
Consider an apartment $A$ that corresponds to the basis $e_1,e_2,e_3$.
By Remark~\ref{degenerationfrompicture}, a lattice  
$t^\alpha e_1R+t^\beta e_2R+t^\gamma e_3R$ in the apartment corresponds to a lattice point $(-a,-b)=(\gamma-\alpha,\gamma-\beta)\in\mathbb{R}^2$. 
It is the $0$-cell of the spider $S(a,b)$. 

\begin{definition}\label{dualspider} 
The {\em dual spider} is defined as follows:
\[
S(a,b)^\vee=\{(-a-t,-b)\mid t\in\mathbb{R}_{\geq0}\}\cup\{(-a,-b-t)\mid t\in\mathbb{R}_{\geq0}\}\cup\{(-a+t,-b+t)\mid t\in\mathbb{R}_{\geq0}\}.
\]
\end{definition}

\begin{figure}[hbtp]
\begin{tikzpicture}[scale=0.5]

	\draw[line width=1pt] (0,0) -- (0+7,0);
	\draw[line width=1pt] (0,0) -- (0,0+5);
	\draw[line width=1pt] (0,0) -- (0-3,0-3);

	\draw[dashed,line width=1pt] (0,0) -- (0-3,0);
	\draw[dashed,line width=1pt] (0,0) -- (0,0-3);
	\draw[dashed,line width=1pt] (0,0) -- (0+5,0+5);

	\draw[line width=1pt] (4,2) -- (4+3,2);
	\draw[line width=1pt] (4,2) -- (4,2+3);
	\draw[line width=1pt] (4,2) -- (4-5,2-5);

	\draw[dashed,line width=1pt] (4,2) -- (4-7,2);
	\draw[dashed,line width=1pt] (4,2) -- (4,2-5);
	\draw[dashed,line width=1pt] (4,2) -- (4+3,2+3);

	\fill (0,0) circle (5pt);
	\fill (4,2) circle (5pt);
	\fill[red] (2,2) circle (5pt);

	\node at (8.5,0) {$S(a,b)$};
	\node at (8.5,2) {$S(c,d)$};

\end{tikzpicture}
\caption{Intersection point of dashed 
dual spiders $S(a,b)^\vee$ and $S(c,d)^\vee$.}
\label{inducedstablelatticewithdualspiders}
\end{figure}
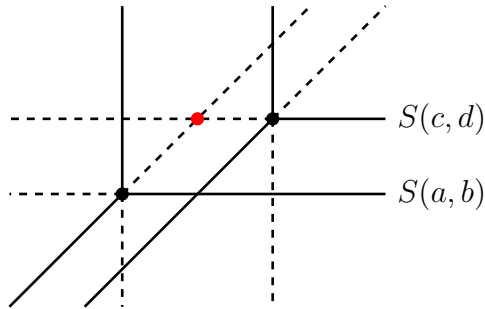

\begin{lemma}\label{wEGsegsEGew}
Consider the stable lattices $L_i$ (resp.~$\bar L_{\alpha\beta,ij}$) stabilizing quadruples $(e_1,e_2,e_3,a_i)$ for $i=1,\ldots,m$ (resp.~$(e_\alpha,e_\beta,a_i,a_j)$ for $\alpha,\beta=1,2,3$, $i,j=1,\ldots,m$). The lattices $L_i$ belong to the apartment $A$ and are given by the integral points $(-v_i,-v_i')$ in $\mathbb{R}^2$. A lattice $\bar L_{\alpha\beta,ij}$ does not have to be in the apartment~$A$, but if it does, then it is the lattice $L_{\alpha\beta,ij}$ that corresponds to the integral point $(-x_{\alpha\beta,ij},-x_{\alpha\beta,ij}')$ from Table~\ref{howtodecidetheredspider}. If this point is different from $(-v_i,-v_i')$ and $(-v_j,-v_j')$ then it is the intersection point $(-a,-b)$ of the dual spiders $S(v_i,v_i')^\vee$ and $S(v_j,v_j')^\vee$.
\end{lemma}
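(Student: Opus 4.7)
The plan is as follows. Part (1) concerning $L_i$ follows immediately from Lemma--Definition~\ref{xfvxfbfb}: writing $a_i = a_{i1}e_1 + a_{i2}e_2 + a_{i3}e_3$ gives the linear relation $a_{i1}e_1 + a_{i2}e_2 + a_{i3}e_3 - a_i = 0$, so $L_i = a_{i1}e_1 R + a_{i2}e_2 R + a_{i3}e_3 R$. This is diagonal in the basis $e_1, e_2, e_3$ and thus lies in the apartment $A$, with 2D point $(-v_i, -v_i')$ determined by the valuations $\mathrm{val}(a_{ij})$ after projection of $\mathbb{R}^3/\mathbb{R}\mathbf{1}$ to $\mathbb{R}^2$.

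For parts (2) and (3), I would fix $\{\alpha,\beta\}\subseteq\{1,2,3\}$ with remaining index $\gamma$, and apply Lemma--Definition~\ref{xfvxfbfb} to the quadruple $(e_\alpha, e_\beta, a_i, a_j)$. Cramer's rule on the $3\times 2$ matrix with columns $a_i, a_j$ produces the unique (up to scale) dependence $k_\alpha e_\alpha + k_\beta e_\beta + k_i a_i + k_j a_j = 0$ with $k_i = a_{j\gamma}$, $k_j = -a_{i\gamma}$, and the coefficients $k_\alpha = a_{i\gamma}a_{j\alpha} - a_{i\alpha}a_{j\gamma}$, $k_\beta = a_{i\gamma}a_{j\beta} - a_{i\beta}a_{j\gamma}$ equal to the $2\times 2$ minors. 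Thus $\bar L_{\alpha\beta,ij}$ is the $R$-span of $k_\alpha e_\alpha, k_\beta e_\beta, k_i a_i$. To decide when it lies in $A$, I would check whether $k_i a_i$ can be row-reduced modulo $R$-multiples of $k_\alpha e_\alpha$ and $k_\beta e_\beta$ to a pure $e_\gamma$-monomial, which reduces to the explicit valuation inequalities $\mathrm{val}(a_{j\gamma}a_{i\alpha})\geq \mathrm{val}(k_\alpha)$ and $\mathrm{val}(a_{j\gamma}a_{i\beta})\geq \mathrm{val}(k_\beta)$. When these hold, the diagonal exponents of $\bar L_{\alpha\beta,ij}$ are $(\mathrm{val}(k_\alpha), \mathrm{val}(k_\beta), \mathrm{val}(a_{i\gamma}a_{j\gamma}))$ in positions $\alpha,\beta,\gamma$, matching the point $(-x_{\alpha\beta,ij},-x_{\alpha\beta,ij}')$ of Table~\ref{howtodecidetheredspider}.

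Part (3) then reduces to a tropical computation. Generically, $\mathrm{val}(a_{i\gamma}a_{j\alpha} - a_{i\alpha}a_{j\gamma}) = \min(\mathrm{val}(a_{i\gamma}a_{j\alpha}), \mathrm{val}(a_{i\alpha}a_{j\gamma}))$. Subtracting $\mathrm{val}(a_{i\gamma}a_{j\gamma})$ yields $\min(\mathrm{val}(a_{j\alpha}) - \mathrm{val}(a_{j\gamma}),\, \mathrm{val}(a_{i\alpha}) - \mathrm{val}(a_{i\gamma}))$, which is the minimum of the $\alpha$-components of $(-v_j,-v_j')$ and $(-v_i,-v_i')$, and similarly for the $\beta$-component. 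Thus the 2D point of $\bar L_{\alpha\beta,ij}$ is the componentwise minimum of the two centers. Referring to Definition~\ref{dualspider}, the componentwise minimum of $(-v_i,-v_i')$ and $(-v_j,-v_j')$ is precisely where a ``left" or ``down" ray of one dual spider meets a ``down" or ``left" ray of the other; a short case analysis over the four orderings of $(v_i$ vs.\ $v_j)$ and $(v_i'$ vs.\ $v_j')$ shows that this produces the unique intersection point $(-a,-b)$ of $S(v_i,v_i')^\vee$ and $S(v_j,v_j')^\vee$ whenever it differs from both centers.

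The main obstacle is the bookkeeping for non-generic degenerations: when valuation sums are equal rather than strictly ordered, the tropical minimum is not unique, and $\bar L_{\alpha\beta,ij}$ may either fail to lie in $A$ or coincide with $L_i$ or $L_j$. The content of the lemma is precisely that these boundary situations are exactly the ones excluded by the hypothesis that the resulting 2D point is different from $(-v_i,-v_i')$ and $(-v_j,-v_j')$, so once the generic formula is in hand, the identification with the dual spider intersection is forced.
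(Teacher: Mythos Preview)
Your approach is genuinely different from the paper's. The paper argues geometrically: for each relative position of $L_i,L_j$ in Table~\ref{howtodecidetheredspider}, it draws the central fiber of the Mustafin join $\mathbb{P}(L_i,L_j)$ (or $\mathbb{P}(L_i,L_j,L_{ij})$ in the non-aligned cases), locates the limits of $e_1,e_2,e_3,a_i,a_j$ in each primary component (Figure~\ref{determinelatticeLalphabetaij}), and reads off directly which lattice stabilizes each quadruple $(e_\alpha,e_\beta,a_i,a_j)$. Your route is algebraic/tropical: write the dependence via Cramer's rule, diagonalize to get explicit exponents, and compare valuations. Both are legitimate, and yours has the advantage of giving closed formulas rather than pictures.

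That said, the execution has a real gap. Your key claim---that after normalizing by $\val(a_{i\gamma}a_{j\gamma})$ the 2D point of $\bar L_{\alpha\beta,ij}$ is the componentwise minimum of $(-v_i,-v_i')$ and $(-v_j,-v_j')$---cannot be correct as stated, because it does not depend on $\{\alpha,\beta\}$, whereas Table~\ref{howtodecidetheredspider} gives three distinct answers for the three choices of $\{\alpha,\beta\}$ in each positional case. The tropical formula you derive is really a formula in $\gamma$-adapted coordinates (subtracting the $\gamma$-slot), and translating it back to the fixed 2D chart used throughout the paper introduces a dependence on $\gamma$ that you have suppressed. In particular, for the cases in the first two rows of the table the answer is always one of $(v_i,v_i')$ or $(v_j,v_j')$, never a third point, so a single ``componentwise min'' cannot capture all three $\{\alpha,\beta\}$ at once. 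To complete your argument you would need to carry out the valuation computation separately for $\gamma=1,2,3$, keep careful track of the sign conventions relating $(-v_i,-v_i')$ to the exponents of $L_i$ (cf.\ Remark~\ref{degenerationfrompicture}), and then verify case-by-case against the table---which is essentially the same case analysis the paper does pictorially. The final identification with the dual spider intersection (part 3) is fine in spirit, but it only accounts for one of the three entries in each cell of the table; the other two, where $\bar L_{\alpha\beta,ij}$ coincides with $L_i$ or $L_j$, are left unverified.
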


\begin{proof}
Let $i,j\in\{1,\ldots,m\}$ be distinct indices and consider the stable lattices $L_i,L_j$. All their possible reciprocal positions as points $(-v_i,-v_i'),(-v_j,-v_j')$ in $\mathbb{R}^2$ are listed in Table~\ref{howtodecidetheredspider}. Let us determine the integral points $(x_{\alpha\beta,ij},x_{\alpha\beta,ij}')$.

Assume that the reciprocal position of $L_i,L_j$ is as shown in the first entry of the first row of Table~\ref{howtodecidetheredspider}. Then the limits of the points $p_1,p_2,p_3,a_i,a_j$ in the central fiber of the Mustafin join $\mathbb{P}(L_i,L_j)$ are shown in the left side of Figure~\ref{determinelatticeLalphabetaij}. We can then directly compute that
\[
(x_{12,ij},x_{12,ij}')=(v_j,v_j'),~(x_{13,ij},x_{13,ij}')=(v_i,v_i'),~(x_{23,ij},x_{23,ij}')=(v_j,v_j').
\]
An analogous argument applies to the remaining cases in the first two rows of Table~\ref{howtodecidetheredspider}.

Now assume that the reciprocal position of $L_i,L_j$ is as shown in the first entry of the third row of Table~\ref{howtodecidetheredspider} instead. Define $L_{ij}$ as the lattice corresponding to the intersection point $(-a,-b)$ of the dual spiders $S(v_i,v_i')^\vee$ and $S(v_j,v_j')^\vee$. Then the limits of the points $p_1,p_2,p_3,a_i,a_j$ in the central fiber of the Mustafin join $\mathbb{P}(L_i,L_j,L_{ij})$ can be found in the right side of Figure~\ref{determinelatticeLalphabetaij}. From this we can directly compute that
\[
(x_{12,ij},x_{12,ij}')=(v_i,v_i'),~(x_{13,ij},x_{13,ij}')=(a,b),~(x_{23,ij},x_{23,ij}')=(v_j,v_j').
\]
The remaining cases in the last two rows of Table~\ref{howtodecidetheredspider} are handled in an analogous way.
\end{proof}

\begin{figure}[hbtp]
\begin{tikzpicture}[scale=0.5]

	\draw[line width=1pt] (0,6) -- (6,6);
	\draw[line width=1pt] (6,6) -- (6,0);
	\draw[line width=1pt] (6,0) -- (0,6);
	\draw[line width=1pt] (3,6) -- (6,3);
%	\fill[gray!50,nearly transparent] (1.5,4.5) -- (4.5,4.5) -- (4.5,1.5) -- cycle;
%	\fill[gray!50,nearly transparent] (1.5,4.5) -- (4.5,4.5) -- (4.5,6) -- (0,6) -- cycle;
%	\fill[gray!50,nearly transparent] (4.5,1.5) -- (4.5,4.5) -- (6,4.5) -- (6,0) -- cycle;
	\fill (6,0) circle (5pt);
	\fill (0,6) circle (5pt);
	\fill (6,6) circle (5pt);
	\fill (4,3) circle (5pt);
	\fill (4.5,5.5) circle (5pt);

	\node at (6.7,6) {$e_2$};
	\node at (5.2,4.9) {{\small$\overline{a}_j(0)$}};
	\node at (-.7,6) {$e_1$};
	\node at (6.7,0) {$e_3$};
	\node at (4.1,3.7) {{\small$\overline{a}_i(0)$}};
	\node at (4.5,6.5) {$L_j$};
	\node at (2,3) {$L_i$};

	\draw[->,line width=1pt] (2,2) -- (1,1);
	\draw[line width=1pt] (0,0) -- (-3,0);
	\draw[line width=1pt] (-3,0) -- (0,-3);
	\draw[line width=1pt] (0,0) -- (0,-3);
%	\fill[gray!50,nearly transparent] (0,0) -- (-3,0) -- (0,-3) -- cycle;
	\fill (0,0) circle (5pt);
	\fill (-3,0) circle (5pt);
	\fill (0,-3) circle (5pt);
	\fill (-1,-1) circle (5pt);
	\node at (0.9,0) {$e_2,$};
	\node at (2.4,0) {{\small$\overline{a}_j(0)$}};
	\node at (-3.7,0) {$e_1$};
	\node at (0.7,-3) {$e_3$};
	\node at (-1.4,-0.5) {{\small$\overline{a}_i(0)$}};

	\draw[->,line width=1pt] (7,2) -- (8,1);
	\draw[line width=1pt] (12,0) -- (9,0);
	\draw[line width=1pt] (9,0) -- (12,-3);
	\draw[line width=1pt] (12,0) -- (12,-3);
%	\fill[gray!50,nearly transparent] (0,0) -- (-3,0) -- (0,-3) -- cycle;
	\fill (12,0) circle (5pt);
	\fill (9,0) circle (5pt);
	\fill (12,-3) circle (5pt);
	\fill (10.5,-0.5) circle (5pt);
	\fill (11,-2) circle (5pt);
	\node at (12.8,0) {$e_2$};
	\node at (8.3,0) {$e_1$};
	\node at (12.7,-3) {$e_3$};
	\node at (11.2,-1.1) {{\small$\overline{a}_j(0)$}};
	\node at (9.8,-2.3) {{\small$\overline{a}_i(0)$}};

	\node at (6.5,-7) {};

\end{tikzpicture}
\hspace{-0.2in}
\begin{tikzpicture}[scale=0.5]

	\draw[line width=1pt] (0,6) -- (6,6);
	\draw[line width=1pt] (6,6) -- (6,0);
	\draw[line width=1pt] (6,0) -- (0,6);
	\draw[line width=1pt] (4.5,6) -- (4.5,1.5);
	\draw[line width=1pt] (1.5,4.5) -- (6,4.5);
%	\fill[gray!50,nearly transparent] (1.5,4.5) -- (4.5,4.5) -- (4.5,1.5) -- cycle;
%	\fill[gray!50,nearly transparent] (1.5,4.5) -- (4.5,4.5) -- (4.5,6) -- (0,6) -- cycle;
%	\fill[gray!50,nearly transparent] (4.5,1.5) -- (4.5,4.5) -- (6,4.5) -- (6,0) -- cycle;
	\fill (6,0) circle (5pt);
	\fill (0,6) circle (5pt);
	\fill (6,6) circle (5pt);
	\fill (5.6,3) circle (5pt);
	\fill (3,5.6) circle (5pt);

	\node at (6.7,6) {$e_2$};
	\node at (3.2,5) {{\small$\overline{a}_i(0)$}};
	\node at (-0.7,6) {$e_1$};
	\node at (6.7,0) {$e_3$};
	\node at (5.4,2.3) {{\small$\overline{a}_j(0)$}};
	\node at (2,6.5) {$L_i$};
	\node at (6.7,1.5) {$L_j$};
	\node at (2.3,2.8) {$L_{ij}$};

	\draw[->,line width=1pt] (-1,5) -- (-3,5);
	\draw[line width=1pt] (-4,6) -- (-7,6);
	\draw[line width=1pt] (-7,6) -- (-4,3);
	\draw[line width=1pt] (-4,6) -- (-4,3);
%	\fill[gray!50,nearly transparent] (-4,6) -- (-7,6) -- (-4,3) -- cycle;
	\fill (-7,6) circle (5pt);
	\fill (-4,6) circle (5pt);
	\fill (-4,3) circle (5pt);
	\fill (-5,5) circle (5pt);
	\node at (-3.3,6) {$e_2$};
	\node at (-5.3,5.5) {{\small$\overline{a}_i(0)$}};
	\node at (-7.7,6) {$e_1$};
	\node at (-3,3) {$e_3,$};
	\node at (-1.5,3) {{\small$\overline{a}_j(0)$}};

	\draw[->,line width=1pt] (2,2) -- (1,1);
	\draw[line width=1pt] (0,0) -- (-3,0);
	\draw[line width=1pt] (-3,0) -- (0,-3);
	\draw[line width=1pt] (0,0) -- (0,-3);
%	\fill[gray!50,nearly transparent] (0,0) -- (-3,0) -- (0,-3) -- cycle;
	\fill (0,0) circle (5pt);
	\fill (-3,0) circle (5pt);
	\fill (0,-3) circle (5pt);
	\fill (0,-1.5) circle (5pt);
	\fill (-1.5,0) circle (5pt);
%	\fill (-2,0) circle (5pt);
	\node at (0.7,0) {$e_2$};
	\node at (-1.4,0.6) {{\small$\overline{a}_i(0)$}};
	\node at (-3.7,0) {$e_1$};
	\node at (0.7,-3) {$e_3$};
	\node at (1.2,-1.5) {{\small$\overline{a}_j(0)$}};

	\draw[->,line width=1pt] (5,-1) -- (5,-3);
	\draw[line width=1pt] (6,-4) -- (3,-4);
	\draw[line width=1pt] (3,-4) -- (6,-7);
	\draw[line width=1pt] (6,-4) -- (6,-7);
%	\fill[gray!50,nearly transparent] (6,-4) -- (3,-4) -- (6,-7) -- cycle;
	\fill (6,-4) circle (5pt);
	\fill (3,-4) circle (5pt);
	\fill (6,-7) circle (5pt);
	\fill (5,-5) circle (5pt);
	\node at (6.7,-4) {$e_2$};
	\node at (2.6,-3.5) {$e_1,$};
	\node at (4,-3.5) {{\small$\overline{a}_i(0)$}};
	\node at (6.7,-7) {$e_3$};
	\node at (4.6,-4.5) {{\small$\overline{a}_j(0)$}};

\end{tikzpicture}
\caption{In the proof of Lemma~\ref{wEGsegsEGew}, the central fibers of the Mustafin joins $\mathbb{P}(L_i,L_j),\mathbb{P}(L_i,L_j,L_{ij})$ and the limits of $e_1,e_2,e_3,a_i,a_j$.}
\label{determinelatticeLalphabetaij}
\end{figure}
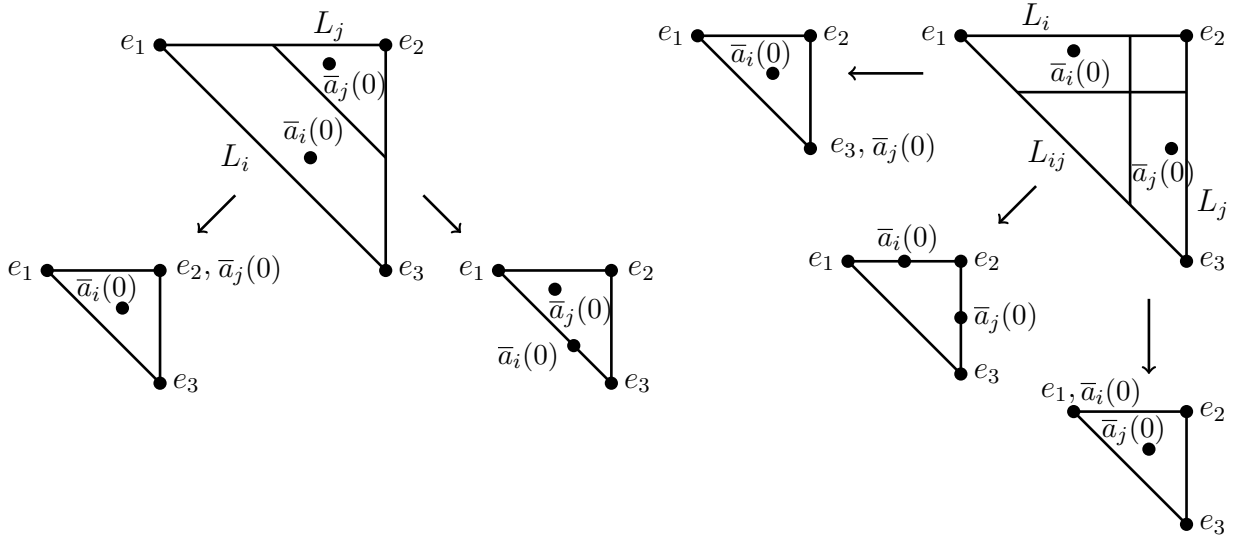

\begin{table}[hbtp]
\centering
\begin{tabular}{|>{\centering\arraybackslash}m{5cm}|>{\centering\arraybackslash}m{5cm}|>{\centering\arraybackslash}m{5cm}|}
\hline
\vspace{.1in}
\begin{tikzpicture}[scale=0.5]

	\draw[line width=1pt] (0,0) -- (0+2,0);
	\draw[line width=1pt] (0,0) -- (0,0+2);
	\draw[line width=1pt] (0,0) -- (0-1,0-1);

	\draw[line width=1pt] (1,1) -- (1+1,1);
	\draw[line width=1pt] (1,1) -- (1,1+1);
	\draw[line width=1pt] (1,1) -- (1-2,1-2);

	\fill (0,0) circle (5pt);
	\fill (1,1) circle (5pt);

	\node at (-0.5,0.5) {$i$};
	\node at (1.5,1.5) {$j$};

\end{tikzpicture}
&\vspace{.1in}
\begin{tikzpicture}[scale=0.5]

	\draw[line width=1pt] (0,0) -- (0+1,0);
	\draw[line width=1pt] (0,0) -- (0,0+1);
	\draw[line width=1pt] (0,0) -- (0-1,0-1);

	\draw[line width=1pt] (0,-1) -- (0+1,-1);
	\draw[line width=1pt] (0,-1) -- (0,-1+1);
	\draw[line width=1pt] (0,-1) -- (0-1,-1-1);

	\fill (0,0) circle (5pt);
	\fill (0,-1) circle (5pt);

	\node at (-0.5,0.5) {$i$};
	\node at (0.5,-1.5) {$j$};

\end{tikzpicture}
&\vspace{.1in}
\begin{tikzpicture}[scale=0.5]

	\draw[line width=1pt] (0,0) -- (0+1,0);
	\draw[line width=1pt] (0,0) -- (0,0+1);
	\draw[line width=1pt] (0,0) -- (0-1,0-1);

	\draw[line width=1pt] (-1,0) -- (-1+1,0);
	\draw[line width=1pt] (-1,0) -- (-1,0+1);
	\draw[line width=1pt] (-1,0) -- (-1-1,0-1);

	\fill (0,0) circle (5pt);
	\fill (-1,0) circle (5pt);

	\node at (0.5,-0.5) {$i$};
	\node at (-1.5,0.5) {$j$};

\end{tikzpicture}
\\
\hline
$(x_{12,ij},x_{12,ij}')=(v_j,v_j')$
$(x_{13,ij},x_{13,ij}')=(v_i,v_i')$
$(x_{23,ij},x_{23,ij}')=(v_j,v_j')$
&
$(x_{12,ij},x_{12,ij}')=(v_i,v_i')$
$(x_{13,ij},x_{13,ij}')=(v_j,v_j')$
$(x_{23,ij},x_{23,ij}')=(v_j,v_j')$
&
$(x_{12,ij},x_{12,ij}')=(v_j,v_j')$
$(x_{13,ij},x_{13,ij}')=(v_j,v_j')$
$(x_{23,ij},x_{23,ij}')=(v_i,v_i')$
\\
\hline
\vspace{.1in}
\begin{tikzpicture}[scale=0.5]

	\draw[line width=1pt] (0,0) -- (0+2,0);
	\draw[line width=1pt] (0,0) -- (0,0+2);
	\draw[line width=1pt] (0,0) -- (0-1,0-1);

	\draw[line width=1pt] (1,1) -- (1+1,1);
	\draw[line width=1pt] (1,1) -- (1,1+1);
	\draw[line width=1pt] (1,1) -- (1-2,1-2);

	\fill (0,0) circle (5pt);
	\fill (1,1) circle (5pt);

	\node at (-0.5,0.5) {$j$};
	\node at (1.5,1.5) {$i$};

\end{tikzpicture}
&\vspace{.1in}
\begin{tikzpicture}[scale=0.5]

	\draw[line width=1pt] (0,0) -- (0+1,0);
	\draw[line width=1pt] (0,0) -- (0,0+1);
	\draw[line width=1pt] (0,0) -- (0-1,0-1);

	\draw[line width=1pt] (0,-1) -- (0+1,-1);
	\draw[line width=1pt] (0,-1) -- (0,-1+1);
	\draw[line width=1pt] (0,-1) -- (0-1,-1-1);

	\fill (0,0) circle (5pt);
	\fill (0,-1) circle (5pt);

	\node at (-0.5,0.5) {$j$};
	\node at (0.5,-1.5) {$i$};

\end{tikzpicture}
&\vspace{.1in}
\begin{tikzpicture}[scale=0.5]

	\draw[line width=1pt] (0,0) -- (0+1,0);
	\draw[line width=1pt] (0,0) -- (0,0+1);
	\draw[line width=1pt] (0,0) -- (0-1,0-1);

	\draw[line width=1pt] (-1,0) -- (-1+1,0);
	\draw[line width=1pt] (-1,0) -- (-1,0+1);
	\draw[line width=1pt] (-1,0) -- (-1-1,0-1);

	\fill (0,0) circle (5pt);
	\fill (-1,0) circle (5pt);

	\node at (0.5,-0.5) {$j$};
	\node at (-1.5,0.5) {$i$};

\end{tikzpicture}
\\
\hline
$(x_{12,ij},x_{12,ij}')=(v_i,v_i')$
$(x_{13,ij},x_{13,ij}')=(v_i,v_i')$
$(x_{23,ij},x_{23,ij}')=(v_j,v_j')$
&
$(x_{12,ij},x_{12,ij}')=(v_j,v_j')$
$(x_{13,ij},x_{13,ij}')=(v_i,v_i')$
$(x_{23,ij},x_{23,ij}')=(v_i,v_i')$
&
$(x_{12,ij},x_{12,ij}')=(v_i,v_i')$
$(x_{13,ij},x_{13,ij}')=(v_i,v_i')$
$(x_{23,ij},x_{23,ij}')=(v_j,v_j')$
\\
\hline
\vspace{.1in}
\begin{tikzpicture}[scale=0.5]

	\draw[line width=1pt] (0,0) -- (0+2,0);
	\draw[line width=1pt] (0,0) -- (0,0+1);
	\draw[line width=1pt] (0,0) -- (0-1,0-1);

	\draw[line width=1pt] (1,-1) -- (1+1,-1);
	\draw[line width=1pt] (1,-1) -- (1,-1+2);
	\draw[line width=1pt] (1,-1) -- (1-1,-1-1);

	\draw[red,line width=1pt] (0,-1) -- (0+1,-1);
	\draw[red,line width=1pt] (0,-1) -- (0,-1+1);
	\draw[red,line width=1pt] (0,-1) -- (0-1,-1-1);

	\fill (0,0) circle (5pt);
	\fill (1,-1) circle (5pt);
	\fill[red] (0,-1) circle (5pt);

	\node at (-0.5,0.5) {$i$};
	\node at (1.5,-1.5) {$j$};

\end{tikzpicture}
&\vspace{.1in}
\begin{tikzpicture}[scale=0.5]

	\draw[line width=1pt] (0,0) -- (0+2,0);
	\draw[line width=1pt] (0,0) -- (0,0+3);
	\draw[line width=1pt] (0,0) -- (0-1,0-1);

	\draw[line width=1pt] (1,2) -- (1+1,2);
	\draw[line width=1pt] (1,2) -- (1,2+1);
	\draw[line width=1pt] (1,2) -- (1-2,2-2);

	\draw[red,line width=1pt] (1,1) -- (1+1,1);
	\draw[red,line width=1pt] (1,1) -- (1,1+1);
	\draw[red,line width=1pt] (1,1) -- (1-1,1-1);

	\fill (0,0) circle (5pt);
	\fill (1,2) circle (5pt);
	\fill[red] (1,1) circle (5pt);

	\node at (0.5,-0.5) {$i$};
	\node at (1.5,2.5) {$j$};

\end{tikzpicture}
&\vspace{.1in}
\begin{tikzpicture}[scale=0.5]

	\draw[line width=1pt] (0,0) -- (0+3,0);
	\draw[line width=1pt] (0,0) -- (0,0+2);
	\draw[line width=1pt] (0,0) -- (0-1,0-1);

	\draw[line width=1pt] (2,1) -- (2+1,1);
	\draw[line width=1pt] (2,1) -- (2,1+1);
	\draw[line width=1pt] (2,1) -- (2-2,1-2);

	\draw[red,line width=1pt] (1,1) -- (1+1,1);
	\draw[red,line width=1pt] (1,1) -- (1,1+1);
	\draw[red,line width=1pt] (1,1) -- (1-1,1-1);

	\fill (0,0) circle (5pt);
	\fill (2,1) circle (5pt);
	\fill[red] (1,1) circle (5pt);

	\node at (-0.5,0.5) {$i$};
	\node at (2.5,1.5) {$j$};

\end{tikzpicture}
\\
\hline
$(x_{12,ij},x_{12,ij}')=(v_i,v_i')$
$(x_{13,ij},x_{13,ij}')=(a,b)$
$(x_{23,ij},x_{23,ij}')=(v_j,v_j')$
&
$(x_{12,ij},x_{12,ij}')=(v_j,v_j')$
$(x_{13,ij},x_{13,ij}')=(v_i,v_i')$
$(x_{23,ij},x_{23,ij}')=(a,b)$
&
$(x_{12,ij},x_{12,ij}')=(a,b)$
$(x_{13,ij},x_{13,ij}')=(v_i,v_i')$
$(x_{23,ij},x_{23,ij}')=(v_j,v_j')$
\\
\hline
\vspace{.1in}
\begin{tikzpicture}[scale=0.5]

	\draw[line width=1pt] (0,0) -- (0+1,0);
	\draw[line width=1pt] (0,0) -- (0,0+2);
	\draw[line width=1pt] (0,0) -- (0-1,0-1);

	\draw[line width=1pt] (-1,1) -- (-1+2,1);
	\draw[line width=1pt] (-1,1) -- (-1,1+1);
	\draw[line width=1pt] (-1,1) -- (-1-1,1-1);

	\draw[red,line width=1pt] (-1,0) -- (-1+1,0);
	\draw[red,line width=1pt] (-1,0) -- (-1,0+1);
	\draw[red,line width=1pt] (-1,0) -- (-1-1,0-1);

	\fill (0,0) circle (5pt);
	\fill (-1,1) circle (5pt);
	\fill[red] (-1,0) circle (5pt);

	\node at (0.5,-0.5) {$i$};
	\node at (-1.5,1.5) {$j$};

\end{tikzpicture}
&\vspace{.1in}
\begin{tikzpicture}[scale=0.5]

	\draw[line width=1pt] (0,0) -- (0+1,0);
	\draw[line width=1pt] (0,0) -- (0,0+1);
	\draw[line width=1pt] (0,0) -- (0-2,0-2);

	\draw[line width=1pt] (-1,-2) -- (-1+2,-2);
	\draw[line width=1pt] (-1,-2) -- (-1,-2+3);
	\draw[line width=1pt] (-1,-2) -- (-1-1,-2-1);

	\draw[red,line width=1pt] (0,-1) -- (0+1,-1);
	\draw[red,line width=1pt] (0,-1) -- (0,-1+1);
	\draw[red,line width=1pt] (0,-1) -- (0-1,-1-1);

	\fill (0,0) circle (5pt);
	\fill (-1,-2) circle (5pt);
	\fill[red] (0,-1) circle (5pt);

	\node at (0.5,0.5) {$i$};
	\node at (-0.5,-2.5) {$j$};

\end{tikzpicture}
&\vspace{.1in}
\begin{tikzpicture}[scale=0.5]

	\draw[line width=1pt] (0,0) -- (0+1,0);
	\draw[line width=1pt] (0,0) -- (0,0+1);
	\draw[line width=1pt] (0,0) -- (0-2,0-2);

	\draw[line width=1pt] (-2,-1) -- (-2+3,-1);
	\draw[line width=1pt] (-2,-1) -- (-2,-1+2);
	\draw[line width=1pt] (-2,-1) -- (-2-1,-1-1);

	\draw[red,line width=1pt] (-1,0) -- (-1+1,0);
	\draw[red,line width=1pt] (-1,0) -- (-1,0+1);
	\draw[red,line width=1pt] (-1,0) -- (-1-1,0-1);

	\fill (0,0) circle (5pt);
	\fill (-2,-1) circle (5pt);
	\fill[red] (-1,0) circle (5pt);

	\node at (0.5,0.5) {$i$};
	\node at (-2.5,-0.5) {$j$};

\end{tikzpicture}
\\
\hline
$(x_{12,ij},x_{12,ij}')=(v_j,v_j')$
$(x_{13,ij},x_{13,ij}')=(a,b)$
$(x_{23,ij},x_{23,ij}')=(v_i,v_i')$
&
$(x_{12,ij},x_{12,ij}')=(a,b)$
$(x_{13,ij},x_{13,ij}')=(v_j,v_j')$
$(x_{23,ij},x_{23,ij}')=(v_i,v_i')$
&
$(x_{12,ij},x_{12,ij}')=(a,b)$
$(x_{13,ij},x_{13,ij}')=(v_i,v_i')$
$(x_{23,ij},x_{23,ij}')=(v_j,v_j')$
\\
\hline
\end{tabular}
\caption{ The red dot (intersection of dual spiders) is in position $(-a,-b)\in\mathbb{R}^2$.}
\label{howtodecidetheredspider}
\end{table}

%It turns out that if $(-a,-b),(-c,-d)\in\mathbb{R}^2$ correspond to stable lattice classes $[L_1],[L_2]$ and $S(a,b)^\vee\cap S(c,d)^\vee=\{(-e,-f)\}$, then $(-e,-f)$ corresponds to a stable lattice class $[L_3]$ (this scenario is pictured in Figure~\ref{inducedstablelatticewithdualspiders}). This fact follows from \cite[\S8.4]{KT06}.

\begin{definition}\label{refinementusingKapranovspiders}
We let 
$$N=\binom{3}{2}\binom{m}{2}.$$
Let $\pi\colon\mathbb{R}^{2m+2N}\rightarrow\mathbb{R}^{2m}$ be the projection map
and let $\iota\colon\mathbb{R}^{2m}\to \mathbb{R}^{2m+2N}$ be
the function
that
sends a vector $v$  to the vector $(v,x)$ such that 
the components $x_{\alpha\beta,ij}, x_{\alpha\beta,ij}'$
of the vector $x$ are given in Table~\ref{howtodecidetheredspider}. %for $\alpha,\beta=1,2,3$; $i,j=1,\ldots,m$.
The induced maps $\mathbb{R}^{2m+2N}/\mathbb{R}^{2}\rightarrow\mathbb{R}^{2m}/\mathbb{R}^{2}$ and 
$\mathbb{R}^{2m}/\mathbb{R}^{2}\to\mathbb{R}^{2m+2N}/\mathbb{R}^{2}$
are well-defined and we denote them by $\bar\pi$ and $\bar\iota$,
respectively.
We define
\[
\widetilde{\mathcal{S}}(v)=\mathcal{S}(\iota(v)),
\]
see Figure~\ref{exampleofrefinementusingKapranovspiders} for an example.
The subdivision $\widetilde{\mathcal{S}}(v)$ refines $\mathcal{S}(v)$.

\begin{figure}[hbtp]
\begin{tikzpicture}[scale=0.5]

	\draw[line width=1pt] (0,0) -- (0+7,0);
	\draw[line width=1pt] (0,0) -- (0,0+4);
	\draw[line width=1pt] (0,0) -- (0-2,0-2);

	\draw[line width=1pt] (3,2) -- (3+4,2);
	\draw[line width=1pt] (3,2) -- (3,2+2);
	\draw[line width=1pt] (3,2) -- (3-4,2-4);

	\draw[line width=1pt] (5,-1) -- (5+2,-1);
	\draw[line width=1pt] (5,-1) -- (5,-1+5);
	\draw[line width=1pt] (5,-1) -- (5-1,-1-1);

	\fill (0,0) circle (5pt);
	\fill (3,2) circle (5pt);
	\fill (5,-1) circle (5pt);

\end{tikzpicture}
\hspace{1in}
\begin{tikzpicture}[scale=0.5]

	\draw[line width=1pt] (0,0) -- (0+7,0);
	\draw[line width=1pt] (0,0) -- (0,0+4);
	\draw[line width=1pt] (0,0) -- (0-2,0-2);

	\draw[line width=1pt] (3,2) -- (3+4,2);
	\draw[line width=1pt] (3,2) -- (3,2+2);
	\draw[line width=1pt] (3,2) -- (3-4,2-4);

	\draw[line width=1pt] (5,-1) -- (5+2,-1);
	\draw[line width=1pt] (5,-1) -- (5,-1+5);
	\draw[line width=1pt] (5,-1) -- (5-1,-1-1);

	\draw[red,line width=1pt] (2,2) -- (2+1,2);
	\draw[red,line width=1pt] (2,2) -- (2,2+2);
	\draw[red,line width=1pt] (2,2) -- (2-2,2-2);

	\draw[red,line width=1pt] (0,-1) -- (0+5,-1);
	\draw[red,line width=1pt] (0,-1) -- (0,-1+1);

	\draw[red,line width=1pt] (3,-1) -- (3,-1+3);
	\draw[red,line width=1pt] (3,-1) -- (3-1,-1-1);

	\fill (0,0) circle (5pt);
	\fill (3,2) circle (5pt);
	\fill (5,-1) circle (5pt);
	\fill[red] (2,2) circle (5pt);
	\fill[red] (0,-1) circle (5pt);
	\fill[red] (3,-1) circle (5pt);

\end{tikzpicture}
\caption{Example of $\mathcal{S}(v)$ (on the left) and its refinement $\widetilde{\mathcal{S}}(v)$ (on the right).}
\label{exampleofrefinementusingKapranovspiders}
\end{figure}
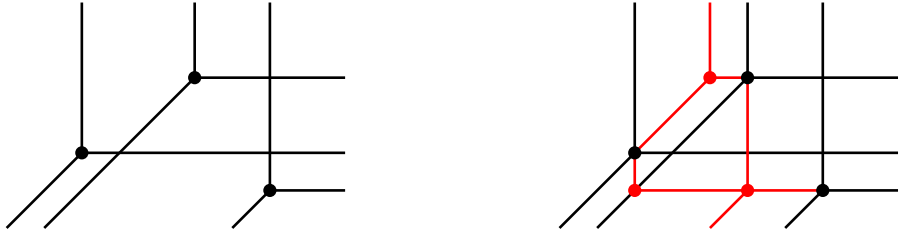
\end{definition}

We note that the image of $\iota$ consists of entire cones
of the fan $\mathcal{Q}_{m+N}$:

\begin{lemma}
\label{localleftinverseofpi}
Let $v\in\mathbb{R}^{2m},z\in\mathbb{R}^{2m+2N}$. If $[\mathcal{S}(\iota(v))]=[\mathcal{S}(z)]$, then $z=\iota(\pi(z))$.
\end{lemma}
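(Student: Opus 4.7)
The plan is to unpack the definition of $\iota$ coordinate-by-coordinate and exploit the rigidity built into Table~\ref{howtodecidetheredspider}. Write $z=(w,y)\in\mathbb{R}^{2m}\oplus\mathbb{R}^{2N}$ with $w=\pi(z)$, where the components $y_{\alpha\beta,ij},\,y'_{\alpha\beta,ij}$ are indexed by pairs $(\alpha\beta,ij)$ with $\alpha\beta\in\{12,13,23\}$ and $ij\in\binom{[m]}{2}$. By the definition of $\iota$, we have $\iota(\pi(z))=(w,x(w))$ where $x(w)_{\alpha\beta,ij}$ is determined by the relative position of the two spiders $S(w_i,w_i')$ and $S(w_j,w_j')$ via Table~\ref{howtodecidetheredspider}. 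So proving the lemma reduces to checking $y_{\alpha\beta,ij}=x(w)_{\alpha\beta,ij}$ and similarly for the primed coordinates, for every $(\alpha\beta,ij)$.

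The key observation is that for each fixed triple $(\alpha\beta,ij)$, the combinatorial type $[\mathcal{S}(\iota(v))]$ records in particular how the three spiders $S(v_i,v_i')$, $S(v_j,v_j')$, $S(x_{\alpha\beta,ij},x'_{\alpha\beta,ij})$ sit with respect to one another, and this type is preserved under the equivalence $[\mathcal{S}(\iota(v))]=[\mathcal{S}(z)]$ by restricting attention to those cones $\sigma_k\in\Sigma(\iota(v)_k)$ which are the whole of $\mathbb{R}^2$ for indices $k$ outside of $\{i,j,(\alpha\beta,ij)\}$. Consequently, the relative configuration of $S(w_i,w_i')$, $S(w_j,w_j')$, and $S(y_{\alpha\beta,ij},y'_{\alpha\beta,ij})$ matches the one in $\iota(v)$.

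The proof then proceeds by a case analysis on the twelve configurations of Table~\ref{howtodecidetheredspider}. In the six cases of the top two rows, the auxiliary spider literally coincides with either $S(v_i,v_i')$ or $S(v_j,v_j')$; combinatorial equivalence preserves coincidence because coincidence is encoded by the intersection pattern of cones $\sigma+p=\tau+p$ agreeing at the level of cone identity, which transports along the equivalence to force $y_{\alpha\beta,ij}=w_i$ or $w_j$ as required. In the six cases of the bottom two rows, the auxiliary spider sits at the unique intersection point of the dual spiders $S(v_i,v_i')^\vee$ and $S(v_j,v_j')^\vee$; this point is combinatorially characterized as the unique $0$-cell of the three-spider subdivision whose three outgoing rays lie along prescribed cells of $\Sigma(v_i,v_i')$ and $\Sigma(v_j,v_j')$, and this characterization transports along the equivalence to pin down $(y_{\alpha\beta,ij},y'_{\alpha\beta,ij})$ as the intersection of the corresponding dual spiders at $(w_i,w_i')$ and $(w_j,w_j')$, which is exactly $(x(w)_{\alpha\beta,ij},x'(w)_{\alpha\beta,ij})$ by Lemma~\ref{wEGsegsEGew}.

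The main obstacle is making precise the principle that a two-spider configuration, together with the combinatorial position of a third spider relative to it, uniquely determines the third spider's $0$-cell. The cleanest route is to note that the intersection point of the dual spiders is precisely the point where three specific $2$-cells of $S(v_i,v_i')$ and $S(v_j,v_j')$ meet and where the outgoing rays of the third spider must originate in order for the intersection pattern recorded by combinatorial equivalence to be realized. Once this rigidity is established for a single triple, applying it to all $N$ triples and combining with $w=\pi(z)$ yields $z=\iota(\pi(z))$.
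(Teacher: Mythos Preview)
Your proposal is correct and follows essentially the same approach as the paper's proof: restrict the combinatorial equivalence $[\mathcal{S}(\iota(v))]=[\mathcal{S}(z)]$ to the three relevant spiders indexed by $i,j,(\alpha\beta,ij)$, then run a case analysis on the possibilities from Table~\ref{howtodecidetheredspider} to pin down $(y_{\alpha\beta,ij},y'_{\alpha\beta,ij})$. The paper makes the ``coincidence is preserved'' step concrete by choosing $\sigma_i$ and $\sigma$ to be the $0$-cells of the respective fans and $\sigma_j$ a suitable ray (or $2$-cone) so that $\sigma_i\cap\sigma_j\cap\sigma$ is the common $0$-cell; the equivalence then forces $\tau_i\cap\tau_j\cap\tau\neq\emptyset$, hence the two singleton cones $\tau_i,\tau$ coincide. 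Your sketch of this mechanism is correct but would benefit from writing out these explicit cone choices, which is exactly what the paper does.
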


\begin{proof}
Write $\iota(v)=(v,x),z=(\pi(z),y)$, where
\[
x=(\ldots,x_{\alpha\beta,ij},x_{\alpha\beta,ij}',\ldots),~y=(\ldots,y_{\alpha\beta,ij},y_{\alpha\beta,ij}',\ldots).
\]
Recall that the pairs $(x_{\alpha\beta,ij},x_{\alpha\beta,ij}')$ are defined according to Definition~\ref{refinementusingKapranovspiders}. Fix distinct indices $\alpha,\beta\in[3]$, $i,j\in[m]$, and for simplicity of notation set $v_{ij}=(v_i,v_i',v_j,v_j'),z_{ij}=(z_i,z_i',z_j,z_j')$. Note that the hypothesis $[\mathcal{S}(\iota(v))]=[\mathcal{S}(z)]$ implies that
\begin{equation}
\label{combinatorialequivalence3spiders}
[\mathcal{S}(v_{ij},x_{\alpha\beta,ij},x_{\alpha\beta,ij}')]=[\mathcal{S}(z_{ij},y_{\alpha\beta,ij},y_{\alpha\beta,ij}')].
\end{equation}
Based on the definition of $\iota(v)$, we have the following three cases on $(x_{\alpha\beta,ij},x_{\alpha\beta,ij}')$:

If $(x_{\alpha\beta,ij},x_{\alpha\beta,ij}')=(v_i,v_i')$, then we show that $(y_{\alpha\beta,ij},y_{\alpha\beta,ij}')=(z_i,z_i')$. Up to symmetries, we have two possibilities for the reciprocal position of the points $(-v_i,-v_i')$, $(-v_j,-v_j')$ in $\mathbb{R}^2$.
Say we are in the first case of the first row of Table~\ref{howtodecidetheredspider}. Consider the cones $\sigma_i=\{(-v_i,-v_i')\}\in\Sigma(v_i,v_i')$, $\sigma_j\in\Sigma(v_j,v_j')$, $\sigma=\{(-x_{\alpha\beta,ij},-x_{\alpha\beta,ij}')\}\in\Sigma(x_{\alpha\beta,ij},x_{\alpha\beta,ij}')$ such that $\sigma_i\cap\sigma_j\cap\sigma=\{(-v_i,-v_i')\}$ and $\sigma_j$ is a ray. Consider the corresponding cones $\tau_i=\{(-z_i,-z_i')\}\in\Sigma(z_i,z_i')$, $\tau_j\in\Sigma(z_j,z_j')$, $\tau=\{(-y_{\alpha\beta,ij},-y_{\alpha\beta,ij}')\}\in\Sigma(y_{\alpha\beta,ij},y_{\alpha\beta,ij}')$. Then also $\tau_i\cap\tau_j\cap\tau\neq\emptyset$ by \eqref{combinatorialequivalence3spiders}, and hence this intersection has to be equal to $\{(-z_i,-z_i')\}$, showing that $(y_{\alpha\beta,ij},y_{\alpha\beta,ij}')=(z_i,z_i')$.
The second possibility for the reciprocal position of $(-v_i,-v_i')$, $(-v_j,-v_j')$, up to symmetry, is the first case of the third row of Table~\ref{howtodecidetheredspider}. Here we use the same argument as in the previous case, but we take $\sigma_j$ to be $2$-dimensional instead.

If $(x_{\alpha\beta,ij},x_{\alpha\beta,ij}')=(v_j,v_j')$, then we show that $(y_{\alpha\beta,ij},y_{\alpha\beta,ij}')=(z_j,z_j')$ by an analogous argument to the one above.

Finally, assume $(x_{\alpha\beta,ij},x_{\alpha\beta,ij}')$ is the unique intersection point $(-a,-b)$ of the dual spiders $S(v_i,v_i')^\vee,S(v_j,v_j')^\vee$. Consider the cones given by $\sigma_i\in\Sigma(v_i,v_i')$, $\sigma_j\in\Sigma(v_j,v_j')$, $\sigma=\{(-x_{\alpha\beta,ij},-x_{\alpha\beta,ij}')\}\in\Sigma(x_{\alpha\beta,ij},x_{\alpha\beta,ij}')$ such that $\sigma_i\cap\sigma_j\cap\sigma=\{(-a,-b)\}$ and $\sigma_i,\sigma_j$ are rays. Consider the corresponding cones $\tau_i\in\Sigma(z_i,z_i')$, $\tau_j\in\Sigma(z_j,z_j')$, $\tau=\{(-y_{\alpha\beta,ij},-y_{\alpha\beta,ij}')\}\in\Sigma(y_{\alpha\beta,ij},y_{\alpha\beta,ij}')$. Then  $\tau_i\cap\tau_j\cap\tau\neq\emptyset$ by \eqref{combinatorialequivalence3spiders}, and hence $(-y_{\alpha\beta,ij},-y_{\alpha\beta,ij}')$ is the unique intersection point of the dual spiders $S(z_i,z_i')^\vee,S(z_j,z_j')^\vee$.
\end{proof}

\begin{lemdef}\label{qergaerhaerh}
There is a unique complete 
fan $\widetilde{\mathcal{Q}}_m$ in $\mathbb{R}^{2m}/\mathbb{R}^2$ such that
$\bar\iota$ is linear on every cone $\sigma$ of 
$\widetilde{\mathcal{Q}}_m$ and $\bar\iota(\sigma)$ is a cone
of the quotient fan $\mathcal{Q}_{m+N}$ in $\mathbb{R}^{2m+2N}/\mathbb{R}^2$.
Vectors $[v],[w]\in \mathbb{R}^{2m}/\mathbb{R}^{2}$ 
lie in the relative interior of the same cone $\sigma$ of the fan $\widetilde{\mathcal{Q}}_m$ if and only if $[\widetilde{\mathcal{S}}(v)]=[\widetilde{\mathcal{S}}(w)]$.
The collection of cones
$\bar\iota(\sigma)$ is a subfan $\mathcal{Q}_{m+N}'$ 
of $\mathcal{Q}_{m+N}$
of 
cones corresponding to subdivisions $\mathcal{S}(z)$, $z\in\mathbb{R}^{2m+2N}$, 
such that
$\mathcal{S}(z)=\widetilde{\mathcal{S}}(\pi(z))$.
\end{lemdef}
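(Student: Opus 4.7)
The plan is to construct $\widetilde{\mathcal{Q}}_m$ as the common refinement of $\mathcal{Q}_m$ with the piecewise-linear pullback of $\mathcal{Q}_{m+N}$ under $\bar\iota$, and then to invoke Lemma~\ref{localleftinverseofpi} to identify the image $\mathcal{Q}_{m+N}'$ as a subfan of $\mathcal{Q}_{m+N}$.

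First I will verify that $\iota$ is piecewise linear, with linearity regions the cones of $\mathcal{Q}_m$. Inspecting Table~\ref{howtodecidetheredspider}, for each $\{\alpha,\beta\}\subseteq[3]$ and each pair $i,j\in[m]$, the coordinate $(x_{\alpha\beta,ij},x'_{\alpha\beta,ij})$ is one of $(v_i,v'_i)$, $(v_j,v'_j)$, or the intersection $(a,b)$ of the dual spiders $S(v_i,v'_i)^\vee$ and $S(v_j,v'_j)^\vee$; each is an explicit linear expression in $(v_i,v'_i,v_j,v'_j)$ on its case region, and the case region is determined by the cone of $\mathcal{Q}_2$ containing the class of $(v_i,v'_i,v_j,v'_j)\bmod\mathbb{R}^2$. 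Taking the product over all pairs $(i,j)$, the map $\iota$ is linear on every cone of $\mathcal{Q}_m$, and so $\bar\iota\colon\mathbb{R}^{2m}/\mathbb{R}^2\to\mathbb{R}^{2m+2N}/\mathbb{R}^2$ is piecewise linear with respect to $\mathcal{Q}_m$.

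Next I will define $\widetilde{\mathcal{Q}}_m$ by subdividing each cone $\sigma_0\in\mathcal{Q}_m$ into the preimages under the linear map $\bar\iota|_{\sigma_0}$ of the cones of $\mathcal{Q}_{m+N}$. These local subdivisions glue across the boundaries between adjacent cones of $\mathcal{Q}_m$: the sub-cone containing a given class $[v]$ is characterized intrinsically as the set of $[w]$ such that $[\iota(w)]$ lies in the relative interior of the same cone of $\mathcal{Q}_{m+N}$ as $[\iota(v)]$, a datum independent of which $\sigma_0$ we view $[v]$ as lying in. By Lemma~\ref{preliminaryfibersfamilyoverchowquotient} applied to $\mathcal{Q}_{m+N}$, this condition is in turn equivalent to $[\mathcal{S}(\iota(v))]=[\mathcal{S}(\iota(w))]$, that is, $[\widetilde{\mathcal{S}}(v)]=[\widetilde{\mathcal{S}}(w)]$, yielding the characterization of cones of $\widetilde{\mathcal{Q}}_m$ stated in the lemma.

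Finally I will show that $\mathcal{Q}_{m+N}':=\{\bar\iota(\sigma)\mid\sigma\in\widetilde{\mathcal{Q}}_m\}$ is a subfan of $\mathcal{Q}_{m+N}$ consisting precisely of those cones $\tau$ whose relative interior points $z$ satisfy $\mathcal{S}(z)=\widetilde{\mathcal{S}}(\pi(z))$. The key ingredient is Lemma~\ref{localleftinverseofpi}: if a single $[z_0]$ in the relative interior of $\tau\in\mathcal{Q}_{m+N}$ lies in the image of $\bar\iota$, then $z_0=\iota(\pi(z_0))$, and every other $z$ in that relative interior satisfies $[\mathcal{S}(z)]=[\mathcal{S}(z_0)]=[\mathcal{S}(\iota(\pi(z_0)))]$, so a second application of Lemma~\ref{localleftinverseofpi} forces $z=\iota(\pi(z))$ as well; thus the entire cone $\tau$ lies in the image of $\bar\iota$. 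Combined with the identity $\bar\pi\circ\bar\iota=\operatorname{id}$, this produces an inclusion-preserving bijection $\widetilde{\mathcal{Q}}_m\to\mathcal{Q}_{m+N}'$ sending each $\sigma$ linearly onto a cone of equal dimension. The main obstacle I expect is confirming that the locally defined subdivisions glue into a genuine fan, but the intrinsic characterization of sub-cones described above resolves this cleanly and simultaneously delivers the equivalence-class description of $\widetilde{\mathcal{Q}}_m$.
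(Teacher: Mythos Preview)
Your argument is correct and follows essentially the same route as the paper's proof: both establish that $\iota$ is piecewise linear (you on cones of $\mathcal{Q}_m$, the paper on a coarser collection $\mathcal{W}$ given directly by the case regions of Table~\ref{howtodecidetheredspider}), and both use Lemma~\ref{localleftinverseofpi} as the key input to show that the image of $\bar\iota$ is a union of full cones of $\mathcal{Q}_{m+N}$. The only organizational difference is that the paper first defines $\mathcal{Q}_{m+N}'$ intrinsically (cones whose relative interior meets $\mathrm{image}(\bar\iota)$), verifies it is a subfan, and then sets $\widetilde{\mathcal{Q}}_m$ to be its preimage, whereas you build $\widetilde{\mathcal{Q}}_m$ first by refining $\mathcal{Q}_m$ and then take images; the two constructions agree because, by Lemma~\ref{droppingspiderspreservesequivalence}, the preimage of each $\tau\in\mathcal{Q}_{m+N}'$ lies in a single cone of $\mathcal{Q}_m$. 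One small point to make explicit: your claim that $\iota$ is linear on each cone of $\mathcal{Q}_m$ uses that $\mathcal{Q}_m$ refines the pullback of every $\mathcal{Q}_2$ under the pairwise forgetful maps, which is Lemma~\ref{droppingspiderspreservesequivalence}.
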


\begin{proof}
Given $v\in\mathbb{R}^{2m}$ and two distinct indices $i,j\in[m]$, then the possible reciprocal positions for $(-v_i,-v_i'),(-v_j,-v_j')\in\mathbb{R}^2$, if they are distinct points, are pictured in Table~\ref{howtodecidetheredspider}. Each possibility is described by a set of inequalities and/or equalities. 
%Following the order in the table from (0) to (12), these are given by
%\begin{enumerate}[start=0]
%\item $-v_i=-v_j,-v_i'=-v_j'$;
%\item $-v_i<-v_j,~v_i-v_j=v_i'-v_j'$;
%\item $-v_i=-v_j,~-v_j'<-v_i'$;
%\item $-v_j<-v_i,-v_j'=-v_i'$;
%\item $-v_j<-v_i,~v_j-v_i=v_j'-v_i'$;
%\item $-v_i=-v_j,~-v_i'<-v_j'$;
%\item $-v_i<-v_j,-v_i'=-v_j'$;
%\item $-v_i<-v_j,-v_j'<-v_i'$;
%\item $-v_i<-v_j,-v_i'<-v_j',v_i-v_j<v_i'-v_j'$;
%\item $-v_i<-v_j,-v_i'<-v_j',v_i'-v_j'<v_i-v_j$;
%\item $-v_j<-v_i,-v_i'<-v_j'$;
%\item $-v_j<-v_i,-v_j'<v_i',v_j-v_i<v_j'-v_i'$;
%\item $-v_j<-v_i,-v_j'<-v_i',v_j'-v_i'<v_j-v_i$.
%\end{enumerate}
Therefore, given $v\in\mathbb{R}^{2m}$, for each pair of distinct indices $i,j\in[m]$ we have a set of inequalities and/or equalities, and considering all these together defines a convex cone $\sigma_v\subseteq\mathbb{R}^{2m}$. (Clearly, if $w$ is in the relative interior of $\sigma_v$, then $\sigma_w=\sigma_v$.) We denote by $\mathcal{W}$ the collection of all these cones in~$\mathbb{R}^{2m}$. We have that $\iota$ is linear on each cone in $\mathcal{W}$ because each $(x_{\alpha\beta,ij},x_{\alpha\beta,ij}')$ is linear by its definition. Note that given $v\in\mathbb{R}^{2m}$ and $(a,b)\in\mathbb{R}^2$, then $v$ and $v+(a,b,\ldots,a,b)$ satisfy the same set of inequalities. Hence, the collection $q(\mathcal{W})$ of images of cones in $\mathcal{W}$ under the quotient $q\colon\mathbb{R}^{2m}\rightarrow\mathbb{R}^{2m}/\mathbb{R}^2$ is a collection of cones on which $\overline{\iota}$ is linear.

We now define $\mathcal{Q}_{m+N}'$ to be the collection of cones $\sigma\in\mathcal{Q}_{m+N}$ such that $\overline{\iota}([v])$ is in the relative interior of $\sigma$ for some $[v]\in\mathbb{R}^{2m}/\mathbb{R}^2$. Note that a cone $\sigma\in\mathcal{Q}_{m+N}'$ is entirely contained in the image of $\overline{\iota}$ by Lemma~\ref{localleftinverseofpi}. Moreover, $\overline{\iota}$ is linear on $(\overline{\iota})^{-1}(\sigma)$. To prove this, it is enough to show the following. Let $\overline{\iota}([v])$ be in the relative interior of $\sigma$ and let $[z]$ be any other point in the relative interior of $\sigma$. We want to show that $\overline{\iota}$ is linear on the $2$-dimensional cone generated by $\overline{\pi}([z])$ and $[v]$. Equivalently, $\iota$ is linear on the $2$-dimensional cone generated by $\pi(z)$ and $v$. But this is clear because $z=\iota(\pi(z))$ by Lemma~\ref{localleftinverseofpi}, and $\pi(z),v$ belong to the same cone of 
$\mathcal{W}$ because $[\mathcal{S}(z)]=[\mathcal{S}(\iota(v))]$.

Next, we observe that $\mathcal{Q}_{m+N}'$ is a fan. Consider $\sigma\in\mathcal{Q}_{m+N}'$, and let $\varphi\subseteq\sigma$ be a face. By the argument above, $\sigma$ is contained in the image of $\overline{\iota}$, hence also $\varphi$ is, so we can certainly find $[v]\in\mathbb{R}^{2m}/\mathbb{R}^2$ such that $\overline{\iota}([v])$ is in the relative interior of $\varphi$. Combined with the fact that $\varphi\in\mathcal{Q}_{m+N}$ because $\mathcal{Q}_{m+N}$ is a fan, we can conclude that $\varphi\in\mathcal{Q}_{m+N}'$.

We then define $\widetilde{\mathcal{Q}}_m$ to be the collection of preimages of cones in $\mathcal{Q}_{m+N}'$ under $\overline{\iota}$. By the discussion above, $\widetilde{\mathcal{Q}}_m$ is a fan, and it is clearly complete. %Moreover, $\widetilde{\mathcal{Q}}_m$ is complete because for any $[v]\in\mathbb{R}^{2m}/\mathbb{R}^2$, we can find a cone $\sigma\in\mathcal{Q}_{m+N}$ containing $\overline{\iota}([v])$ in its relative interior (recall that the fan $\mathcal{Q}_{m+N}$ is complete). Hence, $\sigma\in\mathcal{Q}_{m+N}'$ and $[v]\in(\overline{\iota})^{-1}(\sigma)$ which is a cone in $\widetilde{\mathcal{Q}}_m$ by definition.

We conclude with the combinatorial interpretation of the cones in $\widetilde{\mathcal{Q}}_m$. Let $[v],[w]\in\mathbb{R}^{2m}/\mathbb{R}^2$ be in the relative interior of the same cone $(\overline{\iota})^{-1}(\sigma)\in\widetilde{\mathcal{Q}}_m$, where $\sigma\in\mathcal{Q}_{m+N}'$. Then $\overline{\iota}([v]),\overline{\iota}([w])$ are in the relative interior of $\sigma$, implying that $[\mathcal{S}(\iota(v))]=[\mathcal{S}(\iota(w))]$. Conversely, assume that $[\mathcal{S}(\iota(v))]=[\mathcal{S}(\iota(w))]$. This means that $\overline{\iota}([v]),\overline{\iota}([w])$ are in the relative interior of the same cone $\sigma\in\mathcal{Q}_{m+N}$, which by definition is also a cone of $\mathcal{Q}_{m+N}'$. Hence, $[v],[w]\in(\overline{\iota})^{-1}(\sigma)\in\widetilde{\mathcal{Q}}_m$, proving the last statement.
\end{proof}

\begin{definition}
The variety $Y_{\widetilde{\mathcal{Q}}_m}$ is called the \emph{toric Gerritzen--Piwek space}. 
\end{definition}

\begin{proposition}
\label{combinatorialequivalencetildesubdivisionsanddroppingspiders}
There is a birational toric  morphism
$
Y_{\widetilde{\mathcal{Q}}_m}\rightarrow Y_{\mathcal{Q}_m}
$, a~``forgetful'' toric  morphism 
$Y_{\widetilde{\mathcal{Q}}_m}\rightarrow Y_{\widetilde{\mathcal{Q}}_{[m]\setminus\{i\}}}$
for each $i=1,\ldots,m$ and  
a commutative diagram
\begin{center}
\begin{tikzpicture}[>=angle 90]
\matrix(a)[matrix of math nodes,
row sep=2em, column sep=2em,
text height=1.5ex, text depth=0.25ex]
{
Y_{\widetilde{\mathcal{Q}}_m}  &  Y_{\widetilde{\mathcal{Q}}_{[m]\setminus\{i\}}}\\
Y_{{\mathcal{Q}}_m}    &  Y_{{\mathcal{Q}}_{[m]\setminus\{i\}}}.\\
};
\path[->] (a-1-1) edge node[above]{}(a-1-2);
\path[->] (a-1-1) edge node[left]{}(a-2-1);
\path[->] (a-2-1) edge node[below]{}(a-2-2);
\path[->] (a-1-2) edge node[right]{}(a-2-2);
\end{tikzpicture}
\end{center}
\end{proposition}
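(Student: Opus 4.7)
The plan is to handle each of the three claims separately, with the forgetful morphism being the main content.

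For the birational morphism $Y_{\widetilde{\mathcal{Q}}_m}\to Y_{\mathcal{Q}_m}$, I would verify that $\widetilde{\mathcal{Q}}_m$ refines $\mathcal{Q}_m$ as a fan on $\mathbb{R}^{2m}/\mathbb{R}^2$. By construction $\widetilde{\mathcal{S}}(v)=\mathcal{S}(\iota(v))$ is obtained from $\mathcal{S}(v)$ by adding the $N$ extra spiders centered at intersection points of dual spider pairs, so $\widetilde{\mathcal{S}}(v)$ subdivides $\mathcal{S}(v)$. If $[v],[w]\in\mathbb{R}^{2m}/\mathbb{R}^2$ lie in the relative interior of the same cone of $\widetilde{\mathcal{Q}}_m$, then by Lemma--Definition~\ref{qergaerhaerh} we have $[\widetilde{\mathcal{S}}(v)]=[\widetilde{\mathcal{S}}(w)]$; iterating Lemma~\ref{droppingspiderspreservesequivalence} to forget the $N$ additional spider coordinates yields $[\mathcal{S}(v)]=[\mathcal{S}(w)]$, which by Lemma~\ref{preliminaryfibersfamilyoverchowquotient} means $[v],[w]$ lie in the same cone of $\mathcal{Q}_m$. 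The identity on $\mathbb{R}^{2m}/\mathbb{R}^2$ is thus a map of fans, inducing a birational toric morphism since the dense torus coincides.

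For the forgetful morphism, the key observation is the functoriality of $\iota$. Let $\iota_m\colon\mathbb{R}^{2m}\to\mathbb{R}^{2m+2N_m}$ be the map of Definition~\ref{refinementusingKapranovspiders} (with $N_m=3\binom{m}{2}$), and similarly $\iota_{m-1}$ for the index set $[m]\setminus\{i\}$. Because each extra coordinate $(x_{\alpha\beta,jk},x'_{\alpha\beta,jk})$ of $\iota_m(v)$ is, by Table~\ref{howtodecidetheredspider}, an explicit function of $(v_j,v'_j,v_k,v'_k)$ alone, there is a natural projection $\pi_i\colon\mathbb{R}^{2m+2N_m}\to\mathbb{R}^{2(m-1)+2N_{m-1}}$ that forgets the $i$-th original coordinate together with every extra coordinate indexed by a pair containing $i$, and this projection satisfies $\pi_i\circ\iota_m=\iota_{m-1}\circ p_i$, where $p_i$ is as in Lemma~\ref{droppingspiderspreservesequivalence}. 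Now if $[v],[w]$ lie in the relative interior of a common cone of $\widetilde{\mathcal{Q}}_m$, then $[\mathcal{S}(\iota_m(v))]=[\mathcal{S}(\iota_m(w))]$ by Lemma--Definition~\ref{qergaerhaerh}, and repeated application of Lemma~\ref{droppingspiderspreservesequivalence} to $\pi_i$ gives $[\mathcal{S}(\iota_{m-1}(p_i(v)))]=[\mathcal{S}(\iota_{m-1}(p_i(w)))]$, i.e.\ $[p_i(v)]$ and $[p_i(w)]$ lie in the relative interior of the same cone of $\widetilde{\mathcal{Q}}_{[m]\setminus\{i\}}$. Hence the quotient map $\bar p_i\colon\mathbb{R}^{2m}/\mathbb{R}^2\to\mathbb{R}^{2(m-1)}/\mathbb{R}^2$ is a map of fans, and induces the desired toric morphism.

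Commutativity of the square is then immediate: both paths from $Y_{\widetilde{\mathcal{Q}}_m}$ to $Y_{\mathcal{Q}_{[m]\setminus\{i\}}}$ are toric morphisms induced by the single linear map $\bar p_i$ between the common character lattices, so they agree. I expect the main obstacle to be purely bookkeeping: organizing the coordinates of $\iota_m$ so that the identity $\pi_i\circ\iota_m=\iota_{m-1}\circ p_i$ is transparent; once this is in place, the two combinatorial equivalence statements propagate through Lemma~\ref{droppingspiderspreservesequivalence} without further difficulty.
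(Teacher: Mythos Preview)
Your proposal is correct and follows essentially the same route as the paper. Both parts reduce to the observation that $\widetilde{\mathcal{S}}(p_i(v))$ is obtained from $\widetilde{\mathcal{S}}(v)=\mathcal{S}(\iota_m(v))$ by dropping the $i$-th spider together with all extra spiders indexed by pairs containing $i$, and then invoking Lemma~\ref{droppingspiderspreservesequivalence} repeatedly; your explicit identity $\pi_i\circ\iota_m=\iota_{m-1}\circ p_i$ just makes this bookkeeping more visible than the paper's one-line remark.
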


\begin{proof}
We claim that the fan $\widetilde{\mathcal{Q}}_m$
refines the fan ${\mathcal{Q}}_m$.
We need to show that 
if $v,w\in\mathbb{R}^{2m}$ and 
$
[\widetilde{\mathcal{S}}(v)]=[\widetilde{\mathcal{S}}(w)]$
then $[\mathcal{S}(v)]=[\mathcal{S}(w)]$.
But this is a consequence of Lemma~\ref{droppingspiderspreservesequivalence}. 

For the second statement, 
let $v,w\in\mathbb{R}^{2m}$ and consider the projections $p_i\colon\mathbb{R}^{2m}\rightarrow\mathbb{R}^{2m-2}$ for $i=1,\ldots,m$. We claim that 
if $\widetilde{\mathcal{S}}(v)$ and $\widetilde{\mathcal{S}}(w)$ are combinatorially equivalent, then $\widetilde{\mathcal{S}}(p_i(v))$ and $\widetilde{\mathcal{S}}(p_i(w))$ are combinatorial equivalent. Write $\widetilde{\mathcal{S}}(v)=\mathcal{S}(v,x),\widetilde{\mathcal{S}}(w)=\mathcal{S}(w,y)$ for $x,y\in\mathbb{R}^{\binom{m}{2}}$. Notice that $\widetilde{\mathcal{S}}(p_i(v))$ is obtained from $\mathcal{S}(v,x)$ by dropping the pairs $(v_i,v_i')$ and $(x_{ij},x_{ij}')$ for $j\neq i$, and similarly for $\widetilde{\mathcal{S}}(p_i(w))$. It remains to apply
Lemma~\ref{droppingspiderspreservesequivalence}.
\end{proof}

%--------------------------------------------------

\begin{table}[hbtp]
\centering
\begin{tabular}{|>{\centering\arraybackslash}m{2.3cm}|>{\centering\arraybackslash}m{2.3cm}|>{\centering\arraybackslash}m{2.3cm}|>{\centering\arraybackslash}m{2.3cm}|>{\centering\arraybackslash}m{2.3cm}|>{\centering\arraybackslash}m{2.3cm}|}
\hline
a&b&c&d&e&f
\\
\hline
\vspace{.1in}
\begin{tikzpicture}[scale=0.4]

	\fill (0,0) circle (5pt);

	\draw[line width=1pt] (0,0) -- (0+1,0);
	\draw[line width=1pt] (0,0) -- (0,0+1);
	\draw[line width=1pt] (0,0) -- (0-1,0-1);

\end{tikzpicture}
&\vspace{.1in}
\begin{tikzpicture}[scale=0.4]

	\fill (0,0) circle (5pt);
	\fill (1,1) circle (5pt);

	\draw[line width=1pt] (0,0) -- (0+2,0);
	\draw[line width=1pt] (0,0) -- (0,0+2);
	\draw[line width=1pt] (0,0) -- (0-1,0-1);

	\draw[line width=1pt] (1,1) -- (1+1,1);
	\draw[line width=1pt] (1,1) -- (1,1+1);
	\draw[line width=1pt] (1,1) -- (1-2,1-2);

\end{tikzpicture}
&\vspace{.1in}
\begin{tikzpicture}[scale=0.4]

	\fill (0,0) circle (5pt);
	\fill (2,1) circle (5pt);
	\fill[red] (1,1) circle (5pt);

	\draw[line width=1pt] (0,0) -- (0-1,0-1);
	\draw[line width=1pt] (0,0) -- (0+3,0);
	\draw[line width=1pt] (0,0) -- (0,0+2);

	\draw[line width=1pt] (2,1) -- (2+1,1);
	\draw[line width=1pt] (2,1) -- (2,1+1);
	\draw[line width=1pt] (2,1) -- (2-2,1-2);

	\draw[red,line width=1pt] (1,1) -- (1+1,1);
	\draw[red,line width=1pt] (1,1) -- (1,1+1);
	\draw[red,line width=1pt] (1,1) -- (1-1,1-1);

\end{tikzpicture}
&\vspace{.1in}
\begin{tikzpicture}[scale=0.4]

	\fill (0,0) circle (5pt);
	\fill (0,1) circle (5pt);
	\fill (1,0) circle (5pt);

	\draw[line width=1pt] (0,0) -- (0-1,0-1);
	\draw[line width=1pt] (0,0) -- (0+2,0);
	\draw[line width=1pt] (0,0) -- (0,0+2);

	\draw[line width=1pt] (0,1) -- (0+2,1);
	\draw[line width=1pt] (0,1) -- (0-1,1-1);

	\draw[line width=1pt] (1,0) -- (1,0+2);
	\draw[line width=1pt] (1,0) -- (1-1,0-1);

\end{tikzpicture}
&\vspace{.1in}
\begin{tikzpicture}[scale=0.4]

	\fill (0,0) circle (5pt);
	\fill (1,1) circle (5pt);
	\fill (2,2) circle (5pt);

	\draw[line width=1pt] (0,0) -- (0+3,0);
	\draw[line width=1pt] (0,0) -- (0,0+3);

	\draw[line width=1pt] (1,1) -- (1+2,1);
	\draw[line width=1pt] (1,1) -- (1,1+2);

	\draw[line width=1pt] (2,2) -- (2+1,2);
	\draw[line width=1pt] (2,2) -- (2,2+1);
	\draw[line width=1pt] (2,2) -- (2-3,2-3);

\end{tikzpicture}
&\vspace{.1in}
\begin{tikzpicture}[scale=0.4]

	\fill (0,0) circle (5pt);
	\fill (1,1) circle (5pt);
	\fill (1,0) circle (5pt);

	\draw[line width=1pt] (1,1) -- (1-2,1-2);
	\draw[line width=1pt] (1,1) -- (1+1,1);

	\draw[line width=1pt] (1,0) -- (1-1,0-1);
	\draw[line width=1pt] (1,0) -- (1+1,0);
	\draw[line width=1pt] (1,0) -- (1,0+2);

	\draw[line width=1pt] (0,0) -- (0+1,0);
	\draw[line width=1pt] (0,0) -- (0,0+2);

\end{tikzpicture}
\\
\hline
g&h&i&j&k&l
\\
\hline
\vspace{.1in}
\begin{tikzpicture}[scale=0.4]

	\fill (0,0) circle (5pt);
	\fill (2,1) circle (5pt);
	\fill (1,2) circle (5pt);
	\fill[red] (1,1) circle (5pt);

	\draw[line width=1pt] (0,0) -- (0+3,0);
	\draw[line width=1pt] (0,0) -- (0,0+3);
	\draw[line width=1pt] (0,0) -- (0-1,0-1);

	\draw[line width=1pt] (1,2) -- (1+2,2);
	\draw[line width=1pt] (1,2) -- (1,2+1);
	\draw[line width=1pt] (1,2) -- (1-2,2-2);

	\draw[line width=1pt] (2,1) -- (2+1,1);
	\draw[line width=1pt] (2,1) -- (2,1+2);
	\draw[line width=1pt] (2,1) -- (2-2,1-2);

	\draw[red,line width=1pt] (1,1) -- (1+1,1);
	\draw[red,line width=1pt] (1,1) -- (1,1+1);
	\draw[red,line width=1pt] (1,1) -- (1-1,1-1);

\end{tikzpicture}
&\vspace{.1in}
\begin{tikzpicture}[scale=0.4]

	\fill (0,0) circle (5pt);
	\fill (1,1) circle (5pt);
	\fill (3,2) circle (5pt);
	\fill[red] (2,2) circle (5pt);

	\draw[line width=1pt] (0,0) -- (0+4,0);
	\draw[line width=1pt] (0,0) -- (0,0+3);

	\draw[line width=1pt] (1,1) -- (1,1+2);
	\draw[line width=1pt] (1,1) -- (1+3,1);
	\draw[line width=1pt] (1,1) -- (1-2,1-2);

	\draw[line width=1pt] (3,2) -- (3+1,2);
	\draw[line width=1pt] (3,2) -- (3,2+1);
	\draw[line width=1pt] (3,2) -- (3-3,2-3);

	\draw[red,line width=1pt] (2,2) -- (2+1,2);
	\draw[red,line width=1pt] (2,2) -- (2,2+1);
	\draw[red,line width=1pt] (2,2) -- (2-1,2-1);

\end{tikzpicture}
&\vspace{.1in}
\begin{tikzpicture}[scale=0.4]

	\fill (0,0) circle (5pt);
	\fill (1,0) circle (5pt);
	\fill (2,1) circle (5pt);
	\fill[red] (1,1) circle (5pt);

	\draw[line width=1pt] (2,1) -- (2-2,1-2);
	\draw[line width=1pt] (2,1) -- (2+1,1);
	\draw[line width=1pt] (2,1) -- (2,1+1);

	\draw[line width=1pt] (1,0) -- (1,0+2);

	\draw[line width=1pt] (0,0) -- (0,0+2);
	\draw[line width=1pt] (0,0) -- (0+3,0);
	\draw[line width=1pt] (0,0) -- (0-1,0-1);

	\draw[red,line width=1pt] (1,1) -- (1+1,1);
	\draw[red,line width=1pt] (1,1) -- (1-1,1-1);

\end{tikzpicture}
&\vspace{.1in}
\begin{tikzpicture}[scale=0.4]

	\fill (1,0) circle (5pt);
	\fill (0,1) circle (5pt);
	\fill (2,1) circle (5pt);
	\fill[red] (0,0) circle (5pt);

	\draw[line width=1pt] (1,0) -- (1+2,0);
	\draw[line width=1pt] (1,0) -- (1,0+2);

	\draw[line width=1pt] (0,1) -- (0+3,1);
	\draw[line width=1pt] (0,1) -- (0,1+1);
	\draw[line width=1pt] (0,1) -- (0-1,1-1);

	\draw[line width=1pt] (2,1) -- (2,1+1);
	\draw[line width=1pt] (2,1) -- (2-2,1-2);

	\draw[red,line width=1pt] (0,0) -- (0+1,0);
	\draw[red,line width=1pt] (0,0) -- (0,0+1);
	\draw[red,line width=1pt] (0,0) -- (0-1,0-1);

\end{tikzpicture}
&\vspace{.1in}
\begin{tikzpicture}[scale=0.4]

	\fill (0,0) circle (5pt);
	\fill (0,1) circle (5pt);
	\fill (2,1) circle (5pt);
	\fill[red] (1,1) circle (5pt);

	\draw[line width=1pt] (2,1) -- (2-2,1-2);
	\draw[line width=1pt] (2,1) -- (2,1+1);

	\draw[line width=1pt] (0,1) -- (0-1,1-1);
	\draw[line width=1pt] (0,1) -- (0+3,1);

	\draw[line width=1pt] (0,0) -- (0-1,0-1);
	\draw[line width=1pt] (0,0) -- (0+3,0);
	\draw[line width=1pt] (0,0) -- (0,0+2);

	\draw[red,line width=1pt] (1,1) -- (1-1,1-1);
	\draw[red,line width=1pt] (1,1) -- (1,1+1);

\end{tikzpicture}
&\vspace{.1in}
\begin{tikzpicture}[scale=0.4]

	\fill[red] (0,0) circle (5pt);
	\fill[red] (1,2) circle (5pt);
	\fill (0,1) circle (5pt);
	\fill (1,0) circle (5pt);
	\fill (3,2) circle (5pt);

	\draw[line width=1pt] (3,2) -- (3-3,2-3);
	\draw[line width=1pt] (3,2) -- (3+1,2);
	\draw[line width=1pt] (3,2) -- (3,2+1);

	\draw[line width=1pt] (1,0) -- (1-1,0-1);
	\draw[line width=1pt] (1,0) -- (1,0+3);
	\draw[line width=1pt] (1,0) -- (1+3,0);

	\draw[line width=1pt] (0,1) -- (0-1,1-1);
	\draw[line width=1pt] (0,1) -- (0,1+2);
	\draw[line width=1pt] (0,1) -- (0+4,1);

	\draw[red,line width=1pt] (1,2) -- (1-1,2-1);
	\draw[red,line width=1pt] (1,2) -- (1+2,2);

	\draw[red,line width=1pt] (0,0) -- (0-1,0-1);
	\draw[red,line width=1pt] (0,0) -- (0+1,0);
	\draw[red,line width=1pt] (0,0) -- (0,0+1);

\end{tikzpicture}
\\
\hline
m&n&o&p&q&r
\\
\hline
\vspace{.1in}
\begin{tikzpicture}[scale=0.4]

	\fill (0,0) circle (5pt);
	\fill[red] (1,1) circle (5pt);
	\fill[red] (2,2) circle (5pt);
	\fill (2,1) circle (5pt);
	\fill (3,2) circle (5pt);

	\draw[line width=1pt] (0,0) -- (0-1,0-1);
	\draw[line width=1pt] (0,0) -- (0+4,0);
	\draw[line width=1pt] (0,0) -- (0,0+3);

	\draw[line width=1pt] (3,2) -- (3-3,2-3);
	\draw[line width=1pt] (3,2) -- (3+1,2);
	\draw[line width=1pt] (3,2) -- (3,2+1);

	\draw[line width=1pt] (2,1) -- (2+2,1);
	\draw[line width=1pt] (2,1) -- (2,1+2);

	\draw[red,line width=1pt] (1,1) -- (1+1,1);
	\draw[red,line width=1pt] (1,1) -- (1,1+2);

	\draw[red,line width=1pt] (2,2) -- (2-2,2-2);
	\draw[red,line width=1pt] (2,2) -- (2+1,2);

\end{tikzpicture}
&\vspace{.1in}
\begin{tikzpicture}[scale=0.4]

	\fill[red] (0,0) circle (5pt);
	\fill[red] (0,1) circle (5pt);
	\fill (0,2) circle (5pt);
	\fill (1,0) circle (5pt);
	\fill (2,1) circle (5pt);

	\draw[line width=1pt] (2,1) -- (2-2,1-2);
	\draw[line width=1pt] (2,1) -- (2+1,1);
	\draw[line width=1pt] (2,1) -- (2,1+2);

	\draw[line width=1pt] (0,2) -- (0-1,2-1);
	\draw[line width=1pt] (0,2) -- (0+3,2);
	\draw[line width=1pt] (0,2) -- (0,2+1);

	\draw[line width=1pt] (1,0) -- (1+2,0);
	\draw[line width=1pt] (1,0) -- (1,0+3);

	\draw[red,line width=1pt] (0,0) -- (0-1,0-1);
	\draw[red,line width=1pt] (0,0) -- (0+1,0);
	\draw[red,line width=1pt] (0,0) -- (0,0+2);

	\draw[red,line width=1pt] (0,1) -- (0-1,1-1);
	\draw[red,line width=1pt] (0,1) -- (0+2,1);

\end{tikzpicture}
&\vspace{.1in}
\begin{tikzpicture}[scale=0.4]

	\fill[red] (0,0) circle (5pt);
	\fill (0,1) circle (5pt);
	\fill (1,0) circle (5pt);
	\fill (3,1) circle (5pt);
	\fill[red] (2,1) circle (5pt);

	\draw[line width=1pt] (0,1) -- (0,1+1);
	\draw[line width=1pt] (0,1) -- (0+4,1);
	\draw[line width=1pt] (0,1) -- (0-1,1-1);

	\draw[line width=1pt] (1,0) -- (1,0+2);
	\draw[line width=1pt] (1,0) -- (1+3,0);
	\draw[line width=1pt] (1,0) -- (1-1,0-1);

	\draw[line width=1pt] (3,1) -- (3,1+1);
	\draw[line width=1pt] (3,1) -- (3-2,1-2);

	\draw[red,line width=1pt] (0,0) -- (0,0+1);
	\draw[red,line width=1pt] (0,0) -- (0+1,0);
	\draw[red,line width=1pt] (0,0) -- (0-1,0-1);

	\draw[red,line width=1pt] (2,1) -- (2,1+1);
	\draw[red,line width=1pt] (2,1) -- (2-1,1-1);

\end{tikzpicture}
&\vspace{.1in}
\begin{tikzpicture}[scale=0.4]

	\fill (0,0) circle (5pt);
	\fill[red] (0,1) circle (5pt);
	\fill (0,2) circle (5pt);
	\fill[red] (1,1) circle (5pt);
	\fill (2,1) circle (5pt);

	\draw[line width=1pt] (0,0) -- (0-1,0-1);
	\draw[line width=1pt] (0,0) -- (0+3,0);
	\draw[line width=1pt] (0,0) -- (0,0+3);

	\draw[line width=1pt] (2,1) -- (2-2,1-2);
	\draw[line width=1pt] (2,1) -- (2+1,1);
	\draw[line width=1pt] (2,1) -- (2,1+2);

	\draw[line width=1pt] (0,2) -- (0-1,2-1);
	\draw[line width=1pt] (0,2) -- (0+3,2);

	\draw[red,line width=1pt] (0,1) -- (0-1,1-1);
	\draw[red,line width=1pt] (0,1) -- (0+2,1);

	\draw[red,line width=1pt] (1,1) -- (1-1,1-1);
	\draw[red,line width=1pt] (1,1) -- (1,1+2);

\end{tikzpicture}
&\vspace{.1in}
\begin{tikzpicture}[scale=0.4]

	\fill[red] (0,0) circle (5pt);
	\fill (0,1) circle (5pt);
	\fill[red] (1,2) circle (5pt);
	\fill (2,2) circle (5pt);
	\fill[red] (2,1) circle (5pt);
	\fill (1,0) circle (5pt);

	\draw[red,line width=1pt] (0,0) -- (0,0+1);
	\draw[red,line width=1pt] (0,0) -- (0+1,0);

	\draw[line width=1pt] (0,1) -- (0-1,1-1);
	\draw[line width=1pt] (0,1) -- (0+3,1);
	\draw[line width=1pt] (0,1) -- (0,1+2);

	\draw[line width=1pt] (1,0) -- (1-1,0-1);
	\draw[line width=1pt] (1,0) -- (1+2,0);
	\draw[line width=1pt] (1,0) -- (1,0+3);

	\draw[line width=1pt] (2,2) -- (2,2+1);
	\draw[line width=1pt] (2,2) -- (2+1,2);
	\draw[line width=1pt] (2,2) -- (2-3,2-3);

	\draw[red,line width=1pt] (1,2) -- (1-1,2-1);
	\draw[red,line width=1pt] (1,2) -- (1+1,2);

	\draw[red,line width=1pt] (2,1) -- (2-1,1-1);
	\draw[red,line width=1pt] (2,1) -- (2,1+1);

\end{tikzpicture}
&\vspace{.1in}
\begin{tikzpicture}[scale=0.4]

	\fill[red] (0,0) circle (5pt);
	\fill (0,1) circle (5pt);
	\fill[red] (2,1) circle (5pt);
	\fill[red] (2,3) circle (5pt);
	\fill (1,0) circle (5pt);
	\fill (2,4) circle (5pt);

	\draw[line width=1pt] (0,1) -- (0-1,1-1);
	\draw[line width=1pt] (0,1) -- (0+3,1);
	\draw[line width=1pt] (0,1) -- (0,1+4);

	\draw[line width=1pt] (1,0) -- (1-1,0-1);
	\draw[line width=1pt] (1,0) -- (1+2,0);
	\draw[line width=1pt] (1,0) -- (1,0+5);

	\draw[line width=1pt] (2,4) -- (2-3,4-3);
	\draw[line width=1pt] (2,4) -- (2+1,4);
	\draw[line width=1pt] (2,4) -- (2,4+1);

	\draw[red,line width=1pt] (0,0) -- (0,0+1);
	\draw[red,line width=1pt] (0,0) -- (0+1,0);
	\draw[red,line width=1pt] (0,0) -- (0-1,0-1);

	\draw[red,line width=1pt] (2,1) -- (2,1+3);
	\draw[red,line width=1pt] (2,1) -- (2-1,1-1);

	\draw[red,line width=1pt] (2,3) -- (2+1,3);
	\draw[red,line width=1pt] (2,3) -- (2-2,3-2);

\end{tikzpicture}
\\
\hline
s&t&u&&&
\\
\hline
\vspace{.1in}
\begin{tikzpicture}[scale=0.4]

	\fill[red] (0,0) circle (5pt);
	\fill (0,1) circle (5pt);
	\fill[red] (1,2) circle (5pt);
	\fill (1,3) circle (5pt);
	\fill[red] (1,0) circle (5pt);
	\fill (2,0) circle (5pt);

	\draw[line width=1pt] (0,1) -- (0-1,1-1);
	\draw[line width=1pt] (0,1) -- (0+3,1);
	\draw[line width=1pt] (0,1) -- (0,1+3);

	\draw[line width=1pt] (1,3) -- (1-2,3-2);
	\draw[line width=1pt] (1,3) -- (1+2,3);
	\draw[line width=1pt] (1,3) -- (1,3+1);

	\draw[line width=1pt] (2,0) -- (2-1,0-1);
	\draw[line width=1pt] (2,0) -- (2+1,0);
	\draw[line width=1pt] (2,0) -- (2,0+4);

	\draw[red,line width=1pt] (0,0) -- (0-1,0-1);
	\draw[red,line width=1pt] (0,0) -- (0+2,0);
	\draw[red,line width=1pt] (0,0) -- (0,0+1);

	\draw[red,line width=1pt] (1,0) -- (1-1,0-1);
	\draw[red,line width=1pt] (1,0) -- (1,0+3);

	\draw[red,line width=1pt] (1,2) -- (1-1,2-1);
	\draw[red,line width=1pt] (1,2) -- (1+2,2);

\end{tikzpicture}
&\vspace{.1in}
\begin{tikzpicture}[scale=0.4]

	\fill (0,0) circle (5pt);
	\fill (1,2) circle (5pt);
	\fill (2,4) circle (5pt);
	\fill[red] (1,1) circle (5pt);
	\fill[red] (2,2) circle (5pt);
	\fill[red] (2,3) circle (5pt);

	\draw[line width=1pt] (0,0) -- (0-1,0-1);
	\draw[line width=1pt] (0,0) -- (0+3,0);
	\draw[line width=1pt] (0,0) -- (0,0+5);

	\draw[line width=1pt] (1,2) -- (1-2,2-2);
	\draw[line width=1pt] (1,2) -- (1+2,2);
	\draw[line width=1pt] (1,2) -- (1,2+3);

	\draw[line width=1pt] (2,4) -- (2-3,4-3);
	\draw[line width=1pt] (2,4) -- (2+1,4);
	\draw[line width=1pt] (2,4) -- (2,4+1);

	\draw[red,line width=1pt] (1,1) -- (1+2,1);
	\draw[red,line width=1pt] (1,1) -- (1,1+1);

	\draw[red,line width=1pt] (2,2) -- (2-2,2-2);
	\draw[red,line width=1pt] (2,2) -- (2,2+2);

	\draw[red,line width=1pt] (2,3) -- (2-1,3-1);
	\draw[red,line width=1pt] (2,3) -- (2+1,3);

\end{tikzpicture}
&\vspace{.1in}
\begin{tikzpicture}[scale=0.4]

	\fill[red] (-1,-1) circle (5pt);
	\fill (-1,0) circle (5pt);
	\fill[red] (1,2) circle (5pt);
	\fill (2,2) circle (5pt);
	\fill[red] (2,0) circle (5pt);
	\fill (1,-1) circle (5pt);

	\draw[line width=1pt] (-1,0) -- (-1-1,0-1);
	\draw[line width=1pt] (-1,0) -- (-1+4,0);
	\draw[line width=1pt] (-1,0) -- (-1,0+3);

	\draw[line width=1pt] (1,-1) -- (1-1,-1-1);
	\draw[line width=1pt] (1,-1) -- (1+2,-1);
	\draw[line width=1pt] (1,-1) -- (1,-1+4);

	\draw[line width=1pt] (2,2) -- (2,2+1);
	\draw[line width=1pt] (2,2) -- (2+1,2);
	\draw[line width=1pt] (2,2) -- (2-4,2-4);

	\draw[red,line width=1pt] (-1,-1) -- (-1,-1+1);
	\draw[red,line width=1pt] (-1,-1) -- (-1+2,-1);

	\draw[red,line width=1pt] (1,2) -- (1-2,2-2);
	\draw[red,line width=1pt] (1,2) -- (1+1,2);

	\draw[red,line width=1pt] (2,0) -- (2-1,0-1);
	\draw[red,line width=1pt] (2,0) -- (2,0+2);

\end{tikzpicture}
&\vspace{.1in}
&\vspace{.1in}
&\vspace{.1in}
\\
\hline
\end{tabular}
\caption{Decompositions $\widetilde{\mathcal{S}}(v)$
obtained from decompositions $\mathcal{S}(v)$ of Table~\ref{triplesoflatticesinapartment}.}
\label{triplesoflatticesinapartmentandcorrespondingredspiders}
\end{table}

\begin{table}[hbtp]
\centering
\begin{tabular}{|>{\centering\arraybackslash}m{2.4cm}>{\centering\arraybackslash}m{3.4cm}|>{\centering\arraybackslash}m{2.4cm}>{\centering\arraybackslash}m{3.4cm}|}
\hline
\vspace{.1in}
\begin{tikzpicture}[scale=0.4]

	\fill (0,0) circle (5pt);
	\fill (2,1) circle (5pt);
	\fill (1,2) circle (5pt);
	\fill[red] (1,1) circle (5pt);

	\draw[line width=1pt] (0,0) -- (0+3,0);
	\draw[line width=1pt] (0,0) -- (0,0+3);
	\draw[line width=1pt] (0,0) -- (0-1,0-1);

	\draw[line width=1pt] (1,2) -- (1+2,2);
	\draw[line width=1pt] (1,2) -- (1,2+1);
	\draw[line width=1pt] (1,2) -- (1-2,2-2);

	\draw[line width=1pt] (2,1) -- (2+1,1);
	\draw[line width=1pt] (2,1) -- (2,1+2);
	\draw[line width=1pt] (2,1) -- (2-2,1-2);

	\draw[red,line width=1pt] (1,1) -- (1+1,1);
	\draw[red,line width=1pt] (1,1) -- (1,1+1);
	\draw[red,line width=1pt] (1,1) -- (1-1,1-1);

\end{tikzpicture}
&\vspace{.1in}
\begin{tikzpicture}[scale=0.4]

	\fill (0,0) circle (5pt);
	\fill (3,2) circle (5pt);
	\fill (1,3) circle (5pt);
	\fill[red] (1,1) circle (5pt);
	\fill[red] (2,2) circle (5pt);
	\fill[red] (1,2) circle (5pt);

	\draw[line width=1pt] (0,0) -- (0-1,0-1);
	\draw[line width=1pt] (0,0) -- (0+4,0);
	\draw[line width=1pt] (0,0) -- (0,0+4);

	\draw[line width=1pt] (3,2) -- (3-3,2-3);
	\draw[line width=1pt] (3,2) -- (3+1,2);
	\draw[line width=1pt] (3,2) -- (3,2+2);

	\draw[line width=1pt] (1,3) -- (1-2,3-2);
	\draw[line width=1pt] (1,3) -- (1+3,3);
	\draw[line width=1pt] (1,3) -- (1,3+1);

	\draw[red,line width=1pt] (2,2) -- (2-2,2-2);
	\draw[red,line width=1pt] (2,2) -- (2+1,2);
	\draw[red,line width=1pt] (2,2) -- (2,2+2);

	\draw[red,line width=1pt] (1,1) -- (1+3,1);
	\draw[red,line width=1pt] (1,1) -- (1,1+2);

	\draw[red,line width=1pt] (1,2) -- (1-2,2-2);
	\draw[red,line width=1pt] (1,2) -- (1+1,2);

\end{tikzpicture}
&\vspace{.1in}
\begin{tikzpicture}[scale=0.4]

	\fill (0,0) circle (5pt);
	\fill (1,0) circle (5pt);
	\fill (2,1) circle (5pt);
	\fill[red] (1,1) circle (5pt);

	\draw[line width=1pt] (2,1) -- (2-2,1-2);
	\draw[line width=1pt] (2,1) -- (2+1,1);
	\draw[line width=1pt] (2,1) -- (2,1+1);

	\draw[line width=1pt] (1,0) -- (1,0+2);

	\draw[line width=1pt] (0,0) -- (0,0+2);
	\draw[line width=1pt] (0,0) -- (0+3,0);
	\draw[line width=1pt] (0,0) -- (0-1,0-1);

	\draw[red,line width=1pt] (1,1) -- (1+1,1);
	\draw[red,line width=1pt] (1,1) -- (1-1,1-1);

\end{tikzpicture}
&\vspace{.1in}
\begin{tikzpicture}[scale=0.4]

	\fill (0,0) circle (5pt);
	\fill (2,0) circle (5pt);
	\fill (3,1) circle (5pt);
	\fill[red] (1,1) circle (5pt);

	\draw[line width=1pt] (0,0) -- (0+4,0);
	\draw[line width=1pt] (0,0) -- (0,0+2);
	\draw[line width=1pt] (0,0) -- (0-1,0-1);

	\draw[line width=1pt] (3,1) -- (3-2,1-2);
	\draw[line width=1pt] (3,1) -- (3,1+1);
	\draw[line width=1pt] (3,1) -- (3+1,1);

	\draw[red,line width=1pt] (1,1) -- (1-1,1-1);
	\draw[red,line width=1pt] (1,1) -- (1+2,1);
	\draw[red,line width=1pt] (1,1) -- (1,1+1);

	\draw[line width=1pt] (2,0) -- (2,0+2);

\end{tikzpicture}
\\
\hline
\begin{tikzpicture}[scale=0.4]

	\fill[red] (0,0) circle (5pt);
	\fill[red] (1,2) circle (5pt);
	\fill (0,1) circle (5pt);
	\fill (1,0) circle (5pt);
	\fill (3,2) circle (5pt);

	\draw[line width=1pt] (3,2) -- (3-3,2-3);
	\draw[line width=1pt] (3,2) -- (3+1,2);
	\draw[line width=1pt] (3,2) -- (3,2+1);

	\draw[line width=1pt] (1,0) -- (1-1,0-1);
	\draw[line width=1pt] (1,0) -- (1,0+3);
	\draw[line width=1pt] (1,0) -- (1+3,0);

	\draw[line width=1pt] (0,1) -- (0-1,1-1);
	\draw[line width=1pt] (0,1) -- (0,1+2);
	\draw[line width=1pt] (0,1) -- (0+4,1);

	\draw[red,line width=1pt] (1,2) -- (1-1,2-1);
	\draw[red,line width=1pt] (1,2) -- (1+2,2);

	\draw[red,line width=1pt] (0,0) -- (0-1,0-1);
	\draw[red,line width=1pt] (0,0) -- (0+1,0);
	\draw[red,line width=1pt] (0,0) -- (0,0+1);

\end{tikzpicture}
&\vspace{.1in}
\begin{tikzpicture}[scale=0.4]

	\fill[red] (0,0) circle (5pt);
	\fill[red] (1,2) circle (5pt);
	\fill (0,1) circle (5pt);
	\fill (2,0) circle (5pt);
	\fill (4,2) circle (5pt);

	\draw[line width=1pt] (4,2) -- (4-3,2-3);
	\draw[line width=1pt] (4,2) -- (4+1,2);
	\draw[line width=1pt] (4,2) -- (4,2+1);

	\draw[line width=1pt] (2,0) -- (2-1,0-1);
	\draw[line width=1pt] (2,0) -- (2,0+3);
	\draw[line width=1pt] (2,0) -- (2+3,0);

	\draw[line width=1pt] (0,1) -- (0-1,1-1);
	\draw[line width=1pt] (0,1) -- (0,1+2);
	\draw[line width=1pt] (0,1) -- (0+5,1);

	\draw[red,line width=1pt] (1,2) -- (1-1,2-1);
	\draw[red,line width=1pt] (1,2) -- (1+3,2);
	\draw[red,line width=1pt] (1,2) -- (1,2+1);

	\draw[red,line width=1pt] (0,0) -- (0-1,0-1);
	\draw[red,line width=1pt] (0,0) -- (0+2,0);
	\draw[red,line width=1pt] (0,0) -- (0,0+1);

\end{tikzpicture}
&\vspace{.1in}
\begin{tikzpicture}[scale=0.4]

	\fill (0,0) circle (5pt);
	\fill[red] (1,1) circle (5pt);
	\fill[red] (2,2) circle (5pt);
	\fill (2,1) circle (5pt);
	\fill (3,2) circle (5pt);

	\draw[line width=1pt] (0,0) -- (0-1,0-1);
	\draw[line width=1pt] (0,0) -- (0+4,0);
	\draw[line width=1pt] (0,0) -- (0,0+3);

	\draw[line width=1pt] (3,2) -- (3-3,2-3);
	\draw[line width=1pt] (3,2) -- (3+1,2);
	\draw[line width=1pt] (3,2) -- (3,2+1);

	\draw[line width=1pt] (2,1) -- (2+2,1);
	\draw[line width=1pt] (2,1) -- (2,1+2);

	\draw[red,line width=1pt] (1,1) -- (1+1,1);
	\draw[red,line width=1pt] (1,1) -- (1,1+2);

	\draw[red,line width=1pt] (2,2) -- (2-2,2-2);
	\draw[red,line width=1pt] (2,2) -- (2+1,2);

\end{tikzpicture}
&\vspace{.1in}
\begin{tikzpicture}[scale=0.4]

	\fill (0,0) circle (5pt);
	\fill[red] (1,1) circle (5pt);
	\fill[red] (2,2) circle (5pt);
	\fill (3,1) circle (5pt);
	\fill (4,2) circle (5pt);

	\draw[line width=1pt] (0,0) -- (0-1,0-1);
	\draw[line width=1pt] (0,0) -- (0+4,0);
	\draw[line width=1pt] (0,0) -- (0,0+3);

	\draw[line width=1pt] (4,2) -- (4-3,2-3);
	\draw[line width=1pt] (4,2) -- (4+1,2);
	\draw[line width=1pt] (4,2) -- (4,2+1);

	\draw[line width=1pt] (3,1) -- (3+2,1);
	\draw[line width=1pt] (3,1) -- (3,1+2);

	\draw[red,line width=1pt] (1,1) -- (1+2,1);
	\draw[red,line width=1pt] (1,1) -- (1,1+2);

	\draw[red,line width=1pt] (2,2) -- (2-2,2-2);
	\draw[red,line width=1pt] (2,2) -- (2+2,2);
	\draw[red,line width=1pt] (2,2) -- (2,2+1);

\end{tikzpicture}
\\
\hline
\begin{tikzpicture}[scale=0.4]

	\fill[red] (0,0) circle (5pt);
	\fill (0,1) circle (5pt);
	\fill[red] (2,1) circle (5pt);
	\fill[red] (2,3) circle (5pt);
	\fill (1,0) circle (5pt);
	\fill (2,4) circle (5pt);

	\draw[line width=1pt] (0,1) -- (0-1,1-1);
	\draw[line width=1pt] (0,1) -- (0+3,1);
	\draw[line width=1pt] (0,1) -- (0,1+4);

	\draw[line width=1pt] (1,0) -- (1-1,0-1);
	\draw[line width=1pt] (1,0) -- (1+2,0);
	\draw[line width=1pt] (1,0) -- (1,0+5);

	\draw[line width=1pt] (2,4) -- (2-3,4-3);
	\draw[line width=1pt] (2,4) -- (2+1,4);
	\draw[line width=1pt] (2,4) -- (2,4+1);

	\draw[red,line width=1pt] (0,0) -- (0,0+1);
	\draw[red,line width=1pt] (0,0) -- (0+1,0);
	\draw[red,line width=1pt] (0,0) -- (0-1,0-1);

	\draw[red,line width=1pt] (2,1) -- (2,1+3);
	\draw[red,line width=1pt] (2,1) -- (2-1,1-1);

	\draw[red,line width=1pt] (2,3) -- (2+1,3);
	\draw[red,line width=1pt] (2,3) -- (2-2,3-2);

\end{tikzpicture}
&\vspace{.1in}
\begin{tikzpicture}[scale=0.4]

	\fill[red] (0,-1) circle (5pt);
	\fill (0,1) circle (5pt);
	\fill[red] (2,0) circle (5pt);
	\fill[red] (2,3) circle (5pt);
	\fill (1,-1) circle (5pt);
	\fill (2,4) circle (5pt);

	\draw[line width=1pt] (0,1) -- (0-1,1-1);
	\draw[line width=1pt] (0,1) -- (0+3,1);
	\draw[line width=1pt] (0,1) -- (0,1+4);

	\draw[line width=1pt] (1,-1) -- (1-1,-1-1);
	\draw[line width=1pt] (1,-1) -- (1+2,-1);
	\draw[line width=1pt] (1,-1) -- (1,-1+6);

	\draw[line width=1pt] (2,4) -- (2-3,4-3);
	\draw[line width=1pt] (2,4) -- (2+1,4);
	\draw[line width=1pt] (2,4) -- (2,4+1);

	\draw[red,line width=1pt] (0,-1) -- (0,-1+2);
	\draw[red,line width=1pt] (0,-1) -- (0+1,-1);
	\draw[red,line width=1pt] (0,-1) -- (0-1,-1-1);

	\draw[red,line width=1pt] (2,0) -- (2,0+4);
	\draw[red,line width=1pt] (2,0) -- (2+1,0);
	\draw[red,line width=1pt] (2,0) -- (2-1,0-1);

	\draw[red,line width=1pt] (2,3) -- (2+1,3);
	\draw[red,line width=1pt] (2,3) -- (2-2,3-2);

\end{tikzpicture}
&\vspace{.1in}
\begin{tikzpicture}[scale=0.4]

	\fill (0,0) circle (5pt);
	\fill (1,2) circle (5pt);
	\fill (2,4) circle (5pt);
	\fill[red] (1,1) circle (5pt);
	\fill[red] (2,2) circle (5pt);
	\fill[red] (2,3) circle (5pt);

	\draw[line width=1pt] (0,0) -- (0-1,0-1);
	\draw[line width=1pt] (0,0) -- (0+3,0);
	\draw[line width=1pt] (0,0) -- (0,0+5);

	\draw[line width=1pt] (1,2) -- (1-2,2-2);
	\draw[line width=1pt] (1,2) -- (1+2,2);
	\draw[line width=1pt] (1,2) -- (1,2+3);

	\draw[line width=1pt] (2,4) -- (2-3,4-3);
	\draw[line width=1pt] (2,4) -- (2+1,4);
	\draw[line width=1pt] (2,4) -- (2,4+1);

	\draw[red,line width=1pt] (1,1) -- (1+2,1);
	\draw[red,line width=1pt] (1,1) -- (1,1+1);

	\draw[red,line width=1pt] (2,2) -- (2-2,2-2);
	\draw[red,line width=1pt] (2,2) -- (2,2+2);

	\draw[red,line width=1pt] (2,3) -- (2-1,3-1);
	\draw[red,line width=1pt] (2,3) -- (2+1,3);

\end{tikzpicture}
&\vspace{.1in}
\begin{tikzpicture}[scale=0.4]

	\fill (0,0) circle (5pt);
	\fill (1,2) circle (5pt);
	\fill (3,6) circle (5pt);
	\fill[red] (1,1) circle (5pt);
	\fill[red] (3,3) circle (5pt);
	\fill[red] (3,4) circle (5pt);

	\draw[line width=1pt] (0,0) -- (0-1,0-1);
	\draw[line width=1pt] (0,0) -- (0+4,0);
	\draw[line width=1pt] (0,0) -- (0,0+7);

	\draw[line width=1pt] (1,2) -- (1-2,2-2);
	\draw[line width=1pt] (1,2) -- (1+3,2);
	\draw[line width=1pt] (1,2) -- (1,2+5);

	\draw[line width=1pt] (3,6) -- (3-4,6-4);
	\draw[line width=1pt] (3,6) -- (3+1,6);
	\draw[line width=1pt] (3,6) -- (3,6+1);

	\draw[red,line width=1pt] (1,1) -- (1+3,1);
	\draw[red,line width=1pt] (1,1) -- (1,1+1);

	\draw[red,line width=1pt] (3,3) -- (3-3,3-3);
	\draw[red,line width=1pt] (3,3) -- (3,3+3);
	\draw[red,line width=1pt] (3,3) -- (3+1,3);

	\draw[red,line width=1pt] (3,4) -- (3-2,4-2);
	\draw[red,line width=1pt] (3,4) -- (3+1,4);

\end{tikzpicture}
\\
\hline
\multicolumn{4}{|c|}{
\begin{tikzpicture}[scale=0.4]

	\fill[red] (0,0) circle (5pt);
	\fill (0,1) circle (5pt);
	\fill[red] (1,2) circle (5pt);
	\fill (2,2) circle (5pt);
	\fill[red] (2,1) circle (5pt);
	\fill (1,0) circle (5pt);

	\draw[red,line width=1pt] (0,0) -- (0,0+1);
	\draw[red,line width=1pt] (0,0) -- (0+1,0);

	\draw[line width=1pt] (0,1) -- (0-1,1-1);
	\draw[line width=1pt] (0,1) -- (0+3,1);
	\draw[line width=1pt] (0,1) -- (0,1+2);

	\draw[line width=1pt] (1,0) -- (1-1,0-1);
	\draw[line width=1pt] (1,0) -- (1+2,0);
	\draw[line width=1pt] (1,0) -- (1,0+3);

	\draw[line width=1pt] (2,2) -- (2,2+1);
	\draw[line width=1pt] (2,2) -- (2+1,2);
	\draw[line width=1pt] (2,2) -- (2-3,2-3);

	\draw[red,line width=1pt] (1,2) -- (1-1,2-1);
	\draw[red,line width=1pt] (1,2) -- (1+1,2);

	\draw[red,line width=1pt] (2,1) -- (2-1,1-1);
	\draw[red,line width=1pt] (2,1) -- (2,1+1);

\end{tikzpicture}
\hspace{0.5in}
\begin{tikzpicture}[scale=0.4]

	\fill[red] (0,0) circle (5pt);
	\fill (0,1) circle (5pt);
	\fill[red] (1,2) circle (5pt);
	\fill (3,2) circle (5pt);
	\fill[red] (3,1) circle (5pt);
	\fill (2,0) circle (5pt);

	\draw[red,line width=1pt] (0,0) -- (0,0+1);
	\draw[red,line width=1pt] (0,0) -- (0+2,0);
	\draw[red,line width=1pt] (0,0) -- (0-1,0-1);

	\draw[line width=1pt] (0,1) -- (0-1,1-1);
	\draw[line width=1pt] (0,1) -- (0+3,1);
	\draw[line width=1pt] (0,1) -- (0,1+2);

	\draw[line width=1pt] (2,0) -- (2-1,0-1);
	\draw[line width=1pt] (2,0) -- (2+2,0);
	\draw[line width=1pt] (2,0) -- (2,0+3);

	\draw[line width=1pt] (3,2) -- (3,2+1);
	\draw[line width=1pt] (3,2) -- (3+1,2);
	\draw[line width=1pt] (3,2) -- (3-3,2-3);

	\draw[red,line width=1pt] (1,2) -- (1-1,2-1);
	\draw[red,line width=1pt] (1,2) -- (1+2,2);
	\draw[red,line width=1pt] (1,2) -- (1,2+1);

	\draw[red,line width=1pt] (3,1) -- (3-1,1-1);
	\draw[red,line width=1pt] (3,1) -- (3,1+1);

\end{tikzpicture}
\hspace{0.5in}
\begin{tikzpicture}[scale=0.4]

	\fill[red] (0,0) circle (5pt);
	\fill (0,1) circle (5pt);
	\fill[red] (3,4) circle (5pt);
	\fill (5,4) circle (5pt);
	\fill[red] (5,3) circle (5pt);
	\fill (2,0) circle (5pt);

	\draw[red,line width=1pt] (0,0) -- (0,0+1);
	\draw[red,line width=1pt] (0,0) -- (0+2,0);
	\draw[red,line width=1pt] (0,0) -- (0-1,0-1);

	\draw[line width=1pt] (0,1) -- (0-1,1-1);
	\draw[line width=1pt] (0,1) -- (0+6,1);
	\draw[line width=1pt] (0,1) -- (0,1+4);

	\draw[line width=1pt] (2,0) -- (2-1,0-1);
	\draw[line width=1pt] (2,0) -- (2+4,0);
	\draw[line width=1pt] (2,0) -- (2,0+5);

	\draw[line width=1pt] (5,4) -- (5,4+1);
	\draw[line width=1pt] (5,4) -- (5+1,4);
	\draw[line width=1pt] (5,4) -- (5-5,4-5);

	\draw[red,line width=1pt] (3,4) -- (3-3,4-3);
	\draw[red,line width=1pt] (3,4) -- (3+2,4);
	\draw[red,line width=1pt] (3,4) -- (3,4+1);

	\draw[red,line width=1pt] (5,3) -- (5-3,3-3);
	\draw[red,line width=1pt] (5,3) -- (5,3+1);
	\draw[red,line width=1pt] (5,3) -- (5+1,3);

\end{tikzpicture}
}
\\
\hline
\multicolumn{4}{|c|}{
\begin{tikzpicture}[scale=0.4]

	\fill[red] (-1,-1) circle (5pt);
	\fill (-1,0) circle (5pt);
	\fill[red] (1,2) circle (5pt);
	\fill (2,2) circle (5pt);
	\fill[red] (2,0) circle (5pt);
	\fill (1,-1) circle (5pt);

	\draw[line width=1pt] (-1,0) -- (-1-1,0-1);
	\draw[line width=1pt] (-1,0) -- (-1+4,0);
	\draw[line width=1pt] (-1,0) -- (-1,0+3);

	\draw[line width=1pt] (1,-1) -- (1-1,-1-1);
	\draw[line width=1pt] (1,-1) -- (1+2,-1);
	\draw[line width=1pt] (1,-1) -- (1,-1+4);

	\draw[line width=1pt] (2,2) -- (2,2+1);
	\draw[line width=1pt] (2,2) -- (2+1,2);
	\draw[line width=1pt] (2,2) -- (2-4,2-4);

	\draw[red,line width=1pt] (-1,-1) -- (-1,-1+1);
	\draw[red,line width=1pt] (-1,-1) -- (-1+2,-1);

	\draw[red,line width=1pt] (1,2) -- (1-2,2-2);
	\draw[red,line width=1pt] (1,2) -- (1+1,2);

	\draw[red,line width=1pt] (2,0) -- (2-1,0-1);
	\draw[red,line width=1pt] (2,0) -- (2,0+2);

\end{tikzpicture}
\hspace{0.5in}
\begin{tikzpicture}[scale=0.4]

	\fill[red] (-1,-1) circle (5pt);
	\fill (-1,0) circle (5pt);
	\fill[red] (1,2) circle (5pt);
	\fill (3,2) circle (5pt);
	\fill[red] (3,0) circle (5pt);
	\fill (2,-1) circle (5pt);

	\draw[line width=1pt] (-1,0) -- (-1-1,0-1);
	\draw[line width=1pt] (-1,0) -- (-1+4,0);
	\draw[line width=1pt] (-1,0) -- (-1,0+3);

	\draw[line width=1pt] (2,-1) -- (2-1,-1-1);
	\draw[line width=1pt] (2,-1) -- (2+2,-1);
	\draw[line width=1pt] (2,-1) -- (2,-1+4);

	\draw[line width=1pt] (3,2) -- (3,2+1);
	\draw[line width=1pt] (3,2) -- (3+1,2);
	\draw[line width=1pt] (3,2) -- (3-4,2-4);

	\draw[red,line width=1pt] (-1,-1) -- (-1,-1+1);
	\draw[red,line width=1pt] (-1,-1) -- (-1+3,-1);
	\draw[red,line width=1pt] (-1,-1) -- (-1-1,-1-1);

	\draw[red,line width=1pt] (1,2) -- (1-2,2-2);
	\draw[red,line width=1pt] (1,2) -- (1+2,2);
	\draw[red,line width=1pt] (1,2) -- (1,2+1);

	\draw[red,line width=1pt] (3,0) -- (3-1,0-1);
	\draw[red,line width=1pt] (3,0) -- (3,0+2);
	\draw[red,line width=1pt] (3,0) -- (3+1,0);

\end{tikzpicture}
\hspace{0.5in}
\begin{tikzpicture}[scale=0.4]

	\fill[red] (-1,-1) circle (5pt);
	\fill (-1,0) circle (5pt);
	\fill[red] (3,4) circle (5pt);
	\fill (5,4) circle (5pt);
	\fill[red] (5,2) circle (5pt);
	\fill (2,-1) circle (5pt);

	\draw[line width=1pt] (-1,0) -- (-1-1,0-1);
	\draw[line width=1pt] (-1,0) -- (-1+7,0);
	\draw[line width=1pt] (-1,0) -- (-1,0+5);

	\draw[line width=1pt] (2,-1) -- (2-1,-1-1);
	\draw[line width=1pt] (2,-1) -- (2+4,-1);
	\draw[line width=1pt] (2,-1) -- (2,-1+6);

	\draw[line width=1pt] (5,4) -- (5,4+1);
	\draw[line width=1pt] (5,4) -- (5+1,4);
	\draw[line width=1pt] (5,4) -- (5-6,4-6);

	\draw[red,line width=1pt] (-1,-1) -- (-1,-1+1);
	\draw[red,line width=1pt] (-1,-1) -- (-1+3,-1);
	\draw[red,line width=1pt] (-1,-1) -- (-1-1,-1-1);

	\draw[red,line width=1pt] (3,4) -- (3-4,4-4);
	\draw[red,line width=1pt] (3,4) -- (3+2,4);
	\draw[red,line width=1pt] (3,4) -- (3,4+1);

	\draw[red,line width=1pt] (5,2) -- (5-3,2-3);
	\draw[red,line width=1pt] (5,2) -- (5,2+2);
	\draw[red,line width=1pt] (5,2) -- (5+1,2);

\end{tikzpicture}
}
\\
\hline
\end{tabular}
\caption{
``Stretching'' the decompositions $\mathcal{S}(v)$ 
in Table~\ref{triplesoflatticesinapartmentandcorrespondingredspiders}
creates further combinatorial types of $\widetilde{\mathcal{S}}(v)$
producing further cones in $\widetilde{\mathcal{Q}}_3$.}
\label{stretchesofspiders}
\end{table}

The next step is to define a refinement $\widetilde{\mathcal{R}}_m$ of the fan $\mathcal{R}_m$ of Definition~\ref{fanforfamilyoverchowquotient}.

\begin{lemdef}
Let $\widetilde{\mathcal{R}}_m$ be the collection of cones in $\mathbb{R}^{2m}$ in the form
\[
\pi(Q^{-1}(\sigma)\cap\tau)\quad\hbox{\rm for}\quad
\sigma\in\mathcal{Q}_{m+N}',
\   \tau\in\mathcal{P}^{m+N},
\]
where
$Q\colon\mathbb{R}^{2m+2N}\rightarrow\mathbb{R}^{2m+2N}/\mathbb{R}^2$
is the quotient map.
This collection is a fan and 
the quotient map $q\colon\mathbb{R}^{2m}\rightarrow\mathbb{R}^{2m}/\mathbb{R}^2$ induces a map of fans between $\widetilde{\mathcal{R}}_m$ and $\widetilde{\mathcal{Q}}_m$. 
\end{lemdef}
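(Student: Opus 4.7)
My plan is to reformulate the cones of $\widetilde{\mathcal{R}}_m$ so that the fan structure becomes transparent. Given $\sigma \in \mathcal{Q}'_{m+N}$, I will set $\widetilde{\sigma} = \bar\iota^{-1}(\sigma) \in \widetilde{\mathcal{Q}}_m$ (a cone by Lemma-Definition~\ref{qergaerhaerh}). Inspection of Table~\ref{howtodecidetheredspider} should show that each coordinate of $\iota(v) = (v, x(v))$ is equivariant under diagonal translations of $\mathbb{R}^2$ and linear on $q^{-1}(\widetilde{\sigma})$ (compatibly with the linearity of $\bar\iota$ on $\widetilde{\sigma}$ proved in Lemma-Definition~\ref{qergaerhaerh}). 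Hence $Q^{-1}(\sigma) = \iota(q^{-1}(\widetilde{\sigma})) + \mathbb{R}^2_{\mathrm{diag}}$, and a short calculation will yield the key identity
\[
\pi(Q^{-1}(\sigma) \cap \tau) = \{u \in q^{-1}(\widetilde{\sigma}) : \iota(u) \in \tau\}.
\]

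Next, I will verify the fan axioms for $\widetilde{\mathcal{R}}_m$ using this description. The cone $\eta = \{u \in q^{-1}(\widetilde{\sigma}) : \iota(u) \in \tau\}$ is an intersection of two polyhedral cones in $\mathbb{R}^{2m}$ (by the linearity of $\iota$ on $q^{-1}(\widetilde{\sigma})$), and its faces correspond to pairs $(\widetilde{\sigma}', \tau')$ with $\widetilde{\sigma}' \preceq \widetilde{\sigma}$ and $\tau' \preceq \tau$; these lie in $\widetilde{\mathcal{R}}_m$ since $\mathcal{Q}'_{m+N}$ is closed under faces (being a subfan of $\mathcal{Q}_{m+N}$). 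For cones $\eta_i$ ($i=1,2$) arising from data $(\widetilde{\sigma}_i, \tau_i)$, a straightforward computation will give
\[
\eta_1 \cap \eta_2 = \{u \in q^{-1}(\widetilde{\sigma}_1 \cap \widetilde{\sigma}_2) : \iota(u) \in \tau_1 \cap \tau_2\},
\]
which lies in $\widetilde{\mathcal{R}}_m$ (with $\sigma' = \bar\iota(\widetilde{\sigma}_1 \cap \widetilde{\sigma}_2) \in \mathcal{Q}'_{m+N}$ and $\tau' = \tau_1 \cap \tau_2$) and is a common face of $\eta_1$ and $\eta_2$.

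The map-of-fans claim will then follow immediately: $q(\eta) \subseteq \widetilde{\sigma}$, a cone of $\widetilde{\mathcal{Q}}_m$. Alternatively, one can use the identity $q \circ \pi = \bar\pi \circ Q$ together with $\bar\pi \circ \bar\iota = \mathrm{id}$ (a consequence of $\pi \circ \iota = \mathrm{id}$) to compute directly
\[
q(\pi(Q^{-1}(\sigma) \cap \tau)) \subseteq \bar\pi(\sigma) = \bar\iota^{-1}(\sigma) = \widetilde{\sigma}.
\]

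The main technical obstacle will be the initial reformulation, which requires verifying that $\iota$ (defined piecewise via Table~\ref{howtodecidetheredspider}) is linear on each $q^{-1}(\widetilde{\sigma})$ and equivariant with respect to diagonal translations. This reduces to a case-by-case inspection of the table: each output coordinate of $\iota(v)$ is either $(v_i, v_i')$, $(v_j, v_j')$, or the intersection point $(-a,-b)$ of the dual spiders $S(v_i,v_i')^\vee$ and $S(v_j,v_j')^\vee$, all of which depend linearly on the inputs within a fixed combinatorial type (i.e., a fixed cone of $\widetilde{\mathcal{Q}}_m$) and shift by the appropriate diagonal when $v$ is translated diagonally.
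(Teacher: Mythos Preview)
Your approach is correct and rests on the same core observation as the paper: the bijection $\iota|_\eta \colon \eta \to Q^{-1}(\sigma)\cap\tau$ (equivalent to your key identity). The paper's argument is shorter only because, rather than verifying the fan axioms directly in $\mathbb{R}^{2m}$, it transfers the fan structure from the already-known fan $\{Q^{-1}(\sigma)\cap\tau : \sigma\in\mathcal{Q}'_{m+N},\ \tau\in\mathcal{P}^{m+N}\}$ (a subfan of $\mathcal{R}_{m+N}$) via this bijection, and for the map of fans it uses exactly your alternative computation $q(\eta)\subseteq\bar\pi(\sigma)\in\widetilde{\mathcal{Q}}_m$.
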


\begin{proof}
Take a cone $\eta=\pi(Q^{-1}(\sigma)\cap\tau)\in\widetilde{\mathcal{R}}_m$. Then $\pi|_{\eta}$ is bijective with the inverse $\iota|_{\pi(\eta)}$.
Since cones of the form $Q^{-1}(\sigma)\cap\tau$ form a fan, it follows that $\widetilde{\mathcal{R}}_m$ is a fan as well. Next,
%The commutativity of the following diagram
%\begin{center}
%\begin{tikzpicture}[>=angle 90]
%\matrix(a)[matrix of math nodes,
%row sep=2em, column sep=2em,
%text height=2.5ex, text depth=0.25ex]
%{\mathbb{R}^{2m+2\binom{m}{2}}&\mathbb{R}^{2m}\\
%\mathbb{R}^{2m+2\binom{m}{2}}/\mathbb{R}^2&\mathbb{R}^{2m}/%\mathbb{R}^2,\\};
%\path[->] (a-1-1) edge node[above]{$\pi$}(a-1-2);
%\path[->] (a-1-1) edge node[left]{$Q$}(a-2-1);
%\path[->] (a-2-1) edge node[above]{$\overline{\pi}$}(a-2-2);
%\path[->] (a-1-2) edge node[right]{$q$}(a-2-2);
%\end{tikzpicture}
%\end{center}
%implies that
\[
q(\eta)=q(\pi(Q^{-1}(\sigma)\cap\tau))=\overline{\pi}(Q(Q^{-1}(\sigma)\cap\tau))=\overline{\pi}(\sigma\cap Q(\tau))\subseteq\overline{\pi}(\sigma),
\]where $\overline{\pi}(\sigma)\in\widetilde{\mathcal{Q}}_m$ by definition. Hence $q$ induces a map of fans.
\end{proof}

\begin{lemma}\label{wrgawrgarsg}
Let $N=\binom{3}{2}\binom{m}{2}$. We have a commutative diagram of toric morphisms
\begin{center}
\begin{tikzpicture}[>=angle 90]
\matrix(a)[matrix of math nodes,
row sep=2em, column sep=2em,
text height=1.5ex, text depth=0.25ex]
{
Y_{\mathcal{R'}_{m+N}}  &  Y_{\widetilde{\mathcal{R}}_m}\\
Y_{\mathcal{Q'}_{m+N}}  &  Y_{\widetilde{\mathcal{Q}}_m},\\
};
\path[->] (a-1-1) edge node[above]{$y_\pi$}(a-1-2);
\path[->] (a-1-1) edge node[left]{$y_Q$}(a-2-1);
\path[->] (a-2-1) edge node[below]{$y_{\bar\pi}$}(a-2-2);
\path[->] (a-1-2) edge node[right]{$y_q$}(a-2-2);
\end{tikzpicture}
\end{center}
where $y_\pi$ and $y_{\bar\pi}$ are quotients by
free ${\mathbb G}_m^{2N}$-action.
The morphism $y_q$ is flat with reduced~fibers.
\end{lemma}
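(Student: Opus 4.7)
The plan is to proceed in three steps. First, the commutativity of the claimed diagram is automatic once each arrow is shown to be a morphism of fans: the underlying commutative square of lattice maps $q\circ \pi = \bar\pi \circ Q$ (both compositions project $\mathbb{R}^{2m+2N}$ onto $\mathbb{R}^{2m}/\mathbb{R}^{2}$) induces a commutative diagram of toric morphisms. The horizontal maps are compatible with the fans by construction of $\widetilde{\mathcal{R}}_m$ and Lemma--Definition~\ref{qergaerhaerh}, while the vertical maps are fan morphisms by Proposition~\ref{flatfamilywithreducedfibersontoricchowquotientP2} applied to $\mathcal{R}_{m+N}\to\mathcal{Q}_{m+N}$ and restricted to the subfans $\mathcal{R}'_{m+N}$ and $\mathcal{Q}'_{m+N}$.

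Next I would show that $y_{\bar\pi}$ is a free $\mathbb{G}_m^{2N}$-quotient by verifying the standard toric criterion: on every cone $\xi=\bar\iota(\sigma)\in\mathcal{Q}'_{m+N}$, the map $\bar\pi$ restricts to a bijection of lattice semigroups $\xi\cap(\mathbb{Z}^{2m+2N}/\mathbb{Z}^{2})\to \sigma\cap(\mathbb{Z}^{2m}/\mathbb{Z}^{2})$. Injectivity follows from $\bar\pi\circ\bar\iota=\id$ (Lemma--Definition~\ref{qergaerhaerh}), while surjectivity amounts to showing that $\bar\iota$ is integer-linear on each cone of $\widetilde{\mathcal{Q}}_m$. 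This is the main technical point; it reduces to inspecting Table~\ref{howtodecidetheredspider}, where the only nontrivial entries are the intersection points of dual spiders $S(v_i,v_i')^\vee\cap S(v_j,v_j')^\vee$. These points are cut out by linear equations whose coefficients lie in $\{0,\pm 1\}$ (coming from the three canonical slopes of the dual spiders), so the unique solution has integer entries whenever the spider centers do. The analogous claim for $y_\pi$ on a cone $\eta'=Q^{-1}(\xi)\cap\tau\in\mathcal{R}'_{m+N}$ reduces to the observation that $\ker(\bar\pi)\cap\xi=\{0\}$ (again because $\bar\pi$ is injective on the image of $\bar\iota$), hence $\mathbb{R}^{2N}\cap Q^{-1}(\xi)=\{0\}$, and $\pi|_{\eta'}$ is a lattice-preserving bijection onto $\pi(\eta')$ by the same integrality argument lifted along $\tau$.

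Finally, for flatness of $y_q$ with reduced fibers I would mimic the proof of Proposition~\ref{flatfamilywithreducedfibersontoricchowquotientP2} and apply \cite[Theorem~2.1.4]{Mol21}, checking that (i) every cone $\eta\in\widetilde{\mathcal{R}}_m$ surjects onto a cone of $\widetilde{\mathcal{Q}}_m$ and (ii) lattice points surject. For (i), writing $\eta=\pi(Q^{-1}(\xi)\cap\tau)$, a direct computation gives $q(\eta)=\bar\pi(\xi\cap Q(\tau))$; the cone $\xi\cap Q(\tau)$ is a face of $\xi\in\mathcal{Q}'_{m+N}$ and hence lies in the subfan $\mathcal{Q}'_{m+N}$, so $q(\eta)\in\widetilde{\mathcal{Q}}_m$. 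For (ii), given a lattice point $[w]\in q(\eta)$, the integer section $\bar\iota$ from step two produces the lattice point $\bar\iota([w])\in\xi\cap Q(\tau)$; lattice surjectivity for the map $\mathcal{R}_{m+N}\to\mathcal{Q}_{m+N}$ (already established in Proposition~\ref{flatfamilywithreducedfibersontoricchowquotientP2}) then yields a lattice point $u\in Q^{-1}(\xi)\cap\tau$ with $Q(u)=\bar\iota([w])$, whence $\pi(u)$ is the desired lattice lift of $[w]$ in $\eta$. The hardest step is the integrality check for $\iota$ underlying step two, but it is a finite inspection of Table~\ref{howtodecidetheredspider} together with the elementary geometry of dual spiders.
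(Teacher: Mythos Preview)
Your proposal is correct, and for the free $\mathbb{G}_m^{2N}$-quotient statement it follows the same line as the paper: both arguments rest on the fact that $\pi$ and $\bar\pi$ admit piecewise-linear sections $\iota$ and $\bar\iota$, so that on each cone the projection is a lattice isomorphism onto its image and the corresponding affine toric chart splits off a $\mathbb{G}_m^{2N}$ factor. You are in fact more explicit than the paper here, spelling out why $\iota$ is integer-linear on each cone via the case analysis of Table~\ref{howtodecidetheredspider}; the paper simply asserts the chart-by-chart product decomposition without isolating the integrality check.

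Where you genuinely diverge is in the flatness argument. You re-run Molcho's criterion \cite[Theorem~2.1.4]{Mol21} directly for $y_q$, reducing both cone-surjectivity and lattice-surjectivity to the already-established Proposition~\ref{flatfamilywithreducedfibersontoricchowquotientP2} for $\mathcal{R}_{m+N}\to\mathcal{Q}_{m+N}$ together with the integer section $\bar\iota$. The paper instead observes that $y_Q$ is flat with reduced fibers (it is the restriction of $Y_{\mathcal{R}_{m+N}}\to Y_{\mathcal{Q}_{m+N}}$ to an open piece) and then invokes smooth descent along the $\mathbb{G}_m^{2N}$-torsor $y_{\bar\pi}$: since $y_\pi$ and $y_{\bar\pi}$ are smooth and surjective, flatness and reducedness of fibers pass from $y_Q$ to $y_q$. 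The descent argument is a one-liner once the torsor structure is in hand, whereas your route requires reassembling the combinatorics; on the other hand, your argument is self-contained at the level of fans and makes transparent exactly which lattice points lift where.
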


\begin{proof}
The morphisms $y_\pi$ and $y_{\bar\pi}$ are induced by linear maps $\pi$ and $\bar\pi$
of fans which have piece-wise linear inverses $\iota$ and $\bar\iota$.
This proves the first part because every affine toric chart of the variety on the left
is isomorphic (not canonically) to the product of the corresponding affine toric chart of the variety on the right with ${\mathbb G}_m^{2N}$.
Since $y_Q$ is flat with reduced fibers, the same is true for $y_q$ by smooth descent.
\end{proof}

%--------------------------------------------------

\begin{proposition}\label{,amNEBFjahevfaeV}
The family $Y_{\widetilde{\mathcal{R}}_m}\rightarrow Y_{\widetilde{\mathcal{Q}}_m}$ over the toric Gerrizen--Piwek space
admits ``light'' sections
$\ell_1,\ldots,\ell_m$ induced by linear maps $\lambda_1,\ldots,\lambda_m$ of Definition~\ref{wrgawrgarhgar}
and  ``heavy'' constant sections $t_1,t_2,t_3$ defined as in Proposition~\ref{awrgasrgsrh}.
\end{proposition}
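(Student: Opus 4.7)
Plan:

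My plan is to mirror Proposition~\ref{awrgasrgsrh}: the light sections $\ell_i$ will follow from showing that each linear map $\lambda_i$ descends to a morphism of fans $\widetilde{\mathcal{Q}}_m \to \widetilde{\mathcal{R}}_m$, and the heavy sections $t_i$ will be obtained by pulling back the heavy constant subvarieties $Y_{\mathcal{Q}_m} \times \{e_i\}^m \subseteq Y_{\mathcal{R}_m}$ along the refinement morphism $Y_{\widetilde{\mathcal{R}}_m} \to Y_{\mathcal{R}_m}$. The unifying device is to lift statements about $\widetilde{\mathcal{Q}}_m$ and $\widetilde{\mathcal{R}}_m$ to the $(m+N)$-point picture via $\iota$, apply what has already been established in \S\ref{sGsgsGsehsRH}, and project back with~$\pi$.

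For the light sections, the first step is to verify the identity $\lambda_i = \pi \circ \tilde\lambda_i \circ \bar\iota$ for $i \in \{1,\ldots,m\}$, where $\tilde\lambda_i$ denotes the analogous lifting of Definition~\ref{wrgawrgarhgar} for the $(m+N)$-point configuration. A direct computation gives $\tilde\lambda_i(\bar\iota([v])) = \iota(v) - (v_i,v_i',\ldots,v_i,v_i') \in \mathbb{R}^{2m+2N}$, whose first $2m$ coordinates agree with $\lambda_i([v])$. By Proposition~\ref{awrgasrgsrh} applied to $m+N$ points, each $\sigma \in \mathcal{Q}_{m+N}$ is mapped via $\tilde\lambda_i$ into a cone $Q^{-1}(\sigma) \cap \tau \in \mathcal{R}_{m+N}$, for some $\tau \in \mathcal{P}^{m+N}$. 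For $\widetilde\sigma \in \widetilde{\mathcal{Q}}_m$ with $\bar\iota(\widetilde\sigma) = \sigma \in \mathcal{Q}_{m+N}'$ (Lemma--Definition~\ref{qergaerhaerh}), applying $\pi$ yields
\[
\lambda_i(\widetilde\sigma) = \pi(\tilde\lambda_i(\sigma)) \subseteq \pi(Q^{-1}(\sigma) \cap \tau) \in \widetilde{\mathcal{R}}_m.
\]
Since $q \circ \lambda_i = \mathrm{id}$, the induced toric morphism $\ell_i$ is a section of $y_q$.

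For the heavy sections, I would first observe that $\widetilde{\mathcal{R}}_m$ refines $\mathcal{R}_m$: for $\eta = \pi(Q^{-1}(\sigma)\cap \tau) \in \widetilde{\mathcal{R}}_m$ we have $q(\eta) = \bar\pi(\sigma) \in \widetilde{\mathcal{Q}}_m$, which by Proposition~\ref{combinatorialequivalencetildesubdivisionsanddroppingspiders} is contained in a cone of $\mathcal{Q}_m$, while $\eta \subseteq \pi(\tau) \in \mathcal{P}^m$; hence $\eta$ lies in a cone of $\mathcal{R}_m$. The resulting birational toric morphism $Y_{\widetilde{\mathcal{R}}_m} \to Y_{\mathcal{R}_m}$ pulls back the heavy subvariety $Y_{\mathcal{Q}_m} \times \{e_i\}^m \subseteq Y_{\mathcal{R}_m}$ (constructed in Proposition~\ref{awrgasrgsrh}) to a toric subvariety of $Y_{\widetilde{\mathcal{R}}_m}$ that maps isomorphically onto $Y_{\widetilde{\mathcal{Q}}_m}$ via $y_q$; this is $t_i$.

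The main obstacle I anticipate is verifying the last statement, namely that the pulled-back subvariety is genuinely a section, i.e.~that the subfan $\{q^{-1}(\widetilde\sigma) \cap \tau_0^m : \widetilde\sigma \in \widetilde{\mathcal{Q}}_m\}$, where $\tau_0 \in \mathcal{P}$ is the vertex cone at $e_i$, actually lies in $\widetilde{\mathcal{R}}_m$. My intended approach is to take the lift $\tau = \tau_0^{m+N} \in \mathcal{P}^{m+N}$ and establish the cone identity $q^{-1}(\widetilde\sigma) \cap \tau_0^m = \pi(Q^{-1}(\sigma) \cap \tau_0^{m+N})$. The inclusion $\supseteq$ is immediate; for the reverse, given $v \in q^{-1}(\widetilde\sigma) \cap \tau_0^m$ the natural lift $z = \iota(v)$ satisfies $Q(z) \in \sigma$ automatically, so the crux reduces to a case analysis from Table~\ref{howtodecidetheredspider}: each auxiliary coordinate pair $(x_{\alpha\beta,jk}, x_{\alpha\beta,jk}')$ of $\iota(v)$---being either $(v_j,v_j')$, $(v_k,v_k')$, or the intersection point of the two dual spiders based at these---must be checked to remain in $\tau_0$ whenever the original $(v_j,v_j')$ do.
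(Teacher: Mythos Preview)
Your proposal is correct. The paper's own proof is a single sentence---``the proof is the same as for Proposition~\ref{qwrgawrhwarh}''---meaning one reruns the direct combinatorial-equivalence argument with $\widetilde{\mathcal{S}}$ in place of $\mathcal{S}$: if $[v],[w]$ lie in the interior of a cone of $\widetilde{\mathcal{Q}}_m$ then $\widetilde{\mathcal{S}}(v)=\mathcal{S}(\iota(v))$ and $\widetilde{\mathcal{S}}(w)=\mathcal{S}(\iota(w))$ are combinatorially equivalent, hence every pair of spiders (original and auxiliary) has the same relative position, which pins down the cone of $\mathcal{P}^{m+N}$ and thus the cone of $\widetilde{\mathcal{R}}_m$ receiving $\lambda_i$. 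Your route via the factorization $\lambda_i=\pi\circ\tilde\lambda_i\circ\bar\iota$ and an appeal to Proposition~\ref{awrgasrgsrh} at the $(m+N)$-level is not a different idea but rather the same argument made explicit; the translation-equivariance of the auxiliary coordinates in Table~\ref{howtodecidetheredspider} is exactly what makes $\iota\circ\lambda_i=\tilde\lambda_i\circ\iota$ hold.

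For the heavy sections your treatment is more elaborate than necessary. The paper simply declares $t_i=Y_{\widetilde{\mathcal{Q}}_m}\times\{e_i\}^m$ by the same formula as before; since $Y_{\widetilde{\mathcal{R}}_m}$ and $Y_{\mathcal{R}_m}$ share the same generic fiber (a torus-translate of the diagonal $\mathbb{P}^2$, which contains each fixed point $e_i$), the constant section is automatically contained in the family and nothing further needs checking. Your refinement-and-pullback argument is valid (one small slip: $q(\eta)\subseteq\bar\pi(\sigma)$ rather than equality, as the paper itself notes in the Lemma--Definition following~\ref{qergaerhaerh}, but this is harmless), and the cone identity you isolate does hold, but all of this can be bypassed.
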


\begin{proof}
The proof is the same as for Proposition~\ref{qwrgawrhwarh}.
\end{proof}

%--------------------------------------------------
%--------------------------------------------------

\section{Projective duality and the quotient fans \texorpdfstring{$\mathcal{Q}_m$}{Lg} and \texorpdfstring{$\mathcal{Q}_m^\vee$}{Lg}}
\label{projdualityandquotfan}

In the construction of the quotient fan $\mathcal{Q}_m$ and its interpretation in terms of subdivisions of $\mathbb{R}^2$, we could have used 
the fan $\mathcal{P}^\vee=-\mathcal{P}$
of the dual projective plane $(\mathbb{P}^2)^\vee$ 
in place of $\mathcal{P}$.
As in \S\ref{sGsgsGsehsRH},
we define the \emph{dual quotient fan} $\mathcal{Q}_m^\vee$. So we have the isomorphism
\[
((\mathbb{P}^2)^\vee)^m/\!/\;H\cong Y_{\mathcal{Q}_m^\vee},
\]
where $H\subseteq\mathbb{P}^2$ is the dense torus. The cones in $\mathcal{Q}_m^\vee$ can be interpreted combinatorially 
as in Lemma~\ref{preliminaryfibersfamilyoverchowquotient}
in terms of polyhedral subdivisions $\mathcal{S}(v)^\vee$ of $\mathbb{R}^2$ induced by dual spiders
 $S(v_i,v_i')^\vee$ for $i=1,\ldots,m$ 
 (see Definition~\ref{dualspider}).
 Notice that the homomorphism $\mathbb{R}^{2m}/\mathbb{R}^2$ sending $[v]$ to $-[v]$ maps the cones in $\mathcal{Q}_m$ to cones in $\mathcal{Q}_m^\vee$ (for an explicit example, see Figure~\ref{projectivedualityconesquotientfan}). This induces a natural toric isomorphism between the Chow quotients $((\mathbb{P}^2)^\vee)^m/\!/\;H$ and $(\mathbb{P}^2)^m/\!/\;H$.

\begin{figure}[hbtp]
\begin{tikzpicture}[scale=0.5]

	\draw[line width=1pt] (0,0) -- (0-5,0);
	\draw[line width=1pt] (0,0) -- (0,0-5);
	\draw[line width=1pt] (0,0) -- (0+1,0+1);

	\draw[line width=1pt] (-4,-2) -- (-4-1,-2);
	\draw[line width=1pt] (-4,-2) -- (-4,-2-3);
	\draw[line width=1pt] (-4,-2) -- (-4+3,-2+3);

	\draw[line width=1pt] (-2,-4) -- (-2-3,-4);
	\draw[line width=1pt] (-2,-4) -- (-2,-4-1);
	\draw[line width=1pt] (-2,-4) -- (-2+3,-4+3);

	\fill (0,0) circle (5pt);
	\fill (-4,-2) circle (5pt);
	\fill (-2,-4) circle (5pt);

\end{tikzpicture}
\hspace{1in}
\begin{tikzpicture}[scale=0.5]

	\draw[line width=1pt] (0,0) -- (0+5,0);
	\draw[line width=1pt] (0,0) -- (0,0+5);
	\draw[line width=1pt] (0,0) -- (0-1,0-1);

	\draw[line width=1pt] (4,2) -- (4+1,2);
	\draw[line width=1pt] (4,2) -- (4,2+3);
	\draw[line width=1pt] (4,2) -- (4-3,2-3);

	\draw[line width=1pt] (2,4) -- (2+3,4);
	\draw[line width=1pt] (2,4) -- (2,4+1);
	\draw[line width=1pt] (2,4) -- (2-3,4-3);

	\fill (0,0) circle (5pt);
	\fill (4,2) circle (5pt);
	\fill (2,4) circle (5pt);

\end{tikzpicture}
\caption{On the left, subdivision of $\mathbb{R}^2$ given by a cone in $\mathcal{Q}_3^\vee$, and on the right its image under the homomorphism $[v]\mapsto-[v]$ given by a cone in~$\mathcal{Q}_3$.}
\label{projectivedualityconesquotientfan}
\end{figure}
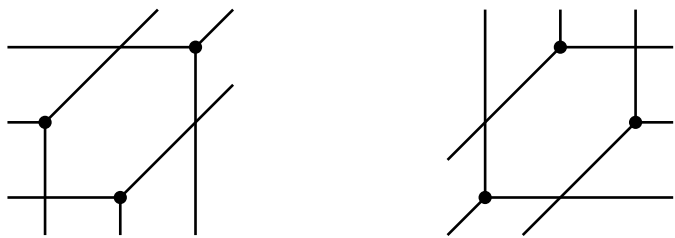

A  surprising fact is that 
the fan $\widetilde{\mathcal{Q}}_m$ refines 
not only $\mathcal{Q}_m$ (Proposition~\ref{combinatorialequivalencetildesubdivisionsanddroppingspiders}) but also 
$\mathcal{Q}_m^\vee$.
Before we can prove this fact in Theorem~\ref{tildefanrefinesdualfan}, we need a little preparation.

\begin{definition}
Given $[v]\in\mathbb{R}^{2m}/\mathbb{R}^2$, for all distinct $i,j\in[m]$, let $\delta_{ij}\in\Sigma(v_i,v_i')$ be the minimal cone that contains $(-v_j,-v_j')$. We call the collection of cones $\{\delta_{ij}\}_{1\leq i,j\leq m,i\neq j}$ the \emph{cone data of $v$}. If $\{\delta_{ij}\}$ and $\{\eta_{ij}\}$ are the cone data attached to two subdivisions $\mathcal{S}(v),\mathcal{S}(w)$, then we say that $\{\delta_{ij}\}$ and $\{\eta_{ij}\}$ are \emph{equivalent} provided for all distinct $i,j\in[m]$,
\[
\delta_{ij}+(v_i,v_i')=\eta_{ij}+(w_i,w_i').
\]
\end{definition}

\begin{lemma}
\label{combinatoriallyequivalentimpliesequivalentconedata}
Let $[v],[w]\in\mathbb{R}^{2m}/\mathbb{R}^2$ and consider the respective cone data $\{\delta_{ij}\}$ and $\{\eta_{ij}\}$. If $\mathcal{S}(v)$ and $\mathcal{S}(w)$ are combinatorially equivalent, then $\{\delta_{ij}\}$ and $\{\eta_{ij}\}$ are equivalent.
\end{lemma}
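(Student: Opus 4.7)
The plan is to reduce the equality $\delta_{ij}+(v_i,v_i')=\eta_{ij}+(w_i,w_i')$ to a single, well-chosen application of the combinatorial equivalence hypothesis. Fix distinct indices $i_0,j_0\in[m]$; it suffices to prove the containment $\eta_{i_0j_0}+(w_{i_0},w_{i_0}')\subseteq\delta_{i_0j_0}+(v_{i_0},v_{i_0}')$, since swapping the roles of $v$ and $w$ then yields the reverse containment and hence equality.

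The idea is to detect $\delta_{kj_0}$ by ``probing'' the $k$-th spider with the apex of the $j_0$-th spider. For each $k\neq j_0$ set $\sigma_k:=\delta_{kj_0}\in\Sigma(v_k,v_k')$, and set $\sigma_{j_0}$ to be the $0$-dimensional cone $\{(-v_{j_0},-v_{j_0}')\}\in\Sigma(v_{j_0},v_{j_0}')$. By the defining property of $\delta_{kj_0}$, the point $(-v_{j_0},-v_{j_0}')$ lies in $\sigma_k$ for every $k$, so $\bigcap_{k=1}^m\sigma_k\ni(-v_{j_0},-v_{j_0}')$ is nonempty. Now let $\tau_k\in\Sigma(w_k,w_k')$ be the unique cone matching $\sigma_k$ in the sense of Definition~\ref{combinatoriallyequivalentsubdivisions}, i.e.~such that $\sigma_k+(v_k,v_k')=\tau_k+(w_k,w_k')$; concretely $\tau_{j_0}=\{(-w_{j_0},-w_{j_0}')\}$. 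Combinatorial equivalence of $\mathcal{S}(v)$ and $\mathcal{S}(w)$ forces $\bigcap_k\tau_k\neq\emptyset$, and since $\tau_{j_0}$ is a single point this forces $(-w_{j_0},-w_{j_0}')\in\tau_k$ for all $k$.

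Applying this with $k=i_0$, the cone $\tau_{i_0}$ of $\Sigma(w_{i_0},w_{i_0}')$ contains $(-w_{j_0},-w_{j_0}')$, so by minimality $\eta_{i_0j_0}\subseteq\tau_{i_0}$ as subsets of $\mathbb{R}^2$. Translating both sides by $(w_{i_0},w_{i_0}')$ and using $\tau_{i_0}+(w_{i_0},w_{i_0}')=\sigma_{i_0}+(v_{i_0},v_{i_0}')=\delta_{i_0j_0}+(v_{i_0},v_{i_0}')$ yields the desired containment.

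The argument is essentially combinatorial and I do not foresee a serious obstacle: the only point needing care is that the ``shape correspondence'' $\sigma_k+(v_k,v_k')=\tau_k+(w_k,w_k')$ is a translation, hence preserves the face relation and all inclusions between the seven cones of each spider fan, so it transports the relation $\eta_{i_0j_0}\subseteq\tau_{i_0}$ cleanly to the statement about translates. The symmetric argument (exchange $v\leftrightarrow w$, $\delta\leftrightarrow\eta$) then closes the proof.
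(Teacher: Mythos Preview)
Your proof is correct. The paper argues differently: it invokes Lemma~\ref{droppingspiderspreservesequivalence} to say that combinatorial equivalence of $\mathcal{S}(v)$ and $\mathcal{S}(w)$ passes to the subdivisions obtained by keeping only the $i$-th and $j$-th spiders, and then observes that for two spiders combinatorial equivalence is literally the statement $\delta_{ij}+(v_i,v_i')=\eta_{ij}+(w_i,w_i')$.

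Your argument bypasses that lemma by applying the combinatorial-equivalence hypothesis directly to a single well-chosen tuple of cones $(\sigma_1,\ldots,\sigma_m)$ --- the apex of the $j_0$-th spider together with the minimal cones $\delta_{kj_0}$ for all other $k$ --- and then reading off the inclusion $\eta_{i_0j_0}\subseteq\tau_{i_0}$ from minimality. This is a bit more hands-on but self-contained; the paper's route is shorter because the ``dropping spiders'' step has already been isolated as a separate lemma. In substance the two arguments use the same mechanism (nonempty intersections are preserved under the shape correspondence, and the $0$-cone pins down the apex), just packaged differently.
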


\begin{proof}
Assume that $\mathcal{S}(v)$ and $\mathcal{S}(w)$ are combinatorially equivalent. By Lemma~\ref{droppingspiderspreservesequivalence}, the subdivisions obtained by dropping all the spiders other than the $i$-th and the $j$-th, are also equivalent. This means precisely that the cone data are equivalent.
%Recall that $\delta_{ij}\in\Sigma(v_i,v_i')$ is the minimal cone containing $(-v_j,-v_j')$. Therefore, %thinking of $\{(-v_j,-v_j')\}$ as a cone in $\Sigma(v_j,v_j')$, we have that $\delta_{ij}\cap\{(-v_j,-v_j')\}\neq\emptyset$. Let $\eta\in\Sigma(w_i,w_i')$ be such that $\delta_{ij}+(v_i,v_i')=\eta+(w_i,w_i')$. By assumption, we know that $\eta\cap\{(-w_j,w_j')\}\neq\emptyset$, showing that $(-w_j,-w_j')\in\eta$. This means that $\eta=\eta_{ij}$, proving that $\delta_{ij}+(v_i,v_i')=\eta_{ij}+(w_i,w_i')$.
\end{proof}

The next proposition analyzes the converse of the the previous lemma when $m=3$.

\begin{proposition}
\label{combinatoriallyequivalentequivalentequivalentconedatam=3}
Let $[v],[w]\in\mathbb{R}^{6}/\mathbb{R}^2$ and suppose their cone data $\{\delta_{ij}\}$ and $\{\eta_{ij}\}$ are equivalent. %Further assume that $m=3$ and that $\{\delta_{ij}\},\{\eta_{ij}\}$ are equivalent. 
Then $\mathcal{S}(v)$ is combinatorially equivalent to $\mathcal{S}(w)$ unless they are 
combinatorially equivalent to (q) and (u) in Table~\ref{triplesoflatticesinapartment}, respectively (or vice versa). 
The subdivisions (q) and (u) have equivalent cone data
but are not combinatorially equivalent.
\end{proposition}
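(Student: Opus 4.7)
The plan is to reduce the statement to a finite enumeration over the $21$ combinatorial types in Table~\ref{triplesoflatticesinapartment} and then analyze a single delicate case. First, I would observe that after translating $v$ and $w$ by elements of $\mathbb{R}^2$ (which does not change $\mathcal{S}(v)$ or its cone data), I may assume $(-v_1,-v_1')=(-w_1,-w_1')$. The equivalence of cone data then forces each $(-w_j,-w_j')$ to lie in the same cone of $\Sigma(v_1,v_1')$ as $(-v_j,-v_j')$, and similarly with the roles of the first spider replaced by the second or third. The pairwise data therefore pin down the combinatorial type of each two-spider subdivision $\mathcal{S}(v_i,v_i',v_j,v_j')$ (these are precisely the types from Table~\ref{howtodecidetheredspider}).

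Next, I would go through Table~\ref{triplesoflatticesinapartment} type by type and, for each of the $21$ cases (a)--(u), record the associated cone data. The critical observation is that the cone data knows \emph{only} the pairwise position of each spider center relative to the other spider's seven cones; it does not record whether legs of distinct spiders cross in ways that create extra bounded $2$-cells. For most of the $21$ cases, this extra information is implied by the pairwise data, because either some center lies on a ray of another spider (collinearity already appears in the cone data), or the leg crossings forced by the pairwise positions are uniquely determined. Checking this case by case (largely a mechanical inspection of the pictures) would show that no two types other than (q) and (u) share the same cone data.

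The content of the proposition then concentrates entirely in the pair (q) vs.\ (u). In both of these configurations, each center lies in the open two-dimensional ``generic'' cone of the other two spiders, so the pairwise cone data in (q) and (u) are the same. However, as one sees from Table~\ref{triplesoflatticesinapartment}, in (q) the three centers are close enough that a pair of dual legs from distinct spiders meet and create an extra bounded $2$-cell, whereas in (u) the centers are further apart and the corresponding legs miss each other, yielding a different bounded-cell structure. I would verify this by exhibiting in each case the intersections of rays and listing the bounded $2$-cells, and confirm directly from Definition~\ref{combinatoriallyequivalentsubdivisions} that no bijection of cells matches the required translations simultaneously.

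The main obstacle is not conceptual but organisational: the proof is essentially a finite classification, and the difficulty lies in verifying the exhaustiveness of the case analysis without missing a second exceptional pair. I would manage this by grouping the $21$ types according to the multiset of pairwise types they induce (as listed in Table~\ref{howtodecidetheredspider}), since cone data is an invariant of that multiset together with the triangle of pairwise relative positions; this reduces the number of potential coincidences to a handful, and only (q) and (u) survive the final check.
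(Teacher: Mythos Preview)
Your approach is essentially the same as the paper's: the published proof consists of the single sentence that going through all the cases in Table~\ref{triplesoflatticesinapartment} shows the cone data determines the combinatorial type except for (q) and (u). Your plan is a more fleshed-out version of exactly this enumeration, with the added organisational device of grouping types by their pairwise data; the paper does not spell out any of this and simply refers to the table.
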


\begin{proof}
Going through all the cases in
Table~\ref{triplesoflatticesinapartment},
which we reproduced from \cite[Figure~6]{CHSW11},
shows that the combinatorial equivalence class 
of the plane subdivision
is uniquely determined by the cone data 
except for the subdivisions (q) and (u).
\end{proof}

Using the fans $\Sigma(v_i,v_i')^\vee$ instead of $\Sigma(v_i,v_i')$, we can associate the \emph{dual cone data} $\{\delta_{ij}^\vee\}$. Analogous results for $\mathcal{S}(v)^\vee$ and $\{\delta_{ij}^\vee\}$ hold. We notice the following property.

\begin{lemma}
\label{equivalenttildesubdivisionimpliesequivalentdualconedata}
Let $[v],[w]\in\mathbb{R}^{2m}/\mathbb{R}^2$.
If $\widetilde{\mathcal{S}}(v)$ is combinatorially equivalent to $\widetilde{\mathcal{S}}(w)$, then 
the dual cone data $\{\delta_{ij}^\vee\}$ and $\{\eta_{ij}^\vee\}$ are equivalent.
\end{lemma}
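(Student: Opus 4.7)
The plan is to reduce the equivalence of dual cone data to a pairwise statement and then carry out a case analysis based on Table~\ref{howtodecidetheredspider}. The first step will apply Lemma~\ref{droppingspiderspreservesequivalence} to $\iota(v), \iota(w) \in \mathbb{R}^{2m+2N}$: since the hypothesis says $\mathcal{S}(\iota(v))$ and $\mathcal{S}(\iota(w))$ are combinatorially equivalent, dropping all coordinates except those corresponding to $(v_i, v_i')$, $(v_j, v_j')$, and the three associated red dots $(x_{\alpha\beta, ij}, x'_{\alpha\beta, ij})$ for $\alpha\beta \in \{12, 13, 23\}$ preserves combinatorial equivalence for each fixed pair $(i,j)$. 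It thus suffices to show, for each such pair, that the resulting configuration of at most five spiders determines the minimal cone $\delta_{ij}^\vee$ of $\Sigma(v_i, v_i')^\vee$ containing $(-v_j, -v_j')$, up to the translation that defines equivalence of cone data.

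The second step is the case analysis. When $(-v_j, -v_j')$ lies on a dual ray of $\Sigma(v_i, v_i')^\vee$ (first row of Table~\ref{howtodecidetheredspider}), the three red dots coincide with existing spider centers, but the two-spider configuration $\mathcal{S}(v_i, v_j)$ nevertheless has a distinguishing combinatorial feature: exactly one ordinary ray of spider $j$ terminates at the vertex of spider $i$, and its direction identifies which dual ray of $\Sigma(v_i, v_i')^\vee$ contains $(-v_j, -v_j')$. When $(-v_j, -v_j')$ lies on an ordinary ray of $\Sigma(v_i, v_i')$ (second row), no new red dot appears either, but now an ordinary ray of spider $i$ terminates at spider $j$'s vertex, which identifies the adjacent dual $2$-cone of $\Sigma(v_i, v_i')^\vee$. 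In the remaining six cases (third and fourth rows), where $(-v_j, -v_j')$ is in the interior of a dual $2$-cone, a genuine red dot is added at the unique intersection of the two dual spiders, and the combinatorial position of this dot relative to $(-v_i, -v_i')$ and $(-v_j, -v_j')$ singles out which of the three dual $2$-cones contains $(-v_j, -v_j')$.

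The main obstacle will be this last class of cases: in general position, the ordinary cone data alone cannot distinguish between the two dual $2$-cones that subdivide an ordinary $2$-cone of $\Sigma(v_i, v_i')$, and resolving the ambiguity requires reading off which of the red dot's ordinary rays points toward $(-v_i, -v_i')$ and which points toward $(-v_j, -v_j')$. By the construction of $\iota$, those rays of the red-dot spider are supported on segments of the dual-spider rays through $(-v_i, -v_i')$ and $(-v_j, -v_j')$, so the combinatorial type of the tilde-subdivision genuinely encodes the dual structure near each pair. Assembling the pairwise conclusions yields $\delta_{ij}^\vee + (v_i, v_i') = \eta_{ij}^\vee + (w_i, w_i')$ for every pair $(i,j)$, which is precisely the equivalence of dual cone data.
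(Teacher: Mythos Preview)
Your proposal is correct and follows essentially the same route as the paper's proof. Both arguments reduce to a fixed pair $(i,j)$ and then use the relative position of the extra ``red dot'' spider with respect to $(-v_j,-v_j')$---a combinatorial feature of $\widetilde{\mathcal{S}}$---to pin down $\delta_{ij}^\vee$; the only difference is bookkeeping. The paper organizes the case analysis by the dimension of the ordinary cone $\delta_{ij}$ (when $\dim\delta_{ij}\le 1$ the dual cone is forced by the ordinary cone data alone; when $\dim\delta_{ij}=2$ one reads off whether the relevant red dot lies at, directly below, or directly left of $(-v_j,-v_j')$), whereas you organize it by rows of Table~\ref{howtodecidetheredspider} and make the preliminary reduction to five spiders via Lemma~\ref{droppingspiderspreservesequivalence} explicit. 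Your first-row argument (``a ray of spider $j$ passes through the vertex of spider $i$'') and the paper's corresponding subcase (``the red dot coincides with $(-v_j,-v_j')$'') are different formulations of the same combinatorial invariant.
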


\begin{proof}
Given a pair of distinct indices, we want to show that
\[
\delta_{ij}^\vee+(v_i,v_i')=\eta_{ij}^\vee+(w_i,w_i').
\]
Let $\delta_{ij}\in\Sigma(v_i,v_i')$ be the minimal cone containing $(-v_j,-v_j')$. If $\delta_{ij}$ has dimension at most~$1$, then we can immediately determine $\delta_{ij}^\vee\in\Sigma(v_i,v_i')^\vee$, and in each case it holds that $\delta_{ij}^\vee+(v_i,v_i')=\eta_{ij}^\vee+(w_i,w_i')$.
Therefore, assume that $\delta_{ij}$ is $2$-dimensional, and without loss of generality assume that $\delta_{ij}$ is the cone whose two rays form a right angle (an analogous argument applies to the other $2$-dimensional cones). Then $\delta_{ij}^\vee$ can be  $\alpha$, $\beta$ or $\alpha\cap\beta$ in Figure~\ref{possibleconesdependingonpositionofvjvj'}, 
\begin{figure}[hbtp]
\begin{tikzpicture}[scale=0.5]

	\draw[red,line width=1pt] (0,0) -- (0-2,0);
	\draw[red,line width=1pt] (0,0) -- (0,0-2);
	\draw[red,line width=1pt] (0,0) -- (0+2,0+2);

	\fill (0,0) circle (5pt);

	\node[red] at (0,1.5) {$\alpha$};
	\node[red] at (1.5,0) {$\beta$};

\end{tikzpicture}
\caption{The fan $\Sigma(v_i,v_i')^\vee$ and two of its maximal cones.}
\label{possibleconesdependingonpositionofvjvj'}
\end{figure}
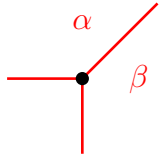
and this is uniquely determined by the position of $(-v_j,-v_j')$ with respect to $(-x_{ij},-x_{ij})$ (see Definition~\ref{refinementusingKapranovspiders}). We have
three possible cases, which are pictured in Figure~\ref{possiblepositionsofvjvj'}:
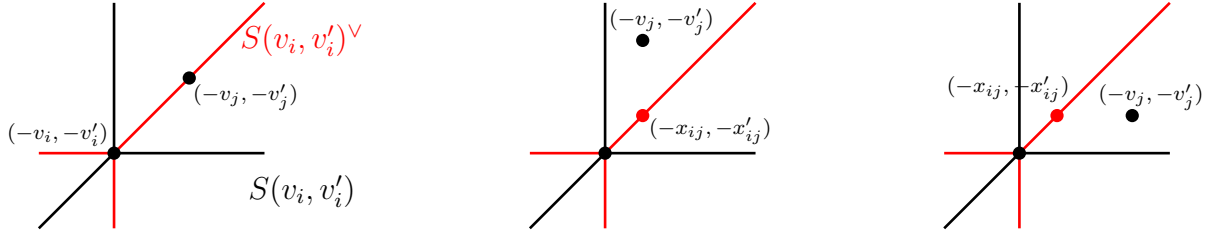
\begin{figure}[hbtp]
\begin{tikzpicture}[scale=0.5]

	\draw[line width=1pt] (0,0) -- (0+4,0);
	\draw[line width=1pt] (0,0) -- (0,0+4);
	\draw[line width=1pt] (0,0) -- (0-2,0-2);
	\draw[red,line width=1pt] (0,0) -- (0-2,0);
	\draw[red,line width=1pt] (0,0) -- (0,0-2);
	\draw[red,line width=1pt] (0,0) -- (0+4,0+4);

	\fill (0,0) circle (5pt);
	\fill (2,2) circle (5pt);
%	\fill (3,1) circle (5pt);
%	\fill (1,3) circle (5pt);

	\node[red] at (5,3) {$S(v_i,v_i')^\vee$};
	\node at (5,-1) {$S(v_i,v_i')$};
	\node at (3.5,1.5) {{\tiny$(-v_j,-v_j')$}};
	\node at (-1.5,0.5) {{\tiny$(-v_i,-v_i')$}};

\end{tikzpicture}
\hspace{0.7in}
\begin{tikzpicture}[scale=0.5]

	\draw[line width=1pt] (0,0) -- (0+4,0);
	\draw[line width=1pt] (0,0) -- (0,0+4);
	\draw[line width=1pt] (0,0) -- (0-2,0-2);
	\draw[red,line width=1pt] (0,0) -- (0-2,0);
	\draw[red,line width=1pt] (0,0) -- (0,0-2);
	\draw[red,line width=1pt] (0,0) -- (0+4,0+4);

	\fill (0,0) circle (5pt);
	\fill[red] (1,1) circle (5pt);
%	\fill (3,1) circle (5pt);
	\fill (1,3) circle (5pt);

	\node at (1.5,3.5) {{\tiny$(-v_j,-v_j')$}};
	\node at (2.8,0.6) {{\tiny$(-x_{ij},-x_{ij}')$}};

\end{tikzpicture}
\hspace{0.7in}
\begin{tikzpicture}[scale=0.5]

	\draw[line width=1pt] (0,0) -- (0+4,0);
	\draw[line width=1pt] (0,0) -- (0,0+4);
	\draw[line width=1pt] (0,0) -- (0-2,0-2);
	\draw[red,line width=1pt] (0,0) -- (0-2,0);
	\draw[red,line width=1pt] (0,0) -- (0,0-2);
	\draw[red,line width=1pt] (0,0) -- (0+4,0+4);

	\fill (0,0) circle (5pt);
	\fill[red] (1,1) circle (5pt);
	\fill (3,1) circle (5pt);
%	\fill (1,3) circle (5pt);

	\node at (3.5,1.5) {{\tiny$(-v_j,-v_j')$}};
	\node at (-0.3,1.7) {{\tiny$(-x_{ij},-x_{ij}')$}};

\end{tikzpicture}
\caption{Possible positions of $(-v_j,-v_j')$.}
\label{possiblepositionsofvjvj'}
\end{figure}
\begin{enumerate}

\item $(x_{ij},x_{ij}')=(v_j,v_j')$ if and only if $(-v_j,-v_j')$ is contained in the relative interior of a ray in $\Sigma(v_i,v_i')^\vee$. If this is the case, then we have that $\delta_{ij}=\alpha\cap\beta$.

\item If $-x_{ij}=-v_j$ and $-x_{ij}'<-v_j'$, then $\delta_{ij}=\alpha$.

\item Lastly, if $-x_{ij}<-v_j$ and $-x_{ij}'=-v_j'$, then $\delta_{ij}=\beta$.

\end{enumerate}
In each case, $\delta_{ij}^\vee+(v_i,v_i')=\eta_{ij}^\vee+(w_i,w_i')$.
\end{proof}

\begin{theorem}
\label{tildefanrefinesdualfan}
The fan $\widetilde{\mathcal{Q}}_m$ refines the dual quotient fan $\mathcal{Q}_m^\vee$.
\end{theorem}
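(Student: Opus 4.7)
The plan is to prove the combinatorial statement that $[\widetilde{\mathcal{S}}(v)]=[\widetilde{\mathcal{S}}(w)]$ implies $[\mathcal{S}(v)^\vee]=[\mathcal{S}(w)^\vee]$. First I would apply Lemma~\ref{equivalenttildesubdivisionimpliesequivalentdualconedata} to obtain the equivalence of dual cone data $\{\delta_{ij}^\vee\}\sim\{\eta_{ij}^\vee\}$. This already captures the full pairwise behavior of the dual spider arrangement, so what remains is to upgrade pairwise cone-data equivalence to full combinatorial equivalence of the dual subdivisions.

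Next I would reduce to $m=3$. On the $\widetilde{\mathcal{S}}$ side, the forgetful compatibility from Proposition~\ref{combinatorialequivalencetildesubdivisionsanddroppingspiders} guarantees that $\widetilde{\mathcal{S}}$-equivalence restricts to every triple of indices. On the $\mathcal{S}^\vee$ side, Helly's theorem in $\mathbb{R}^2$ implies that nonemptiness of a finite intersection of convex cones is detected on triples; hence combinatorial equivalence of $\mathcal{S}(v)^\vee$ and $\mathcal{S}(w)^\vee$ is a triple-by-triple property. Thus it is enough to treat $m=3$.

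For $m=3$ I would invoke the dual form of Proposition~\ref{combinatoriallyequivalentequivalentequivalentconedatam=3}, which follows formally from the original via the involution $v\mapsto -v$: this involution sends $\Sigma(v_i,v_i')$ to $\Sigma(v_i,v_i')^\vee$ up to translation, identifies dual cone data of $v$ with ordinary cone data of $-v$, and carries $\mathcal{S}(v)^\vee$ to $\mathcal{S}(-v)$. The dual proposition then yields $[\mathcal{S}(v)^\vee]=[\mathcal{S}(w)^\vee]$ in every case except possibly when $\mathcal{S}(-v)$ and $\mathcal{S}(-w)$ fall respectively into the exceptional pair (q), (u) of Table~\ref{triplesoflatticesinapartment} (or vice versa).

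The main obstacle is to dispatch this exceptional case, and this I would handle by direct inspection of the $\widetilde{\mathcal{S}}$ pictures for (q) and (u) recorded in Table~\ref{triplesoflatticesinapartmentandcorrespondingredspiders} together with their stretched variants in Table~\ref{stretchesofspiders}. The extra red vertices added by $\widetilde{\mathcal{S}}$ are by construction the intersection points of the dual spiders, and in the (q)-type and (u)-type triples these added vertices occupy distinct positions relative to the ordinary spider legs. Consequently $\widetilde{\mathcal{S}}$ produces combinatorially inequivalent cell complexes in the two cases, ruling out the exceptional pair and completing the proof. This last step, though case-by-case, is the geometric heart of the theorem: it is precisely the ambiguity left open by $\mathcal{Q}_m$ that the refinement $\widetilde{\mathcal{Q}}_m$ was designed to resolve, and verifying its resolution here is what allows $\widetilde{\mathcal{Q}}_m$ to refine not only $\mathcal{Q}_m$ but also $\mathcal{Q}_m^\vee$.
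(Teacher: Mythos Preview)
Your proposal is correct and follows essentially the same route as the paper: obtain dual cone-data equivalence via Lemma~\ref{equivalenttildesubdivisionimpliesequivalentdualconedata}, reduce to $m=3$ by Helly's theorem together with the forgetful compatibility of Proposition~\ref{combinatorialequivalencetildesubdivisionsanddroppingspiders}, invoke the dual of Proposition~\ref{combinatoriallyequivalentequivalentequivalentconedatam=3}, and dispatch the $(q)/(u)$ ambiguity by inspecting $\widetilde{\mathcal{S}}$. The paper phrases the reduction as an induction from $m$ to $m-1$, but this is logically the same as your direct reduction.

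One caveat on the final step: the tables you cite are not quite the right ones. Tables~\ref{triplesoflatticesinapartmentandcorrespondingredspiders} and~\ref{stretchesofspiders} record $\widetilde{\mathcal{S}}(v)$ indexed by the combinatorial type of $\mathcal{S}(v)$, whereas here the hypothesis is that $\mathcal{S}(v)^\vee$ (equivalently $\mathcal{S}(-v)$) is of type $(q)$ or $(u)$. These are different conditions: when $\mathcal{S}(v)^\vee$ is of type $(u)$ or $(q)$, the ordinary subdivision $\mathcal{S}(v)$ is of type $(g)$ (or a stretch thereof), not $(q)$ or $(u)$. So the rows labeled $(q)$ and $(u)$ in those tables are not the relevant pictures. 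The paper carries out the needed computation explicitly in a dedicated table (Table~\ref{exceptionalcaseinprooftildefanrefinesdualfan}), where one sees that the two $\widetilde{\mathcal{S}}$ decompositions acquire three versus one extra vertex and are manifestly inequivalent. Your intuition that the added dual-spider intersection points distinguish the two cases is exactly right; you just need to compute $\widetilde{\mathcal{S}}$ for the correct $v$.
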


\begin{proof}
%We want to show that each cone in $\widetilde{\mathcal{Q}}_m$ is contained in a cone in $\mathcal{Q}_m^\vee$. This amounts to say that given $[v]\in\mathbb{R}^{2m}/\mathbb{R}^2$, the combinatorial equivalence class of the subdivision $\widetilde{\mathcal{S}}(v)$ uniquely determines the combinatorial equivalence class of the subdivision $\mathcal{S}(v)^\vee$.

Let $[v],[w]\in\mathbb{R}^{2m}/\mathbb{R}^2$ be two vectors lying in the relative interior of the same cone of $\widetilde{\mathcal{Q}}_m$, which means that the polyhedral subdivisions $\widetilde{\mathcal{S}}(v)$ and $\widetilde{\mathcal{S}}(w)$ are combinatorially equivalent. We want to show that $\mathcal{S}(v)^\vee$ and $\mathcal{S}(w)^\vee$ are combinatorially equivalent as well.

If $m=1$ then this is  true because $\mathcal{S}(v)^\vee$ and $\mathcal{S}(w)^\vee$ are equal up to a translation in $\mathbb{R}^2$. If $m=2$ then %this is also true because by checking all cases we can observe that 
the combinatorial equivalence class $[\widetilde{\mathcal{S}}(v)]$ uniquely determines $[\mathcal{S}(v)]$, which uniquely determines $[\mathcal{S}(v)^\vee]$. So if $[\widetilde{\mathcal{S}}(v)]=[\widetilde{\mathcal{S}}(w)]$, then $[\mathcal{S}(v)^\vee]=[\mathcal{S}(w)^\vee]$.

For $m\geq3$ we argue by induction. Let $m=3$. By Lemma~\ref{equivalenttildesubdivisionimpliesequivalentdualconedata}, the dual cone data $\{\delta_{ij}^\vee\},\{\eta_{ij}^\vee\}$ associated to $[v],[w]$ respectively are equivalent.
 Therefore, by Proposition~\ref{combinatoriallyequivalentequivalentequivalentconedatam=3} adapted to the dual case,  $\mathcal{S}(v)^\vee$ is equivalent to $\mathcal{S}(w)^\vee$ provided we are away from the exceptional case in the first row of Table~\ref{exceptionalcaseinprooftildefanrefinesdualfan}. However, this case can not occur: % thanks to our hypotheses: if $\mathcal{S}(v)^\vee$ and $\mathcal{S}(w)^\vee$ are as in Table~\ref{exceptionalcaseinprooftildefanrefinesdualfan} respectively, then 
 $\widetilde{\mathcal{S}}(v)$ and $\widetilde{\mathcal{S}}(w)$ are not combinatorially equivalent as we can see in the second row of Table~\ref{exceptionalcaseinprooftildefanrefinesdualfan}.

\begin{table}[hbtp]
\centering
\begin{tabular}{|>{\centering\arraybackslash}m{5cm}|>{\centering\arraybackslash}m{5cm}|}
\hline
\vspace{.1in}
\begin{tikzpicture}[scale=0.5]

	\draw[line width=1pt] (0,0) -- (0-1,0);
	\draw[line width=1pt] (0,0) -- (0,0-1);
	\draw[line width=1pt] (0,0) -- (0+4,0+4);

	\draw[line width=1pt] (1,3) -- (1-2,3);
	\draw[line width=1pt] (1,3) -- (1,3-4);
	\draw[line width=1pt] (1,3) -- (1+1,3+1);

	\draw[line width=1pt] (3,2) -- (3-4,2);
	\draw[line width=1pt] (3,2) -- (3,2-3);
	\draw[line width=1pt] (3,2) -- (3+1,2+1);

	\fill (0,0) circle (5pt);
	\fill (1,3) circle (5pt);
	\fill (3,2) circle (5pt);

	\node at (5,1.5) {$\mathcal{S}(v)^\vee$};

\end{tikzpicture}
&\vspace{.1in}
\begin{tikzpicture}[scale=0.5]

	\draw[line width=1pt] (0,0) -- (0-1,0);
	\draw[line width=1pt] (0,0) -- (0,0-1);
	\draw[line width=1pt] (0,0) -- (0+3,0+3);

	\draw[line width=1pt] (2,1) -- (2-3,1);
	\draw[line width=1pt] (2,1) -- (2,1-2);
	\draw[line width=1pt] (2,1) -- (2+1,1+1);

	\draw[line width=1pt] (1,2) -- (1-2,2);
	\draw[line width=1pt] (1,2) -- (1,2-3);
	\draw[line width=1pt] (1,2) -- (1+1,2+1);

	\fill (0,0) circle (5pt);
	\fill (2,1) circle (5pt);
	\fill (1,2) circle (5pt);

	\node at (5,1.5) {$\mathcal{S}(w)^\vee$};

\end{tikzpicture}
\\
\hline
\vspace{.1in}
\begin{tikzpicture}[scale=0.5]

	\draw[line width=1pt] (0,0) -- (0+4,0);
	\draw[line width=1pt] (0,0) -- (0,0+4);
	\draw[line width=1pt] (0,0) -- (0-1,0-1);

	\draw[line width=1pt] (1,3) -- (1+3,3);
	\draw[line width=1pt] (1,3) -- (1,3+1);
	\draw[line width=1pt] (1,3) -- (1-2,3-2);

	\draw[line width=1pt] (3,2) -- (3+1,2);
	\draw[line width=1pt] (3,2) -- (3,2+2);
	\draw[line width=1pt] (3,2) -- (3-3,2-3);

	\draw[red,line width=1pt] (1,1) -- (1+3,1);
	\draw[red,line width=1pt] (1,1) -- (1,1+2);
	\draw[red,line width=1pt] (1,1) -- (1-1,1-1);

	\draw[red,line width=1pt] (1,2) -- (1+1,2);
	\draw[red,line width=1pt] (1,2) -- (1,2+1);
	\draw[red,line width=1pt] (1,2) -- (1-2,2-2);

	\draw[red,line width=1pt] (2,2) -- (2+1,2);
	\draw[red,line width=1pt] (2,2) -- (2,2+2);
	\draw[red,line width=1pt] (2,2) -- (2-2,2-2);

	\fill (0,0) circle (5pt);
	\fill (1,3) circle (5pt);
	\fill (3,2) circle (5pt);

	\fill[red] (1,1) circle (5pt);
	\fill[red] (1,2) circle (5pt);
	\fill[red] (2,2) circle (5pt);

	\node at (5,1.5) {$\widetilde{\mathcal{S}}(v)$};

\end{tikzpicture}
&\vspace{.1in}
\begin{tikzpicture}[scale=0.5]

	\draw[line width=1pt] (0,0) -- (0+3,0);
	\draw[line width=1pt] (0,0) -- (0,0+3);
	\draw[line width=1pt] (0,0) -- (0-1,0-1);

	\draw[line width=1pt] (2,1) -- (2+1,1);
	\draw[line width=1pt] (2,1) -- (2,1+2);
	\draw[line width=1pt] (2,1) -- (2-2,1-2);

	\draw[line width=1pt] (1,2) -- (1+2,2);
	\draw[line width=1pt] (1,2) -- (1,2+1);
	\draw[line width=1pt] (1,2) -- (1-2,2-2);

	\draw[red,line width=1pt] (1,1) -- (1+1,1);
	\draw[red,line width=1pt] (1,1) -- (1,1+1);
	\draw[red,line width=1pt] (1,1) -- (1-1,1-1);

	\fill (0,0) circle (5pt);
	\fill (2,1) circle (5pt);
	\fill (1,2) circle (5pt);
	\fill[red] (1,1) circle (5pt);

	\node at (5,1.5) {$\widetilde{\mathcal{S}}(w)$};

\end{tikzpicture}
\\
\hline
\end{tabular}
\caption{Exceptional case for $m=3$ in the proof of Theorem~\ref{tildefanrefinesdualfan}.}
\label{exceptionalcaseinprooftildefanrefinesdualfan}
\end{table}

Now assume the conclusion for $m-1\geq3$ and let us prove it for $m$. Consider cones $\sigma_i^\vee\in\Sigma(v_i,v_i')^\vee,\tau_i^\vee\in\Sigma(w_i,w_i')^\vee$ such that $\sigma_i^\vee+(v_i,v_i')=\tau_i^\vee+(w_i,w_i')$ and assume that $\cap_{i=1}^m\sigma_i^\vee\neq\emptyset$. We want to show that $\cap_{i=1}^m\tau_i^\vee$ is also nonempty, which, by Helly's theorem, is equivalent to $\tau_{i_1}^\vee\cap\tau_{i_2}^\vee\cap\tau_{i_3}^\vee\neq\emptyset$ for every choice of  distinct indices $i_1,i_2,i_3\in[m]$. Since $m>3$, it will be enough to show that $\cap_{i\neq j}\tau_i^\vee\neq\emptyset$ for each choice of index $j\in[m]$. So consider the projection map $p_j\colon\mathbb{R}^{2m}\rightarrow\mathbb{R}^{2m-2}$. By Proposition~\ref{combinatorialequivalencetildesubdivisionsanddroppingspiders} we have that $\widetilde{\mathcal{S}}(p_j(v))$ is combinatorially equivalent to $\widetilde{\mathcal{S}}(p_j(w))$. So by the inductive assumption we have that $\mathcal{S}^\vee(p_j(v))$ is combinatorially equivalent to $\mathcal{S}(p_j(w))^\vee$. This means that $\cap_{i\neq j}\tau_i^\vee$ is nonempty because $\emptyset\neq\cap_{i=1}^m\sigma_i^\vee\subseteq\cap_{i\neq j}\sigma_i^\vee$, concluding the proof.
\end{proof}

%--------------------------------------------------

\section{Planar locus in \texorpdfstring{$\overline{\mathbf{X}}(3,n)$}{Lg}
and \texorpdfstring{$\overline{\mathbf{X}}_{\GP}(3,n)$}{Lg}}\label{openpatchesfromtoricvarieties}

Given rational weights $\mathbf{b}=(b_1,\ldots,b_n)$, $0<b_i\leq1$, satisfying $\sum_{i=1}^nb_i>3$, by \cite{Ale15} there exists a projective moduli space $\overline{\mathbf{M}}_{\mathbf{b}}(3,n)$ parametrizing stable pairs $(\mathbb{P}^2,\sum_{i=1}^nb_iL_i)$, where $L_i\subseteq\mathbb{P}^2$ are lines, and their stable degenerations. For $\mathbf{b}=(1,\ldots,1)$, $\overline{\mathbf{M}}_{\mathbf{b}}(3,n)=\overline{\mathbf{X}}(3,n)$.
On the other hand, choose weights 
such that three of them, say $b_i,b_j,b_k$, are equal to $1$ and the remaining weights are equal to $\epsilon\ll1$. By \cite[Example 9.6]{Ale08}, we have an isomorphism $\overline{\mathbf{M}}_{\mathbf{b}}(3,n)\cong ((\mathbb{P}^2)^\vee)^{n-3}/\!/\;H$, and so its 
normalization is isomorphic to $Y_{\mathcal{Q}_{n-3}^\vee}$. Setting $m=n-3$, we obtain a commutative diagram of birational maps and morphisms:
\begin{center}
\begin{tikzpicture}[>=angle 90]
\matrix(a)[matrix of math nodes,
row sep=2em, column sep=2em,
text height=1.5ex, text depth=0.75ex]
{\overline{\mathbf{X}}_{\GP}(3,m+3)^\nu&&Y_{\widetilde{\mathcal{Q}}_m}\\
\overline{\mathbf{X}}(3,m+3)^\nu&\overline{\mathbf{M}}_{\mathbf{b}}(3,n)^\nu&Y_{\mathcal{Q}_m^\vee},\\};
\path[dashed,->] (a-1-1) edge node[above]{$A_{ijk}$}(a-1-3);
\path[->] (a-1-1) edge node[]{}(a-2-1);
\path[->] (a-1-3) edge node[]{}(a-2-3);
\path[->] (a-2-1) edge node[above]{$\rho_{ijk}$}(a-2-2);
\path[->] (a-2-2) edge node[above]{$\cong$}(a-2-3);
\end{tikzpicture}
\end{center}
where the morphism $Y_{\widetilde{\mathcal{Q}}_m}\rightarrow Y_{\mathcal{Q}_m^\vee}$ exists by Theorem~\ref{tildefanrefinesdualfan}. 
%The reason why we considered in the diagram $Y_{\widetilde{\mathcal{Q}}_m}\rightarrow Y_{\mathcal{Q}_m^\vee}$ instead of simply $Y_{\widetilde{\mathcal{Q}}_m}\rightarrow Y_{\mathcal{Q}_m}$ is subtle, and it will be explained in Remark~\ref{}. \luca{Remember to add this remark.}
%\luca{If we change the definition of $\overline{\mathbf{X}}_{\GP}(3,n)$ we can remove the normalization above.} \luca{I now think that we want to keep the normalization because later we want to use Zariski's Main Theorem.} where the horizontal maps depend on which three of the $n$ weights are equal to $1$. For simplicity of exposition, 
In what follows we focus on the case $(i,j,k)=(1,2,3)$. The same results hold for other cases after permuting the labels. The commutative diagram is a diagram of isomorphisms over $\mathbf{B}_{m+3}\cong {\mathbf{X}}(3,n)$.
%Moreover, $A_{123}(\mathbf{B}_{m+3})$ is contained in the dense open subtorus of $Y_{\widetilde{\mathcal{Q}}_{m+3}}$.

%We now define some explicit open subsets of $\overline{\mathbf{X}}(3,m+3),\overline{\mathbf{X}}_{\GP}(3,m+3)^\nu,Y_{\mathcal{Q}_m^\vee},Y_{\widetilde{\mathcal{Q}}_m}$ which are compatible with the above morphisms.

\begin{definition}
\label{definitionofU123GPVandVtildetoricpatch}
Let $P=\Delta(3,m+3)$ and consider Lafforgue's toric variety $\mathcal{A}^P$. Let $\mathcal{A}_{123}^P$ the union of the toric strata of $\mathcal{A}^P$ corresponding to regular matroid polytope subdivisions of~$P$ whose maximal dimensional polytopes contain the vertex $e_1+e_2+e_3$, where $e_1,\ldots,e_{m+3}$ is the canonical basis of $\mathbb{R}^{m+3}$. Notice that $\mathcal{A}_{123}^P$ is an open subset because if a polyhedral subdivision has a maximal polytope not containing $e_1+e_2+e_3$, then any refinement of it does, so the complement is closed. This induces an open subset $$\mathbf{U}^{123}\subseteq\overline{\mathbf{X}}(3,m+3)^\nu$$
and its preimage 
$\mathbf{U}_{\GP}^{123}\subseteq\overline{\mathbf{X}}_{\GP}(3,m+3)^\nu$.
We have the following commutative diagram:
\begin{center}
\begin{tikzpicture}[>=angle 90]
\matrix(a)[matrix of math nodes,
row sep=2em, column sep=2em,
text height=1.5ex, text depth=0.75ex]
{\mathbf{U}_{\GP}^{123}&Y_{\widetilde{\mathcal{Q}}_m}\\
\mathbf{U}^{123}&Y_{\mathcal{Q}_m^\vee}.\\};
\path[dashed,->] (a-1-1) edge node[above]{$A_{123}$}(a-1-2);
\path[->] (a-1-1) edge node[]{}(a-2-1);
\path[->] (a-1-2) edge node[]{}(a-2-2);
\path[->] (a-2-1) edge node[above]{$B_{123}$}(a-2-2);
\end{tikzpicture}
\end{center}
The \emph{planar locus} $\mathbf{U}(3,n)\subseteq\overline{\mathbf{X}}(3,n)^\nu$
(resp.~$\mathbf{U}_{\GP}(3,n)\subseteq\overline{\mathbf{X}}_{\GP}(3,n)^\nu$) is the union of $\mathbf{U}^{ijk}$ (resp.~$\mathbf{U}_{\GP}^{ijk}$) for all possible triples $i,j,k$. 
\end{definition}

\begin{theorem}
\label{reductionmorphismisoonU123}
The  map $A_{123}$ is regular and both maps $A_{123}$
and $B_{123}$ are open embeddings.
The planar loci 
$\mathbf{U}(3,n)\subseteq\overline{\mathbf{X}}(3,n)^\nu$
and $\mathbf{U}_{\GP}(3,n)\subseteq\overline{\mathbf{X}}_{\GP}(3,n)^\nu$
have toroidal singularities.
\end{theorem}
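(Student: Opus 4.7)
The strategy is to identify the planar loci with open subvarieties of the toric varieties $Y_{\mathcal{Q}_m^\vee}$ and $Y_{\widetilde{\mathcal{Q}}_m}$ via a direct matching of fans. First I would analyze the subfan of Lafforgue's fan $\mathcal{F}_{\Laf}(P)$ for $P = \Delta(3,m+3)$ cutting out $\mathcal{A}_{123}^P$: its cones are the regular matroid subdivisions of $P$ whose maximal-dimensional polytopes all contain the vertex $e_1+e_2+e_3$. Translating via the stable-lattice dictionary of Remark~\ref{degenerationfrompicture}, such a subdivision corresponds to a configuration of stable lattices all rescaling $[e_1R + e_2R + e_3R]$, and hence lying in a single apartment of $\mathfrak{B}_3^0$. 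Passing to the dual description as in \S\ref{projdualityandquotfan}, the polyhedral data of the subdivision is encoded by a regular mixed polyhedral subdivision of $m\Delta_2$ built from the $m$ dual spiders of the remaining $n-3$ points, which is precisely a cone of $\mathcal{Q}_m^\vee$. This gives an isomorphism of fans between the $e_{123}$-subfan of $\mathcal{F}_{\Laf}(P)$ and $\mathcal{Q}_m^\vee$, hence an open embedding of toric varieties upon normalization; restricting along $\overline{\mathbf{X}}(3,n)^\nu \hookrightarrow (\overline{\mathbf{\Omega}}^P)^\nu$ yields $B_{123}$ as an open embedding.

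To extend $A_{123}$ and show it is an open embedding, I would use the commutative diagram from Definition~\ref{definitionofU123GPVandVtildetoricpatch} together with Theorem~\ref{tildefanrefinesdualfan}, which supplies the proper birational toric morphism $Y_{\widetilde{\mathcal{Q}}_m} \to Y_{\mathcal{Q}_m^\vee}$. The rays of $\widetilde{\mathcal{Q}}_m$ that are new in comparison with $\mathcal{Q}_m^\vee$ correspond, by Definition~\ref{refinementusingKapranovspiders} and Lemma~\ref{wEGsegsEGew}, to the intersection points of the dual spiders, that is, to the additional stable lattices $L_{\alpha\beta,ij}$ needed to stabilize quadruples in $\Sigma_\mathbf{a}$ that do not involve the triple $(e_1,e_2,e_3)$. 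Since the multigraded Hilbert scheme datum defining $\overline{\mathbf{X}}_{\GP}(3,n)$ (Definition~\ref{definitionofthecorrectcompactification}) records exactly this information via the cross-ratio maps attached to these quadruples, the rational map $A_{123}$ lifts uniquely across $\mathbf{U}_{\GP}^{123}$; regularity then reduces to the universal property of the Hilbert scheme once these extra data have been expressed in toric coordinates on $Y_{\widetilde{\mathcal{Q}}_m}$, and the open-embedding property follows from the same fan-matching argument as for $B_{123}$, now applied to $\widetilde{\mathcal{Q}}_m$.

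Toroidal singularities are then immediate: each $\mathbf{U}^{ijk}$ embeds as an open subscheme of the normal toric variety $Y_{\mathcal{Q}_m^\vee}$ via the appropriate permutation of indices, and an open subscheme of a normal toric variety has toroidal singularities; the union $\mathbf{U}(3,n) = \bigcup_{\{i,j,k\}} \mathbf{U}^{ijk}$ inherits this property, and the same argument via $Y_{\widetilde{\mathcal{Q}}_m}$ handles $\mathbf{U}_{\GP}(3,n)$. The main obstacle is the combinatorial identification underlying the first paragraph: one must check that regularity of a matroid subdivision of $\Delta(3, m+3)$ subject to the $e_{123}$-vertex condition is equivalent to regularity of the induced mixed polyhedral subdivision of $m\Delta_2$, so that the correspondence of cones promotes to an isomorphism of fans rather than merely of their supports. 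This matching of the two regularity conditions, expressed in the secondary-fan language on one side and the quotient-fan language on the other, is the technical heart of the argument, with the second paragraph resting crucially on the precise combinatorial coincidence embodied in Theorem~\ref{tildefanrefinesdualfan}, which was engineered for exactly this purpose.
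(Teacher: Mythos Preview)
Your treatment of $B_{123}$ has a genuine gap. You claim an ``isomorphism of fans between the $e_{123}$-subfan of $\mathcal{F}_{\Laf}(P)$ and $\mathcal{Q}_m^\vee$,'' but these fans live in lattices of different ranks: $\mathcal{F}_{\Laf}(P)$ sits in the cocharacter lattice of $\mathcal{A}_0^P$, of rank $\binom{m+3}{3}-m-3$, while $\mathcal{Q}_m^\vee$ sits in $\mathbb{Z}^{2m}/\mathbb{Z}^2$, of rank $2m-2$. There may well be a Cayley-trick type combinatorial bijection between the cones, but this cannot be an isomorphism of fans in the usual sense and hence does not give an isomorphism of toric varieties. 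More fundamentally, $\mathbf{U}^{123}$ is a proper closed subvariety of the toric open $\mathcal{A}_{123}^P$, not itself a toric open, so matching the boundary combinatorics of the two ambient toric varieties says nothing about whether the restricted map $B_{123}\colon \mathbf{U}^{123}\to Y_{\mathcal{Q}_m^\vee}$ is an open embedding. Your last paragraph acknowledges the regularity subtlety but not this dimension mismatch.

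The paper sidesteps this entirely via the moduli interpretation: $Y_{\mathcal{Q}_m^\vee}$ is the normalization of Alexeev's weighted moduli space $\overline{\mathbf{M}}_{\mathbf{b}}(3,n)$ with $\mathbf{b}=(1,1,1,\epsilon,\ldots,\epsilon)$, and one checks directly that for every fiber $(X,\sum L_i)$ over $\mathbf{U}^{123}$ the lower-weight pair $(X,\sum_{i\le 3}L_i+\epsilon\sum_{i>3}L_i)$ is already slc with ample log canonical class, so the reduction morphism needs no contraction and is an open immersion there. For $A_{123}$ the paper likewise does not argue by fan-matching: it constructs an explicit $\mathbb{G}_m^{2N}$-torsor $\widetilde{\mathbf{V}}\to\mathbf{V}$ and an automorphism $\Psi$ of the ambient $(\mathbb{P}^2)^{m+N}$-bundle sending the first three sections to the torus-fixed points, thereby identifying $\widetilde{\mathbf{V}}$ with an open in the toric Kapranov space $Y_{\mathcal{Q}'_{m+N}}$ (using Lemma~\ref{wEGsegsEGew}), and then descends by Lemma~\ref{wrgawrgarsg}. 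Your appeal to ``the universal property of the Hilbert scheme'' does not replace this: producing, functorially over $\mathbf{U}_{\GP}^{123}$, the toric coordinates on $Y_{\widetilde{\mathcal{Q}}_m}$ is exactly the content of regularity of $A_{123}$, and it is the torsor construction that supplies them.
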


\begin{proof}
We start with $B_{123}$.
Let $(\mathcal{X},\sum\limits_{i=1}^{m+3}\mathcal{L}_i)$ be the 
family of stable pairs over $\overline{\mathbf{X}}(3,m+3)$ and take its fiber 
$(X,\sum\limits_{i=1}^{m+3}L_i)$ over a point in $\mathbf{U}^{123}$.
By \cite[\S8]{KT06}, $X$ is a stable toric surface with the 
stable toric boundary given by $L_1+L_2+L_3$.
In particular, $K_X+L_1+L_2+L_3\sim0$.
Concretely, we can realize $(X,\sum\limits_{i=1}^{m+3}L_i)$ as the central fiber
of the pullback of the family of stable pairs to $\Spec R$ via
some arc $\overline{\mathbf{a}}\colon\Spec R\to \overline{\mathbf{X}}(3,m+3)$
such that $\overline{\mathbf{a}}(\Spec K)\in {\mathbf{X}}(3,m+3)(K)$.
Since all maximal-dimensional polytopes in the matroid decomposition share a vertex $e_1+e_2+e_3$, all stable lattices for the arc are contained 
in the corresponding apartment \cite[\S8]{KT06},
and therefore $X$ is a stable toric surface that corresponds to a regular mixed polyhedral subdivision of
the dual simplex $m\Delta_2^\vee$.
The broken lines $L_1,\ldots,L_n$ 
intersect pairwise at smooth points of $X$  \cite[Theorem~1.1]{HKT06}.

It follows that the pair
 $(X,\sum\limits_{i=1}^3L_i+\sum\limits_{i=4}^{m+3}\epsilon L_i)$ 
is semi-log canonical 
and the $\mathbb Q$-line bundle 
\[
K_X+\sum_{i=1}^3L_i+\sum_{i=4}^{m+3}\epsilon L_i\sim\epsilon\cdot\sum_{i=4}^{m+3}L_i
\]
is ample because 
$K_X+\sum\limits_{i=1}^{m+3}L_i\sim\sum\limits_{i=4}^{m+3}L_i$ is ample.
Therefore, the restriction of the family $(\mathcal{X},\sum\limits_{i=1}^{m+3}\mathcal{L}_i)$ to $\mathbf{U}^{123}$
embeds the latter in $\overline{\mathbf{M}}_{\mathbf{b}}(3,n)$ as an open subset.

Next, we consider $A_{123}$.
%We construct an extension $\Phi\colon\widetilde{\mathbf{V}}\rightarrow\mathbf{U}_{\GP}^{123}$ of $A_{123}^{-1}\colon\widetilde{\mathbf{V}}\dashrightarrow\mathbf{U}_{\GP}^{123}$.
%Let $\Phi=A_{123}^{-1}$.
%We apply \cite[Lemma 7.3]{GG14} for $S=\mathbf{V},X_1=\widetilde{V},X_2=\mathbf{U}_{\GP}^{123}$, and $U=A_{123}(\mathbf{B}_{m+3})$.
%$S=\Spec(\Bbbk),X_1=\mathbf{W}^{ijk},X_2=\overline{\mathbf{X}}(3,m+3)^\nu$, and $U=A_{ijk}(\mathbf{B}_{m+3})$ (note the lemma requires $X_1$ proper, but one can relax this assumption to $X_1$ being an open subset of a proper normal scheme).
%Take any $\alpha=(\alpha_1,\alpha_2)\colon\Spec(R)\rightarrow\widetilde{V}\times_V {\mathbf{U}_{\GP}^{123}}^\nu$  such that  $\alpha(\Spec(K))$ is contained in $\mathbf{B}_{m+3}$.
%Let $\alpha(\mathfrak{m})=(x_1,x_2)\in\widetilde{V}\times_V {\mathbf{U}_{\GP}^{123}}^\nu$.
%It suffices to prove that, given $x_1$, the point $x_2$ is determined 
%uniquely independently of the choice of $\alpha$, and vice versa.
Recall that $\overline{\mathbf{X}}_{\GP}(3,n)$ 
is defined as the closure of $\mathbf{B}_{n}$ in the multigraded Hilbert scheme of $({\mathbb P}^2)^{\binom{n}{4}}$.
Write $n=3+m$, $N=\binom{3}{2}\binom{m}{2}$
and $m+N+M=\binom{3+m}{4}$. 

The pullback to $\overline{\mathbf{X}}_{\GP}(3,n)$ of the universal family of the (ordinary) Hilbert scheme is a flat family $\overline{\mathcal{M}}$
with  $n$ sections. 
For any point $x\in\overline{\mathbf{X}}_{\GP}(3,n)$,
the fiber $\overline{\mathcal{M}}_x$
is a reduced surface $(X;p_1,\ldots,p_n)$ with $n$ smooth marked points.
For every quadruple $I=\{i_1,\ldots,i_4\}\subseteq[n]$,
the morphism $X\to\bP^2$ induced by the projection onto the $I$-th
component of $({\mathbb P}^2)^{\binom{n}{4}}$ sends  $p_{i_1},\ldots,p_{i_4}$ to the points $[1:0:0]$, 
$[0:1:0]$, $[0:0:1]$, and $[1:1:1]$. The surface $(X;p_1,\ldots,p_n)$
is the special fiber of the Mustafin join 
${\mathbb P}(\Sigma_{\mathbf{a}})$ for any arc 
${\mathbf{a}}\colon\Spec K\to\mathbf{B}_n$
with a given limit point $\overline{\mathbf{a}}(0)=x\in\overline{\mathbf{X}}_{\GP}(3,n)$.

The open subset $\mathbf{U}^{123}_{\GP}$ parametrizes surfaces
such that $\Sigma_{\mathbf{a}}$ is 
contained in the apartment that corresponds to $a_1,a_2,a_3$ (for any arc converging to $x$).

\begin{lemma}
The restriction of the projection $({\mathbb P}^2)^{m+N+M}\to
({\mathbb P}^2)^{m+N}$
to every marked surface $(X;p_1,\ldots,p_n)$ parametrized by $\mathbf{U}^{123}_{\GP}$ is an isomorphism. Moreover,
the restriction to $\mathbf{U}^{123}_{\GP}$ of the induced
morphism from the multigraded Hilbert scheme 
$$\mathbf{H}\left(({\mathbb P}^2)^{m+N+M}\right)\to
\mathrm{Hilb}\left(({\mathbb P}^2)^{m+N}\right)$$ 
to the ordinary Hilbert scheme is bijective onto its image,
let $\mathbf V$ be its normalization.
\end{lemma}

\begin{proof}
To prove the first statement, we  choose an arc ${\mathbf a}$ and represent $(X;p_1,\ldots,p_n)$ as the special fiber of the corresponding Mustafin join.
Since all stable lattices are contained in the apartment that
corresponds to sections $a_1,a_2,a_3$, the set
$\Sigma_{{\mathbf a}}$ is the set of lattices $L_i$, $L_{\alpha\beta,ij}$
in the notation of Lemma~\ref{wEGsegsEGew}.
Thus $X$ is isomorphic to a subscheme in $({\mathbb P}^2)^{m+N}$ via projection. To prove the second statement, it suffices to prove that
the morphisms $X\to\bP^2_I$ parametrized by quadruples $I$ indexed by $M$
are uniquely determined by the morphisms $X\to\bP^2_J$ parametrized by quadruples $J$ indexed by $m+N$. Since $\Sigma_\mathbf{a}$ is contained in the apartment that corresponds to the sections $a_1,a_2,a_3$, for every quadruple $I=\{i_1,\ldots,i_4\}$ indexed by $M$, there exists a quadruple $J$ indexed by $m+N$ such that the image of the quadruple $p_{i_1},\ldots,p_{i_4}$ in ${\mathbb P}^2_J$
is in linearly general position.  Thus the morphism $X\to{\mathbb P}^2_I$
is a composition of the morphism $X\to{\mathbb P}^2_J$ and an 
isomorphism ${\mathbb P}^2_J\to{\mathbb P}^2_I$
uniquely determined by the marked surface $(X;p_1,\ldots,p_n)$ and its morphism to ${\mathbb P}^2_J$.
\end{proof}

At this point we can forget about ${\mathbf U}^{123}_{\GP}$ and work with $\mathbf V$
instead, which is a normalization of the partial compactification of ${\mathbf B}_n$ in the Hilbert scheme of 
$({\mathbb P}^2)^{m+N}$ parametrizing limits such that all stable lattices are in the apartment that corresponds to the first three sections (for some choice of an arc).
The pullback $\mathcal{X}\to\mathbf V$ of the universal family of the Hilbert scheme 
is smooth along $n$ disjoint sections giving the same fibers $(X;p_1,\ldots,p_n)$
as before. Composing sections with projections gives morphisms 
$${\mathcal P}_1^I,\ldots,{\mathcal P}_n^I\colon\mathbf V\to\bP^2$$
for every quadruple $I$ of the form $(1,2,3,i)$ (first type) or $(\alpha,\beta,i,j)$ (second type). Note that the points ${\mathcal P}_1^I(v),{\mathcal P}_2^I(v),{\mathcal P}_3^I(v)$ are linearly independent for any $v\in \mathbf V$. Moreover, for quadruples of the first type
they are the standard points $e_1,e_2,e_3$. However, for quadruples of the second type
only ${\mathcal P}_\alpha^I(v)$ and ${\mathcal P}_\beta^I(v)$ are standard points
$e_1$ and $e_2$, since by definition (see the Introduction) ${\mathcal P}_i^I(v)=e_3$ 
(and ${\mathcal P}_j^I(v)=e_4=[1:1:1]$).
For a fixed $v$ and a quadruple $I$ of the second type, 
we can find an automorphism of $\mathbb P^2$ that sends $e_1$ to $e_\alpha$,
$e_2$ to $e_\beta$ and ${\mathcal P}_\gamma^I(v)$ to $e_\gamma$, where 
$\{\alpha,\beta,\gamma\}=\{1,2,3\}$. This automorphism is determined uniquely up to the $\mathbb G_m^2$-action. This gives a $\mathbb G_m^{2N}$-torsor $\psi\colon\widetilde{\mathbf V}\to {\mathbf V}$ and an isomorphism
$$\Psi\colon\widetilde{\mathbf V}\times({\mathbb P}^2)^{m+N}\to \widetilde{\mathbf V}\times({\mathbb P}^2)^{m+N},$$
which is defined to be the identity on the first factor. Let $\widetilde{\mathcal{X}}\to\widetilde{\mathbf V}$ be the family $\Psi(\psi^*\mathcal{X})$.

Since $\mathbf B_n$ is an open subset of an algebraic torus
$\mathbb G_m^{2m-2}$ and $\psi$ is a torsor, $\psi^{-1}(\mathbf B_n)$ is an open subset of an algebraic torus
$\mathbb G_m^{2(m+N)-2}$. The morphism of $\psi^{-1}(\mathbf B_n)$ into the Hilbert scheme of $({\mathbb P}^2)^{m+N}$ given by the family $\widetilde{\mathcal{X}}$ agrees with the embedding of $\mathbb G_m^{2(m+N)-2}$ into this Hilbert scheme as in \S\ref{sGsgsGsehsRH}. Indeed, note that the first three sections of $\widetilde{\mathcal{X}}$ are constant sections given by the  $\mathbb G_m^2$-invariant points in $\mathbb P^2$, thus every fiber of $\widetilde{\mathcal{X}}$ over $\psi^{-1}(\mathbf B_n)$
is a $\mathbb P^2$ embedded into $(\mathbb P^2)^{m+N}$ by torus translates.
Therefore, $\widetilde{\mathbf V}$ is a partial compactification
of $\psi^{-1}(\mathbf B_n)$ in the toric Kapranov space $Y_{{\mathcal Q}_{m+N}}$.
By Lemma~\ref{wEGsegsEGew}
and Lemma--Definition~\ref{qergaerhaerh}, $\widetilde{\mathbf V}$~is an open subset of 
$Y_{\mathcal{Q}_{m+N}'}$ and therefore, as claimed, $\mathbf V=\widetilde{\mathbf V}/\mathbb G_m^{2N}$ is isomorphic to an open subset of
$Y_{\mathcal{Q}_{m+N}'}/\mathbb G_m^{2N}=Y_{\widetilde {\mathcal Q}_{m}}$ by Lemma~\ref{wrgawrgarsg}.
\end{proof}

%----------------------------------------------------------------------------------------------------

\section{Study of \texorpdfstring{$\overline{\mathbf{X}}_{\GP}(3,6)$}{Lg}}

In this section we 
explicitly describe the compactification $\overline{\mathbf{X}}_{\GP}(3,6)$
using the theory developed in \S\ref{sGsgsGsehsRH}--\ref{openpatchesfromtoricvarieties}. 
We classify the $6$-pointed degenerations parametrized by $\overline{\mathbf{X}}_{\GP}(3,6)$, 
show their relation to the corresponding line arrangement degenerations parametrized by Kapranov's Chow quotient $\overline{\mathbf{X}}(3,6)$, and finally prove that $\overline{\mathbf{X}}_{\GP}(3,6)^\nu$ is a tropical compactification of $\mathbf{X}(3,6)$ given by a specific polyhedral subdivision
of the tropical Grassmannian $\Sigma(3,6)$.

%--------------------------------------------------

%\subsection{Planar degenerations parametrized by $\overline{\mathbf{X}}_{\GP}(3,6)$}
%\label{studyofplanardegenerationsGP36}

\begin{proposition}
Up to permuting the labels, the degenerations parametrized by the planar locus $\mathbf{U}_{\GP}(3,6)$ are listed in Tables~\ref{tbl:GPdegenerationspart1}, \ref{tbl:GPdegenerationspart2}, and \ref{tbl:furtherGPdegenerations}.
\end{proposition}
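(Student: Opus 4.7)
The plan is to reduce the classification to an enumeration of cones in the toric Gerritzen--Piwek fan $\widetilde{\mathcal{Q}}_3$ and then compute the corresponding marked degenerations.

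First, I would use the symmetry in the definition of $\mathbf{U}_{\GP}(3,6)$: by construction this planar locus is covered by the open subsets $\mathbf{U}_{\GP}^{ijk}$ indexed by triples of distinct labels from $[6]$, and the $S_6$-action on labels permutes these charts transitively. Hence, up to permuting labels, it suffices to classify the $6$-pointed degenerations parametrized by a single chart, say $\mathbf{U}_{\GP}^{123}$. By Theorem~\ref{reductionmorphismisoonU123}, the morphism $A_{123}$ is an open immersion of $\mathbf{U}_{\GP}^{123}$ into the toric Gerritzen--Piwek variety $Y_{\widetilde{\mathcal{Q}}_3}$ (here $m=n-3=3$). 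Therefore the $6$-pointed degenerations appearing over $\mathbf{U}_{\GP}^{123}$ are precisely the fibers of the pulled-back family $Y_{\widetilde{\mathcal{R}}_3}\to Y_{\widetilde{\mathcal{Q}}_3}$, equipped with their three heavy sections $t_1,t_2,t_3$ (indexed by $1,2,3$) and three light sections $\ell_1,\ell_2,\ell_3$ (indexed by $4,5,6$) coming from Proposition~\ref{,amNEBFjahevfaeV}.

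Next, the fibers of $Y_{\widetilde{\mathcal{R}}_3}\to Y_{\widetilde{\mathcal{Q}}_3}$ are constant on the relative interior of each cone of $\widetilde{\mathcal{Q}}_3$, and by Lemma--Definition~\ref{qergaerhaerh} the cones of $\widetilde{\mathcal{Q}}_3$ are in bijection with the combinatorial equivalence classes of the refined subdivisions $\widetilde{\mathcal{S}}(v)$ for $v\in\mathbb{R}^6$. These combinatorial types, in turn, are obtained by decorating each of the Mustafin triangle types of Table~\ref{triplesoflatticesinapartment} with the appropriate ``red'' spiders dictated by Definition~\ref{refinementusingKapranovspiders} and Table~\ref{howtodecidetheredspider}, together with further refinements arising from ``stretching'' the configurations so that the red intersection points become distinct from the black spiders. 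The resulting list of combinatorial types is exactly the one displayed in Tables~\ref{triplesoflatticesinapartmentandcorrespondingredspiders} and~\ref{stretchesofspiders}. For each such type, I would compute the central fiber of the associated Mustafin join in the apartment using the procedure of Remark~\ref{degenerationfrompicture} applied to the enlarged lattice set $\Sigma\cup\{[\Lambda_{\alpha\beta,ij}]\}$, and then locate the three heavy and three light sections by Proposition~\ref{,amNEBFjahevfaeV} and Corollary~\ref{rgwRGsrgSRH}: the heavy $t_\alpha$ lie at the three vertices of $m\Delta_2$, while each light $\ell_i$ sits on the primary component corresponding to the spider $S(v_i,v_i')$.

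Finally, I would tabulate the outcomes. After this computation, redundancies are removed by observing when two combinatorial types produce isomorphic marked surfaces after an $S_6$-relabeling (recall that $\mathbf{U}_{\GP}^{ijk}$ and $\mathbf{U}_{\GP}^{i'j'k'}$ overlap whenever both apartments can stabilize the arc, so different spider types can sometimes yield the same marked surface up to the $S_6$-action). Collecting the distinct $6$-pointed surfaces produces exactly the list recorded in Tables~\ref{tbl:GPdegenerationspart1}, \ref{tbl:GPdegenerationspart2}, and~\ref{tbl:furtherGPdegenerations}. The main obstacle is bookkeeping rather than any conceptual hurdle: one must carefully match each combinatorial type of $\widetilde{\mathcal{S}}(v)$ with its mixed subdivision of $m\Delta_2$, draw the resulting broken $\bP^2$, mark the six sections correctly, and then verify (using Lemma~\ref{npointedcentralfibermustafinjoinnocalculation} to recognize Mustafin fibers up to relabeling) that no distinct entries of the tables are isomorphic as $6$-pointed surfaces.
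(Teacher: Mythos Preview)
Your proposal is correct and follows essentially the same approach as the paper: reduce to a single chart $\mathbf{U}_{\GP}^{123}\hookrightarrow Y_{\widetilde{\mathcal{Q}}_3}$ via Theorem~\ref{reductionmorphismisoonU123}, enumerate the cones of $\widetilde{\mathcal{Q}}_3$ as combinatorial types of refined spider subdivisions (Tables~\ref{triplesoflatticesinapartmentandcorrespondingredspiders} and~\ref{stretchesofspiders}), compute the central fibers via Remark~\ref{degenerationfrompicture}, and place the heavy and light sections using Propositions~\ref{,amNEBFjahevfaeV} and Corollary~\ref{rgwRGsrgSRH}. The paper's proof is slightly terser and points to Example~\ref{case8comesfromanarc} as a worked instance, but the logic is the same.
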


\begin{proof}
We first classify all cones in the quotient fan $\mathcal Q_3$,
i.e.~all possible decompositions $\mathcal{S}(v)$ for integral $v\in(\mathbb{R}^2)^3$. % (some of the pairs $(v_1,v_1'),(v_2,v_2'),(v_3,v_3')$ are allowed to coincide). 
All the possibilities are computed in \cite[Figure 6]{CHSW11}, and we list them in Table~\ref{triplesoflatticesinapartment}. 
Then, we need to classify all cones in the fan $\widetilde{\mathcal Q}_3$, i.e.~all possible subdivisions $\widetilde{\mathcal{S}}(v)$, which are listed in Tables~\ref{triplesoflatticesinapartmentandcorrespondingredspiders} and \ref{stretchesofspiders}. From these, we can determine the corresponding central fibers $X$ of Mustafin joins by Remark~\ref{degenerationfrompicture}.

Next, we need to put markings $p_1,\ldots,p_6$ on $X$. First, we choose three labeled points $p_i,p_j,p_k$ and put them in the three vertices of the degeneration (we do not always choose just $p_1,p_2,p_3$ for the reason explained in Remark~\ref{labelingofthedegenerationsaccordingtokapranovdegenerations}). These three points are in general linear position in all primary components. By Proposition~\ref{,amNEBFjahevfaeV}, the remaining ``light'' points are arranged 
according to Proposition~\ref{rgwRGsrgSRH}, so that the $i$-the point
belongs to the component that corresponds to $(v_i,v_i')$. 
%so that all primary components stabilize at least a quadruple, and all quadruples are stabilized by some primary component, so these actually occur as $6$-pointed central fiber of $\mathbf{P}(\Sigma_\mathbf{a})$ by Lemma~\ref{npointedcentralfibermustafinjoinnocalculation}. 
As a guide, case $\#8$ was already worked out in Example~\ref{case8comesfromanarc}.
The final result is depicted in Tables~\ref{tbl:GPdegenerationspart1}, \ref{tbl:GPdegenerationspart2}, and \ref{tbl:furtherGPdegenerations}.
\end{proof}

\begin{remark}
\label{labelingofthedegenerationsaccordingtokapranovdegenerations}
The numbering of the degenerations in Tables~\ref{tbl:GPdegenerationspart1}, \ref{tbl:GPdegenerationspart2}, and \ref{tbl:furtherGPdegenerations} 
%is different from the numbering in Tables~\ref{triplesoflatticesinapartmentandcorrespondingredspiders} and \ref{stretchesofspiders} and
is determined as follows. Given $\mathbf{a}\in(\mathbb{P}^2)^6(K)$ in general linear position over $K$, we have $x=\overline{\mathbf{a}}(0)\in\overline{\mathbf{X}}_{\GP}(3,6)$. Interpreting $\mathbf{a}$ as a one-parameter family of lines in $(\mathbb{P}^2)^\vee$ instead, we obtain another limit $y\in\overline{\mathbf{X}}(3,6)$, and $x\mapsto y$ under the morphism $\overline{\mathbf{X}}_{\GP}(3,6)\rightarrow\overline{\mathbf{X}}(3,6)$.
Recall that the boundary of Kapranov's compactification $\overline{\mathbf{X}}(3,6)$ is stratified according to the regular matroid polytope subdivisions of the hypersimplex $\Delta(3,6)$. These are listed up to $S_6$-action in \cite[Table 4.4]{Ale15}. Each boundary stratum 
corresponds to degenerate KSBA stable pairs listed in \cite[Figures 5.11, 5.12, and 5.13]{Ale15}, which we reproduce with permission of the author in Figures~\ref{Alexeevdegenerations6linesC}, \ref{Alexeevdegenerations6linesA}, and \ref{Alexeevdegenerations6linesB} (notice that the degeneration $\#16$ corresponds to the matroid subdivision no. 16 in \cite[Table 4.4]{Ale15} with (3) $x_{1234}\leq2$, $x_{12}\leq1$ instead).

We claim that a degeneration of six points in $\mathbb{P}^2$ labeled by $\#N$ in our tables is mapped to the interior of the Kapranov's stratum $\#N$. To establish this matching, let us first look at an example. Assume $y$ belongs to the interior of the Kapranov's stratum corresponding to the matroid polytope subdivision no.~$8$ in \cite[Table 4.4]{Ale15}. This gives rise to the degenerate line arrangements in $\mathbb{P}^2$ in the first row of Table~\ref{tbl:matroidtiling8degenerations}. 
\begin{table}[hbtp]
\centering
\begin{tabular}{|>{\centering\arraybackslash}m{4cm}|>{\centering\arraybackslash}m{4cm}|>{\centering\arraybackslash}m{4cm}|}
\hline
\begin{tikzpicture}[scale=0.5]

	\draw[line width=1pt] (-3,0) -- (3,0);
	\draw[line width=1pt] (-3,1) -- (-1,-1);
	\draw[line width=1pt] (-3,-1) -- (-1,1);
	\draw[line width=1pt] (-2,1) -- (-2,-1);
	\draw[line width=1pt] (3,1) -- (1,-1);
	\draw[line width=1pt] (3,-1) -- (1,1);

	\node at (0,0.5) {$1$};
	\node at (-3,1.5) {$2$};
	\node at (-2,1.5) {$3$};
	\node at (-1,1.5) {$4$};
	\node at (1,1.5) {$5$};
	\node at (3,1.5) {$6$};

\end{tikzpicture}
&
\begin{tikzpicture}[scale=0.5]

	\draw[line width=1pt] (-3,0) -- (3,0);
	\draw[line width=1pt] (-3,0.2) -- (3,0.2);

	\node at (0,0.7) {$5,6$};

\end{tikzpicture}
&
\begin{tikzpicture}[scale=0.5]

	\draw[line width=1pt] (-3,0) -- (3,0);
	\draw[line width=1pt] (-3,0.2) -- (3,0.2);
	\draw[line width=1pt] (-3,0.4) -- (3,0.4);

	\node at (0,0.9) {$2,3,4$};

\end{tikzpicture}
\\
\hline
\begin{tikzpicture}[scale=0.5]

	\fill (-3,0) circle (5pt);
	\fill (-2,0) circle (5pt);
	\fill (-1,0) circle (5pt);
	\fill (0,0) circle (5pt);
	\fill (0,-1) circle (5pt);
	\fill (0,-2) circle (5pt);

	\node at (0,0.6) {$1$};
	\node at (-1,0.6) {$2$};
	\node at (-2,0.6) {$3$};
	\node at (-3,0.6) {$4$};
	\node at (0.5,-1) {$5$};
	\node at (0.5,-2) {$6$};

\end{tikzpicture}
&
\begin{tikzpicture}[scale=0.5]

	\fill (0,0) circle (5pt);

	\node at (0,0.6) {$5,6$};

\end{tikzpicture}
&
\begin{tikzpicture}[scale=0.5]

	\fill (0,0) circle (5pt);

	\node at (0,0.6) {$2,3,4$};

\end{tikzpicture}
\\
\hline
\end{tabular}
\caption{In the first row, degenerate line arrangements corresponding to three matroid polytopes in subdivision no.~8 in \cite[Table 4.4]{Ale15}. In the second row, their respective projectively dual arrangements from Figure~\ref{GPdegenerationincase8}.}
\label{tbl:matroidtiling8degenerations}
\end{table}
The projective dual of these line degenerations are in the second row of the same table. Now, the marking of $(\mathbb{P}(\Sigma_\mathbf{a})_\Bbbk;\overline{a}_1(0),\ldots,\overline{a}_6(0))$ parametrized by $x$ must be such that on the primary components the image of the markings are given in the second row of Table~\ref{tbl:matroidtiling8degenerations}. Such degeneration is given in Figure~\ref{GPdegenerationincase8}. One can inspect case by case that the degenerations listed in Tables~\ref{tbl:GPdegenerationspart1} and \ref{tbl:GPdegenerationspart2} match bijectively the matroid subdivisions in \cite[Table 4.4]{Ale15}. We deal with the Kapranov's stratum $\#7$ separately in Lemma~\ref{non-planardegenerationgp7}. Lastly, in Table~\ref{tbl:furtherGPdegenerations}, the numbering $\#12.1,\#15.1$, and so on correspond to the stretches of spiders in Table~\ref{stretchesofspiders}.
\end{remark}

%--------------------------------------------------

Next we describe the boundary stratification of $\overline{\mathbf{X}}(3,6)$ and construct its explicit blow up
$\widehat{\mathbf{X}}(3,6)$ based on the analysis of the toroidal morphism
of planar loci ${\mathbf{U}}_{\GP}(3,6)\to {\mathbf{U}}(3,6)$.
Then we will add an ad hoc analysis
of the non-planar locus to prove 
$\widehat{\mathbf{X}}(3,6)\cong\overline{\mathbf{X}}_{\GP}(3,6)^\nu$.
The space $\mathbf{X}(3,6)=G^0(3,6)/\mathbb{G}_m^5$ is a very affine variety, i.e.~a closed subset of the torus
\[
T=\mathbb{G}_m^{\binom{6}{3}-1}/\mathbb{G}_m^{5}.
\]
It was proved in \cite{Lux08} that Kapranov's compactification $\overline{\mathbf{X}}(3,6)$ can be obtained as the Zariski closure of $\mathbf{X}(3,6)$ inside the toric variety $Y_{\Sigma(3,6)}$ with dense open subtorus $T$, where the fan $\Sigma(3,6)$ is the tropical Grassmannian of Speyer--Sturmfels in \cite{SS04}. Moreover, $\overline{\mathbf{X}}(3,6)$ is \emph{tropical}, which means that $\overline{\mathbf{X}}(3,6)$ is proper and the multiplication map
\[
T\times\overline{\mathbf{X}}(3,6)\rightarrow Y_{\Sigma(3,6)}
\]
is faithfully flat (\cite[Definition~1.1]{Tev07}). Even better,
Luxton proves in \cite{Lux08} that 
the multiplication map is smooth (and so $\mathbf{X}(3,6)$ is \emph{sch\"on} \cite[Definition~1.3]{Tev07}).
Adopting the notation in \cite{Lux08}, the fan $\Sigma(3,6)$ has three types of rays, which correspond to boundary divisors of $\overline{\mathbf{X}}(3,6)$. In parentheses, we give the corresponding notation from \cite{SS04}.

\begin{enumerate}
\item For each triple of distinct indices $i,j,k\in[6]$, we have a ray $(ijk)$ (rays of type E).

\item For each pair of distinct indices $i,j\in[6]$, we have a ray $(ij)$ (rays of type F).

\item For each partition $\{\{i,j\},\{k,\ell\},\{m,n\}\}$ of $[6]$, we have two distinct rays denoted by $(ij)(k\ell)(mn)$ and $(ij)(mn)(k\ell)$ (these are the rays of type G in \cite{SS04}).
\end{enumerate}

There is a bijective correspondence between the boundary strata of  $\overline{\mathbf{X}}(3,6)$ and the toric strata of $Y_{\Sigma(3,6)}$, i.e.~the cones of $\Sigma(3,6)$. Explicitly, up to $S_6$-action, consider the labeling of the strata of $\overline{\mathbf{X}}(3,6)$ given in \cite{Ale15}. Then this correspondence is given in Table~\ref{correspondencekapranov'sstrataandcones}.

\begin{table}[hbtp]
\centering
\begin{tabular}{|>{\centering\arraybackslash}m{0.9cm}|>{\centering\arraybackslash}m{5cm}||>{\centering\arraybackslash}m{0.9cm}|>{\centering\arraybackslash}m{8cm}|}
\hline
$\#1$ & $(123)$
&
$\#14$ & $(12)(34)(56),(356)$
\\
\hline
$\#2$ &$(123),(456)$
&
$\#15$ &$(12)(34)(56),(12),(56)$
\\
\hline
$\#3$& $(124),(456)$
&
$\#16$& $(12),(56),(124)$
\\
\hline
$\#4$& $(456),(124),(135)$
&
$\#17$& $(12)(34)(56),(125),(12)$
\\
\hline
$\#5$& $(56)$
&
$\#18$& $(12)(34)(56),(356),(12)$
\\
\hline
$\#6$& $(56),(123)$
&
$\#19$& $(12)(34)(56),(125),(356)$
\\
\hline
$\#7$& $(123),(145),(246),(356)$
&
$\#20$& $(12)(34)(56),(12)(56)(34),(12),(34),(56)$
\\
\hline
$\#8$ &$(56),(156)$
&
$\#21$ &$(12)(34)(56),(34),(56),(256)$
\\
\hline
$\#9$ &$(123),(56),(156)$
&
$\#22$& $(12)(34)(56),(34),(256),(346)$
\\
\hline
$\#10$& $(234),(56),(156)$
&
$\#23$ &$(34),(56),(134),(256)$
\\
\hline
$\#11$ &$(12)(34)(56)$
&
$\#24$ &$(34),(56),(134),(156)$
\\
\hline
$\#12$ &$(56),(34)$
&
$\#25$& $(12)(34)(56),(234),(456),(125)$
\\
\hline
$\#13$& $(12)(34)(56),(56)$
&&

\\
\hline
\end{tabular}
\caption{Correspondence between strata of $\overline{\mathbf{X}}(3,6)$ and cones of $\Sigma(3,6)$.}
\label{correspondencekapranov'sstrataandcones}
\end{table}

%If we denote a cone by listing its minimal set of generating rays, then the fan $\Sigma(3,6)$ is generated by the following maximal cones:
%\begin{itemize}
%\item $(ij),(k\ell),(ijm),(k\ell m)$;
%\item $(ij),(k\ell),(ijm),(k\ell n)$;
%\item $(ijk),(k\ell m),(mni),(k\ell n)$;
%\item $(ij)(k\ell)(mn),(ij),(k\ell),(\ell km)$;
%\item $(ij)(k\ell)(mn),(ij),(ijk),(k\ell m)$;
%\item $(ij)(k\ell)(mn),(ijk),(k\ell m),(imn)$;
%\item $(ij)(k\ell)(mn),(ij)(mn)(k\ell),(ij),(k\ell),(mn)$.
%\end{itemize}

Note that only the cone \#20 (of type FFFGG) is not simplicial. These cones are known as the \emph{bipyramid cones}: each one can be obtained by taking the cone over an appropriate triangular bipyramid, which is the gluing of two tetrahedra along a common facet.

%We now prove that the compactification $\overline{\mathbf{X}}_{\GP}(3,6)$ also arises as the closure of $\mathbf{X}(3,6)$ inside a toric variety $Y_{\mathcal{F}}$ with dense open subtorus $T$ for some fan $\mathcal{F}$. Given the birational morphism $\overline{\mathbf{X}}_{\GP}(3,6)\rightarrow\overline{\mathbf{X}}(3,6)$, it is natural to choose the fan $\mathcal{F}$ as an appropriate refinement of $\Sigma(3,6)$.

\begin{definition}
\label{explicitdescriptionoftherefinementinducedbysplittingbypiramid}
Let $\sigma\in\Sigma(3,6)$ be a bipyramid cone. 
We write $\sigma=\langle e_1,e_2,e_3,e_4,e_4'\rangle$,
where $e_1,e_2,e_3$ are the vertices of the common base of the two pyramids, and split it into the following $12$-cones
\[
\sigma_{ijk}=\langle e_i,e_i+e_j,e_i+e_j+e_k,e_4\rangle,~\sigma_{ijk}'=\langle e_i,e_i+e_j,e_i+e_j+e_k,e_4'\rangle,
\]
where $\{i,j,k\}=\{1,2,3\}$. %This subdivision is visualized in Table~\ref{subdivisionofthebipyramid}.
By also splitting the cones $\#21$, $\#23$, and $\#24$
%\begin{itemize}
%\item $(ij),(k\ell),(ijm),(k\ell m)$,
%\item $(ij),(k\ell),(ijm),(k\ell n)$, and
%\item $(ij)(k\ell)(mn),(k\ell),(mn),(jmn)$
%\end{itemize}
as illustrated in Table~\ref{subdivisionofthebipyramid}, we obtain a refinement $\widehat{\Sigma}(3,6)$ of the fan $\Sigma(3,6)$. We define $\widehat{\mathbf{X}}(3,6)$ to be the closure of $\mathbf{X}(3,6)$ inside the smooth toric variety $Y_{\widehat{\Sigma}(3,6)}$. We have a commutative diagram

\begin{center}
\begin{tikzpicture}[>=angle 90]
\matrix(a)[matrix of math nodes,
row sep=2em, column sep=2em,
text height=1.5ex, text depth=0.25ex]
{\widehat{\mathbf{X}}(3,6)&Y_{\widehat{\Sigma}(3,6)}\\
\overline{\mathbf{X}}(3,6)&Y_{\Sigma(3,6)}.\\};
\path[right hook->] (a-1-1) edge node[above]{}(a-1-2);
\path[->] (a-1-1) edge node[left]{}(a-2-1);
\path[right hook->] (a-2-1) edge node[below]{}(a-2-2);
\path[->] (a-1-2) edge node[right]{}(a-2-2);
\end{tikzpicture}
\end{center}
Since $\widehat{\Sigma}(3,6)$ refines $\Sigma(3,6)$, it follows by \cite[Proposition~2.5]{Tev07} that $\widehat{\mathbf{X}}(3,6)$ is a tropical compactification of $\mathbf{X}(3,6)$ and its morphism to
$\overline{\mathbf{X}}(3,6)$ is toroidal.
\end{definition}

\begin{table}[hbtp]
\centering
\begin{tabular}{|>{\centering\arraybackslash}m{6.5cm}|>{\centering\arraybackslash}m{6.5cm}|}
\hline
\begin{tikzpicture}[scale=0.7]

	\draw[line width=1pt] (0,0) -- (5,1);
	\draw[dashed,line width=1pt] (5,1) -- (1.5,2);
	\draw[dashed,line width=1pt] (1.5,2) -- (0,0);
	\draw[dashed,line width=1pt] (1.5,2) -- (6.5/3,4.5);
	\draw[line width=1pt] (0,0) -- (6.5/3,4.5);
	\draw[line width=1pt] (5,1) -- (6.5/3,4.5);
	\draw[dashed,line width=1pt] (1.5,2) -- (6.5/3,-2.5);
	\draw[line width=1pt] (0,0) -- (6.5/3,-2.5);
	\draw[line width=1pt] (5,1) -- (6.5/3,-2.5);

	\draw[dotted,line width=1pt] (6.5/3,4.5) -- (6.5/3,-2.5);
	\draw[dotted,line width=1pt] (0,0) -- (6.5/2,3/2);
	\draw[dotted,line width=1pt] (1.5,2) -- (5/2,1/2);
	\draw[dotted,line width=1pt] (5,1) -- (1.5/2,2/2);

	\draw[dotted,line width=1pt] (6.5/3,4.5) -- (6.5/2,3/2);
	\draw[dotted,line width=1pt] (6.5/3,4.5) -- (5/2,1/2);
	\draw[dotted,line width=1pt] (6.5/3,4.5) -- (1.5/2,2/2);

	\draw[dotted,line width=1pt] (6.5/3,-2.5) -- (6.5/2,3/2);
	\draw[dotted,line width=1pt] (6.5/3,-2.5) -- (5/2,1/2);
	\draw[dotted,line width=1pt] (6.5/3,-2.5) -- (1.5/2,2/2);

	\node at (2.3,5) {$(ij)(k\ell)(mn)$};
	\node at (2.3,-3) {$(ij)(mn)(k\ell)$};
	\node at (-0.5,0) {$(ij)$};
	\node at (5.6,1) {$(k\ell)$};
	\node at (0.5,2.2) {$(mn)$};

\end{tikzpicture}
&
\begin{tikzpicture}[scale=0.7]

	\draw[line width=1pt] (0,0) -- (5,1);
	\draw[dashed,line width=1pt] (5,1) -- (1.5,2);
	\draw[dashed,line width=1pt] (1.5,2) -- (0,0);
	\draw[dashed,line width=1pt] (1.5,2) -- (6.5/3,4.5);
	\draw[line width=1pt] (0,0) -- (6.5/3,4.5);
	\draw[line width=1pt] (5,1) -- (6.5/3,4.5);

	\draw[dotted,line width=1pt] (1.5,2) -- (5/2,1/2);

	\draw[dotted,line width=1pt] (6.5/3,4.5) -- (5/2,1/2);

	\node at (2.3,5) {$(ijm)$};
	\node at (2.4,2.2) {$(k\ell m)$};
	\node at (5.5,1) {$(ij)$};
	\node at (-0.6,0) {$(k\ell)$};

\end{tikzpicture}
\\
\hline
\begin{tikzpicture}[scale=0.7]

	\draw[line width=1pt] (0,0) -- (5,1);
	\draw[dashed,line width=1pt] (5,1) -- (1.5,2);
	\draw[dashed,line width=1pt] (1.5,2) -- (0,0);
	\draw[dashed,line width=1pt] (1.5,2) -- (6.5/3,4.5);
	\draw[line width=1pt] (0,0) -- (6.5/3,4.5);
	\draw[line width=1pt] (5,1) -- (6.5/3,4.5);

	\draw[dotted,line width=1pt] (1.5,2) -- (5/2,1/2);

	\draw[dotted,line width=1pt] (6.5/3,4.5) -- (5/2,1/2);

	\node at (2.3,5) {$(ijm)$};
	\node at (2.3,2.2) {$(k\ell n)$};
	\node at (5.5,1) {$(ij)$};
	\node at (-0.6,0) {$(k\ell)$};

\end{tikzpicture}
&
\begin{tikzpicture}[scale=0.7]

	\draw[line width=1pt] (0,0) -- (5,1);
	\draw[dashed,line width=1pt] (5,1) -- (1.5,2);
	\draw[dashed,line width=1pt] (1.5,2) -- (0,0);
	\draw[dashed,line width=1pt] (1.5,2) -- (6.5/3,4.5);
	\draw[line width=1pt] (0,0) -- (6.5/3,4.5);
	\draw[line width=1pt] (5,1) -- (6.5/3,4.5);

	\draw[dotted,line width=1pt] (1.5,2) -- (5/2,1/2);

	\draw[dotted,line width=1pt] (6.5/3,4.5) -- (5/2,1/2);

	\node at (2.3,5) {$(ij)(k\ell)(mn)$};
	\node at (5.6,1) {$(k\ell)$};
	\node at (-0.7,0) {$(mn)$};
	\node at (2.4,2.2) {$(jmn)$};

\end{tikzpicture}
\\
\hline
\end{tabular}
\caption{%Subdivisions of four types of cones in $\Sigma(3,6)$ which 
The refinement $\widehat{\Sigma}(3,6)$ of the tropical Grassmannian.}
\label{subdivisionofthebipyramid}
\end{table}

\begin{remark}
Kapranov's compactification $\overline{\mathbf{X}}(3,6)$ is singular at exactly $15$ isolated points each locally isomorphic to the cone over $\mathbb{P}^1\times\mathbb{P}^2$ (\cite[Theorem 4.2.4]{Lux08}). The 
$15$ singular points are the strata that correspond to the bipyramid cones in $\Sigma(3,6)$. These singularities admit two small resolutions related by a flop: let $\overline{\mathbf{X}}_1(3,6)$ (resp. $\overline{\mathbf{X}}_2(3,6)$) be the small resolution  with an exceptional locus isomorphic to $\mathbb{P}^1$ (resp. $\mathbb{P}^2$). There is also the blow up $\overline{\mathbf{X}}'(3,6)$ of the cone singularities with exceptional divisor $\mathbb{P}^1\times\mathbb{P}^2$.
All these compactifications are  tropical and correspond to refinements of $\Sigma(3,6)$: $\Sigma_1(3,6)$ is obtained by splitting each bipyramid into its two pyramids, $\Sigma_2(3,6)$ is obtained by adding to each bipyramid the segment that joins the two opposite vertices, and finally, $\Sigma'(3,6)$ is the common refinement of $\Sigma_1(3,6)$ and $\Sigma_2(3,6)$. The space $\widehat{\mathbf{X}}(3,6)$ is obtained from $\overline{\mathbf{X}}'(3,6)$ by blowing up the codimension $2$ strata of type $\#12$. Summarizing, we have the following commutative diagram:
\begin{center}
\begin{tikzpicture}[>=angle 90]
\matrix(a)[matrix of math nodes,
row sep=2em, column sep=2em,
text height=2.0ex, text depth=0.25ex]
{&\widehat{\mathbf{X}}(3,6)&\\
&\overline{\mathbf{X}}'(3,6)&\\
\overline{\mathbf{X}}_1(3,6)&&\overline{\mathbf{X}}_2(3,6)\\
&\overline{\mathbf{X}}(3,6).&\\};
\path[->] (a-1-2) edge node[]{}(a-2-2);
\path[->] (a-2-2) edge node[]{}(a-3-1);
\path[->] (a-2-2) edge node[]{}(a-3-3);
\path[dashed,->] (a-3-1) edge node[above]{flop}(a-3-3);
\path[->] (a-3-1) edge node[]{}(a-4-2);
\path[->] (a-3-3) edge node[]{}(a-4-2);
\end{tikzpicture}
\end{center}
\end{remark}

%\begin{theorem}
%The normalization of Gerritzen--Piwek compactification $\overline{\mathbf{X}}_{\GP}(3,6)^\nu$ is isomorphic to $\widehat{\mathbf{X}}(3,6)$. In particular, $\overline{\mathbf{X}}_{\GP}(3,6)^\nu$ is a tropical compactification.
%\end{theorem}

\begin{proof}[Proof of Theorem~\ref{zh}]
Consider the birational map $f\colon\widehat{\mathbf{X}}(3,6)\dashrightarrow\overline{\mathbf{X}}_{\GP}(3,6)^\nu$, which
is an isomorphism over ${\mathbf{X}}(3,6)$. We first show that $f$ is an   isomorphism over the open planar locus $\mathbf{U}_{\GP}(3,6)\subseteq\overline{\mathbf{X}}_{\GP}(3,6)^\nu$. Since $\mathbf{U}_{\GP}(3,6)$ is the union of the open subsets $\mathbf{U}_{\GP}^{ijk}$ for all triples of distinct indices,
it suffices to show that $f$ is an isomorphism over~ $\mathbf{U}_{\GP}^{123}$.
Recall from \S\ref{openpatchesfromtoricvarieties} that  $\mathbf{U}^{123}\subseteq\overline{\mathbf{X}}(3,6)$ is isomorphic to the open subset $\mathbf{V}\subseteq Y_{\mathcal{Q}_3^\vee}$. Let $\widehat{\mathbf{U}}^{123}\subseteq\widehat{\mathbf{X}}(3,6)$ be the preimage of $\mathbf{U}^{123}$ under the morphism $\widehat{\mathbf{X}}(3,6)\rightarrow\overline{\mathbf{X}}(3,6)$. 
Finally, let $\widehat{\mathbf{V}}\subseteq Y_{\widetilde{\mathcal{Q}}_3}$ be the open subset $A_{123}(\mathbf{U}_{\GP}^{123})$ as in Definition~\ref{definitionofU123GPVandVtildetoricpatch}.
We claim that $f$ induces an isomorphism $\widehat{\mathbf{U}}_{\GP}^{123}
\cong\widehat{\mathbf{V}}$, so that we have a commutative diagram
\begin{center}
\begin{tikzpicture}[>=angle 90]
\matrix(a)[matrix of math nodes,
row sep=2em, column sep=2em,
text height=2.0ex, text depth=0.5ex]
{Y_{\widehat{\Sigma}(3,6)}&\widehat{\mathbf{U}}^{123}&\widehat{\mathbf{V}}&Y_{\widetilde{\mathcal{Q}}_3}\\
Y_{\Sigma(3,6)}&\mathbf{U}^{123}&\mathbf{V}&Y_{\mathcal{Q}_3^\vee}.\\};
\path[left hook->] (a-1-2) edge node[]{}(a-1-1);
\path[left hook->] (a-2-2) edge node[]{}(a-2-1);
\path[->] (a-1-2) edge node[above]{$\cong$}(a-1-3);
\path[->] (a-2-2) edge node[above]{$\cong$}(a-2-3);
\path[right hook->] (a-1-3) edge node[]{}(a-1-4);
\path[right hook->] (a-2-3) edge node[]{}(a-2-4);
\path[->] (a-1-1) edge node[]{}(a-2-1);
\path[->] (a-1-2) edge node[]{}(a-2-2);
\path[->] (a-1-3) edge node[]{}(a-2-3);
\path[->] (a-1-4) edge node[]{}(a-2-4);
\end{tikzpicture}
\end{center}

Since both spaces are obtained by toroidal blow ups of strata
of the same variety $\mathbf{U}^{123}$, it suffices to prove that the strata match.
We compare the refinements 
$\widehat{\Sigma}(3,6)\preccurlyeq\Sigma(3,6)$ and $\widetilde{\mathcal{Q}}_3\preccurlyeq\mathcal{Q}_3^\vee$.
The maximal cones in $\Sigma(3,6)$ which are split in $\widehat{\Sigma}(3,6)$ and that correspond to points in $\mathbf{U}^{123}$ are the cones $\#20$, $\#24$, $\#21$ in Table~\ref{correspondencekapranov'sstrataandcones}.
Under the isomorphism $\mathbf{U}^{123}\rightarrow\mathbf{V}$, these cones correspond to the cones of $\mathcal{Q}_3^\vee$ in the left column of Table~\ref{conesinrefinedquotientfancorrespondingtosplittingbipyramid}. 
The  cones in $\Sigma(3,6)$ are split as in Definition~\ref{explicitdescriptionoftherefinementinducedbysplittingbypiramid}, which matches the refinement given by the maximal cones in $\widetilde{\mathcal{Q}}_3$ in the right column of Table~\ref{conesinrefinedquotientfancorrespondingtosplittingbipyramid}. 
The three thickened segment lengths $x_1,x_2,x_3$ satisfy $x_1>x_2>x_3$. Different choices of inequality yield a total of $12$ different cones in $\widetilde{\mathcal{Q}}_3$, which match the splitting in $\widehat{\Sigma}(3,6)$ of the bipyramid cone. This implies that $\widehat{\mathbf{U}}^{123}\cong\widehat{\mathbf{V}}$. 

\begin{table}[hbtp]
\centering
\begin{tabular}{|>{\centering\arraybackslash}m{2.8cm}|>{\centering\arraybackslash}m{4.8cm}>{\centering\arraybackslash}m{4.8cm}|}
\hline
\vspace{0.1in}
\begin{tikzpicture}[scale=0.4]

	\draw[line width=1pt] (0,0) -- (0-1,0);
	\draw[line width=1pt] (0,0) -- (0,0-2);
	\draw[line width=1pt] (0,0) -- (0+2,0+2);

	\draw[line width=1pt] (1,-1) -- (1-2,-1);
	\draw[line width=1pt] (1,-1) -- (1,-1-1);
	\draw[line width=1pt] (1,-1) -- (1+2,-1+2);

	\draw[line width=1pt] (2,1) -- (2-3,1);
	\draw[line width=1pt] (2,1) -- (2,1-3);
	\draw[line width=1pt] (2,1) -- (2+1,1+1);

	\fill (0,0) circle (5pt);
	\fill (1,-1) circle (5pt);
	\fill (2,1) circle (5pt);

\end{tikzpicture}
&\vspace{0.1in}
\begin{tikzpicture}[scale=0.4]

	\draw[red,line width=1pt] (0,0) -- (0+4,0);
	\draw[red,line width=1pt] (0,0) -- (0,0+1);
	\draw[red,line width=1pt] (0,0) -- (0-1,0-1);

	\draw[line width=1pt] (0,1) -- (0+7,1);
	\draw[line width=1pt] (0,1) -- (0,1+4);
	\draw[line width=1pt] (0,1) -- (0-1,1-1);

	\draw[line width=1pt] (4,0) -- (4+1,0);
	\draw[line width=1pt] (4,0) -- (4,0+5);
	\draw[line width=1pt] (4,0) -- (4-1,0-1);

	\draw[red,line width=1pt] (6,2) -- (6+1,2);
	\draw[red,line width=1pt] (6,2) -- (6,2+2);
	\draw[red,line width=1pt] (6,2) -- (6-2,2-2);

	\draw[line width=1pt] (6,4) -- (6+1,4);
	\draw[line width=1pt] (6,4) -- (6,4+1);
	\draw[line width=1pt] (6,4) -- (6-5,4-5);

	\draw[red,line width=1pt] (3,4) -- (3+3,4);
	\draw[red,line width=1pt] (3,4) -- (3,4+1);
	\draw[red,line width=1pt] (3,4) -- (3-3,4-3);

	\fill[red] (0,0) circle (5pt);
	\fill (0,1) circle (5pt);
	\fill (4,0) circle (5pt);
	\fill[red] (6,2) circle (5pt);
	\fill (6,4) circle (5pt);
	\fill[red] (3,4) circle (5pt);

	\node at (1.5,1.5) {$x_1$};
	\draw[line width=2pt] (0,1) -- (0+3,1);
	\node at (4.6,3.5) {$x_2$};
	\draw[line width=2pt] (4,2) -- (4+2,2+2);
	\node at (3.4,0.5) {$x_3$};
	\draw[line width=2pt] (4,0) -- (4,0+1);

\end{tikzpicture}
&\vspace{0.1in}
\begin{tikzpicture}[scale=0.4]

	\draw[red,line width=1pt] (0,0) -- (0+4,0);
	\draw[red,line width=1pt] (0,0) -- (0,0+2);
	\draw[red,line width=1pt] (0,0) -- (0-1,0-1);

	\draw[line width=1pt] (0,2) -- (0+7,2);
	\draw[line width=1pt] (0,2) -- (0,2+3);
	\draw[line width=1pt] (0,2) -- (0-1,2-1);

	\draw[line width=1pt] (3,0) -- (3+4,0);
	\draw[line width=1pt] (3,0) -- (3,0+5);
	\draw[line width=1pt] (3,0) -- (3-1,0-1);

	\draw[red,line width=1pt] (6,3) -- (6+1,3);
	\draw[red,line width=1pt] (6,3) -- (6,3+2);
	\draw[red,line width=1pt] (6,3) -- (6-3,3-3);

	\draw[line width=1pt] (6,4) -- (6+1,4);
	\draw[line width=1pt] (6,4) -- (6,4+1);
	\draw[line width=1pt] (6,4) -- (6-5,4-5);

	\draw[red,line width=1pt] (2,4) -- (2+4,4);
	\draw[red,line width=1pt] (2,4) -- (2,4+1);
	\draw[red,line width=1pt] (2,4) -- (2-2,4-2);

	\fill[red] (0,0) circle (5pt);
	\fill (0,2) circle (5pt);
	\fill (3,0) circle (5pt);
	\fill[red] (6,3) circle (5pt);
	\fill (6,4) circle (5pt);
	\fill[red] (2,4) circle (5pt);

	\node at (1.5,1.5) {$x_1$};
	\draw[line width=2pt] (0,2) -- (0+3,2);
	\node at (4.6,3.5) {$x_2$};
	\draw[line width=2pt] (4,2) -- (4+2,2+2);
	\node at (3.7,0.5) {$x_3$};
	\draw[line width=2pt] (3,0) -- (3,0+1);

\end{tikzpicture}
\\
\hline
\begin{tikzpicture}[scale=0.4]

	\draw[line width=1pt] (0,2) -- (0-1,2);
	\draw[line width=1pt] (0,2) -- (0,2-3);
	\draw[line width=1pt] (0,2) -- (0+1,2+1);

	\draw[line width=1pt] (1,1) -- (1-2,1);
	\draw[line width=1pt] (1,1) -- (1,1-2);
	\draw[line width=1pt] (1,1) -- (1+2,1+2);

	\draw[line width=1pt] (2,0) -- (2-3,0);
	\draw[line width=1pt] (2,0) -- (2,0-1);
	\draw[line width=1pt] (2,0) -- (2+1,0+1);

	\fill (0,2) circle (5pt);
	\fill (1,1) circle (5pt);
	\fill (2,0) circle (5pt);

\end{tikzpicture}
&\vspace{0.1in}
\begin{tikzpicture}[scale=0.4]

	\draw[red,line width=1pt] (0,0) -- (0+1,0);
	\draw[red,line width=1pt] (0,0) -- (0,0+2);
	\draw[red,line width=1pt] (0,0) -- (0-1,0-1);

	\draw[red,line width=1pt] (1,0) -- (1+1,0);
	\draw[red,line width=1pt] (1,0) -- (1,0+2);
	\draw[red,line width=1pt] (1,0) -- (1-1,0-1);

	\draw[line width=1pt] (2,0) -- (2+1,0);
	\draw[line width=1pt] (2,0) -- (2,0+4);
	\draw[line width=1pt] (2,0) -- (2-1,0-1);

	\draw[line width=1pt] (0,3) -- (0+3,3);
	\draw[line width=1pt] (0,3) -- (0,3+1);
	\draw[line width=1pt] (0,3) -- (0-1,3-1);

	\draw[red,line width=1pt] (0,2) -- (0+1,2);
	\draw[red,line width=1pt] (0,2) -- (0,2+1);
	\draw[red,line width=1pt] (0,2) -- (0-1,2-1);

	\draw[line width=1pt] (1,2) -- (1+2,2);
	\draw[line width=1pt] (1,2) -- (1,2+2);
	\draw[line width=1pt] (1,2) -- (1-2,2-2);

	\fill[red] (0,0) circle (5pt);
	\fill[red] (1,0) circle (5pt);
	\fill (2,0) circle (5pt);
	\fill[red] (0,2) circle (5pt);
	\fill (0,3) circle (5pt);
	\fill (1,2) circle (5pt);

\end{tikzpicture}
&\vspace{0.1in}
\begin{tikzpicture}[scale=0.4]

	\draw[red,line width=1pt] (0,0) -- (0+2,0);
	\draw[red,line width=1pt] (0,0) -- (0,0+1);
	\draw[red,line width=1pt] (0,0) -- (0-1,0-1);

	\draw[red,line width=1pt] (2,0) -- (2+1,0);
	\draw[red,line width=1pt] (2,0) -- (2,0+1);
	\draw[red,line width=1pt] (2,0) -- (2-1,0-1);

	\draw[line width=1pt] (3,0) -- (3+1,0);
	\draw[line width=1pt] (3,0) -- (3,0+3);
	\draw[line width=1pt] (3,0) -- (3-1,0-1);

	\draw[line width=1pt] (0,2) -- (0+4,2);
	\draw[line width=1pt] (0,2) -- (0,2+1);
	\draw[line width=1pt] (0,2) -- (0-1,2-1);

	\draw[red,line width=1pt] (0,1) -- (0+2,1);
	\draw[red,line width=1pt] (0,1) -- (0,1+1);
	\draw[red,line width=1pt] (0,1) -- (0-1,1-1);

	\draw[line width=1pt] (2,1) -- (2+2,1);
	\draw[line width=1pt] (2,1) -- (2,1+2);
	\draw[line width=1pt] (2,1) -- (2-2,1-2);

	\fill[red] (0,0) circle (5pt);
	\fill[red] (2,0) circle (5pt);
	\fill (3,0) circle (5pt);
	\fill[red] (0,1) circle (5pt);
	\fill (0,2) circle (5pt);
	\fill (2,1) circle (5pt);

\end{tikzpicture}
\\
\hline
\begin{tikzpicture}[scale=0.4]

	\draw[line width=1pt] (0,3) -- (0-1,3);
	\draw[line width=1pt] (0,3) -- (0,3-4);
	\draw[line width=1pt] (0,3) -- (0+1,3+1);

	\draw[line width=1pt] (2,2) -- (2-3,2);
	\draw[line width=1pt] (2,2) -- (2,2-3);
	\draw[line width=1pt] (2,2) -- (2+1,2+1);

	\draw[line width=1pt] (1,0) -- (1-2,0);
	\draw[line width=1pt] (1,0) -- (1,0-1);
	\draw[line width=1pt] (1,0) -- (1+2,0+2);

	\fill (0,3) circle (5pt);
	\fill (2,2) circle (5pt);
	\fill (1,0) circle (5pt);

\end{tikzpicture}
&\vspace{0.1in}
\begin{tikzpicture}[scale=0.4]

	\draw[red,line width=1pt] (0,0) -- (0+1,0);
	\draw[red,line width=1pt] (0,0) -- (0,0+3);
	\draw[red,line width=1pt] (0,0) -- (0-1,0-1);

	\draw[line width=1pt] (1,0) -- (1+2,0);
	\draw[line width=1pt] (1,0) -- (1,0+5);
	\draw[line width=1pt] (1,0) -- (1-1,0-1);

	\draw[line width=1pt] (0,4) -- (0+3,4);
	\draw[line width=1pt] (0,4) -- (0,4+1);
	\draw[line width=1pt] (0,4) -- (0-1,4-1);

	\draw[red,line width=1pt] (0,3) -- (0+2,3);
	\draw[red,line width=1pt] (0,3) -- (0,3+1);
	\draw[red,line width=1pt] (0,3) -- (0-1,3-1);

	\draw[red,line width=1pt] (2,1) -- (2+1,1);
	\draw[red,line width=1pt] (2,1) -- (2,1+2);
	\draw[red,line width=1pt] (2,1) -- (2-1,1-1);

	\draw[line width=1pt] (2,3) -- (2+1,3);
	\draw[line width=1pt] (2,3) -- (2,3+2);
	\draw[line width=1pt] (2,3) -- (2-3,3-3);

	\fill[red] (0,0) circle (5pt);
	\fill[red] (0,3) circle (5pt);
	\fill (0,4) circle (5pt);
	\fill (1,0) circle (5pt);
	\fill[red] (2,1) circle (5pt);
	\fill (2,3) circle (5pt);

\end{tikzpicture}
&\vspace{0.1in}
\begin{tikzpicture}[scale=0.4]

	\draw[red,line width=1pt] (0,0) -- (0+2,0);
	\draw[red,line width=1pt] (0,0) -- (0,0+2);
	\draw[red,line width=1pt] (0,0) -- (0-1,0-1);

	\draw[red,line width=1pt] (0,2) -- (0+3,2);
	\draw[red,line width=1pt] (0,2) -- (0,2+1);
	\draw[red,line width=1pt] (0,2) -- (0-1,2-1);

	\draw[red,line width=1pt] (3,1) -- (3+1,1);
	\draw[red,line width=1pt] (3,1) -- (3,1+1);
	\draw[red,line width=1pt] (3,1) -- (3-1,1-1);

	\draw[line width=1pt] (2,0) -- (2+2,0);
	\draw[line width=1pt] (2,0) -- (2,0+4);
	\draw[line width=1pt] (2,0) -- (2-1,0-1);

	\draw[line width=1pt] (0,3) -- (0+4,3);
	\draw[line width=1pt] (0,3) -- (0,3+1);
	\draw[line width=1pt] (0,3) -- (0-1,3-1);

	\draw[line width=1pt] (3,2) -- (3+1,2);
	\draw[line width=1pt] (3,2) -- (3,2+2);
	\draw[line width=1pt] (3,2) -- (3-3,2-3);

	\fill[red] (0,0) circle (5pt);
	\fill[red] (0,2) circle (5pt);
	\fill (0,3) circle (5pt);
	\fill (2,0) circle (5pt);
	\fill[red] (3,1) circle (5pt);
	\fill (3,2) circle (5pt);

\end{tikzpicture}
\\
\hline
\end{tabular}
\caption{On the left, subdivisions of $\mathbb{R}^2$ giving cones in $\mathcal{Q}_3^\vee$ which correspond to points in $\mathbf{V}$. These cones are refined by $\widetilde{\mathcal{Q}}_3$ on the right.}
\label{conesinrefinedquotientfancorrespondingtosplittingbipyramid}
\end{table}
Next we study $f$ over the non-planar locus of $\overline{\mathbf{X}}_{\GP}(3,6)^\nu$. 
Let $x\in\overline{\mathbf{X}}_{\GP}(3,6)\setminus\mathbf{U}_{\GP}(3,6)$ and let  
$y\in\overline{\mathbf{X}}(3,6)$ be the image of $x$. Then $y$ is in the interior of the strata $\#7$, $\#2$, $\#10$, or $\#23$ -- these are the only degenerations in  Figures~\ref{Alexeevdegenerations6linesC}, \ref{Alexeevdegenerations6linesA}, and \ref{Alexeevdegenerations6linesB} which do not have a triple of lines in general linear position in each primary component.
Each stratum $\#7$ is an isolated point of the non-planar locus.
By the description of the fan $\widehat{\Sigma}(3,6)$,
the morphism $\widehat{\mathbf{X}}(3,6)\rightarrow\overline{\mathbf{X}}(3,6)$ is a local isomorphism over this stratum.
By~normality, it suffices to show that $\overline{\mathbf{X}}_{\GP}(3,6)$
contains a unique point over each stratum~$\#7$, which is proved in
Lemma~\ref{non-planardegenerationgp7}.

Next we study $f$ over the preimage of the closed strata $\#2=(ijk),(lmn)$ in $\overline{\mathbf{X}}(3,6)$,
which contains the strata $\#10$ and $\#23$ in its interior.
By \cite[Theorem~4.2.9]{Lux08}, the boundary strata $(ijk)$ are isomorphic to $\overline{\mathrm{M}}_{0,6}$
and each stratum $\#2=(ijk),(lmn)$ corresponds to a boundary stratum of $\overline{\mathrm{M}}_{0,6}$ isomorphic to $\mathbb{P}^1\times\mathbb{P}^1$, which we denote by $S$. The boundary divisors of type $(ij),(ik),(jk)$ restrict to $S$ giving three distinct parallel rulings, which are strata $\#10$.
The pairwise intersections of these rulings with the analogous 
restrictions of the boundary divisors of type $(\ell m),(\ell n),(mn)$ give nine $0$-dimensional boundary strata $\#23$. It follows from this analysis and the structure of the refinement $\widehat{\Sigma}(3,6)$ in Table~\ref{subdivisionofthebipyramid} that the preimage $\widehat{S}$ of $S$ under the blow up $\widehat{\mathbf{X}}(3,6)\rightarrow\overline{\mathbf{X}}(3,6)$ is $\Bl_9S$, the blow up of $S$ at the nine $0$-dimensional boundary strata $\#23$.
Let $S_{\GP}\subseteq\overline{\mathbf{X}}_{\GP}(3,6)$ be the preimage of~$S$.
By Lemma~\ref{type2stratainGPareblowupp1xp1at9pts},
the morphism $S_{\GP}\to S$ induces a bijective  normalization morphism $\widehat{S}\cong\Bl_9 S\to S_{\GP}$.
Take $x\in S_{\GP}$ 
and take any $\mathbf{a}\colon\Spec(K)\rightarrow\mathbf{B}_6$ with limit $x$. Denote by $z\in\widehat{S}\subseteq\widehat{\mathbf{X}}(3,6)$ the corresponding limit of $f^{-1}\circ\mathbf{a}$. 
In the proof of Lemma~\ref{type2stratainGPareblowupp1xp1at9pts} we showed 
that $x$ only depends on $z$ and not on the choice of $\mathbf{a}$.
This shows that $f$ is regular at $z$ by \cite[Theorem~7.3]{GG14}.
Since $f$ is finite and birational, $\overline{\mathbf{X}}_{\GP}(3,6)^\nu\cong\widehat{\mathbf{X}}(3,6)$.
\end{proof}

%-------------

%\subsection{Non-planar degenerations $\#7$}

\begin{lemma}\label{non-planardegenerationgp7}
$\overline{\mathbf{X}}_{\GP}(3,6)$
contains a unique point over each stratum $\#7$
in $\overline{\mathbf{X}}(3,6)$.
\end{lemma}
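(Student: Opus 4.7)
The plan is to fix a point $y$ in the interior of a $\#7$ stratum of $\overline{\mathbf{X}}(3,6)$ and show that any two arcs $\mathbf{a},\mathbf{b}\colon\Spec(K)\to\mathbf{B}_6$ whose limits in $\overline{\mathbf{X}}(3,6)$ equal $y$ also give the same limit in $\overline{\mathbf{X}}_{\GP}(3,6)$. By Definition~\ref{definitionofthecorrectcompactification}, $\overline{\mathbf{X}}_{\GP}(3,6)$ is the closure of the image of $\mathbf{B}_6$ in $\overline{\mathbf{B}}_6\times\mathbf{H}_S^h$. The first components of $\overline{\mathbf{a}}(0)$ and $\overline{\mathbf{b}}(0)$ agree because the morphism $\overline{\mathbf{X}}(3,6)\to\overline{\mathbf{B}}_6$ of Theorem~\ref{isomorphismbetweenkapranovcompandgpcomp} is a morphism sending $y$ to a single point; so it suffices to show the second components agree, i.e., that $\mathbb{P}(\Sigma_{\mathbf{a}})$ and $\mathbb{P}(\Sigma_{\mathbf{b}})$ define the same multi-homogeneous ideal in $(\mathbb{P}^2)^{\binom{6}{4}}$ by Theorem~\ref{indeedcorrectfamily}.

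The first substantive step is to identify $\Sigma_{\mathbf{a}}$ for any arc $\mathbf{a}$ converging to $y$. The $\#7$ stratum corresponds to the matroid polytope subdivision of $\Delta(3,6)$ whose four maximal cells are indexed by the triples $\{123\},\{145\},\{246\},\{356\}$, each cell being the matroid polytope of the rank-$3$ matroid on $[6]$ in which that triple is the unique non-basis. Via \cite[2.9]{KT06} and the description of Mustafin joins in terms of building decompositions, this forces $\Sigma_{\mathbf{a}}$ to consist of exactly four lattice classes $[L_{123}],[L_{145}],[L_{246}],[L_{356}]$, each stabilizing precisely the quadruples disjoint from its triple. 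Hence the central fiber $\mathbb{P}(\Sigma_{\mathbf{a}})_\Bbbk$ and the distribution of the six limit sections on it are a $6$-pointed surface of the combinatorial type recorded as $\#7$ in Table~\ref{tbl:nonplanarGPdegenerations}, and this type is independent of the chosen arc.

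Next, I would show that the image of $(\mathbb{P}(\Sigma_{\mathbf{a}})_\Bbbk;\overline{a}_1(0),\dots,\overline{a}_6(0))$ in each factor $(\mathbb{P}^2)_I$, for $I\in\binom{[6]}{4}$, is forced by $y$ alone. For the four quadruples $I$ that are stabilized by some lattice in $\Sigma_{\mathbf{a}}$ the image is the corresponding $\mathbb{P}(L)_\Bbbk$, and under the normalization that sends four of the marked points to the standard frame, the remaining two markings project to prescribed points in $\mathbb{P}^2$ that depend only on the combinatorial type of $\#7$. For the remaining quadruples $I$, Remark~\ref{allabouttwolattices} and the factorization through one of the four stabilizing primary components express each such image as the closure of a standard frame together with the cross-ratios of $y\in\overline{\mathbf{B}}_6\subseteq\prod_{\mathcal{Q}_6'}\mathbb{P}^2$; since these cross-ratios are intrinsic to $y$, the images are the same for $\mathbf{a}$ and $\mathbf{b}$. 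Combining these determinations gives equality of the two multi-homogeneous ideals, completing the proof.

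The main obstacle is verifying the last step cleanly for quadruples $I$ that are not stabilized by a single lattice in $\Sigma_{\mathbf{a}}$: in the $\#7$ configuration no three of $\{L_{123},L_{145},L_{246},L_{356}\}$ lie in a common apartment, so one cannot simply invoke the apartment-based machinery of \S\ref{sGsgsGsehsRH} and the tables. I expect the cleanest route is to reduce each such $I$ to a two-lattice subcase via Remark~\ref{allabouttwolattices} and Lemma~\ref{latticerestrictionpoint}, thereby expressing the $I$-th projection as a pushforward of data already recorded by $y\in\overline{\mathbf{B}}_6$, which removes any remaining dependence on the choice of arc.
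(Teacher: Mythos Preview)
Your first substantive step contains a concrete error: you assert that $\Sigma_{\mathbf{a}}$ consists of exactly four lattice classes, one for each maximal cell of the $\#7$ matroid subdivision. This is false. The matroid cells of the subdivision index primary components of the \emph{visible contour} family (the line-arrangement side), not the stable lattices of the Gerritzen--Piwek construction. The paper's explicit computation for a general arc limiting to $\#7$ gives \emph{five} stable lattices: one lattice $L_0$ stabilizing the three quadruples $(1234),(1356),(2456)$, and four further lattices $L_1,\dots,L_4$ each stabilizing the three quadruples containing one of the four collinear triples. All fifteen quadruples are accounted for, and the central fiber $\mathbb{P}(\Sigma_{\mathbf{a}})_\Bbbk$ is a non-planar configuration built from one $\mathbb{P}^2$, four $\mathbb{F}_1$'s, and six secondary $\mathbb{P}^1\times\mathbb{P}^1$'s. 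In particular the six marked points land on the secondary components, not the primary ones.

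Even granting the correct $\Sigma_{\mathbf{a}}$, your strategy of recovering the multihomogeneous ideal from the images of the six marked points in each $(\mathbb{P}^2)_I$ does not by itself pin down the Hilbert point: those images determine the sections, but the ideal depends on how the whole surface sits inside the product, and you have not argued that the pointed surface $(\mathbb{P}(\Sigma_{\mathbf{a}})_\Bbbk;\overline a_1(0),\dots,\overline a_6(0))$ is rigid. The paper handles exactly this: after identifying the eleven components and locating each $\overline a_i(0)$ on a secondary $\mathbb{P}^1\times\mathbb{P}^1$, it shows by an explicit automorphism count that the automorphism group of $\mathbb{P}(\Sigma_{\mathbf{a}})_\Bbbk$ (preserving the double locus) acts transitively on all admissible positions of the six markings. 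This moduli-count is the heart of the argument and is missing from your plan.
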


\begin{proof}
%We have that the degenerations of type $\#7$ which are parametrized by $\overline{\mathbf{X}}(3,6)$ correspond to finitely many smooth points. We want to understand the fiber of the birational morphism $\overline{\mathbf{X}}_{\GP}(3,6)\rightarrow\overline{\mathbf{X}}(3,6)$ at these points. 
Consider the following one-parameter family $\mathbf{a}(t)$ of six points in $\mathbb{P}^2$:
\begin{gather*}
a_1(t)=[1:0:0],~a_2(t)=[0:1:0],~a_3(t)=[0:0:1],~a_4(t)=[1:1:1],\\
a_5(t)=[c_1t:1:1+c_2t],~a_6(t)=[1+c_3t:1:c_4t],
\end{gather*}
where  $c_1,\ldots,c_4\in R\setminus\{0\}$. 
Let $\lambda_1,\ldots,\lambda_4$ be the valuations of $c_1,\ldots,c_4$, respectively.
The limit points for $t=0$ are the double points of the line arrangement in Figure~\ref{degeneration6pointstype7} given by:
\[
\ell_{126}=\{x_3=0\},~\ell_{145}=\{x_2=x_3\},~\ell_{235}=\{x_1=0\},~\ell_{346}=\{x_1=x_2\}.
\]

\begin{figure}[hbtp]
\begin{tikzpicture}[scale=0.7]

	\draw[line width=1pt] (-2,-1) -- (6,3);
	\draw[line width=1pt] (0,3) -- (8,-1);
	\draw[line width=1pt] (-1,-1) -- (4,4);
	\draw[line width=1pt] (2,4) -- (7,-1);

	\fill (0,0) circle (5pt);
	\fill (2,2) circle (5pt);
	\fill (4,2) circle (5pt);
	\fill (3,3) circle (5pt);
	\fill (3,1.5) circle (5pt);
	\fill (6,0) circle (5pt);

	\node at (1,-0.5) {$a_1(0)$};
	\node at (3,4) {$a_2(0)$};
	\node at (5,-0.5) {$a_3(0)$};
	\node at (3,0.8) {$a_4(0)$};
	\node at (5.5,2) {$a_5(0)$};
	\node at (0.5,2) {$a_6(0)$};

	\node at (-0.5,-1.5) {$\ell_{126}$};
	\node at (-2.5,-1.2) {$\ell_{145}$};
	\node at (7.5,-1.5) {$\ell_{235}$};
	\node at (9,-1.2) {$\ell_{346}$};

\end{tikzpicture}
\caption{Degeneration of six points in $\mathbb{P}^2$.}
\label{degeneration6pointstype7}
\end{figure}

The stable lattices and the corresponding stabilized quadruples are the following:
\begin{itemize}

\item $L_0=e_1R+e_2R+e_3R$ stabilizes $(1,2,3,4),(1,3,5,6),(2,4,5,6)$; 

\item $L_1=t^{\lambda_1+1}e_1R+e_2R+e_3R$ stabilizes $(1,2,3,5),(2,3,4,5),(2,3,5,6)$;

\item $L_2=e_1R+e_2R+t^{\lambda_4+1}e_3R$ stabilizes $(1,2,3,6),(1,2,4,6),(1,2,5,6)$;

\item $L_3=e_1R+t^{\lambda_2+1}e_2R+(e_2+e_3)R$ stabilizes $(1,2,4,5),(1,3,4,5),(1,4,5,6)$;

\item $L_4=t^{\lambda_3+1}e_1R+(e_1+e_2)R+e_3R$ stabilizes $(1,3,4,6),(2,3,4,6),(3,4,5,6)$. 
\end{itemize}

We now compute the $6$-pointed degenerate central fiber $\mathbb{P}(\Sigma_\mathbf{a})_\Bbbk$. Let us start by observing that the induced linear spaces on $\mathbb{P}(L_0)_\Bbbk$ are given by
\[
W_{L_0}(L_1)=\{x_1=0\},~W_{L_0}(L_2)=\{x_3=0\},~W_{L_0}(L_3)=\{x_2=x_3\},~W_{L_0}(L_4)=\{x_1=x_2\}.
\]
In particular the primary component in $\mathbb{P}(\Sigma_\mathbf{a})_\Bbbk$ corresponding to $L_0$ is isomorphic to $\mathbb{P}^2$. On the other hand, let $i,j$ be distinct nonzero indices. Then $W_{L_i}(L_0)$ is a point by Remark~\ref{allabouttwolattices} (note that $L_i\subseteq L_0$ and $W_{L_0}(L_i)$ is a line). Moreover $W_{L_i}(L_j)$ is a line again by Remark~\ref{allabouttwolattices} ($L_i\nsubseteq L_j$ and $L_j\nsubseteq L_i$). So the primary component $S_i$ in $\mathbb{P}(\Sigma_\mathbf{a})_\Bbbk$ corresponding to $L_i$ is isomorphic to $\mathbb{F}_1$. The secondary components of $\mathbb{P}(\Sigma_\mathbf{a})_\Bbbk$ can be understood as follows. If $i,j$ are distinct nonzero indices, then the central fiber $\mathbb{P}(L_0,L_i,L_j)_\Bbbk$ is as shown in Figure~\ref{degeneration5points2}, and there is a unique secondary component $T_{ij}$ isomorphic to $\mathbb{P}^1\times\mathbb{P}^1$. Note that the center of the blow up $\mathbb{P}(\Sigma_\mathbf{a})\rightarrow\mathbb{P}(L_0,L_i,L_j)$ does not intersect $T_{ij}$. In conclusion, $\mathbb{P}(\Sigma_\mathbf{a})_\Bbbk$ has six secondary components $T_{ij}$ isomorphic to $\mathbb{P}^1\times\mathbb{P}^1$, each one glued along two of its incident rulings to a fiber in each $S_i,S_j$. The components $S_1,\ldots,S_4$ are then glued along their exceptional curves to $\mathbb{P}^2$ along the lines $W_{L_0}(L_1),\ldots,W_{L_0}(L_4)$ respectively. The resulting surface $\mathbb{P}(\Sigma_\mathbf{a})_\Bbbk$ is pictured in Figure~\ref{degeneration6pointstype7GPlimit}.  %The surfaces $S_1,\ldots,S_4\cong\mathbb{F}_1$ and $T_{12},\ldots,T_{34}\cong\mathbb{P}^1\times\mathbb{P}^1$ are glued together with $\mathbb{P}(L_0)_\Bbbk\cong\mathbb{P}^2$ along $\ell_{126},\ell_{145},\ell_{235},\ell_{346}$ as shown. 
It is independent from $c_1,\ldots,c_4\in R\setminus\{0\}$.

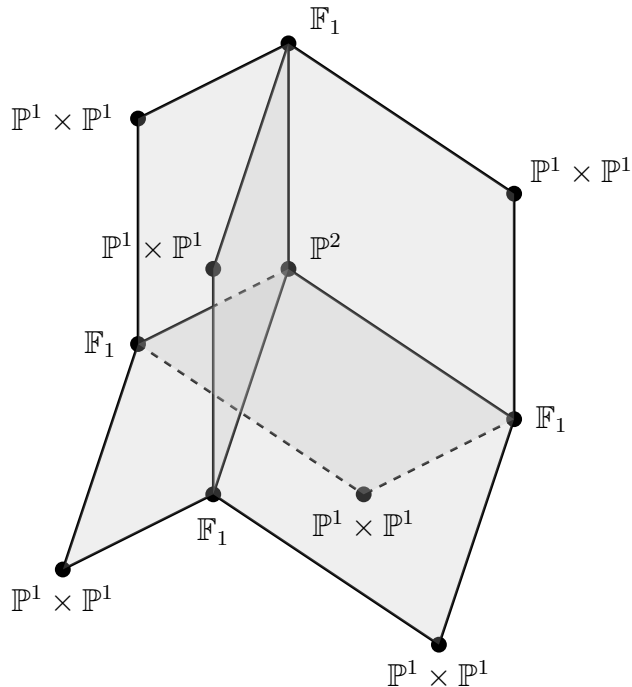
\begin{figure}[hbtp]
\begin{tikzpicture}[scale=1.0]

	\draw[line width=1pt] (0,0) -- (0,3);
	\draw[dashed,line width=1pt] (0,0) -- (-1,-0.5);
	\draw[line width=1pt] (-1,-0.5) -- (-2,-1);
	\draw[line width=1pt] (0,0) -- (3,-2);
	\draw[line width=1pt] (0,0) -- (-1,-3);
	\draw[line width=1pt] (0,3) -- (3,1);
	\draw[line width=1pt] (0,3) -- (-2,2);
	\draw[line width=1pt] (3,1) -- (3,-2);
	\draw[line width=1pt] (-2,2) -- (-2,-1);
	\draw[line width=1pt] (-2,-1) -- (-3,-4);
	\draw[line width=1pt] (-3,-4) -- (-1,-3);
	\draw[line width=1pt] (3,-2) -- (2,-5);
	\draw[line width=1pt] (-1,-3) -- (2,-5);
	\draw[dashed,line width=1pt] (-2,-1) -- (1,-3);
	\draw[dashed,line width=1pt] (3,-2) -- (1,-3);
	\draw[line width=1pt] (0,3) -- (-1,0);
	\draw[line width=1pt] (-1,0) -- (-1,-3);

	\fill (0,0) circle (3pt);

	\fill (0,3) circle (3pt);
	\fill (-2,-1) circle (3pt);
	\fill (3,-2) circle (3pt);
	\fill (-1,-3) circle (3pt);

	\fill (3,1) circle (3pt);
	\fill (-2,2) circle (3pt);
	\fill (-3,-4) circle (3pt);
	\fill (2,-5) circle (3pt);
	\fill (1,-3) circle (3pt);
	\fill (-1,0) circle (3pt);

	\fill[gray!50,nearly transparent] (0,0) -- (0,3) -- (3,1) -- (3,-2) -- cycle;
	\fill[gray!50,nearly transparent] (0,0) -- (0,3) -- (-2,2) -- (-2,-1) -- cycle;
	\fill[gray!50,nearly transparent] (0,0) -- (-2,-1) -- (-3,-4) -- (-1,-3) -- cycle;
	\fill[gray!50,nearly transparent] (0,0) -- (3,-2) -- (2,-5) -- (-1,-3) -- cycle;
	\fill[gray!50,nearly transparent] (0,0) -- (0,3) -- (-1,0) -- (-1,-3) -- cycle;
	\fill[gray!50,nearly transparent] (0,0) -- (3,-2) -- (1,-3) -- (-2,-1) -- cycle;

	\node at (0.5,0.3) {$\mathbb{P}^2$};
	\node at (0.5,3.3) {$\mathbb{F}_1$};
	\node at (3.5,-2) {$\mathbb{F}_1$};
	\node at (-1,-3.5) {$\mathbb{F}_1$};
	\node at (-2.5,-1) {$\mathbb{F}_1$};

	\node at (3.9,1.3) {$\mathbb{P}^1\times\mathbb{P}^1$};
	\node at (-3,2) {$\mathbb{P}^1\times\mathbb{P}^1$};
	\node at (2,-5.4) {$\mathbb{P}^1\times\mathbb{P}^1$};
	\node at (-3,-4.4) {$\mathbb{P}^1\times\mathbb{P}^1$};
	\node at (1,-3.4) {$\mathbb{P}^1\times\mathbb{P}^1$};
	\node at (-1.8,0.3) {$\mathbb{P}^1\times\mathbb{P}^1$};

\end{tikzpicture}
\caption{Dual complex of the degeneration discussed in Lemma~\ref{non-planardegenerationgp7}.}
\label{degeneration6pointstype7GPlimit}
\end{figure}

Let us determine the limits $\overline{a}_1(0),\ldots,\overline{a}_6(0)\in\mathbb{P}(\Sigma_\mathbf{a})_\Bbbk$. Under the birational morphism $\mathbb{P}(\Sigma_\mathbf{a})\rightarrow\mathbb{P}(L_0)$, the irreducible components of $\mathbb{P}(\Sigma_\mathbf{a})_\Bbbk$ mapping to $\ell_{126}$ (resp. $\ell_{145}$) are $S_2,T_{12},T_{23},T_{24}$ (resp. $S_3,T_{13},T_{23},T_{34}$). This implies that the limit point $\overline{a}_1^{L_0}(0)=\ell_{126}\cap\ell_{145}$ is on $T_{23}$. By the same reasoning, we have that
\[
\overline{a}_1(0)\in T_{23},~\overline{a}_2(0)\in T_{12},~\overline{a}_3(0)\in T_{14},~\overline{a}_4(0)\in T_{34},~\overline{a}_5(0)\in T_{13},~\overline{a}_6(0)\in T_{24}.
\]

Finally, we count the moduli associated to the choice of the six markings on the $T_{ij}$ components of $\mathbb{P}(\Sigma_\mathbf{a})_\Bbbk$. We show that we can find an appropriate automorphism of the surface $\mathbb{P}(\Sigma_\mathbf{a})_\Bbbk$ that fixes the six markings. Consider $S_1$, which is glued to $T_{12},T_{13},T_{14}$ along three rulings $f_{12},f_{13},f_{14}$ respectively. The automorphism group of $S_1$ that preserves the $(-1)$-section and the rulings $f_{12},f_{13},f_{14}$ is three dimensional, and we can choose one of such automorphisms $\varphi_1$ that fixes the images on $S_1$ of the limits $\overline{a}_2(0)\in T_{12},\overline{a}_3(0)\in T_{14},\overline{a}_5(0)\in T_{13}$. Consider the analogous automorphisms $\varphi_2,\varphi_3,\varphi_4$ for $S_2,S_3,S_4$ respectively. We can choose an appropriate $\psi_{ij}\in\Aut(T_{ij})$ acting on the two rulings in such a way that $\psi_{ij},\varphi_k,\id_{\mathbb{P}(L_0)_\Bbbk}$ glue to a global automorphism of $\mathbb{P}(\Sigma_\mathbf{a})_\Bbbk$ which fixes the six markings.
\end{proof}

Next we consider the non-planar locus in $\overline{\mathbf{X}}_{\GP}(3,6)^\nu$ dominating the closed stratum $\#2$ in $\overline{\mathbf{X}}(3,6)$. First we classify degenerations parametrized by it.

\begin{lemma}
The $6$-pointed degenerations of $\mathbb{P}^2$ parametrized by points 
in $\overline{\mathbf{X}}_{\GP}(3,6)$
over the (closed) stratum of type $\#2$ in $\overline{\mathbf{X}}(3,6)$ are depicted in figures $\#2,10,23$, $23.1$ in Table~\ref{tbl:nonplanarGPdegenerations}.
\end{lemma}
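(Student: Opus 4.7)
The plan is to directly compute the central fiber $\mathbb{P}(\Sigma_\mathbf{a})_\Bbbk$ for a representative arc in each Kapranov stratum contained in the closure of $\#2$, and verify that the four figures $\#2$, $\#10$, $\#23$, $\#23.1$ exhaust the possibilities. Up to the $S_6$-action we may assume the stratum has type $(123),(456)$, which in the six-point picture corresponds to families $\mathbf{a} \in (\mathbb{P}^2)^6(K)$ whose limits have $a_1,a_2,a_3$ collinear on one line $\overline{L}_1$ and $a_4,a_5,a_6$ collinear on a distinct line $\overline{L}_2$. After applying $\PGL_3(K)$, one normalizes so that the limits of $a_1,a_2,a_3$ are three distinct points on $\{x_3 = 0\}$ and the limits of $a_4,a_5,a_6$ are three distinct points on a transverse line, expressing each $a_i(t)$ as an explicit $R$-valued point with tunable parameters whose valuations record the position of $\overline{\mathbf{a}}(0)$ inside the closed stratum.

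The key combinatorial step is to enumerate $\Sigma_\mathbf{a}$ using Lemma--Definition~\ref{xfvxfbfb} together with the normalization procedure of Lemma--Definition~\ref{vectorsnormalizedwrtanotherone}. The quadruples split into two classes: those containing at least three indices from one of $\{1,2,3\}$ or $\{4,5,6\}$, whose stabilizing lattices detect the transverse collision and typically lie outside any single apartment; and the nine ``mixed'' quadruples of shape $\{i,j,k,\ell\}$ with $|\{i,j,k,\ell\}\cap\{1,2,3\}|=|\{i,j,k,\ell\}\cap\{4,5,6\}|=2$, whose stabilizing lattices can be placed in a common apartment and whose mutual coincidences refine the stratification of $\#2$. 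Partitioning these mixed quadruples according to the valuations of the family parameters yields a finite list of combinatorial possibilities matching the interiors of $\#2$, $\#10$, $\#23$, plus one exceptional coincidence producing $\#23.1$.

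For each case, I assemble the central fiber $\mathbb{P}(\Sigma_\mathbf{a})_\Bbbk$ one primary component at a time via \cite[\S5]{CHSW11} -- each primary piece is $\mathbb{P}^2$ blown up at the proper induced subspaces $W_{L_{i_0}}(L_j)$ -- and glue in secondary components using the local two-lattice dictionary of Remark~\ref{allabouttwolattices}. The sections $\overline{a}_i(0)$ are then placed using Lemma~\ref{smoothsectionifminimum} (for smooth points on primary components), together with Lemma~\ref{latticerestrictionpoint} and Lemma~\ref{alignedlimitsstablelattice} (for points forced onto secondary components by collinearities). In the generic arc over $\#2$ only the ``transverse'' lattices appear and the fiber is two $\mathbb{P}^2$'s glued along a line, with three sections on each, matching figure $\#2$; specializing into $\#10$ or $\#23$ introduces additional stable lattices from the apartment, producing the blow-ups recorded in Table~\ref{tbl:nonplanarGPdegenerations}; finally $\#23.1$ arises for arcs in the interior of $\#23$ for which a coincidence of valuations forces a lattice out of the apartment, creating an additional secondary $\mathbb{F}_0$ component absent from $\#23$, and this is precisely the phenomenon that makes $\overline{\mathbf{X}}_{\GP}(3,6)^\nu \ne \overline{\mathbf{X}}(3,6)$ at this locus.

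The principal obstacle is bookkeeping stable lattices that lie outside any single apartment: the tropical recipe of Remark~\ref{degenerationfrompicture} is unavailable, so one must combine two-lattice gluings across all pairs in $\Sigma_\mathbf{a}$, check compatibility globally, and confirm that no further lattice stabilizes any quadruple beyond those enumerated. Finiteness of $\Sigma_\mathbf{a}$ is guaranteed by \cite[Lemma~5.19]{KT06}, but the explicit identification of all lattices and the case-by-case verification that the resulting glued surface with marked sections agrees with figures $\#2$, $\#10$, $\#23$, $\#23.1$ requires careful combinatorial control driven by the valuations that parametrize the closed stratum.
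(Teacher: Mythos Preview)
Your proposal has a genuine gap at its central premise. You assume that for arcs over the closed stratum $\#2$ some stable lattices lie outside any single apartment, and you then build your strategy around piecing together two-lattice gluings because ``the tropical recipe of Remark~\ref{degenerationfrompicture} is unavailable.'' In fact the paper's key observation is exactly the opposite: for any arc limiting to this stratum, \emph{all} stable lattices lie in one apartment. The apartment is not one of the $\binom{6}{3}$ apartments attached to a triple of sections (so the point is indeed non-planar in the sense of Definition~\ref{definitionofU123GPVandVtildetoricpatch}), but it exists nonetheless---the paper sees it by realizing $y$ as the image of a planar point in $\overline{\mathbf{X}}(3,7)$ under the forgetful map. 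Once you have this apartment, the spider combinatorics of Remark~\ref{degenerationfrompicture} applies directly, and the four pictures drop out of the four combinatorial types (with stretches) recorded in Table~\ref{stablelatticesinapartmentfortype2102323.1}.

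This misapprehension causes concrete errors downstream. Your description of the generic $\#2$ fiber as ``two $\mathbb{P}^2$'s glued along a line, with three sections on each'' is wrong; the actual fiber, read off from the spider in Table~\ref{stablelatticesinapartmentfortype2102323.1}, has four components (two $\mathbb{F}_1$'s, an $\mathbb{F}_0$, and a $\mathbb{P}^2$). Your explanation of $\#23.1$ as arising from ``a coincidence of valuations forc[ing] a lattice out of the apartment'' is also off: $\#23.1$ comes from a \emph{stretch} of the $\#23$ spider within the same apartment (the same phenomenon as in Table~\ref{stretchesofspiders}), and this is precisely why $\overline{\mathbf{X}}_{\GP}(3,6)^\nu$ blows up the nine $\#23$ points of $S$ in Lemma~\ref{type2stratainGPareblowupp1xp1at9pts}. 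Your two-lattice patching approach could in principle be pushed through, but without the apartment observation it is both much harder and, as your sketch shows, error-prone.
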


\begin{table}[hbtp]
\centering
\begin{tabular}{|>{\centering\arraybackslash}m{2.5cm}|>{\centering\arraybackslash}m{2.5cm}|>{\centering\arraybackslash}m{2.5cm}|>{\centering\arraybackslash}m{2.5cm}|}
\hline
\vspace{.1in}
\begin{tikzpicture}[scale=0.5]

	\fill (0,0) circle (5pt);
	\fill (0,1) circle (5pt);
	\fill (1,0) circle (5pt);

	\draw[line width=1pt] (0,1) -- (0-1,1);
	\draw[line width=1pt] (0,1) -- (0,1-2);
	\draw[line width=1pt] (0,1) -- (0+1,1+1);

	\draw[line width=1pt] (1,0) -- (1-2,0);
	\draw[line width=1pt] (1,0) -- (1,0-1);
	\draw[line width=1pt] (1,0) -- (1+1,0+1);

	\draw[line width=1pt] (0,0) -- (0+2,0+2);

\end{tikzpicture}
&\vspace{.1in}
\begin{tikzpicture}[scale=0.5]

	\fill (0,0) circle (5pt);
	\fill (0,1) circle (5pt);
	\fill (1,0) circle (5pt);

	\draw[line width=1pt] (0,0) -- (0-1,0-1);
	\draw[line width=1pt] (0,0) -- (0+2,0);
	\draw[line width=1pt] (0,0) -- (0,0+2);

	\draw[line width=1pt] (0,1) -- (0+2,1);
	\draw[line width=1pt] (0,1) -- (0-1,1-1);

	\draw[line width=1pt] (1,0) -- (1,0+2);
	\draw[line width=1pt] (1,0) -- (1-1,0-1);

\end{tikzpicture}
&\vspace{.1in}
\begin{tikzpicture}[scale=0.5]

	\fill (0,0) circle (5pt);
	\fill (1,1) circle (5pt);
	\fill (3,2) circle (5pt);
	\fill[red] (2,2) circle (5pt);

	\draw[line width=1pt] (0,0) -- (0-1,0);
	\draw[line width=1pt] (0,0) -- (0,0-1);
	\draw[line width=1pt] (0,0) -- (0+3,0+3);

	\draw[line width=1pt] (1,1) -- (1-2,1);
	\draw[line width=1pt] (1,1) -- (1,1-2);

	\draw[line width=1pt] (3,2) -- (3-3,2);
	\draw[line width=1pt] (3,2) -- (3,2-3);
	\draw[line width=1pt] (3,2) -- (3+1,2+1);

\end{tikzpicture}
&\vspace{.1in}
\begin{tikzpicture}[scale=0.5]

	\fill (0,0) circle (5pt);
	\fill (1,1) circle (5pt);
	\fill (3,2) circle (5pt);
	\fill[red] (2,2) circle (5pt);

	\draw[line width=1pt] (0,0) -- (0+4,0);
	\draw[line width=1pt] (0,0) -- (0,0+3);

	\draw[line width=1pt] (1,1) -- (1,1+2);
	\draw[line width=1pt] (1,1) -- (1+3,1);
	\draw[line width=1pt] (1,1) -- (1-2,1-2);

	\draw[line width=1pt] (3,2) -- (3+1,2);
	\draw[line width=1pt] (3,2) -- (3,2+1);
	\draw[line width=1pt] (3,2) -- (3-3,2-3);

	\draw[red,line width=1pt] (2,2) -- (2+1,2);
	\draw[red,line width=1pt] (2,2) -- (2,2+1);
	\draw[red,line width=1pt] (2,2) -- (2-1,2-1);

\end{tikzpicture}
\\
\hline
\multicolumn{2}{|c|}{{\small $\#2$}} & \multicolumn{2}{c|}{{\small $\#10$}}
\\
\hline
\vspace{.1in}
\begin{tikzpicture}[scale=0.5]

	\fill (0,0) circle (5pt);
	\fill (1,2) circle (5pt);
	\fill (2,4) circle (5pt);
	\fill[red] (1,1) circle (5pt);
	\fill[red] (2,2) circle (5pt);
	\fill[red] (2,3) circle (5pt);

	\draw[line width=1pt] (0,0) -- (0-1,0);
	\draw[line width=1pt] (0,0) -- (0,0-1);
	\draw[line width=1pt] (0,0) -- (0+3,0+3);

	\draw[line width=1pt] (1,2) -- (1-2,2);
	\draw[line width=1pt] (1,2) -- (1,2-3);
	\draw[line width=1pt] (1,2) -- (1+2,2+2);

	\draw[line width=1pt] (2,4) -- (2-3,4);
	\draw[line width=1pt] (2,4) -- (2,4-5);
	\draw[line width=1pt] (2,4) -- (2+1,4+1);

\end{tikzpicture}
&\vspace{.1in}
\begin{tikzpicture}[scale=0.5]

	\fill (0,0) circle (5pt);
	\fill (1,2) circle (5pt);
	\fill (2,4) circle (5pt);
	\fill[red] (1,1) circle (5pt);
	\fill[red] (2,2) circle (5pt);
	\fill[red] (2,3) circle (5pt);

	\draw[line width=1pt] (0,0) -- (0-1,0-1);
	\draw[line width=1pt] (0,0) -- (0+3,0);
	\draw[line width=1pt] (0,0) -- (0,0+5);

	\draw[line width=1pt] (1,2) -- (1-2,2-2);
	\draw[line width=1pt] (1,2) -- (1+2,2);
	\draw[line width=1pt] (1,2) -- (1,2+3);

	\draw[line width=1pt] (2,4) -- (2-3,4-3);
	\draw[line width=1pt] (2,4) -- (2+1,4);
	\draw[line width=1pt] (2,4) -- (2,4+1);

	\draw[red,line width=1pt] (1,1) -- (1+2,1);
	\draw[red,line width=1pt] (1,1) -- (1,1+1);

	\draw[red,line width=1pt] (2,2) -- (2-2,2-2);
	\draw[red,line width=1pt] (2,2) -- (2,2+2);

	\draw[red,line width=1pt] (2,3) -- (2-1,3-1);
	\draw[red,line width=1pt] (2,3) -- (2+1,3);

\end{tikzpicture}
&\vspace{.1in}
\begin{tikzpicture}[scale=0.4]

	\fill (0,0) circle (5pt);
	\fill (1,2) circle (5pt);
	\fill (3,5) circle (5pt);
	\fill[red] (1,1) circle (5pt);
	\fill[red] (3,3) circle (5pt);
	\fill[red] (3,4) circle (5pt);

	\draw[line width=1pt] (0,0) -- (0-1,0);
	\draw[line width=1pt] (0,0) -- (0,0-1);
	\draw[line width=1pt] (0,0) -- (0+4,0+4);

	\draw[line width=1pt] (1,2) -- (1-2,2);
	\draw[line width=1pt] (1,2) -- (1,2-3);
	\draw[line width=1pt] (1,2) -- (1+3,2+3);

	\draw[line width=1pt] (3,5) -- (3-4,5);
	\draw[line width=1pt] (3,5) -- (3,5-6);
	\draw[line width=1pt] (3,5) -- (3+1,5+1);

\end{tikzpicture}
&\vspace{.1in}
\begin{tikzpicture}[scale=0.4]

	\fill (0,0) circle (5pt);
	\fill (1,2) circle (5pt);
	\fill (3,6) circle (5pt);
	\fill[red] (1,1) circle (5pt);
	\fill[red] (3,3) circle (5pt);
	\fill[red] (3,4) circle (5pt);

	\draw[line width=1pt] (0,0) -- (0-1,0-1);
	\draw[line width=1pt] (0,0) -- (0+4,0);
	\draw[line width=1pt] (0,0) -- (0,0+7);

	\draw[line width=1pt] (1,2) -- (1-2,2-2);
	\draw[line width=1pt] (1,2) -- (1+3,2);
	\draw[line width=1pt] (1,2) -- (1,2+5);

	\draw[line width=1pt] (3,6) -- (3-4,6-4);
	\draw[line width=1pt] (3,6) -- (3+1,6);
	\draw[line width=1pt] (3,6) -- (3,6+1);

	\draw[red,line width=1pt] (1,1) -- (1+3,1);
	\draw[red,line width=1pt] (1,1) -- (1,1+1);

	\draw[red,line width=1pt] (3,3) -- (3-3,3-3);
	\draw[red,line width=1pt] (3,3) -- (3,3+3);
	\draw[red,line width=1pt] (3,3) -- (3+1,3);

	\draw[red,line width=1pt] (3,4) -- (3-2,4-2);
	\draw[red,line width=1pt] (3,4) -- (3+1,4);

\end{tikzpicture}
\\
\hline
\multicolumn{2}{|c|}{{\small $\#23$}} & \multicolumn{2}{c|}{{\small $\#23.1$}}
\\
\hline
\end{tabular}
\caption{Dual spiders for the degenerations $\#2,10,23$ of six lines in $(\mathbb{P}^2)^\vee$, and the corresponding spiders for the degenerations of six points $\mathbb{P}^2$.}
\label{stablelatticesinapartmentfortype2102323.1}
\end{table}

\begin{proof}
Let $x\in\overline{\mathbf{X}}_{\GP}(3,6)$ be a point over a point 
$y\in\overline{\mathbf{X}}(3,6)$
in the (closed) stratum $\#2$.
From the possible Kapranov's degenerations $\#2,\#10,\#23$ parametrized by $y$, which can be found in \cite[Figures~5.12 and 5.13]{Ale15}, we 
conclude that all stable lattices
are contained in one apartment.  For clarity, let us point out that this apartment
is not determined by a triple of sections as for degenerations parametrized by the planar locus.
Instead, the point $y$ is in the image of the planar locus in $\overline{\mathbf{X}}(3,7)$ with respect to the forgetful map.
By considering combinatorial types of the dual spiders giving the degenerations of $(\mathbb{P}^2)^\vee$ parametrized by~$y$ and the corresponding usual spiders, we compute the resulting degenerations of $\mathbb{P}^2$,
%(which may be different, recall Remark~\ref{mustafindegenerationsofP2andP2dualarenotthesame}), 
taking into account that stretching the spiders may yield different degenerations. This is summarized in Table~\ref{stablelatticesinapartmentfortype2102323.1}. 
For each pair, on the left we give dual spiders in one apartment for the degenerations $\#2,10,23$ of six lines in $(\mathbb{P}^2)^\vee$, and on the right the corresponding 
spiders for the degenerations of six points in $\mathbb{P}^2$.
Finally, we add to these degenerations of $\mathbb{P}^2$ six marked points as prescribed by Lemma~\ref{npointedcentralfibermustafinjoinnocalculation}. %, but making sure that there does not exist a triple which is in general linear position in each primary component.
\end{proof}

%\begin{definition}
%Following the same idea adopted in Remark~\ref{labelingofthedegenerationsaccordingtokapranovdegenerations}, 
%We define and label the (open) strata in $\overline{\mathbf{X}}_{\GP}(3,6)$ and $\overline{\mathbf{X}}_{\GP}(3,6)^\nu$ (both planar and non-planar) according to their labels in the Tables~\ref{tbl:GPdegenerationspart1}--\ref{tbl:nonplanarGPdegenerations}.
%\end{definition}

\begin{lemma}
\label{type2stratainGPareblowupp1xp1at9pts}
Let $S\subseteq\overline{\mathbf{X}}(3,6)$ be a (closed) stratum of type  $\#2$ and let $S_{\GP}\subseteq\overline{\mathbf{X}}_{\GP}(3,6)$ be its preimage. The birational morphism 
$S_{\GP}\to S$ induces a bijective  normalization morphism $\Bl_9 S\to S_{\GP}$,
where $\Bl_9 S$ is the blow up of $S$ at the nine double points (strata $\#23$) of the arrangement of six rulings (strata $\#10$). 
\end{lemma}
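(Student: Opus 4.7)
The plan is to analyze the morphism $S_{\GP}\to S$ fiber by fiber using the classification of degenerations $\#2,\#10,\#23,\#23.1$ from the preceding lemma, and then package this into an algebraic morphism $\Bl_9 S\to S_{\GP}$. First I would identify $S\cong\overline{\mathrm{M}}_{0,\{i,j,k,\ast\}}\times\overline{\mathrm{M}}_{0,\{\ell,m,n,\ast\}}\cong\bP^1\times\bP^1$ and introduce coordinates making the three horizontal and three vertical rulings correspond to the six strata of type $\#10$ (coming from divisors $(ij),(ik),(jk)$ and $(\ell m),(\ell n),(mn)$), with the nine intersection points being the type $\#23$ strata.

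For a point $s\in S$ away from the nine double points, I would select a standard arc $\mathbf{a}\colon\Spec(K)\to\mathbf{B}_6$ with $\overline{\mathbf{a}}(0)=s$, compute the set $\Sigma_\mathbf{a}$ of stable lattices directly using Lemma--Definition~\ref{xfvxfbfb}, and verify that the resulting Mustafin join with its six sections matches uniquely the prescribed $\#2$ or $\#10$ degeneration from Table~\ref{stablelatticesinapartmentfortype2102323.1}. Any two arcs converging to the same $s$ give isomorphic marked Mustafin joins because the combinatorial type of $\Sigma_\mathbf{a}$ and the limits of the sections are determined by $s$ alone. Hence the restriction of $S_{\GP}\to S$ to the complement of the nine double points is bijective, and since $S$ is smooth, an isomorphism there by Zariski's Main Theorem.

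The core case is a double point $s=H_i\cap V_j$ of type $\#23$, where two pairs of points collide simultaneously. Here I would parameterize arcs $\mathbf{a}(t)\to s$ by their first-order behavior: after suitable normalization one can write the two colliding pairs as $a_i(t)-a_j(t)=t^{p}v+\cdots$ and $a_k(t)-a_\ell(t)=t^{q}w+\cdots$, and show that the isomorphism class of the central fiber $\mathbb{P}(\Sigma_\mathbf{a})_\Bbbk$ (with markings) depends only on a single projective invariant extracted from the pair $(v,w)$ together with the balance $q-p$. The resulting non-isomorphic central fibers form a $\bP^1$-family naturally identified with the $\#23.1$ degenerations of Table~\ref{stablelatticesinapartmentfortype2102323.1} (whose dual-spider decompositions carry precisely one unconstrained stretch parameter, cf.~the bottom rows of Tables~\ref{triplesoflatticesinapartmentandcorrespondingredspiders}--\ref{stretchesofspiders}). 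This identifies the set-theoretic fiber of $S_{\GP}\to S$ over each type $\#23$ point with a $\bP^1$.

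To promote this to a morphism $\Bl_9 S\to S_{\GP}$, I would assemble the computed Mustafin joins into a flat family over $\Bl_9 S$ by combining the pullback of $\overline{\mathcal{M}}\to\overline{\mathbf{X}}_{\GP}(3,6)$ over the complement of the nine points with an explicit local model near each exceptional divisor, produced from a two-parameter family of arcs whose ratio of deformation parameters serves as the coordinate on the exceptional $\bP^1$. This gives a morphism to the multigraded Hilbert scheme used to define $\overline{\mathbf{X}}_{\GP}(3,6)$ and thus a morphism $\Bl_9 S\to S_{\GP}$ by the universal property. Bijectivity follows from the fiber analysis above, and since $\Bl_9 S$ is smooth (hence normal) and $S_{\GP}\to S$ is finite, Zariski's Main Theorem identifies $\Bl_9 S\to S_{\GP}$ with the normalization. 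The main obstacle is the explicit computation in the $\#23$ case: one must verify that the projective parameter governing the $\#23.1$ family varies algebraically with the blow-up coordinate, which reduces to identifying the correct cross-ratio invariant attached to a type $\#23$ arc and matching it with the exceptional $\bP^1$ of $\Bl_9 S$.
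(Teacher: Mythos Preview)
Your overall strategy matches the paper's: analyze arcs case by case over the strata $\#2$, $\#10$, $\#23$, show the fiber of $S_{\GP}\to S$ is a point over the first two and a $\bP^1$ over each $\#23$ point, then conclude via normality. A few points of comparison and correction:

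\textbf{The $\#23$ fiber.} Your identification of the exceptional $\bP^1$ with the $\#23.1$ family is slightly off. In the paper's computation, the generic point of the exceptional $\bP^1$ parametrizes a surface of combinatorial type $\#23$ carrying a genuine cross-ratio modulus $\beta\in\bP^1\setminus\{0,\infty\}$; the two special points $\beta=0,\infty$ are where the surface further degenerates to type $\#23.1$. So the $\bP^1$ is not a family of $\#23.1$ surfaces, but rather of $\#23$ surfaces specializing to $\#23.1$ at two points. Your ``balance $q-p$'' does not appear: the paper normalizes all arcs to the same valuation profile and extracts the single invariant $\beta=[c_1(0)-c_3(0):c_4(0)-c_2(0)]$ from the leading coefficients.

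\textbf{Matching with the blow-up coordinate.} You correctly flag as ``the main obstacle'' the verification that this invariant coincides with the blow-up coordinate on $\Bl_9 S$. The paper does this in two steps: first it writes down explicit cross-ratio morphisms $\gamma_1,\gamma_2\colon\overline{\mathbf X}(3,6)\to\bP^1$ giving local coordinates on $S$ near the $\#23$ point and computes their $t$-derivatives along the arc, obtaining $(c_3(0)-c_1(0),\,c_4(0)-c_2(0))$, i.e.\ the tangent direction; second it exhibits $\beta$ geometrically inside the degenerate surface via an elaborate ruling-and-line construction (Figure~\ref{reconstructingdegenerationnumber23}) and checks $\beta$ equals that same ratio. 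This explicit identification is the heart of the proof and is absent from your sketch.

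\textbf{Constructing the morphism.} Your plan to build a flat family over $\Bl_9 S$ and invoke the Hilbert-scheme universal property is more laborious than necessary. The paper instead shows directly that for any arc $\mathbf a$ limiting to a point $z\in\Bl_9 S$, the limit $x\in S_{\GP}$ depends only on $z$; regularity of $\Bl_9 S\to S_{\GP}$ then follows from \cite[Theorem~7.3]{GG14} (this is made explicit in the proof of Theorem~\ref{zh} where the lemma is applied). This bypasses the need to glue local families.
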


\begin{proof}
Consider a general one-parameter family of 
six lines
\[
\mathbf{a}(t)=([1:0:0],[0:1:0],[0:0:1],[1:1:1],[1:c_1:c_2t],[1:1+c_3t:c_4])
\]
in $(\mathbb{P}^2)^\vee$ limiting to a point $y\in\overline{\mathbf{X}}(3,6)$
in the open stratum $\#2$. Here $c_1,\ldots,c_4\in R\setminus\{0\}$ and $c_1(0),c_4(0)\neq0,1$.
It can also be viewed as a general one-parameter family of
six points in~$\mathbb{P}^2$ with limit $x\in\overline{\mathbf{X}}_{\GP}(3,6)^\nu$.
We want to show that $x$ only depends on $y$, and not on the choice of the arc $\mathbf{a}$. 
The degeneration of $\mathbb{P}^2$ parametrized by $x$ is pictured in Table~\ref{tbl:nonplanarGPdegenerations}, $\#2$, and it only depends on the two cross-ratios we are about to describe. The limits in $\mathbb{P}^2$ of the six points in $\mathbf{a}$ lie on the two lines $x_3=0$ and $x_1-x_2=0$, which intersect at $[1:1:0]$. So consider the quadruple $[1:0:0]$, $[0:1:0]$, $[1:c_1(0):0]$, $[1:1:0]$ on the first line, and $[1:1:1]$, $[0:0:1]$, $[1:1:c_4(0)]$, $[1:1:0]$ on the second line. Their cross-ratios are $c_1(0)$ and $c_4(0)$ respectively. On the other hand, let $\ell_1,\ldots,\ell_6$ be the lines in $(\mathbb{P}^2)^\vee$ corresponding to the six points in $\mathbf{a}$. In the limit, $\ell_1,\ell_2,\ell_5$ (resp. $\ell_3,\ell_4,\ell_6$) pass through $[0:0:1]$ (resp. $[1:-1:0]$). Let $\ell$ be the line passing through $[0:0:1]$ and $[1:-1:0]$. The stable pair parametrized by $y\in\overline{\mathbf{X}}(3,6)$ is pictured in Figure~\ref{Alexeevdegenerations6linesA}, $\#2$ (note that the labels used for the lines in this proof do not match the ones in the figure), and the limits of $\ell_1,\ell_2,\ell_5,\ell$ and $\ell_3,\ell_4,\ell_6,\ell$ determine four points on each one of the two irreducible $\mathbb{P}^1$ in the double locus. The degeneration parametrized by $y\in\overline{\mathbf{X}}(3,6)$ only depends on the cross-ratios of these two quadruples, which are equal to $c_1(0)$ and $c_4(0)$ respectively. In conclusion, $x$ only depends on $y$, and not on the tangent direction of the arc limiting to $y$.

We apply the same strategy for a point $x\in\overline{\mathbf{X}}_{\GP}(3,6)^\nu$ 
in the preimage of  the interior of the stratum $\#10$. The general one-parameter family of six points in $\mathbb{P}^2$ with limit $x$
is
\[
\mathbf{a}(t)=([1:0:0],[0:1:0],[0:0:1],[1:1:1],[1:c_1:c_2t],[1:1+c_3t:c_4t]),
\]
where $c_1,\ldots,c_4\in R\setminus\{0\}$ and $c_1(0)\neq0,1$. Viewing this as a family of lines we obtain a limit $y\in\overline{\mathbf{X}}(3,6)$. The degeneration parametrized by $x$ depends on one cross-ratio, which is determined by $[1:0:0]$, $[0:1:0]$, $[1:c_1(0):0]$, $[1:1:0]$ on the line $x_3=0$ and equals~$c_1(0)$. Consider the corresponding degeneration of six lines parametrized by $y\in\overline{\mathbf{X}}(3,6)$, which is pictured in Figure~\ref{Alexeevdegenerations6linesA}, $\#10$. This degeneration is determined by the cross-ratio of four points on the gluing locus between $\mathbb{P}^1\times\mathbb{P}^1$ and $\mathbb{F}_1$. These four points are cut out by another curve in the double locus and the limits of $\ell_1,\ell_2,\ell_5$. Their cross-ratio is equal to~$c_1(0)$.

The general one-parameter family limiting to $y\in\overline{\mathbf{X}}(3,6)$ in the (closed) stratum $\#23$ is given by
\[
\mathbf{a}(t)=([1:0:0],[0:1:0],[0:0:1],[1:1:1],[1:1+c_1t:c_2t],[1:1+c_3t:c_4t]),
\]
where $c_1,\ldots,c_4\in R\setminus\{0\}$. Here we interpret the arc 
as a one-parameter family of lines $\ell_1,\ldots,\ell_6$ in $(\mathbb{P}^2)^\vee$.
Consider the cross-ratio morphisms $\gamma_1,\gamma_2:\,\overline{\mathbf{X}}(3,6)\rightarrow\mathbb{P}^1$ 
given by restricting the lines $\ell_1,\ell_2,\ell_5,\ell_6$ to $\ell_3$ and $\ell_3,\ell_4,\ell_5,\ell_6$ to $\ell_1$. Then $(\gamma_1,\gamma_2)$ induces a local isomorphism near $y$ between $S$ and $\mathbb{P}^1\times\mathbb{P}^1$. 
A simple calculation shows that the limits as $t\to0$ of the derivatives with respect to $t$ of $\gamma_1(\mathbf{a}(t))$ and $\gamma_2(\mathbf{a}(t))$ are equal to $c_3(0)-c_1(0)$ and $c_4(0)-c_2(0)$ respectively.
We claim that the limit as $t\to0$ of the arc $\mathbf{a}(t)$ in $\overline{\mathbf{X}}_{\GP}(3,6)$
is uniquely determined by
the point $[c_1(0)-c_3(0):c_4(0)-c_2(0)]\in\mathbb P^1$. In $\mathbb{P}^2$, consider the cross-ratio of the following four points on the line $x_2=0$: $[1:0:0],[0:0:1],[1:0:1]$, and the intersection of $x_2=0$ with the line spanned by $a_5(t),a_6(t)$, which is given by $[c_1-c_3:(c_4-c_2)+(c_1c_4-c_2c_3)t]$. Denote by $\beta$ its limit for $t\to0$.

This cross-ratio manifests itself in the $6$-pointed degeneration $X$ of $\mathbb{P}^2$ parametrized by the limit $x\in\overline{\mathbf{X}}_{\GP}(3,6)^\nu$ over the (closed) stratum $\#23$ in $\overline{\mathbf{X}}(3,6)$ as follows. Assume that the degenerate surface $X$ parametrized by $x$ is obtained by gluing five copies of $\mathbb{F}_1$, three copies of $\mathbb{P}^1\times\mathbb{P}^1$, and one copy of the blow up of $\mathbb{P}^2$ at two points, and that the limit points are as shown in Figure~\ref{reconstructingdegenerationnumber23}. Denote these surfaces by $F_1,\ldots,F_5,P_1,P_2,P_3,B$ respectively. On $F_1$ (resp. $F_5$) let $f_1,f_2$ (resp. $f_3,f_4$) be the rulings passing through the points marked by $1,2$ (resp. $3,4$). These induce other two rulings $f_1',f_2'$ (resp. $f_3',f_4'$) on $F_2$ (resp. $F_4$). The rulings $f_1',\ldots,f_4'$ intersect $F_3$ in four points $p_1,\ldots,p_4$, respectively. Let $r_5$ (resp. $r_6$) be the ruling in $P_1$ (resp. $P_3$) passing through the point marked by $5$ (resp. $6$) and intersecting $B$ into a point $q_5$ (resp. $q_6$). There is a unique line $\ell$ in $B\cong\Bl_2\mathbb{P}^2$ disjoint from the two exceptional divisors which passes through $q_5$ and $q_6$. This line $\ell$ intersects in a point $p$ the strict transform in $B$ of the line passing through the two points blown up. The point $p$ lies in the exceptional divisor of $F_3$, and it is contained in a unique ruling $f$. Now consider the line $\ell_{13}$ (resp. $\ell_{24}$) in $F_3$ spanned by $p_1$ and $p_3$ (resp. $p_2$ and $p_4$). On the line $\ell_{13}$ we have four distinct points $p_1,p_3,\ell_{13}\cap\ell_{24},\ell_{13}\cap f$, and denote their cross-ratio by $\beta'$ (this construction is reproduced in Figure~\ref{reconstructingdegenerationnumber23}). Note that $F_3$ is the primary component corresponding to the standard lattice $L_0=e_1R+e_2R+e_3R$ with respect to which the arc $\mathbf{a}(t)$ was given above, and $\beta$ was constructed inside $\mathbb{P}(L_0)_\Bbbk\cong\mathbb{P}^2$ in the same way we did for $\beta'$. Therefore, $\beta'=\beta\neq0,\infty$, and it characterizes the isomorphism type of the $6$-pointed degeneration $X$. The extreme cases $\beta=0,\infty$ correspond to the following further degenerations of $X$ in Table~\ref{tbl:nonplanarGPdegenerations}, $\#23.1$. The surface $F_3\subseteq X$ further breaks into the gluing of a surface $P\cong\mathbb{P}^1\times\mathbb{P}^1$, and a surface $T\cong\mathbb{P}^2$ along a ruling of $P$ and a line in $T$. Without loss of generality, assume that $P$ is glued with $F_2$. In this case the line $\ell_{13}$ breaks into two irreducible components, and the analogues of the points $p_1,p_3,\ell_{13}\cap\ell_{24},\ell_{13}\cap f$ split into two groups $\{p_1,\ell_{13}\cap f\},\{p_3,\ell_{13}\cap\ell_{24}\}$, where $p_1,\ell_{13}\cap f$ lie on the surface $P$ and $p_3,\ell_{13}\cap\ell_{24}$ on~$T$. This together with the other degeneration where $P$ is glued to $F_4$ instead, correspond, up to order, to the two cross-ratios $\beta=0,\infty$.

This shows that we have morphisms $\Bl_9S\rightarrow S_{\GP}\rightarrow S$,
where the first morphism is a bijective normalization.
\end{proof}

\begin{figure}[hbtp]
% [inline block 0: 5 envs, 50516 chars -> data_tex | \begin{tikzpicture}[scale=1.2] ...]

\caption{Further planar degenerations that come from the stretches in Table~\ref{stretchesofspiders}.}
\label{tbl:furtherGPdegenerations}
\end{table}

\newpage

\begin{figure}[hbtp]
\centering
\includegraphics[scale=0.60,valign=t]{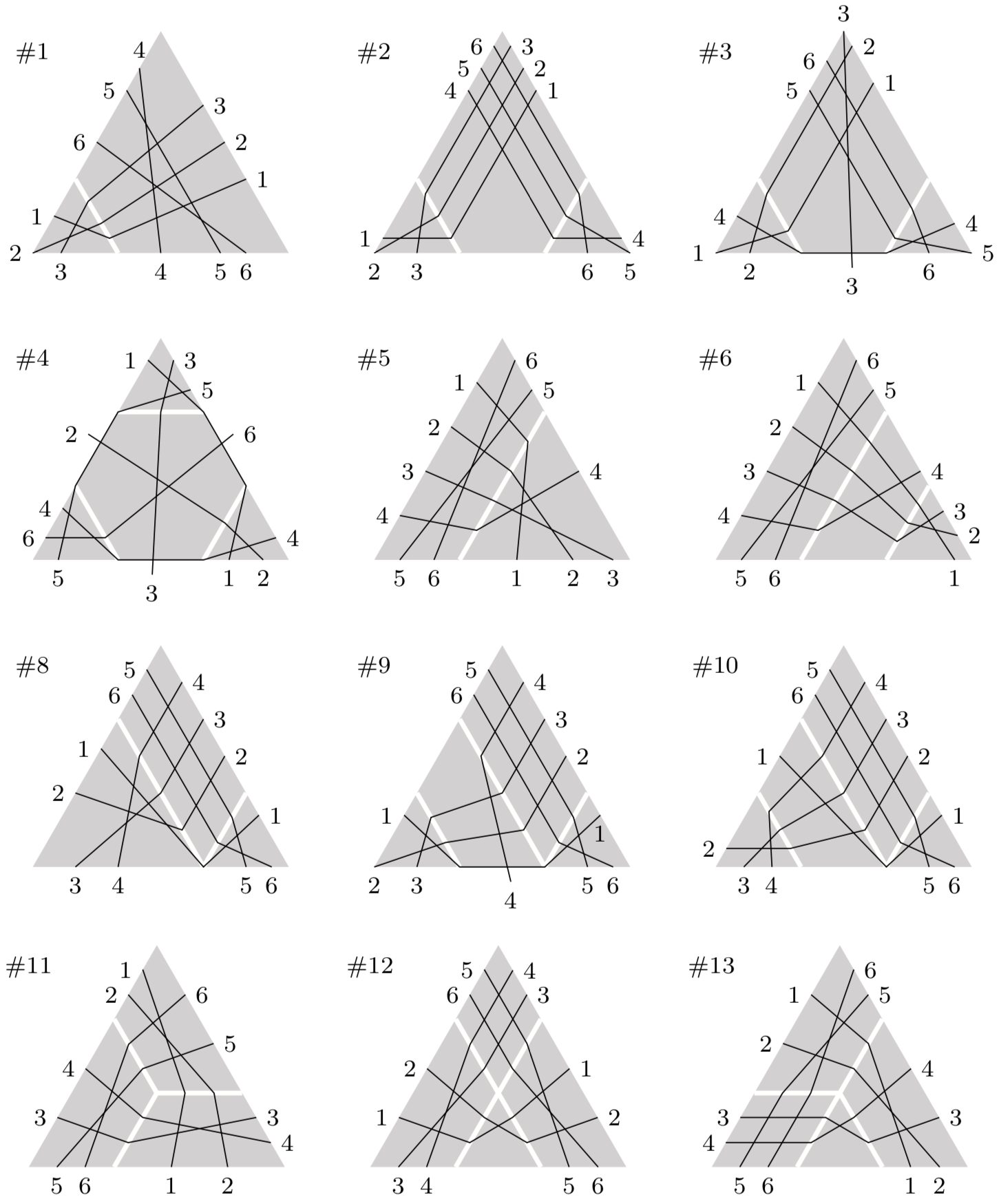}
\caption{(From \cite[Figure~5.12]{Ale15} with permission of the author.)}
\label{Alexeevdegenerations6linesA}
\end{figure}

\begin{figure}[hbtp]
\centering
\includegraphics[scale=0.60,valign=t]{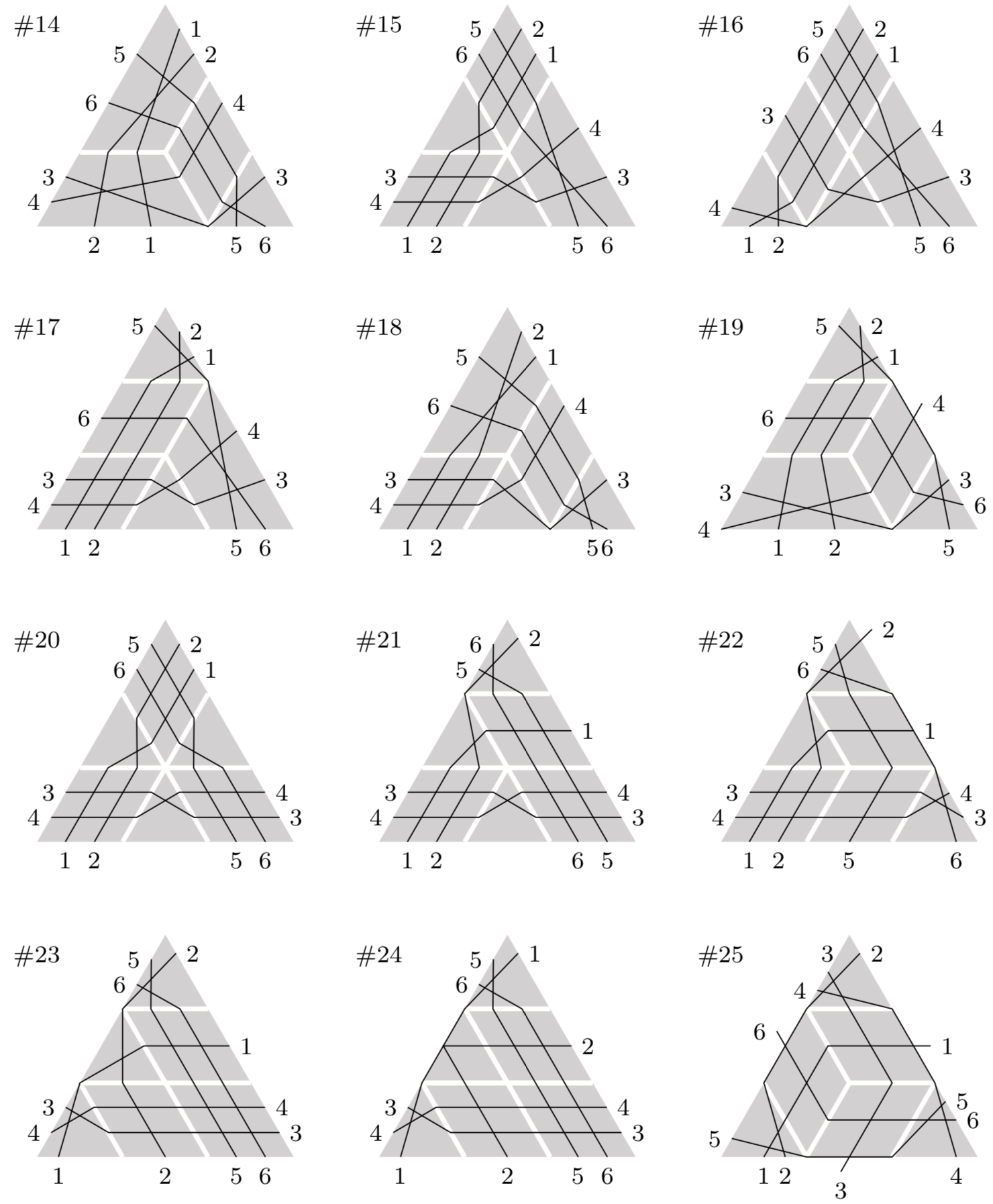}
\caption{(From \cite[Figure~5.13]{Ale15} with permission of the author.)}
\label{Alexeevdegenerations6linesB}
\end{figure}

%----------------------------------------------------------------------------------------------------

%BIBLIOGRAPHY

\clearpage

\end{document}